\newtheorem{theorem}{Theorem}[section]
\newtheorem{proposition}[theorem]{Proposition}
\newtheorem{lemma}[theorem]{Lemma}
\newtheorem{definition}[theorem]{Definition}
\newtheorem{remark}[theorem]{Remark}
\numberwithin{equation}{section} 
\let\al=\alpha
\let\b=\beta
\let\g=\gamma
\let\z=\zeta
\let\la=\lambda
\let\r=\rho
\let\s=\sigma
\let\om=\omega
\let\G= \Gamma
\let\La=\Lambda
\let\P=\Phi
\let\ep=\epsilon
\let\va=\varphi
\let\fy=\infty
\def\scrP{\mathscr{P}}
\def\calA{\mathcal {A}}
\def\calD{\mathcal {D}}
\def\calI{\mathcal {I}}
\def\calM{\mathcal {M}}
\def\calT{\mathcal {T}}
\def\calS{\mathcal {S}}
\def\scrF{\mathscr{F}}
\newcommand{\be}{\begin{equation*}}
\newcommand{\ee}{\end{equation*}}
\newcommand{\ben}{\begin{equation}}
\newcommand{\een}{\end{equation}}
\newcommand{\bn}{\begin{enumerate}}
\newcommand{\en}{\end{enumerate}}
\def\rr{{\mathbb R}}
\def\rd{{{\rr}^d}}
\def\rdd{{{\rr}^{2d}}}
\def\rddd{{{\rr}^{4d}}}
\def\zd{{{\mathbb{Z}}^d}}
\def\zdd{{{\mathbb{Z}}^{2d}}}
\def\zddd{{{\mathbb{Z}}^{4d}}}
\def\-fl1{\scrF^{-1}L^1}
\def\fl1{\scrF L^1}
\begin{document}
\title[Boundedness on weighted modulation spaces
of $\tau$-Wigner distributions]
{Characterization of boundedness on weighted modulation spaces
of $\tau$-Wigner distributions}
\author{WEICHAO GUO}
\address{School of Science, Jimei University, Xiamen, 361021, P.R.China}
\email{weichaoguomath@gmail.com}
\author{jiecheng chen}
\address{Department of Mathematics, Zhejiang Normal University,
Jinhua, 321004, P.R.China} \email{jcchen@zjnu.edu.cn}
\author{DASHAN FAN}
\address{Department of Mathematics, University of Wisconsin-Milwaukee, Milwaukee, WI 53201, USA}
\email{fan@uwm.edu}
\author{GUOPING ZHAO}
\address{School of Applied Mathematics, Xiamen University of Technology,
Xiamen, 361024, P.R.China} \email{guopingzhaomath@gmail.com}
\subjclass[2000]{42B35,47G30,35S05.}
\keywords{Wigner distribution, pseudodifferential operator, modulation space, Wiener amalgam space. }

\begin{abstract}
This paper is devoted to give several characterizations on a more general level
for the boundedness of $\tau$-Wigner distributions acting from weighted modulation spaces to
weighted modulation and Wiener amalgam spaces.
As applications, sharp exponents are obtained for the boundedness of $\tau$-Wigner distributions
on modulation spaces with power weights. We also recapture the main theorems of Wigner distribution obtained in \cite{CorderoNicola2018IMRNI,Cordero2020a}. As consequences, the characterizations of the boundedness on weighted modulation spaces
of several types of pseudodifferential operators are established. In particular, we give the sharp exponents for the boundedness
of pseudodifferential operators with symbols in Sj\"{o}strand's class and the corresponding Wiener amalgam spaces.
\end{abstract}

\maketitle


\section{INTRODUCTION}
The study of cross-Wigner distribution has a long history. It was first introduced in 1932
in E.Wigner's ground-breaking paper \cite{Wigner1932PR}, and then introduced in 1948 by J.Ville \cite{Ville1948CeT} in the field of signal analysis.
Let us recall the definition as follows.

Given two functions $f_1,f_2\in L^2(\rd)$, the cross-Wigner distribution $W(f_1,f_2)$ is defined by
\be
W(f_1,f_2)(x,\xi):=\int_{\rd}f_1(x+\frac{t}{2})\overline{f_2(x-\frac{t}{2})}e^{-2\pi it\cdot\xi}dt.
\ee
Let $\calT_s$ be the symmetric coordinate change defined by
\be
\calT_sF(x,t)=F(x+\frac{t}{2},x-\frac{t}{2}),
\ee
and let $\scrF_2$ be the partial Fourier transform in the second variable defined by
\be
\scrF_2F(x,\xi)=\int_{\rd}F(x,t)e^{-2\pi it\cdot\xi}dt.
\ee
The cross-Wigner distribution can be written as
\be
W(f_1,f_2)=\scrF_2\calT_s(f_1\otimes \bar{f_2}).
\ee
For $f=f_1=f_2$, $Wf=W(f,f)$ is simply called the Wigner distribution of $f$.
For simplicity, in the remaining part of this paper, we omit the word ``cross'' no matter whether $f_1=f_2$ or not.

As an important time-frequency representation, the Wigner distribution is closed related to the short-time Fourier transform (STFT)
defined by
\be
V_gf(x,\xi):=\int_{\rd}f(t)\overline{g(t-x)}e^{-2\pi it\cdot \xi}dt,\ \ \  f,g\in L^2(\rd).
\ee
In fact, a direct calculation shows that
\be
W(f,g)(x,\xi)=2^de^{4\pi i x\cdot \xi}V_{\calI g}f(2x,2\xi),\ \ \  f,g\in L^2(\rd),
\ee
where $\calI g(t)=g(-t)$.

On the other hand, the pseudodifferential operator in the Weyl form, i.e., the Weyl operator $L_{\sigma}$ with symbol $\s\in \calS'(\rdd)$
can be defined by means of duality pairing between the symbol and the Wigner distribution:
\be
\langle L_{\sigma}f, g\rangle=\langle \sigma, W(g,f)\rangle,\ \ \ f,g\in \calS(\rd).
\ee

The localization operator $A_{a}^{\va_1,\va_2}$  with symbol $a\in \calS'(\rdd)$,
analysis window $\va_1\in \calS(\rd)$, and synthesis window function $\va_2\in \calS(\rd)$
can be regarded as the Weyl operator whose symbol is the convolution of $a$
with the Wigner distribution of the windows $\va_1$ and $\va_2$:
\be
A_{a}^{\va_1,\va_2}=L_{a\ast W(\va_2,\va_1)}.
\ee

For $\tau\in [0,1]$, a more general time-frequency representation, namely, the cross-$\tau$-Wigner distribution
of $f_1,f_2\in L^2(\rd)$ is defined by
\be
W_{\tau}(f_1,f_2)(x,\xi):=\int_{\rd}f_1(x+\tau t)\overline{f_2(x-(1-\tau)t)}e^{-2\pi i\xi\cdot t}dt.
\ee
For $f=f_1=f_2$, $W_{\tau}f:=W_{\tau}(f,f)$ is simply called the $\tau$-Wigner distribution of $f$.
For simplicity, we omit the word ``cross'' in the remaining part of this paper.

Note that $\tau$-Wigner distribution is a generalization of the Wigner distribution.
Varying the parameter $\tau$, $W_{\tau}(f_1,f_2)$ is a family of time-frequency representations:

For $\tau=1/2$, $W_{1/2}(f_1,f_2)$ becomes the Wigner distribution $W(f_1,f_2)$.

For $\tau=0$, $W_0(f_1,f_2)$ coincides with the Rihaczek distribution $R(f_1,f_2)$:
\be
W_0(f_1,f_2)(x,\xi)=R(f_1,f_2)(x,\xi)=e^{-2\pi ix\cdot\xi}f_1(x)\overline{\hat{f_2}(\xi)}.
\ee

For $\tau=1$, $W_1(f_1,f_2)$ coincides with the conjugate Rihaczek distribution $R^*(f_1,f_2)$:
\be
W_1(f_1,f_2)(x,\xi)=R^*(f_1,f_2)(x,\xi)=\overline{R(f_2,f_1)(x,\xi)}
=e^{2\pi ix\cdot\xi}\overline{f_2(x)}\hat{f_1}(\xi).
\ee

By the Rihaczek distribution, the famous Kohn-Nirenberg operator $K_{\s}$ with symbol $\s$ can be defined weakly as:
\be
\langle K_{\s}f, g\rangle=\langle \s, R(g,f)\rangle,\ \ \ f,g\in \calS(\rd),\ \s\in \calS'(\rdd).
\ee
In general, for $\tau\in [0,1]$,
the so-called $\tau$-operators or Shubin operators \cite{Shubin2001} can be defined as
\be
\langle \text{OP}_{\tau}(\s)f, g\rangle=\langle \s, W_{\tau}(g,f)\rangle,\ \ \ f,g\in \calS(\rd).
\ee
Note that $\text{OP}_0(\s)$ coincides with the Khon-Nirenberg operator $K_{\s}$, $\text{OP}_{1/2}(\s)$
is just the Weyl operator $L_{\s}$. As the adjoint operator of $\text{OP}_0(\s)$, $\text{OP}_1(\s)$ is also called anti-Kohn-Nirenberg operator.

According to the above relations, the boundedness of several important operators
has direct connections with the corresponding boundedness of $\tau$-Wigner distributions.
Hence, it is important to establish the boundedness results of $\tau$-Wigner distribution on function spaces.
Among them, the boundedness acting on modulation spaces has its important position, since it
has closed relationship with time-frequency analysis.

Modulation spaces were invented by H. Feichtinger \cite{Feichtinger1983TRUoV} in 1983.
Nowadays, they have been fully recognized as the ``right'' function spaces
for time-frequency analysis. More precisely, modulation spaces
are defined by measuring the decay and integrability of the STFT as following:
\be
M^{p,q}_m(\rd)=\{f\in \calS'(\rd): V_gf\in L^{p,q}_m(\rdd) \}
\ee
endowed with the obvious (quasi-)norm, where $L^{p,q}_m(\rdd)$ are weighted mixed-norm Lebesgue spaces with the weight $m$,
see Section 2 for more details. We use $\calM^{p,q}_m(\rd)$ to denote the $\calS(\rd)$ closure in $M^{p,q}_m(\rd)$.

For the power weights
\be
v_{s,t}(z)=\langle z_1\rangle^s\langle z_2\rangle^t,\ \ \ v_s(z)=\langle z\rangle^s=(1+|z|^2)^{s/2} \ \ s,t\in \rr,
\ \ z=(z_1,z_2)\in \rdd,
\ee
the problem for \emph{the boundedness of $\tau$-Wigner distribution acting from weighted modulation spaces to
weighted modulation spaces} (BMM)
is to find the full range of exponents of $p_i, q_i, p, q \in (0,\fy]$, $s_i, t_i\in \rr$, $i=1,2$
for the boundedness:
\be
W_{\tau}: \calM^{p_1,q_1}_{v_{s_1,t_1}}(\rd)\times \calM^{p_2,q_2}_{v_{s_2,t_2}}(\rd)\longrightarrow M^{p,q}_{1\otimes v_{s}}(\rdd),
\ee
that is,
\be
\|W_{\tau}(f_1,f_2)\|_{M^{p,q}_{1\otimes v_{s}}}\lesssim \|f_1\|_{M^{p_1,q_1}_{v_{s_1,t_1}}}\cdot \|f_2\|_{M^{p_2,q_2}_{v_{s_2,t_2}}},\ \ \ f,g\in \calS(\rd),
\ee
where we write $(1\otimes v_s)(z,\z):= v_s(\z)$ for $(z,\z)\in \rddd$.

Note that, to avoid the fact that $\calS(\rd)$ is not dense in some endpoint spaces, such as $M^{p,q}_m$ with $p=\fy$ or $q=\fy$,
we only consider the action of $\tau$-Wigner distribution on Schwartz function spaces.
Similarly, we only consider the action of $\tau$-operator $OP_{\tau}(\s)$ on Schwartz function spaces.

This problem restricted to $s=s_i=t_i=0$, namely,
\ben\label{ieq, without s}
W: \calM^{p_1,q_1}(\rd)\times \calM^{p_2,q_2}(\rd)\longrightarrow M^{p,q}(\rdd)
\een
was studied by Toft \cite[Theorem 4.2]{Toft2004AGAG}, and then refined very recently by Cordero-Nicola \cite[Theorem 1.1]{CorderoNicola2018IMRNI}
and Cordero \cite[Theorem 3.2]{Cordero2020a}.
In \cite{CorderoNicola2018IMRNI,Cordero2020a}, the authors find the sharp conditions for \eqref{ieq, without s} of exponents $p_i, q_i, p, q \in (0,\fy]$, $i=1,2$.
Under the same conditions, they also obtain the following estimate:
\be
W: \calM^{p_1,q_1}_{v_{|s|}}(\rd)\times \calM^{p_2,q_2}_{v_{s}}(\rd)\longrightarrow M^{p,q}_{1\otimes v_{s}}(\rdd).
\ee

In the present paper, our first major goal is to consider BMM problem on a more general level.
For suitable weight function $m, m_1$, $m_2$ on $\rdd$ (see Section 2 for more precise definitions of weights),
our first main theorem shows that BMM can be
characterized by the corresponding convolution inequalities of discrete mixed-norm spaces.

\begin{theorem} (First characterization of BMM)\label{thm-M1}
Assume $p_i, q_i, p, q \in (0,\fy]$, $i=1,2$, $\tau\in [0,1]$.
Suppose that $m, m_i \in \mathscr{P}(\rdd)$, $i=1,2$.
We have
  \be
  W_{\tau}: \calM^{p_1,q_1}_{m_1}(\rd)\times \calM^{p_2,q_2}_{m_2}(\rd)\longrightarrow M^{p,q}_{1\otimes m}(\rdd)
  \ee
if and only if for all $\vec{a},\vec{b}$,
\ben\label{cd0-thm-M1}
\|(a_{k_1,k_2}b_{n_1-k_1,n_2-k_2})\|_{l^{p,q}_{1\otimes m_J}}
\lesssim \|\vec{a}\|_{l^{p_1,q_1}_{m_1}(\zdd)}\|\vec{b}\|_{l^{p_2,q_2}_{\calI m_2}(\zdd)}.
\een
In particular, for $p<\fy$, this is equivalent to
\ben\label{cd1-thm-M1}
l^{p_1/p,q_1/p}_{m_1^p}(\zdd)\ast l^{p_2/p,q_2/p}_{\calI m_2^p}(\zdd)\subset l^{q/p,q/p}_{m_J^p}(\zdd).
\een
Here we write $\calI m_2(z)=m_2(-z)$,
$m_J(z)=m(Jz)=m(z_2,-z_1)$ for $z\in \zdd$, where $J$ is the symplectic matrix (see Section 2).
\end{theorem}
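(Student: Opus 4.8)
The plan is to reduce the continuous bilinear estimate to a bilinear estimate on Gabor coefficients, and to carry out that reduction through an exact factorization of the short-time Fourier transform of a $\tau$-Wigner distribution. First I would fix Gaussian-type windows $g_1,g_2\in \calS(\rd)$ and use $\Phi:=W_\tau(g_1,g_2)$ as the window defining the target norm $\|\cdot\|_{M^{p,q}_{1\otimes m}(\rdd)}$; this is legitimate because, for weights in $\scrP$, the modulation norm is independent (up to equivalence) of the admissible window, and $\Phi\in\calS(\rdd)$. The heart of the matter is a magic formula: a direct computation, relying on $W_\tau(f,g)$ being a rescaled, phase-modulated partial-Fourier image of $f\otimes\bar g$ (the $\tau$-analogue of the displayed identity $W(f,g)(x,\xi)=2^de^{4\pi i x\cdot\xi}V_{\calI g}f(2x,2\xi)$), shows that for an explicit invertible, measure-preserving linear map $T=(T_1,T_2)$ on $\rddd$,
\be
|V_{\Phi} W_\tau(f_1,f_2)(z,\zeta)|=|V_{g_1}f_1(T_1(z,\zeta))|\cdot |V_{g_2}f_2(T_2(z,\zeta))|.
\ee
The symplectic matrix $J$ (hence the weight $m_J$) and the reflection $\calI$ (hence $\calI m_2$) enter precisely through the components $T_1,T_2$, each mapping $\rddd\to\rdd$.

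With the factorization in hand, I would invoke the standard discretization of modulation-space norms valid for weights in $\scrP$: fixing a Gabor frame, $\|f_i\|_{M^{p_i,q_i}_{m_i}}$ is equivalent to the $l^{p_i,q_i}_{m_i}(\zdd)$-norm of the Gabor coefficients, which I call $\vec a$ and $\vec b$. Sampling the above identity on the lattice then shows that $\|W_\tau(f_1,f_2)\|_{M^{p,q}_{1\otimes m}}$ is equivalent, through $T$, to the discrete mixed norm $\|(a_{k_1,k_2}b_{n_1-k_1,n_2-k_2})\|_{l^{p,q}_{1\otimes m_J}}$ of the products of the two coefficient sequences; the affine structure of $T$ on the lattice is exactly what turns the two STFT arguments into the index pair $(k_1,k_2)$ and $(n_1-k_1,n_2-k_2)$. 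Sufficiency, that \eqref{cd0-thm-M1} implies the boundedness, is then immediate. For necessity I would run the argument in reverse: given arbitrary finitely supported $\vec a,\vec b$, I synthesize test functions $f_1,f_2$ from time-frequency shifts of $g_1,g_2$ realizing these coefficients, with $\|f_i\|_{M^{p_i,q_i}_{m_i}}\lesssim \|\vec a\|_{l^{p_1,q_1}_{m_1}}$ and $\lesssim\|\vec b\|_{l^{p_2,q_2}_{\calI m_2}}$ respectively, and feed them into the assumed boundedness.

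Finally, for $p<\fy$ I would pass from \eqref{cd0-thm-M1} to the convolution inclusion \eqref{cd1-thm-M1} by a purely sequence-space computation. Raising the left-hand side to the $p$-th power linearizes the inner ($p$-)index: since $\sum_{k}|a_{k}|^{p}|b_{n-k}|^{p}=(|\vec a|^{p}\ast|\vec b|^{p})(n)$, the bilinear product-norm becomes, after taking $p$-th powers, the norm of a convolution, while the moderateness of $m,m_1,m_2$ in $\scrP$ lets me distribute the weight $m_J^{p}$ across the two factors $m_1^{p}$ and $\calI m_2^{p}$ along the convolution. Matching the surviving exponents then identifies \eqref{cd0-thm-M1} with $l^{p_1/p,q_1/p}_{m_1^p}(\zdd)\ast l^{p_2/p,q_2/p}_{\calI m_2^p}(\zdd)\subset l^{q/p,q/p}_{m_J^p}(\zdd)$.

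The step I expect to be the main obstacle is the magic formula, and above all the bookkeeping of the linear change of variables $T$: I must verify that $T$ is invertible and measure-preserving (so the continuous $L^{p,q}$ norms transfer with no Jacobian corrections), that the reflection $\calI$ lands on the second factor and the swap $J$ on the target weight, and that $T$ is $\scrP$-compatible so that the weight equivalences survive the change of variables and the lattice sampling. Everything downstream — the discretization and the sequence-space manipulation — is routine once this factorization with its correct geometry is in place.
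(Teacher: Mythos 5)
Your overall strategy is the paper's own: the ``magic formula'' you describe is exactly Lemma \ref{lm-STFT-tWd} (up to a phase, $V_{\Phi_\tau}W_\tau(f_1,f_2)(z,\z)$ factors as $V_{\phi_1}f_1(z_1-\tau\z_2,z_2+(1-\tau)\z_1)\,\overline{V_{\phi_2}f_2(z_1+(1-\tau)\z_2,z_2-\tau\z_1)}$), followed by Gabor discretization and a sequence-space reformulation. But there is a genuine gap precisely where you anticipated trouble. The affine map $T=(T_1,T_2)$ does \emph{not} carry the sampling lattice into lattices compatible with the Gabor systems of $f_1$ and $f_2$ unless $\tau\in\{0,1\}$ (or $\tau$ is rational with an adapted lattice): sampling at $(\al k,\b n)$ produces the points $(\al k_1-\tau\b n_2,\,\al k_2+(1-\tau)\b n_1)$, which for irrational $\tau$ lie on no lattice, so the sampled values are not Gabor coefficients of $f_1$ and your claimed equivalence between $\|W_\tau(f_1,f_2)\|_{M^{p,q}_{1\otimes m}}$ and the discrete norm of products of Gabor coefficients does not follow from sampling, in either direction. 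Verifying that $T$ is invertible, measure-preserving and $\scrP$-compatible does not repair this (and measure preservation of a map mixing the $z$ and $\z$ blocks would not by itself preserve a mixed $L^{p,q}$ norm anyway). The paper's fix is a preliminary reduction: because the weight $1\otimes m$ ignores $z$ and, for each fixed $\z$, the $\tau$-dependence in the factorization is a pure translation in $z$, the continuous norm $\|V_{\Phi_\tau}W_\tau(f_1,f_2)\|_{L^{p,q}_{1\otimes m}}$ is literally independent of $\tau$; one then discretizes only $\tau=0$ (Proposition \ref{pp-M1}), where $T$ is integral. You need this step or a substitute for it.

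Two further points, more minor. In the necessity direction, ``synthesizing test functions realizing these coefficients'' is not automatic: with Gaussian windows the Gabor coefficients of $f_1=\sum_{j,l} a_{j,l}T_{\al j}M_{\al l}g_1$ need not dominate $a_{k_1,k_2}$ pointwise, because of cancellation between time-frequency shifts. The paper instead takes $\va,\phi$ supported in $(-\al/2,\al/2)^d$ with $\hat\va,\hat\phi\ge 0$ and nonnegative truncated sequences, so only the diagonal term survives and $|V_\phi f_1(\al k_1,\al k_2)|\gtrsim a_{k_1,k_2}$, with a limiting argument at the end. In the sufficiency direction, the frame reconstruction forces an arbitrarily fine lattice $\al/(N_1N_2)$, so one must also show that the discrete hypothesis at scale $\al$ implies it at scale $\al/N$; the paper does this by splitting $\zddd$ into $N$-cosets. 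Your passage to \eqref{cd1-thm-M1} for $p<\fy$ is fine, though simpler than you suggest: raising \eqref{cd0-thm-M1} to the $p$-th power identifies it verbatim with the convolution inclusion, with no need to distribute the weight $m_J^p$ across the factors.
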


Furthermore, for submultiplicative weight $m$ and
variable-separable weights $m_1$ and $m_2$, namely,
\be
m(z_1+n_1,z_2+n_2)\lesssim m(z_1,z_2)m(n_1,n_2),\ \ \
m_1=\om_1\otimes \mu_1,\ \ \ m_2=\om_2\otimes \mu_2,
\ee
our second main theorem shows that BMM can be
further characterized by some convolution or embedding inequalities of discrete norm spaces.

\begin{theorem}(Second characterization of BMM)\label{thm-M2}
Assume $p_i, q_i, p, q \in (0,\fy]$, $i=1,2$, $\tau\in [0,1]$.
Suppose that $m\in \mathscr{P}(\rdd)$ is submultiplicative, $\om_i, \mu_i \in \mathscr{P}(\rd)$, $i=1,2$.
We have
\be
W_{\tau}: \calM^{p_1,q_1}_{\om_1\otimes \mu_1}(\rd)\times \calM^{p_2,q_2}_{\om_2\otimes \mu_2}(\rd)\longrightarrow M^{p,q}_{1\otimes m}(\rdd)
\ee
if and only if
\begin{eqnarray}
&l_{\om_1^p}^{p_1/p}(\zd)\ast l_{\calI \om_2^p}^{p_2/p}(\zd)\subset  l_{\calI m_{\b}^p}^{q/p}(\zd),\ \ \
l_{\mu_1^p}^{q_1/p}(\zd)\ast l_{\calI \mu_2^p}^{q_2/p}(\zd)\subset  l_{m_{\al}^p}^{q/p}(\zd),\ \ \
&p<\fy,
\\
&l_{\om_1}^{p_1}(\zd),\  l_{\calI \om_2}^{p_2}(\zd)\subset  l_{\calI m_{\b}}^{q}(\zd),\ \ \ \
l_{\mu_1}^{q_1}(\zd),\  l_{\calI \mu_2}^{q_2}(\zd)\subset  l_{m_{\al}}^{q}(\zd),
&p\geq q.
\end{eqnarray}

%
Here, we write $m_{\al}(z_1)=m(z_1,0)$ and $m_{\b}(z_2)=m(0,z_2)$ for $z_1,z_2\in \rd$.
\end{theorem}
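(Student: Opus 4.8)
The plan is to deduce Theorem~\ref{thm-M2} from the discrete characterization in Theorem~\ref{thm-M1} by \emph{factorizing} the two-dimensional convolution inequality \eqref{cd0-thm-M1}--\eqref{cd1-thm-M1} into two $d$-dimensional ones, the decoupling being driven entirely by the submultiplicativity of $m$. First I would record the weight facts that make this possible. Writing $z=(z_1,z_2)$ and recalling $m_J(z_1,z_2)=m(z_2,-z_1)$, submultiplicativity applied to $(z_2,-z_1)=(z_2,0)+(0,-z_1)$ gives the upper bound $m_J(z_1,z_2)\lesssim m(z_2,0)m(0,-z_1)=m_{\al}(z_2)\,\calI m_{\b}(z_1)$, while on the coordinate slices one has the \emph{exact} identities $m_J(n_1,0)=\calI m_{\b}(n_1)$ and $m_J(0,n_2)=m_{\al}(n_2)$. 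Submultiplicativity of $m$ also descends to the marginals $m_{\al},m_{\b}$, hence to $\calI m_{\b}$; this will be needed to turn convolution inequalities into embeddings below.

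For the range $p<\fy$ with $q/p\geq 1$ I would prove that \eqref{cd1-thm-M1} is equivalent to the stated pair of convolution inequalities. Sufficiency ($\Leftarrow$) is the core. Dominating the target weight by $\calI m_{\b}^p\otimes m_{\al}^p$ and viewing the full convolution as iterated, $(\vec a\ast\vec b)(n_1,n_2)=\sum_{k_2}\bigl(a_{\cdot,k_2}\ast b_{\cdot,n_2-k_2}\bigr)(n_1)$, I would take the $l^{q/p}_{\calI m_{\b}^p}$-norm in $n_1$, move the $k_2$-sum outside by Minkowski's inequality (valid since $q/p\geq1$), and apply the first ($\om$-)inequality slicewise to get $\|(\vec a\ast\vec b)(\cdot,n_2)\|_{l^{q/p}_{\calI m_{\b}^p}}\lesssim(\Phi\ast\Psi)(n_2)$, where $\Phi_{k_2}=\|a_{\cdot,k_2}\|_{l^{p_1/p}_{\om_1^p}}$ and $\Psi_{l_2}=\|b_{\cdot,l_2}\|_{l^{p_2/p}_{\calI\om_2^p}}$. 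Taking the $l^{q/p}_{m_{\al}^p}$-norm in $n_2$ and applying the second ($\mu$-)inequality then produces $\|\Phi\|_{l^{q_1/p}_{\mu_1^p}}\|\Psi\|_{l^{q_2/p}_{\calI\mu_2^p}}$, which is exactly $\|\vec a\|_{l^{p_1/p,q_1/p}_{\om_1^p\otimes\mu_1^p}}\|\vec b\|_{l^{p_2/p,q_2/p}_{\calI\om_2^p\otimes\calI\mu_2^p}}$ by the definition of the mixed norms. For necessity ($\Rightarrow$) I would test \eqref{cd1-thm-M1} against sequences supported on one coordinate slice: taking $\vec a,\vec b$ supported where the second coordinate vanishes collapses the double convolution to a first-variable convolution supported on $\{n_2=0\}$, where the exact value $m_J(n_1,0)=\calI m_{\b}(n_1)$ recovers the first convolution inequality; concentrating where the first coordinate vanishes and using $m_J(0,n_2)=m_{\al}(n_2)$ recovers the second. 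No submultiplicativity enters here, which is precisely why the marginal weights coming from necessity and those used in sufficiency agree.

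For the range $p\geq q$ (equivalently $q/p\leq1$, which also covers $p=\fy$) I would convert each convolution condition into the two embeddings. The device is the elementary equivalence that, for target exponent $c\leq1$ and submultiplicative target weight $w$, $l^{A}_{v_1}\ast l^{B}_{v_2}\subset l^{c}_{w}$ holds if and only if both $l^{A}_{v_1}\subset l^{c}_{w}$ and $l^{B}_{v_2}\subset l^{c}_{w}$: necessity follows by convolving against $\delta_0$, and sufficiency from the fact that $l^{c}_{w}$ is a convolution algebra when $w$ is submultiplicative and $c\le1$ (applied to $w=\calI m_{\b}^p$ and $w=m_{\al}^p$). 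Feeding this the exponent $c=q/p$ and then removing the power $p$ via the substitution $c_n=|d_n|^p$ turns $l^{p_1/p}_{\om_1^p}\subset l^{q/p}_{\calI m_{\b}^p}$ into $l^{p_1}_{\om_1}\subset l^{q}_{\calI m_{\b}}$, and likewise for the remaining three inclusions, giving the stated list. To combine the embeddings back into \eqref{cd0-thm-M1} one uses $|(\vec a\ast\vec b)(n)|^{q/p}\leq\sum_k|a_k|^{q/p}|b_{n-k}|^{q/p}$ (as $q/p\le1$) together with submultiplicativity of both marginals to factor the resulting sum into $\|\vec a\|_{l^{q/p}_{(\calI m_{\b}\otimes m_{\al})^p}}\|\vec b\|_{l^{q/p}_{(\calI m_{\b}\otimes m_{\al})^p}}$, after which the embeddings in tensorized mixed-norm form finish the estimate. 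In the overlap $q\leq p<\fy$ the convolution conditions and the embeddings coincide by the same equivalence, so the $p<\fy$ characterization is complete for all $q$; for $p=\fy$ one argues directly on \eqref{cd0-thm-M1}, where \eqref{cd1-thm-M1} is unavailable.

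The step I expect to be the main obstacle is the factorization/tensorization in the sufficiency direction, together with correctly matching the two arguments at the boundary $q/p=1$, where the genuine iterated-convolution estimate (for $q/p\geq1$) must hand off to the embedding-based estimate (for $q/p\leq1$) while keeping the mixed-norm bookkeeping honest. The genuinely delicate point is conceptual: necessity produces the marginal weights $\calI m_{\b}$ and $m_{\al}$ through exact slice values, whereas sufficiency has only the one-sided submultiplicative bound $m_J\lesssim\calI m_{\b}\otimes m_{\al}$; the argument closes to an ``if and only if'' exactly because submultiplicativity forces these weights to agree up to constants on the indices that matter. A secondary care point is the $p=\fy$ endpoint, where the divided exponents in \eqref{cd1-thm-M1} degenerate and the slice, $\delta_0$, and convolution-algebra arguments must all be run directly on \eqref{cd0-thm-M1}.
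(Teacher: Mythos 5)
Your proposal is correct and follows essentially the same route as the paper: the paper also deduces Theorem \ref{thm-M2} from Theorem \ref{thm-M1} via a separation result (Proposition \ref{pp-sepc}), proved by slice/delta tests for necessity, Minkowski plus iterated convolution for $p<q$, and the inner embedding $l^q\subset l^p$ combined with submultiplicativity and the marginal embeddings for $p\geq q$, translating weights through $(m_J)_{\al}=\calI m_{\b}$ and $(m_J)_{\b}=m_{\al}$ exactly as you do. Your explicit ``convolution algebra'' equivalence for $c\leq 1$ is left implicit in the paper but is the same mechanism.
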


As an application, we return to the case of power weights.
Our third characterization shows that in this case BMM
can be characterized by some convolution or embedding inequalities of discrete norm spaces
with power weights. Some further characterizations of exponents will be shown in Section 5.

\begin{theorem}(Third characterization of $BMM$)\label{thm-M3}
  Assume $p, q, p_i, q_i \in (0,\fy]$, $s_i, t_i\in \rr$, $i=1,2$, $\tau\in [0,1]$. We have
\be
W_{\tau}: \calM^{p_1,q_1}_{v_{s_1,t_1}}(\rd)\times \calM^{p_2,q_2}_{v_{s_2,t_2}}(\rd) \longrightarrow M^{p,q}_{1\otimes v_s}(\rdd)
\ee
if and only if
\begin{eqnarray}
&l_{ps_1}^{p_1/p}(\zd)\ast l_{ps_2}^{p_2/p}(\zd)\subset  l_{ps}^{q/p}(\zd),
 \ \ \ l_{pt_1}^{q_1/p}(\zd)\ast l_{pt_2}^{q_2/p}(\zd)\subset  l_{ps}^{q/p}(\zd),
&p<\fy,
\\
&l_{s_1}^{p_1}(\zd),\  l_{s_2}^{p_2}(\zd)\subset  l_{s}^{q}(\zd),\ \ \
 \ \ \ l_{t_1}^{q_1}(\zd),\  l_{t_2}^{q_2}(\zd)\subset  l_{s}^{q}(\zd),
&p\geq q.
\end{eqnarray}

\end{theorem}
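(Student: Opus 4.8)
The plan is to obtain Theorem~\ref{thm-M3} as a specialization of the two general characterizations already established, treating the sign of $s$ separately. First I would record the structural features of the power weights: setting $\om_i(z_1)=\langle z_1\rangle^{s_i}$ and $\mu_i(z_2)=\langle z_2\rangle^{t_i}$ we have $v_{s_i,t_i}=\om_i\otimes\mu_i$ with $\om_i,\mu_i\in\mathscr{P}(\rd)$ for all real $s_i,t_i$, so the hypotheses on the input weights in Theorem~\ref{thm-M2} hold automatically. The target weight is $m=v_s\in\mathscr{P}(\rdd)$, and since $\langle\cdot\rangle$ is even the involution $\calI$ acts trivially on each factor, so $\calI\om_2=\langle\cdot\rangle^{s_2}$, $\calI\mu_2=\langle\cdot\rangle^{t_2}$ and $\calI m_\b=m_\b$. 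A one-line computation of the axis restrictions gives $m_\al(z_1)=v_s(z_1,0)=\langle z_1\rangle^{s}$ and $m_\b(z_2)=v_s(0,z_2)=\langle z_2\rangle^{s}$, so $m_\al^p=m_\b^p=\langle\cdot\rangle^{ps}$.

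For $s\geq 0$ the weight $v_s$ is submultiplicative by the elementary inequality $\langle z+w\rangle\lesssim\langle z\rangle\langle w\rangle$, so Theorem~\ref{thm-M2} applies verbatim. Inserting the identifications above and writing $l^a_r:=l^a_{\langle\cdot\rangle^r}$, the relations $\om_i^p=\langle\cdot\rangle^{ps_i}$, $\mu_i^p=\langle\cdot\rangle^{pt_i}$ and $m_\al^p=m_\b^p=\langle\cdot\rangle^{ps}$ turn the two convolution inclusions (for $p<\fy$) and the four embeddings (for $p\geq q$) of Theorem~\ref{thm-M2} into exactly the displayed conditions of Theorem~\ref{thm-M3}. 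This disposes of the case $s\geq 0$ with no further analysis.

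The remaining, and genuinely harder, case is $s<0$, where $v_s$ decays and therefore cannot be submultiplicative: submultiplicativity would force $1=v_s(0)\le v_s(z)v_s(-z)=v_s(z)^2$, impossible for a decaying even weight. Here Theorem~\ref{thm-M2} is unavailable and I would fall back on the First characterization, Theorem~\ref{thm-M1}, which needs only $m\in\mathscr{P}(\rdd)$. Because $v_s$ is invariant under permutation and reflection of coordinates, $m_J(z_1,z_2)=v_s(z_2,-z_1)=v_s(z_1,z_2)$, so for $p<\fy$ condition~\eqref{cd1-thm-M1} becomes the single $\zdd$ convolution inclusion
\be
l^{p_1/p,q_1/p}_{v_{ps_1,pt_1}}(\zdd)\ast l^{p_2/p,q_2/p}_{v_{ps_2,pt_2}}(\zdd)\subset l^{q/p,q/p}_{v_{ps}}(\zdd),
\ee
the remaining task being to show that this single inclusion is equivalent to the two $\zd$ convolution inclusions displayed in the theorem. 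I expect this decoupling to be the main obstacle. Its difficulty stems entirely from the output weight: for $s\geq 0$ one has the additive comparison $\langle(z_1,z_2)\rangle^{ps}\approx\langle z_1\rangle^{ps}+\langle z_2\rangle^{ps}$, which splits the inclusion cleanly over the two blocks, whereas for $s<0$ the correct comparison is $\langle(z_1,z_2)\rangle^{ps}\approx\min(\langle z_1\rangle^{ps},\langle z_2\rangle^{ps})$, a minimum that does not factor.

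To carry out the decoupling for $s<0$ I would treat the two implications differently. For necessity I would test the inclusion on tensor sequences $\vec a=a^{(1)}\otimes a^{(2)}$, $\vec b=b^{(1)}\otimes b^{(2)}$, for which both the $\zdd$ convolution and the mixed-norm inputs factor over the two blocks; concentrating the second factors $a^{(2)},b^{(2)}$ at the origin freezes $\langle z_2\rangle\approx 1$ on the support, so the output weight collapses to the pure power $\langle z_1\rangle^{ps}$ and the inclusion reduces to $l^{p_1/p}_{ps_1}\ast l^{p_2/p}_{ps_2}\subset l^{q/p}_{ps}$, and symmetrically concentrating the first factors yields $l^{q_1/p}_{pt_1}\ast l^{q_2/p}_{pt_2}\subset l^{q/p}_{ps}$. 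For sufficiency I would decompose $\zdd$ into the regions $\{\langle z_1\rangle\geq\langle z_2\rangle\}$ and $\{\langle z_1\rangle<\langle z_2\rangle\}$, on which the minimum equals $\langle z_1\rangle^{ps}$ and $\langle z_2\rangle^{ps}$ respectively, and estimate each piece by combining the two one-block inclusions with Peetre's inequality $\langle z\rangle^{ps}\lesssim\langle w\rangle^{|ps|}\langle z-w\rangle^{ps}$ to redistribute the weight across the convolution; controlling the coupling between the two blocks introduced by the region constraints, rather than relying on the additive splitting available in the submultiplicative case, is precisely where the work lies. Finally, the regime $p\geq q$, where the convolution inclusions degrade to the embeddings in the statement, is handled in the same two-case manner, completing the proof.
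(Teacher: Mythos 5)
Your treatment of $s\ge 0$ is exactly the paper's proof: the paper disposes of Theorem~\ref{thm-M3} in one line as a ``direct conclusion of Theorem~\ref{thm-M2}'', which is precisely your first two paragraphs. You are also right that this one-line deduction silently uses the submultiplicativity of $v_s$, which fails for every $s<0$ (no constant can save $\langle z+w\rangle^{s}\lesssim\langle z\rangle^{s}\langle w\rangle^{s}$ at $w=-z$); spotting that the paper's route does not cover $s<0$ is a genuine and correct observation.

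The difficulty is with your proposed repair for $s<0$. The decoupling you defer (``controlling the coupling between the two blocks \dots is precisely where the work lies'') is not merely hard, it is false: for $s<0$ the two separated convolution inclusions do not imply condition \eqref{cd0-thm-M1} of Theorem~\ref{thm-M1}. Take $d=1$, $p=q=p_1=q_1=p_2=q_2=1$, $s=-1/2$, $(s_1,t_1)=(-2/5,\,1)$, $(s_2,t_2)=(1,\,-2/5)$. Both displayed conditions reduce to $l^1_{-2/5}(\mathbb{Z})\ast l^1_{1}(\mathbb{Z})\subset l^1_{-1/2}(\mathbb{Z})$, which holds because of the pointwise bound $\langle n\rangle^{-1/2}\langle k\rangle^{2/5}\langle n-k\rangle^{-1}\lesssim \langle n\rangle^{-1/10}\langle n-k\rangle^{-3/5}\lesssim 1$ (Peetre's inequality). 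Now test \eqref{cd0-thm-M1} with cross-supported data: let $a_{k_1,k_2}=1$ when $|k_1|\le N$ and $k_2=0$ (and $=0$ otherwise), and $b_{l_1,l_2}=1$ when $l_1=0$ and $|l_2|\le N$. Then $\sum_k a_k b_{n-k}=1$ on $[-N,N]^2$, so the left-hand side of \eqref{cd0-thm-M1} is $\sum_{|n_1|,|n_2|\le N}\langle(n_1,n_2)\rangle^{-1/2}\sim N^{3/2}$, while the right-hand side is $\sim N^{3/5}\cdot N^{3/5}=N^{6/5}$. Hence \eqref{cd0-thm-M1} fails and, by Theorem~\ref{thm-M1}, the boundedness fails even though both separated conditions hold. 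The obstruction is exactly the feature you flagged: for $s<0$ the output weight is the minimum $\langle(n_1,n_2)\rangle^{s}\sim\min(\langle n_1\rangle^{s},\langle n_2\rangle^{s})$, and on cross-supported data the single-block hypotheses (whose targets $l^{q/p}_{ps}$ are then weaker than the unweighted space) give no control of the joint sum. So your sufficiency step for $s<0$ cannot be completed as an equivalence with the stated conditions; the theorem has to be read with $s\ge 0$ (consistent with the submultiplicativity it is derived from, and with the fact that the hypotheses quantify $s_i,t_i\in\rr$ but never $s$), and for that case your argument is complete and identical to the paper's.
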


It is well known that the boundeness property of $\tau$-pseudodifferential operators with symbols in modulation spaces
are independent with $\tau\in [0,1]$, see \cite[Remark 1.5]{Toft2004JFA}.
As expected, the characterizations of BMM is also independent of $\tau$ as shown in Theorems \ref{thm-M1} to \ref{thm-M3}.
However, situation changes
in the problem of \emph{the boundedness of $\tau$-Wigner distribution acting from weighted modulation spaces to
weighted Wiener amalgam spaces} (BMW).

In this paper, we consider the Wiener amalgam spaces $W(\scrF L^p,L^q_m)(\rdd)$, which
are the image of modulation spaces $M^{p,q}_{1\otimes m}(\rdd)$ under the Fourier transform, see the next section for
its precise definition.
In contrast with the fruitful works on BMM,
there are only few results of BMW. In \cite{DEliaTrapasso2018JPOA,CorderoDEliaTrapasso2019JMAA}, some sufficient conditions of BMW (with weight on the first component of Wiener amalgam space, namely, $W(\scrF L^p_m,L^q)(\rdd)$) was established for $\tau\in (0,1)$, a negative result of a special form of  BMW (see \cite[Proposition 4.4]{DEliaTrapasso2018JPOA}) was
shown for $\tau=0,1$.

Our second major goal is to give some characterizations for BMW on a general level.
For suitable weight function $m, m_1$, $m_2$ on $\rdd$ (see Section 2 for more precise definitions of weights),
our first main theorem for BMW is as follows.

\begin{theorem} (First characterization of BMW)\label{thm-W1}
Assume $p_i, q_i, p, q \in (0,\fy]$, $i=1,2$, $\tau\in [0,1]$. Suppose that $m, m_i \in \mathscr{P}(\rdd)$, $i=1,2$.
Denote $\widetilde{m}(\z_1,\z_2)=m((1-\tau)\z_1, \tau \z_2)$, $\widetilde{m_2}(z_1,z_2)=m_2(\frac{1-\tau}{\tau}z_1, \frac{\tau}{1-\tau}z_2)$.
We have
  \be
  W_{\tau}: \calM^{p_1,q_1}_{m_1}(\rd)\times \calM^{p_2,q_2}_{m_2}(\rd)\longrightarrow W(\scrF L^p, L^q_m)(\rdd)
  \ee
if and only if for all $\vec{a},\vec{b}$,

\begin{eqnarray}
&\label{thm-W1-cd1}
\|(a_{k_1,k_2}b_{n_1-k_1,n_2-k_2})\|_{l^{p,q}_{1\otimes \widetilde{m}}}
\lesssim \|\vec{a}\|_{l^{p_1,q_1}_{m_1}(\zdd)}\|\vec{b}\|_{l^{p_2,q_2}_{\widetilde{m_2}}(\zdd)}
\ \ \ \ &\tau\in (0,1),
\\
&\|(a_{n_1,k_1}b_{k_2,n_2})\|_{l^{p,q}_{1\otimes m}}\lesssim \|\vec{a}\|_{l^{p_1,q_1}_{m_1}(\zdd)}\|\vec{b}\|_{l^{p_2,q_2}_{m_2}(\zdd)}
&\tau=0,
\\
&\|(a_{n_1,k_1}b_{k_2,n_2})\|_{l^{p,q}_{1\otimes m}}\lesssim \|\vec{b}\|_{l^{p_1,q_1}_{m_1}(\zdd)}\|\vec{a}\|_{l^{p_2,q_2}_{m_2}(\zdd)} &\tau=1.
\end{eqnarray}
In particular, for $p<\fy$, the condition \eqref{thm-W1-cd1} is equivalent to
\be
l^{p_1/p,q_1/p}_{m_1^p}(\zdd)\ast l^{p_2/p,q_2/p}_{\widetilde{m_2}^p}(\zdd)
\subset l^{q/p,q/p}_{\widetilde{m}^p}(\zdd).
\ee

\end{theorem}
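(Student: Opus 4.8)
The plan is to run the proof of Theorem~\ref{thm-M1} through the Fourier transform. By the very definition of the amalgam space, $W(\scrF L^p,L^q_m)(\rdd)=\scrF\big(M^{p,q}_{1\otimes m}(\rdd)\big)$, so, up to an obvious reflection of the weight, the boundedness $W_\tau:\calM^{p_1,q_1}_{m_1}\times\calM^{p_2,q_2}_{m_2}\to W(\scrF L^p,L^q_m)$ is equivalent to the single modulation-space estimate $\|\widehat{W_\tau(f_1,f_2)}\|_{M^{p,q}_{1\otimes m}}\lesssim\|f_1\|_{\calM^{p_1,q_1}_{m_1}}\|f_2\|_{\calM^{p_2,q_2}_{m_2}}$. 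Integrating out the frequency variable in the defining integral gives the explicit identity
\[
\widehat{W_\tau(f_1,f_2)}(\eta,y)=e^{-2\pi i\tau\,y\cdot\eta}\,V_{f_2}f_1(-y,\eta),
\]
exhibiting $\widehat{W_\tau}$ as a reflected cross-ambiguity function carrying a $\tau$-dependent chirp. This chirp is exactly what breaks the $\tau$-independence enjoyed by the BMM problem: when we next apply an STFT, the quadratic phase $e^{-2\pi i\tau y\cdot\eta}$ shears phase space in a way that depends on $\tau$.

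The second step is the magic formula. Taking the analysis window $\Phi=\widehat{W_\tau(\phi_1,\phi_2)}$ built from fixed Schwartz windows $\phi_1,\phi_2$ and expanding $V_\Phi\widehat{W_\tau(f_1,f_2)}$, I would use the covariance of the STFT together with the factorization of the ambiguity function to reduce the modulus $|V_\Phi\widehat{W_\tau(f_1,f_2)}(z,\zeta)|$ to a product $|V_{\phi_1}f_1(\cdots)|\,|V_{\phi_2}f_2(\cdots)|$ in which the arguments are a $\tau$-dependent invertible linear change of variables of $(z,\zeta)$. Tracking this change of variables against the weight $1\otimes m$ is what produces the scaled weights $\widetilde m(\zeta_1,\zeta_2)=m((1-\tau)\zeta_1,\tau\zeta_2)$ and $\widetilde{m_2}(z_1,z_2)=m_2(\frac{1-\tau}{\tau}z_1,\frac{\tau}{1-\tau}z_2)$: the diagonal factors $(1-\tau,\tau)$ and $(\frac{1-\tau}{\tau},\frac{\tau}{1-\tau})$ are precisely the stretching constants generated by the shear.

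Once the continuous bilinear inequality has been rewritten, after this change of variables, as a bound for $\|V_{\phi_1}f_1\cdot V_{\phi_2}f_2\|_{L^{p,q}_{1\otimes\widetilde m}}$, I would discretize exactly as in Theorem~\ref{thm-M1}. Using that $m,m_1,m_2\in\mathscr{P}(\rdd)$ are comparable on unit cubes and sampling the STFTs over a lattice (a Gabor frame / bounded uniform partition of unity), the continuous norms collapse to the sequence norms $\|\vec a\|_{l^{p_1,q_1}_{m_1}}$ and $\|\vec b\|_{l^{p_2,q_2}_{\widetilde{m_2}}}$, while the product structure turns into the discrete convolution $a_{k_1,k_2}b_{n_1-k_1,n_2-k_2}$; this gives \eqref{thm-W1-cd1}. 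The reformulation as the embedding $l^{p_1/p,q_1/p}_{m_1^p}\ast l^{p_2/p,q_2/p}_{\widetilde{m_2}^p}\subset l^{q/p,q/p}_{\widetilde m^p}$ for $p<\fy$ follows by raising to the $p$-th power as in the proof of \eqref{cd1-thm-M1}, and the necessity direction is obtained by testing the estimate against Gabor atoms.

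The main obstacle, and the source of the three cases, is the degeneration of this change of variables at the endpoints. For $\tau\in(0,1)$ the shear is invertible and genuinely mixes all four $d$-blocks, so the full $2d$-dimensional convolution \eqref{thm-W1-cd1} survives. At $\tau=0$ and $\tau=1$ the scaling constants $\frac{1-\tau}{\tau}$ and $\frac{\tau}{1-\tau}$ blow up, the shear collapses, and $\widehat{W_\tau}$ degenerates into the tensor structure already visible in the Rihaczek and conjugate-Rihaczek formulas, in which the $x$- and $\xi$-dependences of $f_1$ and $f_2$ decouple. These endpoints therefore require a separate direct computation, which is what replaces the true convolution by the decoupled patterns $a_{n_1,k_1}b_{k_2,n_2}$ (with the roles of $f_1$ and $f_2$ exchanged when $\tau=1$).
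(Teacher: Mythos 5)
Your proposal is correct in substance and rests on the same identities as the paper (the formula $\scrF W_\tau(f_1,f_2)(z)=e^{-2\pi i\tau z_1\cdot z_2}V_{f_2}f_1(-Jz)$ of Lemma \ref{lm, STFT-twd2}, the STFT formula of Lemma \ref{lm-STFT-tWd}, Gabor discretization, the degeneration at $\tau=0,1$, and the conjugation symmetry $W_1(f_1,f_2)=\overline{W_0(f_2,f_1)}$), but for $\tau\in(0,1)$ it is organized differently. The paper does not discretize the amalgam-space estimate directly: it first proves Proposition \ref{pp-eqWM}, an exact equivalence between the BMW estimate and a BMM estimate with the modified weights $\widetilde m_{J^{-1}}$ and $\calI\widetilde{m_2}$ (obtained by writing $W_\tau(f_1,f_2)$ as a chirp times $V_{\calD_{(1-\tau)/\tau}\calI f_2}f_1$ suitably dilated, and discarding chirps via Lemma \ref{lm-chirp}), and then simply invokes the already-proved Theorem \ref{thm-M1}; this buys a second pass through the Gabor-frame machinery for free. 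Your route — take the window $\widehat{W_\tau(\phi_1,\phi_2)}$, expand $V_\Phi\widehat{W_\tau(f_1,f_2)}$, and discretize on the Fourier side — also works and yields the same conditions, but the step you describe as ``tracking the change of variables against the weight'' is where essentially all of the work sits and deserves care: the $\tau$-dependent linear map $(z,\zeta)\mapsto(u,v)$ with $u=(\zeta_1+\tau z_2,\zeta_2-(1-\tau)z_1)$, $v=(\zeta_1-(1-\tau)z_2,\zeta_2+\tau z_1)$ does not respect the inner/outer block structure of $L^{p,q}_{1\otimes m}$, so one cannot literally rewrite the continuous inequality as a bound for $\|V_{\phi_1}f_1\cdot V_{\phi_2}f_2\|_{L^{p,q}_{1\otimes\widetilde m}}$; the convolution pattern $a_{k_1,k_2}b_{n_1-k_1,n_2-k_2}$ only emerges at the discrete level, after re-indexing the samples of $V_{\phi_2}f_2$ along the lattice rescaled by $\mathrm{diag}(\frac{1-\tau}{\tau},\frac{\tau}{1-\tau})$ (whence $\widetilde{m_2}$) and the outer index by $\mathrm{diag}(\frac1{1-\tau},\frac1\tau)$ (whence $\widetilde m$). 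If you carry out that bookkeeping, together with the standard lattice-refinement argument of Proposition \ref{pp-M1} for the sufficiency direction, your proof is complete; your treatment of $\tau=0,1$ by direct computation of the decoupled pattern $a_{n_1,k_1}b_{k_2,n_2}$ coincides with the paper's. One small quibble: the $\tau$-dependence is not really caused by the chirp (which is harmless by Lemma \ref{lm-chirp}) but by the interaction of the $\tau$-dependent dilations with the weight and with which variable is measured in $\ell^p$ versus $\ell^q_m$; this does not affect your argument.
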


As in the case of BMM, if $m$ is submultiplicative, $m_1$ and $m_2$ are variable-separable, we obtain a further characterization of BMW.
See the definition of $l^{(p,q)}_m$ in Definition \ref{df-dmII}.

\begin{theorem}(Second characterization of $BMW$)\label{thm-W2}
Assume $p_i, q_i, p, q \in (0,\fy]$, $i=1,2$, $\tau\in [0,1]$.
Suppose that $m\in \mathscr{P}(\rdd)$ is submultiplicative, $\om_i, \mu_i \in \mathscr{P}(\rd)$, $i=1,2$.
Denote $\widetilde{m}(\z_1,\z_2)=m((1-\tau)\z_1, \tau \z_2)$,
$\widetilde{\om_2}(z_1)=\om_2(\frac{1-\tau}{\tau}z_1)$, $\widetilde{\mu_2}(z_2)=\mu_2(\frac{\tau}{1-\tau}z_2)$.
We have
\be
W_{\tau}: \calM^{p_1,q_1}_{\om_1\otimes \mu_1}(\rd)\times \calM^{p_2,q_2}_{\om_2\otimes \mu_2}(\rd)
\longrightarrow W(\scrF L^p, L^q_m)(\rdd)
\ee
if and only if
\begin{eqnarray}
&
l_{\om_1^p}^{p_1/p}(\zd)\ast l_{\widetilde{\om_2}^p}^{p_2/p}(\zd)\subset  l_{\widetilde{m}_{\al}^p}^{q/p}(\zd),\ \
l_{\mu_1^p}^{q_1/p}(\zd)\ast l_{\widetilde{\mu_2}^p}^{q_2/p}(\zd)\subset  l_{\widetilde{m}_{\b}^p}^{q/p}(\zd),\ \ &p<\fy,
\\
&
l_{\om_1}^{p_1}(\zd),\ l_{\widetilde{\om_2}}^{p_2}(\zd)\subset  l_{\widetilde{m}_{\al}}^{q}(\zd),\ \ \
l_{\mu_1}^{q_1}(\zd),\ l_{\widetilde{\mu_2}}^{q_2}(\zd)\subset  l_{\widetilde{m}_{\b}}^{q}(\zd),\ \ \ &p\geq q,
\end{eqnarray}
for $\tau\in (0,1)$, and
\begin{eqnarray}
& l^{p_1,q_1}_{\om_1\otimes\mu_1}(\zdd)\subset l^{(q,p)}_{m_{\al}\otimes 1}(\zdd),
\ l^{p_2}_{\om_2}(\zd)\subset l^p(\zd),\ l^{q_2}_{\mu_2}(\zd)\subset l^q_{m_{\b}}(\zd),\ \
&\tau=0,
\\
& l^{p_2,q_2}_{\om_2\otimes \mu_2}(\zdd)\subset l^{(q,p)}_{m_{\al}\otimes 1}(\zdd),
\ l^{p_1}_{\om_1}(\zd)\subset l^p(\zd),\ l^{q_1}_{\mu_1}(\zd)\subset l^q_{m_{\b}}(\zd),\ \
&\tau=1.
\end{eqnarray}
Here, we write $\widetilde{m}_{\al}(z_1)=\widetilde{m}(z_1,0)$ and $\widetilde{m}_{\b}(z_2)=\widetilde{m}(0,z_2)$ for $z_1,z_2\in \rd$.
\end{theorem}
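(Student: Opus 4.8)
The plan is to \emph{derive Theorem \ref{thm-W2} from Theorem \ref{thm-W1}}, translating the discrete bilinear inequalities of the latter into the stated convolution and embedding conditions, while exploiting the tensor structure $m_i=\om_i\otimes\mu_i$ and the submultiplicativity of $m$. For $\tau\in(0,1)$ the argument runs in close parallel to the passage from Theorem \ref{thm-M1} to Theorem \ref{thm-M2}; the endpoints $\tau=0,1$ require a separate and more delicate analysis, since there the governing discrete inequality is a reindexing rather than a convolution. Throughout I would first record the factorization $\widetilde{m}(\z_1,\z_2)\approx \widetilde{m}_{\al}(\z_1)\,\widetilde{m}_{\b}(\z_2)$ (and likewise $m\approx m_{\al}\otimes m_{\b}$), valid because $m$, hence $\widetilde m$, is submultiplicative and lies in $\scrP$, so each is comparable to the tensor product of its marginals.

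For $\tau\in(0,1)$ and $p<\fy$, I would start from the equivalent form of \eqref{thm-W1-cd1} recorded in Theorem \ref{thm-W1},
\[
l^{p_1/p,q_1/p}_{m_1^p}(\zdd)\ast l^{p_2/p,q_2/p}_{\widetilde{m_2}^p}(\zdd)\subset l^{q/p,q/p}_{\widetilde{m}^p}(\zdd).
\]
Since $m_1^p=\om_1^p\otimes\mu_1^p$, $\widetilde{m_2}^p=\widetilde{\om_2}^p\otimes\widetilde{\mu_2}^p$, and the target weight tensorizes as well, the two‑dimensional mixed‑norm convolution inclusion splits — by testing on product sequences and using that both convolution and the mixed norms factor over the two coordinate blocks — into the two one‑dimensional inclusions $l^{p_1/p}_{\om_1^p}\ast l^{p_2/p}_{\widetilde{\om_2}^p}\subset l^{q/p}_{\widetilde{m}_{\al}^p}$ and $l^{q_1/p}_{\mu_1^p}\ast l^{q_2/p}_{\widetilde{\mu_2}^p}\subset l^{q/p}_{\widetilde{m}_{\b}^p}$, which is exactly the $p<\fy$ assertion. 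For the regime $p\ge q$ I would instead observe that the target quasi‑exponent $q/p\le 1$ makes $l^{q/p}_{\widetilde{m}_{\al}^p}$ a convolution algebra (its weight being submultiplicative), so each one‑dimensional convolution inclusion is equivalent, by testing against a unit mass, to the two embeddings of its factors into the target; un‑powering then yields $l^{p_1}_{\om_1},\,l^{p_2}_{\widetilde{\om_2}}\subset l^q_{\widetilde{m}_{\al}}$ and $l^{q_1}_{\mu_1},\,l^{q_2}_{\widetilde{\mu_2}}\subset l^q_{\widetilde{m}_{\b}}$. Both reductions may be quoted from the discrete convolution/embedding lemmas established earlier.

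For $\tau=0$ the controlling inequality from Theorem \ref{thm-W1} is the reindexing estimate $\|(a_{n_1,k_1}b_{k_2,n_2})\|_{l^{p,q}_{1\otimes m}}\lesssim\|\vec a\|_{l^{p_1,q_1}_{\om_1\otimes\mu_1}}\|\vec b\|_{l^{p_2,q_2}_{\om_2\otimes\mu_2}}$, in which the two factors are merely tensored and relabelled, not convolved. Factoring $m\approx m_{\al}\otimes m_{\b}$ and testing on product sequences, the asymmetry of the Rihaczek distribution (where $f_1$ enters through its values and $f_2$ through its Fourier transform) forces the two windows into different roles. Tracking which output index lies in the inner versus the outer layer of the \emph{swapped} mixed norm of the amalgam target, one finds that the indices $(n_1,k_1)$ carried by $\vec a$ sit in different layers and with different weights, producing the single two‑dimensional embedding $l^{p_1,q_1}_{\om_1\otimes\mu_1}\subset l^{(q,p)}_{m_{\al}\otimes 1}$ (with $l^{(q,p)}$ as in Definition \ref{df-dmII}), whereas the indices of $\vec b$ separate into the marginal embeddings $l^{p_2}_{\om_2}\subset l^p$ and $l^{q_2}_{\mu_2}\subset l^q_{m_{\b}}$. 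The case $\tau=1$ is entirely analogous: Theorem \ref{thm-W1} furnishes the same discrete inequality with the roles of $\vec a$ and $\vec b$ interchanged (reflecting $W_1(f_1,f_2)=\overline{W_0(f_2,f_1)}$), so the identical factorization applies with $f_1$ and $f_2$ swapped.

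The main obstacle I expect is precisely this endpoint analysis at $\tau=0,1$. Unlike the convolution case, the relabelling $a_{n_1,k_1}b_{k_2,n_2}$ couples one index of each factor to the inner layer and one to the outer layer of a mixed norm whose order is reversed relative to the domain, because the amalgam target is the Fourier image of a modulation space. Verifying that this coupling collapses to one genuinely two‑dimensional embedding for $\vec a$ together with two one‑dimensional marginal embeddings for $\vec b$, and checking that necessity is obtained through an appropriate choice of extremal test sequences rather than by duality so that the argument remains valid in the quasi‑Banach range $p,q<1$, is the step that demands the most care. By contrast, the $\tau\in(0,1)$ case is essentially a tensorization of facts already in hand for BMM.
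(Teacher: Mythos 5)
Your overall architecture coincides with the paper's: reduce to the discrete inequalities of Theorem \ref{thm-W1}, then separate them — for $\tau\in(0,1)$ exactly as in the passage from Theorem \ref{thm-M1} to Theorem \ref{thm-M2} (Proposition \ref{pp-sepc}), and for $\tau=0,1$ via the separation of the reindexing inequality (Proposition \ref{pp-sepe}). Your identification of the endpoint structure — the two-dimensional embedding $l^{p_1,q_1}_{\om_1\otimes\mu_1}\subset l^{(q,p)}_{m_{\al}\otimes 1}$ for $\vec a$ and the two marginal embeddings for $\vec b$, with the roles swapped at $\tau=1$ — is exactly what the paper proves. Two points in your sketch, however, are not right as stated.

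First, the opening claim $\widetilde m(\z_1,\z_2)\approx\widetilde m_{\al}(\z_1)\widetilde m_{\b}(\z_2)$ (and $m\approx m_{\al}\otimes m_{\b}$) is false: submultiplicativity only gives the upper bound $m(n_1,n_2)\le m_{\al}(n_1)m_{\b}(n_2)$. For instance $m(z_1,z_2)=\langle z_1-z_2\rangle^{s}$ with $s>0$ is submultiplicative and lies in $\scrP(\rdd)$, yet $m(n,n)=1$ while $m_{\al}(n)m_{\b}(n)=\langle n\rangle^{2s}$. The upper bound is all you need for sufficiency, but your necessity argument "test on product sequences, the target weight tensorizes" would require the (false) lower bound. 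The repair is the one the paper uses: test on sequences supported on the coordinate axes (e.g. $a_{k_1,n_1}=0$ for $n_1\ne 0$, $b_{k_2,n_2}=0$ for $n_2\ne 0$), where the output weight is exactly $m(n_1,0)=m_{\al}(n_1)$, so the one-dimensional convolution inclusion with weight $m_{\al}$ drops out with no tensorization needed. Relatedly, for general (non-product) sequences the two-dimensional mixed-norm convolution does not "factor over the coordinate blocks"; the sufficiency step for $p<q$ genuinely needs Minkowski's inequality to interchange the inner and outer sums before the two one-dimensional inclusions can be applied in succession.

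Second, your treatment of the regime $p\ge q$ via the convolution-algebra property of $l^{q/p}_{\widetilde m_{\al}^{p}}$ is a nice alternative to the paper's direct argument when $q\le p<\fy$ (it cleanly shows the two displayed condition sets are equivalent there), but it presupposes the powered convolution form of \eqref{thm-W1-cd1}, which is only available for $p<\fy$. The case $p=\fy$ — which is contained in $p\ge q$ and is precisely where only the embedding conditions are asserted — is left uncovered. The paper handles it by working directly with the bilinear inequality: necessity again by axis tests, sufficiency by $l^{q}\subset l^{p}$ together with $m(n_1,n_2)\lesssim m(k_1,k_2)m(n_1-k_1,n_2-k_2)$. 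You would need to add such a direct argument for $p=\fy$.
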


For the case of power weight, we have following further characterization.
See the characterizations of exponents in Section 5.

\begin{theorem}(Third characterization of $BMW$)\label{thm-W3}
Assume $p_i, q_i, p, q \in (0,\fy]$, $s_i, t_i\in \rr$, $i=1,2$, $\tau\in [0,1]$. We have
\be
W_{\tau}: \calM^{p_1,q_1}_{v_{s_1,t_1}}(\rd)\times \calM^{p_2,q_2}_{v_{s_2,t_2}}(\rd)
\longrightarrow W(\scrF L^p, L^q_s)(\rdd)
\ee
if and only if
\begin{eqnarray}
&
l_{ps_1}^{p_1/p}(\zd)\ast l_{ps_2}^{p_2/p}(\zd)\subset  l_{ps}^{q/p}(\zd),
\ \ \ l_{pt_1}^{q_1/p}(\zd)\ast l_{pt_2}^{q_2/p}(\zd)\subset  l_{pt}^{q/p}(\zd),\ \ &p<\fy,
\\
&
l_{s_1}^{p_1}(\zd),\ l_{s_2}^{p_2}(\zd)\subset  l_{s}^{q}(\zd),
\ \ \ l_{t_1}^{q_1}(\zd),\ l_{t_2}^{q_2}(\zd)\subset  l_{t}^{q}(\zd),\ \ &p\geq q,
\end{eqnarray}
for $\tau\in (0,1)$, and
\begin{eqnarray}
&
l^{q_1}_{t_1}(\zd), l^{p_2}_{s_2}(\zd)\subset l^p(\zd),\ \ \
l^{p_1}_{s_1}(\zd), l^{q_1}_{s_1+t_1}(\zd), l^{q_2}_{t_2}(\zd)\subset l^q_s(\zd),\ \ \
&\tau=0,
\\
&
l^{q_2}_{t_2}(\zd),l^{p_1}_{s_1}(\zd)\subset l^p(\zd),\ \ \
l^{p_2}_{s_2}(\zd), l^{q_2}_{s_2+t_2}(\zd), l^{q_1}_{t_1}(\zd)\subset l^q_s(\zd),\ \ \
&\tau=1.
\end{eqnarray}
\end{theorem}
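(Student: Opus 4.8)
\emph{The plan} is to obtain Theorem \ref{thm-W3} as the specialization of the second characterization (Theorem \ref{thm-W2}) to separable power weights. We write $v_{s_i,t_i}=v_{s_i}\otimes v_{t_i}$ on $\rd\times\rd$, so that the input weights are variable-separable with $\om_i=v_{s_i}$ and $\mu_i=v_{t_i}$, and we take the target weight $m=v_s$ on $\rdd$. By Peetre's inequality $\langle x+y\rangle\ls\langle x\rangle\langle y\rangle$, the weight $v_s$ is submultiplicative for $s\geq0$, so Theorem \ref{thm-W2} applies verbatim in that range; for $s<0$ we instead start from the first characterization (Theorem \ref{thm-W1}, which only requires $m\in\scrP(\rdd)$, and $v_s\in\scrP(\rdd)$ for every $s$) and carry out the separation of variables by hand, exploiting that power weights satisfy Peetre's inequality in both directions. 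In all cases the proof reduces to rewriting the discrete convolution and embedding conditions of Theorem \ref{thm-W2} for these particular weights.

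\emph{The interior range $\tau\in(0,1)$} is immediate. Once $\tau$ is fixed, the dilation factors $1-\tau$, $\tau$, $\tfrac{1-\tau}{\tau}$, $\tfrac{\tau}{1-\tau}$ are strictly positive constants, so every rescaled power weight is comparable to its unscaled counterpart, with constants depending only on $\tau$: one has $\widetilde{m}\approx v_s$, $\widetilde{\om_2}\approx v_{s_2}$ and $\widetilde{\mu_2}\approx v_{t_2}$. Substituting these equivalences into the two convolution inclusions (for $p<\fy$) and the two pairs of embeddings (for $p\geq q$) of Theorem \ref{thm-W2} produces the stated conditions of Theorem \ref{thm-W3}, and makes transparent why the characterization is $\tau$-independent on the open interval.

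\emph{The endpoints} $\tau=0,1$ carry the real content. By the identity $W_1(f_1,f_2)=\overline{W_0(f_2,f_1)}$ recorded in the introduction, the case $\tau=1$ follows from $\tau=0$ after exchanging the two inputs, i.e.\ the substitution $(p_1,q_1,s_1,t_1)\leftrightarrow(p_2,q_2,s_2,t_2)$; one checks directly that this turns the stated $\tau=0$ conditions into the stated $\tau=1$ conditions. It therefore suffices to treat $\tau=0$. Two of the three conditions of Theorem \ref{thm-W2} translate at once: $l^{p_2}_{\om_2}(\zd)\subset l^p(\zd)$ becomes $l^{p_2}_{s_2}\subset l^p$, and $l^{q_2}_{\mu_2}(\zd)\subset l^q_{m_{\b}}(\zd)$ becomes $l^{q_2}_{t_2}\subset l^q_s$. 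The remaining condition is the mixed-norm tensor embedding
\[
l^{p_1,q_1}_{v_{s_1}\otimes v_{t_1}}(\zdd)\subset l^{(q,p)}_{v_s\otimes 1}(\zdd),
\]
in which the order of the two norms is interchanged (see Definition \ref{df-dmII}). The key lemma I would isolate is that, for power weights, this single embedding is equivalent to the three scalar conditions $l^{q_1}_{t_1}\subset l^p$, $l^{p_1}_{s_1}\subset l^q_s$ and $l^{q_1}_{s_1+t_1}\subset l^q_s$. Gathering these with the two conditions above reproduces the $\tau=0$ line of Theorem \ref{thm-W3}.

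\emph{The main obstacle} is this last equivalence. Necessity is obtained by testing the embedding on well-chosen sequences: a slab concentrated at $n_1=0$ forces $l^{q_1}_{t_1}\subset l^p$, a slab at $n_2=0$ forces $l^{p_1}_{s_1}\subset l^q_s$, and crucially a sequence supported on the diagonal $n_1=n_2$ collapses the inner $p,p_1$ layers and multiplies the two weights, forcing the coupled exponent $l^{q_1}_{s_1+t_1}\subset l^q_s$. The harder direction is sufficiency: one must recover the full swapped mixed-norm bound for an arbitrary array $c_{n_1,n_2}$ from the three scalar inequalities, which I expect to require a decomposition separating the coordinate and diagonal contributions together with H\"older's inequality and the nesting of $\ell^r$-norms. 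The explicit translation of all these discrete inclusions into ranges of the exponents $(p_i,q_i,p,q,s_i,t_i)$ is the business of Section 5 and is not needed here.
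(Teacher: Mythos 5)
Your plan follows the paper's proof almost step for step: the interior range $\tau\in(0,1)$ is read off from Theorem \ref{thm-W2} because the $\tau$-dependent dilations leave power weights unchanged up to constants, $\tau=1$ reduces to $\tau=0$ by swapping the inputs, and for $\tau=0$ everything rests on separating the swapped mixed-norm embedding $l^{p_1,q_1}_{v_{s_1,t_1}}\subset l^{(q,p)}_{v_s\otimes 1}$ into three scalar embeddings. The key lemma you isolate is exactly the paper's Proposition \ref{pp-sep2-eb} (stated there after the normalization $l^{p_1,q_1}_{v_{s_1,t_1}}\subset l^{(q,p)}_{v_s\otimes 1}\Longleftrightarrow l^{p_1,q_1}\subset l^{(q,p)}_{v_{s-s_1,-t_1}}$, which makes the source unweighted), and your three necessity tests, including the diagonal one, are precisely its Tests 1--3. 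The one genuinely incomplete step is the sufficiency direction of that lemma, and there your guess at the mechanism differs from what actually works: the paper does not split the array into coordinate and diagonal pieces and does not use H\"older. Writing the target weight as $v_{\sigma,\theta}$ with $\sigma=s-s_1$ and $\theta=-t_1$, the first two scalar embeddings force $\sigma,\theta\le 0$ via Lemma \ref{lm-exp-eb}, and the argument proceeds by cases: if $\sigma=0$ or $\theta=0$, one either applies Minkowski's inequality to interchange the two sums or uses $l^{q}\subset l^{p}$ (resp.\ $l^{p_1}\subset l^{q_1}$) to collapse the mixed norm onto a single $l^{q}$ (resp.\ $l^{q_1}$) scale, at which point the diagonal embedding $l^{q_1}\subset l^{q}_{v_{\sigma+\theta}}$ is exactly what gets consumed; if $\sigma,\theta<0$, one inserts intermediate exponents $\rho>r$, and the room to choose them comes from the strict inequality $\frac{1}{q}+\frac{\sigma}{d}<\frac{1}{q_1}-\frac{\theta}{d}$ that the diagonal condition supplies through Lemma \ref{lm-exp-eb}, after which a single Minkowski interchange finishes. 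So the diagonal condition enters sufficiency quantitatively, not through a support decomposition, and I would adopt that case analysis. Finally, your caution that $v_s$ fails to be submultiplicative for $s<0$ is legitimate (the bound $v_s(n_1,n_2)\ls v_s(n_1)v_s(n_2)$ used in the sufficiency part of Proposition \ref{pp-sepc} genuinely fails there), but be aware that the paper itself does not make this distinction and invokes Theorem \ref{thm-W2} for all $s$; if you carry out the hand-made separation from Theorem \ref{thm-W1} for $s<0$, you will need more than the one-sided Peetre inequality, so that step should not be dismissed as routine.
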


\bigskip

As mentioned in the beginning of this paper, the boundedness property of Wigner distribution has closed connections
with some important operators, for which we can deduce fruitful new boundedness results from our main Theorems \ref{thm-M1} to \ref{thm-W3}.
Here, we focus on the boundedness of pseudodifferential operators with symbols in modulation and Wiener amalgam spaces.

Let us mention that the study of pseudodifferential operators has a long history in the field of classical harmonic analysis,
we refer the reader to the pioneering works of
Kohn--Nirenberg \cite{KohnNirenberg1965CPAM} and
H\"{o}rmander \cite{Hoermander2007}. See also the famous H\"{o}rmander class in \cite{Hoermander2007}.
The classical Calderon-Vaillancourt theorem \cite{CalderonVaillancourt1971JMSJ}
gives the $L^2$-boundedness of Kohn-Nirenberg operator with symbols belonging to
the H\"{o}rmander's class $S_{0,0}^0$, in which all the derivatives of symbols are required to be bounded.

In the field of time-frequency analysis, the earliest work of pseudodifferential operators
is due to Sj\"{o}strand \cite{Sjoestrand1994MRL}, where the boundednss on $L^2$ of pseudodifferential operators with
symbols in $M^{\fy,1}$ (Sj\"{o}strand's class) was obtained.
Since $S_{0,0}^0\subsetneq M^{\fy,1}$, Sj\"{o}strand's result essentially extended the Calderon-Vaillancourt theorem.
Then, Gr\"{o}chenig--Heil \cite{GroechenigHeil1999IEOT} and Gr\"{o}chenig \cite{Groechenig2006RMI} extended Sj\"{o}strand's result to the boundedness on all modulation spaces $M^{p,q}$ with
$1\leq p,q\leq \fy$.

In this paper, we consider the problems for
\emph{the boundedness on modulation spaces of pseudodifferential operators with symbols in modulation spaces} (BPM)
and
\emph{the boundedness on modulation spaces of pseudodifferential operators with symbols in Wiener amalgam spaces} (BPW).
By an equivalent characterization between BMM (or BMW) and BPM (or BPW), we give several characterizations for BPM and BPW.
See Section 6 for more details.

We also point out that our methods and theorems for BPM and BPW can be extended to the bilinear and even multilinear cases.
See \cite{BenyiOkoudjou2004JFAA, BenyiGroechenigHeilOkoudjou2005JOT} for the boundedness on
modulation spaces of multilinear pseudodifferential operators with symbols in modulation spaces,
and see a recent contribution in \cite{MolahajlooOkoudjouPfander2016JFAA} for symbols in some modified modulation spaces.
We may revisit this topic of multilinear cases in the future.

The rest of this paper is organized as follows. In Section 2, we recall some definitions
of function spaces we shall use. We also list some basic time-frequency representations
associated with Wigner distribution, and recall the Gabor expansion of modulation spaces, which
are the key tools for our first characterizations in Theorems \ref{thm-M1} and \ref{thm-W1}.

Section 3 is devoted to the first characterizations of BMM and BMW.
First, Theorem \ref{thm-M1} is proved by the help of the time-frequency tools mentioned in Section 2.
Then, we establish the relations between BMM and BMW in Proposition 3.3.
Combining this with Theorem \ref{thm-M1}, we give the proof for the non-endpoint case of Theorem \ref{thm-W1}.
Like Theorem \ref{thm-M1}, the endpoint case of \ref{thm-W1} will be proved directly by the time-frequency tools.

In Section 4, under some reasonable assumptions of weights, we give further characterizations of BMM and BMW.
The separation of convolution inequality, i.e. Proposition \ref{pp-sepc},
 yields the proof for Theorem \ref{thm-M2} and the non-endpoint case of Theorem \ref{thm-W2}.
The separation of mixed-norm embedding inequality, i.e. Proposition \ref{pp-sepe},
 yields the proof for the endpoint case of Theorem \ref{thm-W2}.

The power weight case will be handled in Section 5.
The proof for Theorem \ref{thm-M3} and the non-endpoint case of Theorem \ref{thm-W3} follows directly by Theorems \ref{thm-M2} and
\ref{thm-W2}. The proof of endpoint case of Theorem \ref{thm-W3} follows
by the further separation of mixed-norm embedding, namely, Proposition \ref{pp-sep2-eb}.
We also list Lemmas \ref{lm-exp-wcov}, \ref{lm-exp-cov} and \ref{lm-exp-eb} for
further exponent characterizations. Then, several characterizations of exponents
are established for BMM and BMW, see Theorems \ref{thm-M4}, \ref{thm-W4}, \ref{thm-M5}, \ref{thm-W5}, \ref{thm-M6} and \ref{thm-W6}
for the sharp exponents of BMM and non-endpoint cases of BMW.
See Theorem \ref{thm-W7} for the sharp exponents of endpoint cases of BMW.

In Section 6, by establishing some equivalent relations in Propositions \ref{pp-eqOP-BMM} and \ref{pp-eqOP-BMW}, we give several
useful characterizations of BPM and BPW. In particular, the sharp exponents of unweighted version of BPM and BPW will be
given in Theorems \ref{thm-OPM} and \ref{thm-OPWE}.
The sharp exponents for BPM with Sj\"{o}strand's class and for BPW with symbols in $W(\scrF L^1, L^{\fy})(\rdd)$ can be
founded in Theorems \ref{thm-OPM-SJ}, \ref{thm-OPW-SJ} and \ref{thm-OPWE-SJ}, and Remarks \ref{rk_cpSJ} and \ref{rk_cpSJE}
will be prepared for the comparisons between them.
At the end of this section, we give the sharp exponents for the boundedness on Sobolev spaces $H^s$ of pseudodifferential operators with
symbols in Wiener amalgam spaces, showing that the boundedness on Sobolev spaces can not happen with $W(\scrF L^{\fy}, L^1)(\rdd)$ symbols.

\textbf{Notations:}
Throughout this paper, we will adopt the following notations. Let $C$ be a
positive constant that may depend on $d, p, q, p_{i},\,q_{i},\,s_{i},\,t_{i}, m, m_i,
\omega _{i}, \mu_{i},\,(i=1,\,2)$. The notation $X\lesssim Y$ denotes the
statement that $X\leq CY$, and The notation $X\sim Y$ means the statement $%
X\lesssim Y\lesssim X$.
The Schwartz function space is denoted by $\calS(\rd)$, and the space of tempered distributions by $\calS'(\rd)$.
We use the brackets $\langle f,g\rangle$ to denote
the extension to $\calS'(\rd)\times \calS(\rd)$ of the inner product $\langle f,g\rangle=\int_{\rd} f(x)\overline{g(x)}dx$ for $f,g\in L^2(\rd)$.
We set $\calI f(x)=f(-x)$ and $\calD_{\lambda}f(x)=f(\lambda x)$ for $\la\in \rr$, $x\in \rd$.

\section{PRELIMINARIES}

\subsection{Time-frequency representations}
The translation operator $T_x$ and modulation operator $M_{\xi}$ are defined as
\be
T_xf(t)=f(t-x),\ \ \ \ M_{\xi}f(t)=e^{2\pi it\xi}f(t).
\ee
We recall that, as a bilinear map on $L^2(\rd)\times L^2(\rd)$,
the STFT $V_gf$ can be extended to be a map from $\calS'(\rd)\times \calS(\rd)$ into $\calS'(\rdd)$ by
\be
V_gf(x,\xi)=\langle f, M_{\xi}T_xg\rangle.
\ee
In fact, for $f\in \calS'(\rd)$ and $g\in \calS(\rd)$, $V_gf$ is a continuous function on $\rdd$ with polynomial growth,
see \cite[Theorem 11.2.3]{GrochenigBook2013}.
The so-called fundamental indentity of time-frequency analysis is as follows:
\be
V_gf(x,\xi)=e^{-2\pi ix\cdot \xi}V_{\hat{g}}\hat{f}(\xi,-x),\ \ \ (x,\xi)\in \rdd.
\ee
Next, we calculate the linear transform of STFT.

\begin{lemma}[Linear transform of STFT]\label{lm-bpSTFT}
Assume $f,g\in L^2(\rd)$. Let $L$ be a invertible linear transform on $\rd$.
For a function $f$, denote $f_L(x):=f(Lx)$.
We have
\be
V_{\phi_L}f_L(x,\xi)=|\det(L)|^{-1}V_{\phi}f(Lx,(L^{-1})^T\xi).
\ee
\end{lemma}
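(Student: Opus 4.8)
The plan is to prove the identity by a direct change of variables in the integral defining the STFT; no deep machinery is needed, since the whole content is linear-algebraic bookkeeping. First I would unfold the definition and substitute the dilated window and input, using $f_L(t)=f(Lt)$ and $\phi_L(t-x)=\phi(L(t-x))=\phi(Lt-Lx)$:
\be
V_{\phi_L}f_L(x,\xi)=\int_{\rd}f_L(t)\overline{\phi_L(t-x)}e^{-2\pi it\cdot \xi}dt
=\int_{\rd}f(Lt)\overline{\phi(Lt-Lx)}e^{-2\pi it\cdot\xi}dt.
\ee

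Next I would perform the substitution $u=Lt$, so that $t=L^{-1}u$ and $dt=|\det(L)|^{-1}du$. This replaces the spatial arguments of the integrand by $f(u)\overline{\phi(u-Lx)}$, pulls out the Jacobian factor $|\det(L)|^{-1}$, and turns the phase into $e^{-2\pi i(L^{-1}u)\cdot\xi}$.

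The only point requiring care — and the sole place where one could slip — is the rewriting of the phase, where the transform must land on the frequency variable as an inverse transpose. I would use the adjoint relation $(L^{-1}u)\cdot\xi=u\cdot((L^{-1})^T\xi)$, converting the exponent into $e^{-2\pi iu\cdot((L^{-1})^T\xi)}$. With this, the integral becomes precisely $\int_{\rd}f(u)\overline{\phi(u-Lx)}e^{-2\pi iu\cdot((L^{-1})^T\xi)}du$, which I recognize as the definition of $V_{\phi}f$ evaluated at the point $(Lx,(L^{-1})^T\xi)$. Collecting the Jacobian factor then yields the claimed formula. Since $f,\phi\in L^2(\rd)$ and $L$ is invertible, all integrals are absolutely convergent and the change of variables is fully justified, so there is no analytic subtlety beyond this bookkeeping.
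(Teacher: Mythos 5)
Your proof is correct and follows essentially the same route as the paper: both unfold the definition, perform the substitution $u=Lt$ with Jacobian $|\det(L)|^{-1}$, and use the adjoint identity $(L^{-1}u)\cdot\xi=u\cdot((L^{-1})^T\xi)$ to land the transform on the frequency variable. The only cosmetic difference is that the paper rewrites the phase via the adjoint identity before the change of variables rather than after.
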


\begin{proof}
By a direct calculation, we have
  \be
  \begin{split}
  V_{\phi_L}f_L(x,\xi)
  = &
  \int_{\rd}f(Lt)\overline{\phi(Lt-Lx)}e^{-2\pi iLt\cdot (L^{-1})^T\xi}dt
  \\
  = &
  |\det(L)|^{-1}\int_{\rd}f(t)\overline{\phi(t-Lx)}e^{-2\pi it\cdot (L^{-1})^T\xi}dy
  =
  |\det(L)|^{-1}V_{\phi}f(Lx,(L^{-1})^T\xi).
  \end{split}
  \ee
\end{proof}

In the next lemma, we calculate the STFTs of of $\tau$-Wigner distributions, which are the key tools for the estimates
of $\tau$-Wigner distributions on modulation spaces.
We refer the readers to \cite{CorderoDEliaTrapasso2019JMAA} for the process of calculations.
\begin{lemma}[STFT of $\tau$-Wigner distribution]\label{lm-STFT-tWd}
Consider $\tau\in [0,1]$.
Let $\Phi_{\tau}=W_{\tau}(\phi_1,\phi_2)$ for nonzero functions $\phi_1, \phi_2\in \calS(\rd)$. Then the STFT of $W_{\tau}(f_1,f_2)$ with respect to
the window $\Phi_{\tau}$ is given by
\be
V_{\P_{\tau}}(W_{\tau}(f_1,f_2))(z,\z)
=e^{-2\pi iz_2\z_2}V_{\phi_1}f_1(z_1-\tau \z_2,z_2+(1-\tau)\z_1)\overline{V_{\phi_2}f_2(z_1+(1-\tau)\z_2,z_2-\tau \z_1)}.
\ee
In particular, for $\tau=0$,
\be
V_{\P_{0}}(W_{0}(f_1,f_2))(z,\z)
=e^{-2\pi iz_2\z_2}V_{\phi_1}f_1(z_1,z_2+\z_1)\overline{V_{\phi_2}f_2(z_1+\z_2,z_2)}.
\ee
For $\tau=1$, we have
\be
V_{\P_{1}}(W_{1}(f_1,f_2))(z,\z)
=e^{-2\pi iz_2\z_2}V_{\phi_1}f_1(z_1-\z_2,z_2)\overline{V_{\phi_2}f_2(z_1,z_2-\z_1)}.
\ee
For $\tau=\frac{1}{2}$, we have
\be
\begin{split}
V_{\P}(W(f_1,f_2))(z,\z)
= &
e^{-2\pi iz_2\z_2}V_{\phi_1}f_1(z_1-\frac{\z_2}{2},z_2+\frac{\z_1}{2})\overline{V_{\phi_2}f_2(z_1+\frac{\z_2}{2},z_2-\frac{\z_1}{2})},
\\
= &
e^{-2\pi iz_2\z_2}V_{\phi_1}f_1(z-\frac{1}{2}J\z)\overline{V_{\phi_2}f_2(z+\frac{1}{2}J\z)},
\end{split}
\ee
where $J$ is the canonical symplectic matrix in $\rr^{2d}$ defined by
\be
J=
\begin{pmatrix}

    0_{d\times d} & I_{d\times d} \\

    -I_{d\times d} & 0_{d\times d}

\end{pmatrix}.\qquad
\ee
\end{lemma}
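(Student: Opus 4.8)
The plan is to prove the identity by a direct computation, reducing the STFT of the $\tau$-Wigner distribution to a product of two ordinary STFTs through one frequency integration followed by one volume-preserving change of variables. First I would unfold all the definitions: writing the phase-space variable as $w=(x,\xi)$ and using $V_g F(z,\z)=\int_{\rdd}F(w)\overline{g(w-z)}e^{-2\pi i w\cdot\z}\,dw$ together with the definition of $W_\tau$, the quantity $V_{\P_\tau}(W_\tau(f_1,f_2))(z,\z)$ becomes a fourfold integral over $x,\xi\in\rd$ (the Wigner variables) and two auxiliary variables $t,s\in\rd$ arising respectively from $W_\tau(f_1,f_2)(x,\xi)$ and from $\overline{\P_\tau(x-z_1,\xi-z_2)}=\overline{W_\tau(\phi_1,\phi_2)(x-z_1,\xi-z_2)}$. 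I would carry out all manipulations first for $f_i,\phi_i\in\calS(\rd)$, so that Fubini applies throughout, and extend to general $f_i\in L^2(\rd)$ at the end by density and the continuity of both sides. The next step is the $\xi$-integration: the only $\xi$-dependent factors are the exponentials $e^{-2\pi i\xi\cdot t}$, $e^{2\pi i\xi\cdot s}$ and $e^{-2\pi i\xi\cdot\z_2}$, whose product is $e^{-2\pi i\xi\cdot(t-s+\z_2)}$, so integrating in $\xi$ produces the Dirac mass $\delta(t-s+\z_2)$ and removes the $t$-integral by forcing $t=s-\z_2$.

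The crucial step is then the change of variables $(x,s)\mapsto(r,r')$ in the remaining double integral, defined by $r=x+\tau(s-\z_2)$ and $r'=x-(1-\tau)(s-\z_2)$, which are exactly the arguments of $f_1$ and $f_2$ after setting $t=s-\z_2$. One checks that $r-r'=s-\z_2$ and $x=(1-\tau)r+\tau r'$, and that the Jacobian matrix $\bigl(\begin{smallmatrix}1&\tau\\1&-(1-\tau)\end{smallmatrix}\bigr)\otimes I_d$ has determinant of absolute value $1$, so the substitution is measure-preserving. After it the integrand separates: the factor $f_1(r)\overline{\phi_1(r-(z_1-\tau\z_2))}$ depends only on $r$, the factor $\overline{f_2(r')}\phi_2(r'-(z_1+(1-\tau)\z_2))$ depends only on $r'$, and the accumulated phases collect as $e^{-2\pi i z_2\cdot\z_2}\,e^{-2\pi i r\cdot(z_2+(1-\tau)\z_1)}\,e^{2\pi i r'\cdot(z_2-\tau\z_1)}$. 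Recognizing the $r$- and $r'$-integrals as $V_{\phi_1}f_1(z_1-\tau\z_2,\,z_2+(1-\tau)\z_1)$ and $\overline{V_{\phi_2}f_2(z_1+(1-\tau)\z_2,\,z_2-\tau\z_1)}$ respectively, and pulling out the remaining phase $e^{-2\pi i z_2\cdot\z_2}$, gives the asserted formula. The three displayed special cases follow by setting $\tau=0,1,\tfrac12$; for $\tau=\tfrac12$ one only rewrites the shifts via $J\z=(\z_2,-\z_1)$, so that $(z_1\mp\tfrac{\z_2}{2},z_2\pm\tfrac{\z_1}{2})=z\mp\tfrac12 J\z$.

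The body of the argument is pure bookkeeping, so the one genuinely delicate point is the legitimacy of the $\xi$-integration that produces the delta distribution together with the interchanges of order of integration. I expect this to be the main technical obstacle, and I would dispose of it by running the entire computation first on $f_i,\phi_i\in\calS(\rd)$, where the triple product is absolutely integrable and the $\xi$-integral is just Fourier inversion on $\rd$ evaluated at $t=s-\z_2$, and then invoking the density of $\calS(\rd)$ in $L^2(\rd)$ and the $L^2$-boundedness of $V_g$ and of $W_\tau$ to pass to arbitrary $f_1,f_2\in L^2(\rd)$. Lemma~\ref{lm-bpSTFT} could alternatively be used to package the linear change of variables, but for this lemma the explicit substitution above is the most transparent route.
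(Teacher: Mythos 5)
Your computation is correct and is the standard derivation of this identity; the four-fold integral, the $\xi$-integration forcing $t=s-\z_2$, the unimodular change of variables $(x,s)\mapsto(r,r')$ with $x=(1-\tau)r+\tau r'$, and the resulting phase $e^{-2\pi i z_2\cdot\z_2}$ all check out, and your plan to justify the formal $\delta$-step by working first with Schwartz functions (where the $\xi$-integral is Parseval in the second variable) and then passing to $L^2$ by density is adequate. The paper itself gives no proof of this lemma but defers to Cordero--D'Elia--Trapasso \cite{CorderoDEliaTrapasso2019JMAA}, where essentially this same direct calculation is carried out, so your argument fills in exactly the omitted computation.
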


\begin{lemma}[Connection between STFT and $\tau$-Wigner distribution I]\label{lm, STFT-twd1}
For $\tau\in (0,1)$, $f_1,f_2\in L^2(\rd)$, we have
\be
W_{\tau}(f_1,f_2)(x,\xi)
=\tau^{-d}e^{2\pi i\tau^{-1}x\cdot \xi}V_{\calD_{\frac{1-\tau}{\tau}}\calI f_2}f_1(\frac{1}{1-\tau}x,\frac{1}{\tau}\xi).
\ee
\end{lemma}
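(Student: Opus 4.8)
The plan is to prove this identity by a single, carefully chosen change of variables in the defining integral of $W_\tau$, the point being that the lag variable $t$ appearing in $W_{\tau}(f_1,f_2)(x,\xi)=\int_{\rd}f_1(x+\tau t)\overline{f_2(x-(1-\tau)t)}e^{-2\pi i\xi\cdot t}\,dt$ should be reparametrized so that the argument of $f_1$ becomes the free integration variable of an STFT. Concretely, I would substitute $s=x+\tau t$, so that $t=(s-x)/\tau$ and $dt=\tau^{-d}\,ds$; this is where the prefactor $\tau^{-d}$ and the dilation $1/\tau$ in the frequency slot originate, and it is precisely this step that uses $\tau\in(0,1)$, so that the dilations $\frac{1-\tau}{\tau}$, $\frac{1}{\tau}$, $\frac{1}{1-\tau}$ are all nondegenerate.

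Under this substitution I would track the three ingredients of the integrand separately. The first factor becomes simply $f_1(s)$. For the second factor, the argument $x-(1-\tau)t$ becomes $x-(1-\tau)(s-x)/\tau=\tau^{-1}x-\tfrac{1-\tau}{\tau}s$; I would then observe that $(\calD_{\frac{1-\tau}{\tau}}\calI f_2)(u)=f_2(-\tfrac{1-\tau}{\tau}u)$, so that evaluating at $u=s-\tfrac{1}{1-\tau}x$ gives exactly $f_2(\tau^{-1}x-\tfrac{1-\tau}{\tau}s)$, matching the second factor after conjugation. For the phase, $e^{-2\pi i\xi\cdot t}=e^{-2\pi i\tau^{-1}\xi\cdot(s-x)}=e^{2\pi i\tau^{-1}x\cdot\xi}\,e^{-2\pi i s\cdot(\tau^{-1}\xi)}$, so that the pure phase $e^{2\pi i\tau^{-1}x\cdot\xi}$ pulls out of the integral.

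Collecting these pieces, the surviving integral is $\int_{\rd}f_1(s)\,\overline{(\calD_{\frac{1-\tau}{\tau}}\calI f_2)(s-\tfrac{1}{1-\tau}x)}\,e^{-2\pi i s\cdot(\tau^{-1}\xi)}\,ds$, which is by the very definition $V_{\calD_{\frac{1-\tau}{\tau}}\calI f_2}f_1\big(\tfrac{1}{1-\tau}x,\tfrac{1}{\tau}\xi\big)$, yielding the claimed formula together with the prefactor $\tau^{-d}e^{2\pi i\tau^{-1}x\cdot\xi}$. There is no genuine obstacle beyond bookkeeping: the only delicate points are matching the translation offset $\tfrac{1}{1-\tau}x$ against the STFT convention $V_g f(u,v)=\int f(t)\overline{g(t-u)}e^{-2\pi i t\cdot v}\,dt$, and keeping the sign inside $\calI$ consistent so that reflection and dilation fuse into the single window $\calD_{\frac{1-\tau}{\tau}}\calI f_2$. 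Since these manipulations are elementary once the integrand is absolutely integrable, I would first carry them out for Schwartz $f_1,f_2$ and then extend the identity to all of $L^2(\rd)$ by density, the right-hand side being continuous in $(f_1,f_2)$ through the STFT.
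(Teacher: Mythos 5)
Your computation is correct and follows essentially the same route as the paper: a direct change of variables in the defining integral of $W_\tau$, recognizing the second factor as the reflected–dilated window $\calD_{\frac{1-\tau}{\tau}}\calI f_2$ and pulling out the phase $e^{2\pi i\tau^{-1}x\cdot\xi}$. The only cosmetic difference is that you perform the affine substitution $s=x+\tau t$ in one step, whereas the paper splits it into a dilation $t\mapsto t/\tau$ followed by a translation $t\mapsto t-x$.
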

\begin{proof}
  \be
  \begin{split}
    W_{\tau}(f_1,f_2)(x,\xi)
    = &
    \int_{\rd}f_1(x+\tau t)\overline{f_2(x-(1-\tau)t)}e^{-2\pi it\cdot \xi}dt
    \\
    = &
    \int_{\rd} f_1(x+\tau t)\overline{(\calD_{\frac{1-\tau}{\tau}}\calI f_2)(\tau t-\frac{\tau}{1-\tau}x)}e^{-2\pi it\cdot \xi}dt
    \\
    = &
    \tau^{-d}\int_{\rd} f_1(x+t)\overline{(\calD_{\frac{1-\tau}{\tau}}\calI f_2)(t-\frac{\tau}{1-\tau}x)}e^{-2\pi i\tau^{-1}t\cdot \xi}dt
    \\
    = &
    \tau^{-d}e^{2\pi i\tau^{-1}x\cdot \xi}\int_{\rd} f_1(t)\overline{(\calD_{\frac{1-\tau}{\tau}}\calI f_2)(t-\frac{1}{1-\tau}x)}e^{-2\pi i\tau^{-1}t\cdot \xi}dt
    \\
    = &
    \tau^{-d}e^{2\pi i\tau^{-1}x\cdot \xi}V_{\calD_{\frac{1-\tau}{\tau}}\calI f_2}f_1(\frac{1}{1-\tau}x,\frac{1}{\tau}\xi).
  \end{split}
  \ee
\end{proof}

\begin{lemma}[Connection between STFT and $\tau$-Wigner distribution II]\label{lm, STFT-twd2}
  \be
  \scrF W_{\tau}(f_1,f_2)(z)=e^{-2\pi i\tau z_1\cdot z_2}V_{f_2}f_1(-Jz)
  \ee
\end{lemma}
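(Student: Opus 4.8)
The plan is to compute the full Fourier transform on $\rdd$ directly, organizing it as an iterated partial Fourier transform so that no distributional (delta-function) argument is needed. First I would record the factorization $W_{\tau}(f_1,f_2)=\scrF_2 G$, where $G(x,t)=f_1(x+\tau t)\overline{f_2(x-(1-\tau)t)}$ and $\scrF_2$ sends the $t$-variable to the frequency variable; this is just the $\tau$-analogue of the identity $W=\scrF_2\calT_s(f_1\otimes\bar f_2)$ recalled in the introduction.

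Writing the Fourier transform on $\rdd$ as $\scrF=\scrF_1\scrF_2$ (with $\scrF_1,\scrF_2$ the partial transforms in the first and second slots, which commute), and using that applying the partial transform twice in the same slot is reflection, i.e. $\scrF^2 h(s)=h(-s)$ on $L^2(\rd)$, I get
\[
\scrF_2 W_{\tau}(f_1,f_2)(x,z_2)=(\scrF_2)^2 G(x,z_2)=G(x,-z_2)=f_1(x-\tau z_2)\overline{f_2(x+(1-\tau)z_2)}.
\]
Applying $\scrF_1$ in the $x$-variable then gives
\[
\scrF W_{\tau}(f_1,f_2)(z_1,z_2)=\int_{\rd}f_1(x-\tau z_2)\overline{f_2(x+(1-\tau)z_2)}\,e^{-2\pi i x\cdot z_1}\,dx.
\]
The next step is the change of variables $x=s+\tau z_2$, under which $x-\tau z_2=s$, $x+(1-\tau)z_2=s+z_2$, and $x\cdot z_1=s\cdot z_1+\tau z_2\cdot z_1$; factoring out the phase produces
\[
\scrF W_{\tau}(f_1,f_2)(z_1,z_2)=e^{-2\pi i\tau z_1\cdot z_2}\int_{\rd}f_1(s)\overline{f_2(s+z_2)}\,e^{-2\pi i s\cdot z_1}\,ds=e^{-2\pi i\tau z_1\cdot z_2}\,V_{f_2}f_1(-z_2,z_1),
\]
where in the last equality I recognize $s+z_2=s-(-z_2)$, so that the integral is exactly the STFT $V_{f_2}f_1$ at time-shift $-z_2$ and frequency $z_1$. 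Finally, since $Jz=(z_2,-z_1)$ gives $-Jz=(-z_2,z_1)$, the right-hand side equals $e^{-2\pi i\tau z_1\cdot z_2}V_{f_2}f_1(-Jz)$, which is the asserted identity.

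I expect the content of the argument to be completely routine; the only genuine point of care is justification at the stated $L^2$ regularity. For $f_1,f_2\in\calS(\rd)$ one has $G\in\calS(\rdd)$ and $W_{\tau}(f_1,f_2)\in\calS(\rdd)$, so all the integrals converge absolutely and the partial-transform identities are classical; the general case $f_1,f_2\in L^2(\rd)$ then follows by density, using that $V_{f_2}f_1$ is a bounded continuous function and that both sides depend continuously on $(f_1,f_2)$. Beyond this the only thing to watch is the bookkeeping of the phase factor and the symplectic identification $-Jz=(-z_2,z_1)$.
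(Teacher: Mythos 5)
Your proposal is correct and follows essentially the same route as the paper: both compute $\scrF=\scrF_1\scrF_2$ by first undoing the partial transform in the second variable (the paper writes $W_{\tau}=\scrF_2^{-1}(f_1(x-\tau\cdot)\overline{f_2(x+(1-\tau)\cdot)})$ while you use $\scrF_2^2h=h(-\cdot)$, which is the same step), then shift $x$ by $\tau z_2$ to extract the phase and recognize $V_{f_2}f_1(-z_2,z_1)=V_{f_2}f_1(-Jz)$. The extra remarks on justification for Schwartz versus $L^2$ inputs are fine but not needed beyond what the paper does.
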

\begin{proof}
  Write
  \be
  W_{\tau}(f_1,f_2)(x,\xi)=\scrF_2^{-1}(f(x-\tau \cdot)\overline{f_2(x+(1-\tau)\cdot)})(\xi).
  \ee
  Then,
  \be
  \begin{split}
    \scrF W_{\tau}(f_1,f_2)(z)
    = &
    \scrF_1(f_1(\cdot-\tau z_2)\overline{f_2(\cdot+(1-\tau)z_2)})(z_1)
    \\
    = &
    \int_{\rd}f_1(x-\tau z_2)\overline{f_2(x+(1-\tau)z_2)}e^{-2\pi ix\cdot z_1}dx
    \\
    = &
    e^{-2\pi i\tau z_1\cdot z_2}\int_{\rd}f_1(x)\overline{f_2(x+z_2)}e^{-2\pi ixz_1}dx
    \\
    = &
    e^{-2\pi i\tau z_1\cdot z_2}V_{f_2}f_1(-z_2,z_1)=e^{-2\pi i\tau z_1\cdot z_2}V_{f_2}f_1(-Jz).
  \end{split}
  \ee
\end{proof}

\subsection{Function spaces}
As we mentioned above, modulation spaces are defined as a measure of the STFT of $f\in \calS'$.
In order to draw a more accurate portrait of the decay and summability properties of STFT, modulation space
is usually be measured by the weighted norm. Let us recall the definitions of weights we shall use.

A weight function $v$ on $\rd$ is called submultiplicative
if $v(z_1+z_2)\leq v(z_1)v(z_2)$ for all $z_1,z_2\in \rd$, a weight function $m$ on $\rd$ is called $v$-moderate
if
\be
m(z_1+z_2)\leq Cv(z_1)m(z_2),\ \ \ \ z_1,z_2\in \rd.
\ee
In this paper, we consider the $v$-moderate weights where $v$ is submultiplicative with polynomial growth.
We use the notation $\scrP(\rd)$ to denote the cone of all non-negative functions which are
$v$-moderate. Similarly, we can define $\scrP(\rdd)$.

The weighted mixed-norm spaces used to measure the STFT is defined as following.
\begin{definition}[Weighted mixed-norm spaces.]
Let $m\in \scrP(\rdd)$, $p,q\in (0,\fy]$. Then the weighted mixed-norm space $L^{p,q}_m(\rdd)$
consists of all Lebesgue measurable functions on $\rdd$ such that the (quasi-)norm
\be
\|F\|_{L^{p,q}_m(\rdd)}=\left(\int_{\rd}\left(\int_{\rd}|F(x,\xi)|^pm(x,\xi)^pdx\right)^{q/p}d\xi\right)^{1/q}
\ee
is finite, with usual modification when $p=\fy$ or $q=\fy$.
\end{definition}
Now, we recall the definition of modulation space.
\begin{definition}\label{df-M}
Let $0<p,q\leq \infty$, $m\in \mathscr{P}(\rdd)$.
Given a non-zero window function $\phi\in \calS(\rd)$, the (weighted) modulation space $M^{p,q}_m(\rd)$ consists
of all $f\in \calS'(\rd)$ such that the norm
\be
\begin{split}
\|f\|_{M^{p,q}_m(\rd)}&:=\|V_{\phi}f(x,\xi)\|_{L^{p,q}_m(\rdd)}
=\left(\int_{\rd}\left(\int_{\rd}|V_{\phi}f(x,\xi)m(x,\xi)|^{p} dx\right)^{{q}/{p}}d\xi\right)^{{1}/{q}}
\end{split}
\ee
is finite.
We write $M^{p,q}$ for modulation space with $m\equiv 1$.
\end{definition}
Recall that the above definition of $M^{p,q}_m$ is independent of the choice of window function $\phi$.
The readers may see this fact in \cite{GrochenigBook2013} for the case $(p,q)\in\lbrack 1,\infty ]^{2}$,
and in \cite{GalperinSamarah2004ACHA} for full range $(p,q)\in (0,\infty ]^{2}$.
In particular, we point out that in \cite{GalperinSamarah2004ACHA}, the author find a admissible windows, denoted by $\mathfrak{M}^{p,q}_v$,
depending on $p, q$,
for the modulation space $M^{p,q}_m$.

Denote by $\calM^{p,q}_m(\rd)$ the $\calS(\rd)$ closure in $M^{p,q}_m(\rd)$.
Note that $\calM^{p,q}_m(\rd)=M^{p,q}_m(\rd)$ for $p,q\neq \fy$.

Next, we turn to the definition of Wiener amalgam space.
As we mentioned in Section 1, in this paper we consider the Wiener amalgam space of the type of the image of modulation space under Fourier transform.
Write
\be
\begin{split}
\|f\|_{\scrF M^{p,q}_m(\rd)}
= &
\|\scrF^{-1}f\|_{M^{p,q}_m(\rd)}
\\
= &
\|V_{\check{\phi}}\check{f}(x,\xi)\|_{L^{p,q}_m(\rdd)}
=
\|V_{\phi}f(\xi,-x)\|_{L^{p,q}_m(\rdd)}=\|(V_{\phi}f)(J(x,\xi))\|_{L^{p,q}_m(\rdd)}.
\end{split}
\ee
The Wiener amalgam space can be also defined by the weighted mixed-norm of STFT.
\begin{definition}\label{df-W}
Let $0<p,q\leq \infty$, $m\in \mathscr{P}(\rdd)$.
Given a non-zero window function $\phi\in \calS(\rd)$, the (weighted) Wiener amalgam space $\scrF(M^{p,q}_m)$ consists
of all $f\in \calS'(\rd)$ such that the norm
\be
\begin{split}
\|f\|_{\scrF M^{p,q}_m(\rd)}
=
\|V_{\phi}f(\xi,-x)\|_{L^{p,q}_m(\rdd)}
= \left(\int_{\mathbb{R}^n}\left(\int_{\mathbb{R}^n}|V_{\phi}f(\xi,-x)m(x,\xi)|^{p} dx\right)^{{q}/{p}}d\xi\right)^{{1}/{q}}
\end{split}
\ee
is finite.
\end{definition}
In particular, for $f\in \calS'(\rd)$, $\phi\in \calS(\rd)$,
\be
\|f\|_{_{\scrF M^{p,q}_{1\otimes m}(\rdd)}}
=
\left(\int_{\rd}\left(\int_{\rd}|V_{\phi}f(\xi,-x)|^{p} dx\right)^{{q}/{p}}m(\xi)^qd\xi\right)^{{1}/{q}}.
\ee
Using the notation of Wiener amalgam space in \cite{FeichtingerIPCoFSOB1}, we have
\be
\scrF M^{p,q}_{1\otimes m}(\rdd)=W(\scrF L^p,L^q_m)(\rdd).
\ee
This representation makes us more clear that $f$ belongs to $\scrF M^{p,q}_{1\otimes m}(\rdd)$ means it lies locally
in $\scrF L^p(\rdd)$ and globally in $L^q_m(\rdd)$.

Next, we collect following calculations for the linear transform of modulation and Wiener amalgam spaces.
\begin{lemma}\label{lm-ltM}
Let $0<p,q\leq \fy$, $L$ be a invertible linear transform on $\rd$, $m\in\scrP(\rdd)$. For a function $f$ on $\rd$, denote $f_L(x):=f(Lx)$.
We have
\be
\|f_L\|_{M^{p,q}_m(\rd)}\sim \|f\|_{M^{p,q}_{\widetilde{m}}(\rd)},
\ee
where $\widetilde{m}(x,\xi)=m(L^{-1}x,L^T\xi)$, $x, \xi\in \rd$, $i=1,2$.
\end{lemma}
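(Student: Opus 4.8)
The plan is to reduce everything to the transformation law for the STFT established in Lemma~\ref{lm-bpSTFT}, combined with the window-independence of the modulation norm. Concretely, I would compute $\|f_L\|_{M^{p,q}_m}$ using the \emph{transformed} window $\phi_L(x)=\phi(Lx)$, which is again a nonzero Schwartz function since $L$ is invertible. By Lemma~\ref{lm-bpSTFT},
\[
V_{\phi_L}f_L(x,\xi)=|\det(L)|^{-1}V_{\phi}f(Lx,(L^{-1})^T\xi),
\]
so up to the harmless scalar $|\det L|^{-1}$ the STFT of $f_L$ is just the STFT of $f$ precomposed with the linear map $(x,\xi)\mapsto(Lx,(L^{-1})^T\xi)$. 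Since $L$ acts only on the space variable and $(L^{-1})^T$ only on the frequency variable, this substitution is block-diagonal with respect to the mixed-norm structure, which is exactly what makes the computation decouple cleanly.

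Next I would insert this identity into the definition of $L^{p,q}_m$ and change variables one factor at a time. Setting $u=Lx$ in the inner ($dx$) integral produces a Jacobian $|\det L|^{-1}$ and replaces the weight $m(x,\xi)$ by $m(L^{-1}u,\xi)$; then setting $\eta=(L^{-1})^T\xi$ in the outer ($d\xi$) integral produces a Jacobian $|\det L|$ and turns the weight into $m(L^{-1}u,L^T\eta)=\widetilde{m}(u,\eta)$. Collecting the two Jacobian factors (which combine to $|\det L|^{1/q-1/p}$ after taking the appropriate roots) together with the $|\det L|^{-1}$ from the STFT identity yields
\[
\|f_L\|_{M^{p,q}_m}\sim\|V_{\phi_L}f_L\|_{L^{p,q}_m}
=|\det L|^{-1-1/p+1/q}\,\|V_{\phi}f\|_{L^{p,q}_{\widetilde m}}
\sim|\det L|^{-1-1/p+1/q}\,\|f\|_{M^{p,q}_{\widetilde m}}.
\]
The exponent $-1-1/p+1/q$ depends only on $L,p,q$ and is therefore absorbed into the equivalence constant, giving the claim. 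Before this, one should record that $\widetilde m\in\scrP(\rdd)$: if $m$ is $v$-moderate, then $\widetilde m$ is $\widetilde v$-moderate with $\widetilde v(x,\xi)=v(L^{-1}x,L^T\xi)$, still submultiplicative of polynomial growth, so the right-hand norm is well defined.

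The only genuine point requiring care is the use of window-independence to pass between $\|f_L\|_{M^{p,q}_m}\sim\|V_{\phi_L}f_L\|_{L^{p,q}_m}$ on one side and $\|f\|_{M^{p,q}_{\widetilde m}}\sim\|V_{\phi}f\|_{L^{p,q}_{\widetilde m}}$ on the other. For $p,q\ge1$ this is standard; in the quasi-Banach range $0<p,q<1$ one must invoke the admissible-window theory of Galperin--Samarah, checking that $\phi$ and $\phi_L$ may be taken in the relevant admissible classes $\mathfrak{M}^{p,q}_v$ (respectively $\mathfrak{M}^{p,q}_{\widetilde v}$), which are stable under the dilation $\phi\mapsto\phi_L$. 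The mixed-norm change of variables is in principle the expected obstacle, but here it causes no difficulty precisely because the underlying linear map is diagonal in the $(x,\xi)$ splitting, so the inner and outer substitutions never interact and each contributes exactly one power of $|\det L|$.
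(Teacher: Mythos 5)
Your proposal is correct and follows essentially the same route as the paper: apply Lemma \ref{lm-bpSTFT} with the dilated window $\phi_L$ and then change variables in the mixed-norm integral, absorbing the resulting powers of $|\det L|$ into the equivalence constant. The paper's proof is just the terse chain of equivalences; your additional remarks on the Jacobian bookkeeping, on $\widetilde m\in\scrP(\rdd)$, and on admissibility of $\phi_L$ in the quasi-Banach range are accurate elaborations of the same argument.
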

\begin{proof}
  Using Lemma \ref{lm-bpSTFT}, we write
  \be
  \begin{split}
    \|f_L\|_{M^{p,q}_m}
    = &\|V_{\phi_L}f_L(x,\xi)\|_{L^{p,q}_m}
    \\
    \sim &
    \|V_{\phi}f(Lx,(L^{-1})^T\xi)\|_{L^{p,q}_m}
    \sim
    \|V_{\phi}f(x,\xi)\|_{L^{p,q}_{\widetilde{m}}}\sim \|f\|_{M^{p,q}_{\widetilde{m}}}.
  \end{split}
  \ee
\end{proof}

\begin{lemma}\label{lm-ltW}
Let $0<p,q\leq \fy$, $L$ be a invertible linear transform on $\rdd$, $m\in\scrP(\rdd)$. For a function $f$ on $\rdd$, denote $f_L(x):=f(Lx)$.
We have
\be
\|f_L\|_{W(\scrF L^p,L^q_m)(\rdd)}\sim \|f\|_{W(\scrF L^p,L^q_{\widetilde{m}})(\rdd)},
\ee
where $\widetilde{m}(z)=m(L^{-1}z)$.
\end{lemma}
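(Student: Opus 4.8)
The plan is to reduce the statement to the already-established Lemma~\ref{lm-ltM} for modulation spaces, via the Fourier duality $W(\scrF L^p,L^q_m)(\rdd)=\scrF M^{p,q}_{1\otimes m}(\rdd)$. Recalling from Definition~\ref{df-W} that $\|f\|_{W(\scrF L^p,L^q_m)}=\|\scrF^{-1}f\|_{M^{p,q}_{1\otimes m}}$, the whole problem becomes one about how the modulation norm of $\scrF^{-1}f$ changes when $f$ is replaced by the dilate $f_L$. First I would set $g:=\scrF^{-1}f$ and compute $\scrF^{-1}(f_L)$ in terms of $g$.

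The key computation is the interaction of $\scrF^{-1}$ with the linear change of variables. A direct substitution $u=Lz$ gives
\[
\scrF^{-1}(f_L)(w)=\int_{\rdd}f(Lz)e^{2\pi iz\cdot w}\,dz
=|\det L|^{-1}(\scrF^{-1}f)\big((L^{-1})^Tw\big)=|\det L|^{-1}g_M(w),
\]
where $M:=(L^{-1})^T$ acts on $\rdd$ and $g_M(w):=g(Mw)$. Hence $\|f_L\|_{W(\scrF L^p,L^q_m)}=|\det L|^{-1}\|g_M\|_{M^{p,q}_{1\otimes m}}$. Now I would apply Lemma~\ref{lm-ltM}, with its base space $\rd$ replaced by $\rdd$ and its transform taken to be $M$, so that the phase-space variables $(z,\zeta)$ run over $\rddd$ and the weight $1\otimes m\in\scrP(\rddd)$. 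This gives $\|g_M\|_{M^{p,q}_{1\otimes m}}\sim\|g\|_{M^{p,q}_{\widetilde{1\otimes m}}}$, where $\widetilde{1\otimes m}(z,\zeta)=(1\otimes m)(M^{-1}z,M^T\zeta)$.

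It then remains only to identify the transformed weight. Because the time-part of $1\otimes m$ is the constant $1$, the factor $M^{-1}z=L^Tz$ is irrelevant, and since $M^T=L^{-1}$ the frequency part becomes $m(M^T\zeta)=m(L^{-1}\zeta)=\widetilde m(\zeta)$; thus $\widetilde{1\otimes m}=1\otimes\widetilde m$ with precisely $\widetilde m(z)=m(L^{-1}z)$ as in the statement. Here $\widetilde m\in\scrP(\rdd)$ since $m$ is $v$-moderate and $L^{-1}$ is linear, so $\widetilde m$ is $(v\circ L^{-1})$-moderate. Unwinding the Fourier duality once more, $\|g\|_{M^{p,q}_{1\otimes\widetilde m}}=\|\scrF^{-1}f\|_{M^{p,q}_{1\otimes\widetilde m}}=\|f\|_{W(\scrF L^p,L^q_{\widetilde m})}$, and absorbing the fixed constant $|\det L|^{-1}$ into $\sim$ yields the claim.

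The computations here are all routine; the only place demanding care is the bookkeeping of which copy of the linear map acts on the time variable versus the frequency variable after passing through $\scrF^{-1}$ and Lemma~\ref{lm-ltM}. The conceptual point that makes everything go through cleanly is that the tensor structure $1\otimes m$ is preserved under the transform $M$ exactly because the time-weight is trivial, so only the frequency weight $m$ is deformed, and it is deformed by $M^T=L^{-1}$ rather than by $L$ --- matching the inverse that already appeared when $\scrF^{-1}$ converted the dilation $f\mapsto f_L$ into $g\mapsto g_M$.
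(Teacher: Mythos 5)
Your proof is correct. The route is mildly but genuinely different from the paper's: the paper works directly with the Wiener amalgam norm as written in Definition~\ref{df-W}, applying Lemma~\ref{lm-bpSTFT} to get $V_{\phi_L}f_L(\z,-z)\sim V_{\phi}f(L\z,-(L^{-1})^Tz)$ and then changing variables in the two iterated integrals, so that the whole argument is a three-line computation on the STFT side. You instead Fourier-conjugate first, computing $\scrF^{-1}(f_L)=|\det L|^{-1}g_{(L^{-1})^T}$ with $g=\scrF^{-1}f$, and then invoke the already-proved modulation-space Lemma~\ref{lm-ltM} with the transform $M=(L^{-1})^T$ acting on $\rdd$, using that the tensor weight $1\otimes m$ is deformed only in its frequency slot, by $M^T=L^{-1}$. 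The two arguments rest on the same underlying change of variables (Lemma~\ref{lm-ltM} is itself proved from Lemma~\ref{lm-bpSTFT}), so neither is deeper than the other; what your version buys is the reuse of Lemma~\ref{lm-ltM} as a black box, at the cost of the extra step of identifying $\scrF^{-1}(f_L)$, while the paper's version avoids the Fourier conjugation entirely. Your bookkeeping of $M^{-1}=L^T$ versus $M^T=L^{-1}$, and the observation that the trivial time-weight makes the former irrelevant, is exactly right, and the constant $|\det L|^{-1}$ is harmlessly absorbed into $\sim$ even in the quasi-norm range $p,q<1$.
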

\begin{proof}
  By Lemma \ref{lm-bpSTFT} and Definition \ref{df-W}, we have
  \be
  \begin{split}
    \|f_L\|_{W(\scrF L^p,L^q_m)(\rdd)}
    = &
    \|V_{\phi_L}f_L(\z,-z)\|_{L^{p,q}_{1\otimes m}}
    \\
    \sim &
    \|V_{\phi}f(L\z,-(L^{-1})^Tz)\|_{L^{p,q}_{1\otimes m}}
    \\
    \sim &
    \|V_{\phi}f(\z,-z)\|_{L^{p,q}_{1\otimes \widetilde{m}}}\sim \|f\|_{W(\scrF L^p,L^q_{\widetilde{m}})(\rdd)}.
  \end{split}
  \ee
\end{proof}

Next, we recall a multiplication property of Wiener amalgam space.
\begin{lemma}\label{lm-mpW}
  Let $0<p,q\leq \fy$, $\dot{p}=\min\{p,1\}$.
  We have
  \be
  W(\scrF L^p,L^q_m)\cdot W(\scrF L^{\dot{p}},L^{\fy})\subset W(\scrF L^p,L^q_m).
  \ee
\end{lemma}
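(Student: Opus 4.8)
The plan is to convert the pointwise product into a convolution by means of the short-time Fourier transform, and then to read off the result from the local/global structure encoded in the Wiener amalgam norm. Fix nonzero Gaussians $\phi_1,\phi_2\in\calS(\rdd)$, so that $\phi:=\phi_1\phi_2$ is again a nonzero Schwartz window; since the (quasi-)norm is independent of the admissible window on the whole range $(0,\fy]^2$, I may compute $\|fg\|_{W(\scrF L^p,L^q_m)}$ with the window $\phi$, while using $\phi_1$ for $f$ and $\phi_2$ for $g$. Expanding $f\,\overline{T_z\phi_1}$ and $g\,\overline{T_z\phi_2}$ by Fourier inversion and integrating out the resulting frequency variable yields the product-to-convolution identity
\be
V_{\phi_1\phi_2}(fg)(z,\zeta)=\big(V_{\phi_1}f(z,\cdot)\ast V_{\phi_2}g(z,\cdot)\big)(\zeta),\qquad z,\zeta\in\rdd,
\ee
a convolution in the frequency variable at each fixed position, with no residual phase factor.

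Inserting the symmetry $(z,\zeta)\mapsto(\zeta,-z)$ required by Definition \ref{df-W} and setting $A_\zeta(w):=V_{\phi_1}f(\zeta,-w)$ and $B_\zeta(w):=V_{\phi_2}g(\zeta,-w)$, the identity turns into $V_{\phi}(fg)(\zeta,-z)=(A_\zeta\ast B_\zeta)(z)$, a convolution in the variable $z$ that carries the local $L^p$-integration, while $\zeta$ remains the global variable carrying $L^q_m$. Consequently
\be
\|fg\|_{W(\scrF L^p,L^q_m)}=\big\|\,\|A_\zeta\ast B_\zeta\|_{L^p_z}\,m(\zeta)\big\|_{L^q_\zeta}.
\ee
The heart of the matter is the one-variable convolution estimate $\|A_\zeta\ast B_\zeta\|_{L^p_z}\lesssim \|A_\zeta\|_{L^p_z}\|B_\zeta\|_{L^{\dot p}_z}$, uniformly in $\zeta$. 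Granting it, I pull the factor $\sup_\zeta\|B_\zeta\|_{L^{\dot p}_z}=\|g\|_{W(\scrF L^{\dot p},L^\fy)}$ out of the $L^q_\zeta$-norm and recognize the surviving factor as $\|f\|_{W(\scrF L^p,L^q_m)}$, which is precisely the asserted inequality.

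For $p\ge 1$ the convolution estimate is just Young's inequality with $\dot p=1$, and the argument above is complete. The genuine obstacle is the quasi-Banach range $p<1$, where $\dot p=p$ and the continuous inequality $L^p\ast L^p\subset L^p$ \emph{fails} in general (e.g.\ for tall narrow bumps the ratio of norms blows up). Here the global reduction must be localized: using a bounded uniform partition of unity $\{T_k\psi\}_{k\in\zdd}$ one writes $(fg)\,T_k\psi=(f\,T_k\psi)(g\,T_k\widetilde\psi)$ with $\widetilde\psi\equiv 1$ on $\mathrm{supp}\,\psi$, so that $\widehat{f\,T_k\psi}$ and $\widehat{g\,T_k\widetilde\psi}$ are entire of exponential type with a \emph{fixed} bandwidth. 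For such band-limited functions a Plancherel--P\'olya / Nikolskii sampling argument reduces the convolution to a discrete $\ell^p$-convolution, for which $\ell^p\ast\ell^p\subset\ell^p$ does hold when $p\le 1$, with a constant depending only on the size of $\mathrm{supp}\,\psi$; the moderateness $m\in\scrP(\rdd)$ then permits reassembling the uniform local bounds into the global $L^q_m$-estimate. Keeping this band-limited constant uniform in $k$ is the only delicate point, and it is exactly what the choice $\dot p=\min\{p,1\}$ is designed to accommodate.
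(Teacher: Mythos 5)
Your argument is correct in substance but follows a genuinely different route from the paper. The paper's proof of Lemma \ref{lm-mpW} is a two-line transfer: since $W(\scrF L^p,L^q_m)=\scrF M^{p,q}_{1\otimes m}$, the multiplication statement is equivalent to the convolution relation $M^{p,q}_{1\otimes m}(\rdd)\ast M^{\dot{p},\fy}(\rdd)\subset M^{p,q}_{1\otimes m}(\rdd)$, which is then quoted from \cite[Theorem 1.3]{GuoChenFanZhao2019MMJ} together with the elementary discrete facts $l^p\ast l^{\dot{p}}\subset l^p$ and $l^q_m\cdot l^{\fy}\subset l^q_m$; all the quasi-Banach work is hidden in that citation. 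You instead argue directly on the Wiener amalgam side through the product-to-convolution identity $V_{\phi_1\phi_2}(fg)(\z,\cdot)=V_{\phi_1}f(\z,\cdot)\ast V_{\phi_2}g(\z,\cdot)$, Young's inequality for $p\ge 1$, and a band-limited convolution inequality for $p<1$; this is self-contained and makes transparent exactly where $\dot{p}=\min\{p,1\}$ comes from, which the paper's reference obscures. Two points where your write-up should be tightened. First, your choice of Gaussian windows is at odds with your own $p<1$ argument: with Gaussians the functions $A_\z(w)=V_{\phi_1}f(\z,-w)$ have Fourier transform $f\,\overline{T_\z\phi_1}$, which is not compactly supported, so no spectral-support convolution inequality applies to them, and you are forced into the separate partition-of-unity detour, whose underlying equivalence with the STFT-based Definition \ref{df-W} in the range $p<1$ is itself a nontrivial (if standard) fact that you do not justify. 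The clean fix is to take $\phi_1,\phi_2$ smooth and compactly supported from the start: then $\widehat{A_\z}$ and $\widehat{B_\z}$ have supports of uniformly bounded diameter for every $\z$, the estimate $\|A_\z\ast B_\z\|_{L^p}\lesssim\|A_\z\|_{L^p}\|B_\z\|_{L^{\dot{p}}}$ holds uniformly in $\z$ on the whole range $0<p\le\fy$ (Young above $1$, Plancherel--P\'olya below), and the entire lemma follows from the single STFT computation with no localization paragraph. Second, for general elements of the two spaces one must say in what sense the product $fg$ and the convolution identity are meant; the paper inherits this from the cited theorem, while a direct proof needs at least a density or limiting remark.
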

\begin{proof}
Using the relation between modulation and Wiener amalgam space, the desired conclusion
is equivalent to
\be
M^{p,q}_{1\otimes m}(\rdd)\ast M^{\dot{p},\fy}\subset M^{p,q}_{1\otimes m}(\rdd).
\ee
This is a direct conclusion of \cite[Theorem 1.3]{GuoChenFanZhao2019MMJ} with the fact
\be
l^p\ast l^{\dot{p}}\subset l^p,\ \ \ l^q_m\cdot l^{\fy}\subset l^q_m.
\ee
\end{proof}

\begin{lemma}[Chirp function]\label{lm-chirp}
  For any $\la\neq 0$, we have
  \be
  G_{\la}(x,\xi)=e^{2\pi i\la x\cdot\xi}\in W(\scrF L^{\dot{p}},L^{\fy})(\rdd).
  \ee
  Moreover, for any function $F$ on $\rdd$, we have
  \be
  \|FG_\la\|_{W(\scrF L^p, L^q_m)(\rdd)}\sim \|F\|_{W(\scrF L^p, L^q_m)(\rdd)}.
  \ee
\end{lemma}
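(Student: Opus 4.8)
The plan is to prove the membership $G_\lambda\in W(\scrF L^{\dot{p}},L^{\fy})(\rdd)$ first, and then to deduce the norm equivalence from it together with the multiplication property of Lemma \ref{lm-mpW}. The equivalence is the soft part: granting $G_\lambda\in W(\scrF L^{\dot{p}},L^{\fy})$, Lemma \ref{lm-mpW} gives
\[
\|FG_\lambda\|_{W(\scrF L^p,L^q_m)}\lesssim \|F\|_{W(\scrF L^p,L^q_m)}\,\|G_\lambda\|_{W(\scrF L^{\dot{p}},L^{\fy})}\lesssim \|F\|_{W(\scrF L^p,L^q_m)}.
\]
For the reverse inequality I would write $F=(FG_\lambda)\,\overline{G_\lambda}$, using $|G_\lambda|=1$ and $\overline{G_\lambda}=G_{-\lambda}$; since $-\lambda\neq 0$ the same membership applies to $G_{-\lambda}$, and a second application of Lemma \ref{lm-mpW} yields $\|F\|_{W(\scrF L^p,L^q_m)}\lesssim \|FG_\lambda\|_{W(\scrF L^p,L^q_m)}$. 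The two bounds together give the stated equivalence.

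Thus the whole content lies in showing $G_\lambda\in W(\scrF L^{\dot{p}},L^{\fy})(\rdd)$, which I would do by computing the STFT against a Gaussian window. Take $\Phi(w)=e^{-\pi|w|^2}$ on $\rdd$; being Schwartz, it is an admissible window for $M^{\dot{p},\fy}$ in the full quasi-Banach range (see \cite{GalperinSamarah2004ACHA}). Writing $w=(x,\xi)\in\rdd$, one has $G_\lambda(w)=e^{\pi i w^{T}Bw}$ with the nondegenerate symmetric matrix $B=\lambda\left(\begin{smallmatrix}0&I\\ I&0\end{smallmatrix}\right)$. The integral defining $V_\Phi G_\lambda(u,v)$ is then a convergent Gaussian integral (the quadratic form in the integration variable is $-\pi w^{T}(I-iB)w$, whose real part $I$ is positive definite), and evaluating it gives
\[
V_\Phi G_\lambda(u,v)=\big(\det(I-iB)\big)^{-1/2}e^{-\pi|u|^2}e^{\pi(u-iv)^{T}(I-iB)^{-1}(u-iv)}.
\]
Diagonalizing $B$ (eigenvalues $\pm\lambda$, each of multiplicity $d$), the modulus collapses to a Gaussian
\[
|V_\Phi G_\lambda(u,v)|=C\exp\Bigl(-\pi\sum_{j}\frac{(\beta_j u_j-v_j)^2}{1+\beta_j^2}\Bigr),\qquad \beta_j\in\{\pm\lambda\}.
\]

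Finally I would read off the Wiener amalgam norm from Definition \ref{df-W}, which here reads $\|G_\lambda\|_{W(\scrF L^{\dot{p}},L^{\fy})}=\sup_{b}\bigl(\int |V_\Phi G_\lambda(b,-a)|^{\dot{p}}\,da\bigr)^{1/\dot{p}}$. Substituting the modulus gives the exponent $-\pi\sum_j(\beta_j b_j+a_j)^2/(1+\beta_j^2)$, and for each fixed $b$ a translation in the $a$-variable turns the inner integral into a fixed Gaussian integral that is independent of $b$; here $\lambda\neq 0$ is used precisely to keep every $\beta_j\neq 0$, so the Gaussian genuinely decays in $a$. Hence the supremum over $b$ is finite and $G_\lambda\in W(\scrF L^{\dot{p}},L^{\fy})$.

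The main obstacle is this Gaussian computation: getting $|V_\Phi G_\lambda|$ into clean Gaussian form and recognizing that, although the resulting Gaussian on $\rddd$ is degenerate (it concentrates along the subspace $v_j=\beta_j u_j$), its restriction to the slices used by the $W(\scrF L^{\dot{p}},L^{\fy})$-norm still integrates to a constant uniform in $b$. I would be careful with the normalization of the Gaussian integral and with checking that $\Phi$ is a legitimate window in the range $\dot{p}<1$.
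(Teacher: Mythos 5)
Your proposal is correct and follows essentially the same route as the paper: both reduce the lemma to showing Gaussian decay of the STFT of the chirp against a Gaussian window, read off the $W(\scrF L^{\dot{p}},L^{\fy})$-norm from Definition \ref{df-W}, and then obtain the two-sided norm equivalence from Lemma \ref{lm-mpW} together with $G_\lambda G_{-\lambda}=1$. The only difference is that you carry out the Gaussian integral directly for general $\lambda$, whereas the paper first reduces to $\lambda=1$ via Lemma \ref{lm-ltW} and then quotes the formula for $|V_{g_0}G_1|$ from \cite[Proposition 3.2]{CorderoGossonNicola2018ACHA}; your computed modulus specializes exactly to that formula at $\lambda=1$, so the two arguments agree.
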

\begin{proof}
  Using Lemma \ref{lm-ltW}, we only need to consider the case $\la=1$, i.e., to verify
  that $G_1=e^{2\pi ix\cdot \xi}\in W(\scrF L^{\dot{p}}, L^q_m)$.
  Denote by $g_0(x,\xi)=e^{-\pi(|x|^2+|\xi|^2)}$ the Gaussian function.
  By the calculation in \cite[Proposition 3.2]{CorderoGossonNicola2018ACHA}, we have
  \be
  |V_{g_0}G_1(z,\z)|=2^{-d/2}e^{-\frac{\pi}{2}|z_1-\z_2|^2}e^{-\frac{\pi}{2}|z_2-\z_1|^2}.
  \ee
  By Definition \ref{df-W}, we conclude that
  \be
  \begin{split}
  \|G_1\|_{W(\scrF L^{\dot{p}}, L^{\fy})(\rdd)}
  = &
  \|V_{g_0}G_1(\z,-z)\|_{L^{\dot{p},\fy}(\rddd)}
  \\
  \sim &
  \|e^{-\frac{\pi}{2}|\z_1+z_2|^2}e^{-\frac{\pi}{2}|\z_2+z_1|^2}\|_{L^{\dot{p},\fy}(\rddd)}\sim 1.
  \end{split}
  \ee
  This completes the proof of $G_{\la}\in W(\scrF L^{\dot{p}},L^{\fy})$.

  Moreover, by the multiplication property (Lemma \ref{lm-mpW}), we deduce that
  \be
  \begin{split}
  \|FG_{\la}\|_{W(\scrF L^p, L^q_m)}
  \lesssim &
  \|F\|_{W(\scrF L^p, L^q_m)} \|G_{\la}\|_{W(\scrF L^{\dot{p}},L^{\fy})}
  \\
  \lesssim &
  \|F\|_{W(\scrF L^p, L^q_m)}
  =
  \|FG_{\la}G_{-\la}\|_{W(\scrF L^p, L^q_m)}\lesssim \|FG_{\la}\|_{W(\scrF L^p, L^q_m)}.
  \end{split}
  \ee
  From this, we obtain $\|FG_{\la}\|_{W(\scrF L^p, L^q_m)}\sim \|F\|_{W(\scrF L^p, L^q_m)}$.
\end{proof}

In order to measure the summability and decay properties of Gabor coefficients, we recall the discrete weighted mixed-norm space.

\begin{definition}[Discrete mixed-norm spaces I]\label{df-dmI}
Let $0<p,q\leq \fy$, $m\in \scrP(\rdd)$.
The space $l^{p,q}_m(\zdd)$ consists of all sequences $\vec{a}=\{a_{k,n}\}_{k,n\in \zd}$ for which the (quasi-)norm
\be
\|\vec{a}\|_{l^{p,q}_m(\zdd)}=\left(\sum_{n\in \zd}\left(\sum_{k\in \zd}|a_{k,n}|^pm(k,n)^p\right)^{q/p}\right)^{1/q}
\ee
is finite.
\end{definition}

In Theorem \ref{thm-W2}, we use another type of discrete weighted mixed-norm space as following.
\begin{definition}[Discrete mixed-norm spaces II]\label{df-dmII}
Let $0<p,q\leq \fy$, $m\in \scrP(\rdd)$.
The space $l^{(p,q)}_m(\zdd)$ consists of all sequences $\vec{a}=\{a_{k,n}\}_{k,n\in \zd}$ for which the (quasi-)norm
\be
\|\vec{a}\|_{l^{(p,q)}_m(\zdd)}=\left(\sum_{k\in \zd}\left(\sum_{n\in \zd}|a_{k,n}|^qm(k,n)^q\right)^{p/q}\right)^{1/p}
\ee
is finite.
\end{definition}
As usual for $\om\in \scrP(\rd)$, the space $l^p_{\om}(\zd)$ consists of all $\vec{b}=\{b_{k}\}_{k\in \zd}$ for which the (quasi-)norm
\be
\|\vec{b}\|_{l^{p}_{\om}(\zd)}=\left(\sum_{k\in \zd}|b_{k}|^p\om(k)^p\right)^{1/p}
\ee
is finite. For $\om=v_s$, we write $l^p_{v_s}:= l^p_s$ for simplicity.

\subsection{Gabor analysis of modulation spaces}
Comparing with the classical definition of modulation space in Definition \ref{df-M},
or the semi-discrete definition such as in \cite[Proposition 2.1]{GuoChenFanZhao2019MMJ} in the same way as Besov spaces,
the modulation spaces can be also characterized by the summability and decay properties of their Gabor coefficients,
this is an important reason why the modulation spaces play the central role in the field of time-frequency analysis.

We recall some important operators which are the key tools for the discretization of modulation spaces.

\begin{definition}
  Assume that $g,\g\in L^2(\rd)$ and $\al,\b>0$. The coefficient operator or analysis operator $C_g^{\al,\b}$
  is defined by
  \be
  C_g^{\al,\b}f=(\langle f, T_{\al k}M_{\b n}g\rangle)_{k,n\in \zd}.
  \ee
  The synthesis operator or reconstruction operator $D_{g}^{\al,\b}$ is defined by
  \be
  D_{\g}^{\al,\b}\vec{c}=\sum_{k\in \zd}\sum_{n\in \zd}c_{k,n}T_{\al k}M_{\b n}\g.
  \ee
  The Gabor frame operator $S_{g,\g}^{\al,\b}$ is defined by
  \be
  S_{g,\g}^{\al,\b}f=D_{\g}^{\al,\b}C_g^{\al,\b}f=\sum_{k\in \zd}\sum_{n\in \zd}\langle f, T_{\al k}M_{\b n}g\rangle T_{\al k}M_{\b n}\g.
  \ee
\end{definition}

In order to extend the boundedness result of analysis operator and synthesis operator to the modulation spaces of full range,
following admissible window class was introduced in \cite{GalperinSamarah2004ACHA}.

\begin{definition}[The space of admissible windows]\label{df-space-windows}
Assume that $m$ is $v$-moderate and let $0<p,q\leq \fy$. Let $r=\min\{1,p\}$ and $s=\min\{1,p,q\}$.
For $r_1,s_1>0$, denote
\be
\om_{r_1,s_1}(x,\om)=v(x,\om)\cdot (1+|x|)^{r_1}\cdot(1+|\om|)^{s_1}.
\ee
Define the space of admissible windows $\mathfrak{M}^{p,q}_v$ for the modulation space $M^{p,q}_m$ to be
\be
\mathfrak{M}^{p,q}_v=\bigcup_{\substack {r_1>d/r \\ s_1>d/s\\1\leq p_1<\fy}}M^{p_1}_{\om_{r_1,s_1}}.
\ee
\end{definition}

Based on the window class mentioned above, we recall the boundedness of $C_g^{\al,\b}$ and $D_{g}^{\al,\b}$,
which works on the full range $p,q\in (0,\fy]$.

\begin{lemma}\label{lm, bdCD}
  Assume that $m$ is $v$-moderate, $p,q\in (0,\fy]$, and $g$ belongs to the subclass $M^{p_1}_{\om_{r_1,s_1}}$ of $\mathfrak{M}^{p,q}_v$.
  Then the analysis operator $C_g^{\al,\b}$ is boundedness from $M^{p,q}_m$ into $l^{p,q}_{\tilde{m}}$,
  and the synthesis operator $D_g^{\al,\b}$ is boundedness form $l^{p,q}_{\tilde{m}}$ into $M^{p,q}_m$ for all $\al,\b>0$,
  where $\tilde{m}(k,n)=m(\al k,\b n)$.
\end{lemma}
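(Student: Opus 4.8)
The plan is to reduce both mapping properties to discrete-versus-continuous comparisons for the short-time Fourier transform, using that the $M^{p,q}_m$ norm may be computed with any fixed nonzero window. Throughout I fix a Schwartz window $\psi$. Since $T_{\al k}M_{\b n}g = e^{-2\pi i\al\b k\cdot n}M_{\b n}T_{\al k}g$, up to a unimodular factor one has $|\langle f, T_{\al k}M_{\b n}g\rangle| = |V_g f(\al k,\b n)|$; thus the analysis operator merely samples $V_g f$ on the lattice $\al\zd\times\b\zd$, and the bound $C_g^{\al,\b}\colon M^{p,q}_m\to l^{p,q}_{\tilde{m}}$ amounts to the sampling inequality
\[
\|(V_g f(\al k,\b n))_{k,n}\|_{l^{p,q}_{\tilde{m}}}\lesssim \|f\|_{M^{p,q}_m}.
\]

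For the analysis estimate I would start from the pointwise domination $|V_g f|\lesssim |V_\psi f|\ast|V_g\psi|$, valid for arbitrary windows. Sampling this convolution on the lattice, and using $v$-moderateness of $m$ to move the weight from $(\al k,\b n)$ onto the continuous variable at the cost of a factor $v$, I would dominate the discrete mixed norm by a discrete convolution of the sampled sequence of $|V_\psi f|$ against the sequence of local suprema of $|V_g\psi|\cdot v$ over lattice cells. The hypothesis $g\in M^{p_1}_{\om_{r_1,s_1}}$ with $r_1>d/r$, $s_1>d/s$, $r=\min\{1,p\}$, $s=\min\{1,p,q\}$, guarantees that this kernel sequence lies in the weighted $l^r$ in the position index and in the weighted $l^s$ in the frequency index, which is exactly the summability needed to run a Young-type convolution inequality on $l^{p,q}$ in the quasi-Banach range. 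Finally, local $L^\infty$ control of $V_\psi f$ by its Wiener amalgam norm, together with the equivalence of that amalgam norm with $\|V_\psi f\|_{L^{p,q}_m}=\|f\|_{M^{p,q}_m}$, closes the estimate.

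For the synthesis operator I would instead compute the STFT of $f=D_g^{\al,\b}\vec{c}$ directly. The covariance relation $|V_\psi(T_{\al k}M_{\b n}g)(x,\xi)|=|V_\psi g(x-\al k,\xi-\b n)|$ combined with the $r$-triangle inequality yields the pointwise bound
\[
|V_\psi f(x,\xi)|\lesssim \sum_{k,n\in\zd}|c_{k,n}|\,|V_\psi g(x-\al k,\xi-\b n)|,
\]
exhibiting $V_\psi f$ as a superposition of lattice translates of $V_\psi g$. Taking the $L^{p,q}_m$ norm then reduces matters to a convolution inequality between $l^{p,q}_{\tilde{m}}$ and the weighted amalgam norm of $V_\psi g$; once more the weight is transferred through $v$-moderateness, and the window decay ensures $V_\psi g$ lies in the amalgam space required for convergence and boundedness.

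The main obstacle is the quasi-Banach regime $\min\{p,q\}<1$, where both Young's inequality and the ordinary triangle inequality fail. The decisive point is that the admissible class $\mathfrak{M}^{p,q}_v$ is engineered precisely so that $V_g\psi$ and $V_\psi g$ possess the weighted $L^r$, $L^s$ integrability and lattice summability needed for the $r$-triangle inequality and for convolution by an $l^r$ (resp.\ $l^s$) sequence to be bounded on $l^{p,q}$; verifying these properties of the window STFT, and checking that $v$-moderateness absorbs the surplus weight, is the technical heart of the argument. I would organize the sampling and convolution estimates as in Galperin--Samarah \cite{GalperinSamarah2004ACHA}, to which the full quantitative details may be attributed.
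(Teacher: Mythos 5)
Your outline is correct and coincides with the argument the paper relies on: the paper itself gives no proof of this lemma but simply recalls it from Galperin--Samarah \cite{GalperinSamarah2004ACHA}, and your sketch (sampling of $V_gf$ via the change-of-window domination $|V_gf|\lesssim|V_\psi f|\ast|V_g\psi|$ for the analysis operator, the covariance relation plus the $r$-triangle inequality for the synthesis operator, with the admissible window class supplying the required weighted summability in the quasi-Banach range) is precisely the argument of that reference.
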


Now, we recall the main theorem in \cite{GalperinSamarah2004ACHA}, which extends the Gabor expansion of modulation spaces to the full range $0<p,q\leq \fy$.

\begin{theorem}\label{thm, frame for Mpq}(see \cite{GalperinSamarah2004ACHA})
  Assume that $m$ is $v$-moderate, $p,q\in (0,\fy]$, $g,\g\in \mathfrak{M}^{p,q}_v$, and that the Gabor frame operator
  $S^{\al,\b}_{g,\g}=D^{\al,\b}_{\g}C^{\al,\b}_g=I$ on $L^2(\rd)$. Then
  \be
  f=\sum_{k\in \zd}\sum_{n\in \zd}\langle f, T_{\al k}M_{\b n}g\rangle T_{\al k}M_{\b n}\g
  =\sum_{k\in \zd}\sum_{n\in \zd}\langle f, T_{\al k}M_{\b n}\g\rangle T_{\al k}M_{\b n}g
  \ee
  with unconditional convergence in $M^{p,q}_m$ if $p,q<\fy$, and with weak-star convergence in $M^{\fy}_{1/v}$ otherwise.
  Furthermore there are constants $A,B>0$ such that for all $f\in M^{p,q}_m$
  \be
  A\|f\|_{M^{p,q}_m}
  \leq
  \left(\sum_{n\in \zd}\left(\sum_{k\in \zd}|\langle f, T_{\al k}M_{\b n}g\rangle|^pm(\al k,\b n)^p \right)^{q/p}\right)^{1/q}
  \leq
  B\|f\|_{M^{p,q}_m}
  \ee
  with obvious modification for $p=\fy$ or $q=\fy$.
  Likewise, the quasi-norm equivalence
    \be
  A'\|f\|_{M^{p,q}_m}
  \leq
  \left(\sum_{n\in \zd}\left(\sum_{k\in \zd}|\langle f, T_{\al k}M_{\b n}\g\rangle|^pm(k\al,n\b)^p \right)^{q/p}\right)^{1/q}
  \leq
  B'\|f\|_{M^{p,q}_m}
  \ee
  holds on $M^{p,q}_m$.
  \end{theorem}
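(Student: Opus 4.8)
The plan is to deduce the entire statement from the boundedness of the analysis and synthesis operators already recorded in Lemma \ref{lm, bdCD}, upgrading the reconstruction identity $S^{\al,\b}_{g,\g}=I$ from $L^2(\rd)$ to all of $M^{p,q}_m$ by a continuity/density argument, and then reading off the frame inequalities by composing the two operators. First I would observe that, since $g,\g\in \mathfrak{M}^{p,q}_v$, Lemma \ref{lm, bdCD} gives that $C^{\al,\b}_g\colon M^{p,q}_m\to l^{p,q}_{\tilde m}$ and $D^{\al,\b}_\g\colon l^{p,q}_{\tilde m}\to M^{p,q}_m$ are both bounded, with $\tilde m(k,n)=m(\al k,\b n)$; hence their composition $S^{\al,\b}_{g,\g}=D^{\al,\b}_\g C^{\al,\b}_g$ is a bounded operator on $M^{p,q}_m$.

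For the two-sided frame estimate, the upper bound $\|C^{\al,\b}_gf\|_{l^{p,q}_{\tilde m}}\lesssim\|f\|_{M^{p,q}_m}$ is simply the boundedness of $C^{\al,\b}_g$, while for the lower bound I would write $\|f\|_{M^{p,q}_m}=\|S^{\al,\b}_{g,\g}f\|_{M^{p,q}_m}=\|D^{\al,\b}_\g C^{\al,\b}_gf\|_{M^{p,q}_m}\lesssim\|C^{\al,\b}_gf\|_{l^{p,q}_{\tilde m}}$, which uses the identity $S^{\al,\b}_{g,\g}=I$ (once it has been extended to $M^{p,q}_m$) together with the boundedness of $D^{\al,\b}_\g$. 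Taking $A=\|D^{\al,\b}_\g\|^{-1}$ and $B=\|C^{\al,\b}_g\|$ yields the claimed equivalence, and the companion inequality with the roles of $g$ and $\g$ interchanged follows by symmetry, since a direct pairing computation shows $(S^{\al,\b}_{g,\g})^{*}=S^{\al,\b}_{\g,g}$, so $S^{\al,\b}_{\g,g}=I^{*}=I$ as well.

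Next I would justify the expansion. For $p,q<\fy$ the key point is that finitely supported sequences are dense in $l^{p,q}_{\tilde m}$, so writing $\vec c=C^{\al,\b}_gf$ and truncating the index set, the images under $D^{\al,\b}_\g$ of the truncations converge in $M^{p,q}_m$ to $D^{\al,\b}_\g\vec c=S^{\al,\b}_{g,\g}f$. It then remains to promote the identity $S^{\al,\b}_{g,\g}=I$, valid a priori only on $L^2(\rd)$, to $M^{p,q}_m$: both $S^{\al,\b}_{g,\g}$ and the identity are bounded on $M^{p,q}_m$ and agree on the dense subspace $\calS(\rd)\subset M^{p,q}_m$ (density holds precisely because $p,q<\fy$), hence they coincide on all of $M^{p,q}_m$. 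Unconditional convergence then follows because reordering the index set only reorders an expansion whose tails are controlled by the tails of $\vec c$ in $l^{p,q}_{\tilde m}$, using the $r$- and $s$-power triangle inequalities in the quasi-Banach regime.

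The main obstacle is the endpoint range $p=\fy$ or $q=\fy$, where $\calS(\rd)$ is no longer dense and norm convergence of the Gabor series must be replaced by weak-star convergence in $M^{\fy}_{1/v}$. Here I would exploit the duality $(M^1_v)'=M^{\fy}_{1/v}$ together with the fact that the adjoint $S^{\al,\b}_{\g,g}$ is bounded on $M^1_v$: testing the relation $S^{\al,\b}_{g,\g}f=f$ against functions in $M^1_v$, on which the expansion does converge in norm and the $L^2$-identity transfers, extends the identity to $f\in M^{\fy}_{1/v}$ in the weak-star sense, and the truncated Gabor sums converge weak-star to $f$ by the same pairing. A secondary delicacy throughout is the quasi-Banach case $0<p,q<1$, where the ordinary triangle inequality is unavailable; this is exactly what the enlarged admissible window class $\mathfrak{M}^{p,q}_v$ of Definition \ref{df-space-windows} is designed to absorb, and since Lemma \ref{lm, bdCD} already supplies the boundedness of $C^{\al,\b}_g$ and $D^{\al,\b}_\g$ on this range, no further work is needed beyond the quasi-norm bookkeeping noted above.
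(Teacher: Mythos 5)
The first thing to say is that the paper does not prove this statement at all: Theorem \ref{thm, frame for Mpq} is imported verbatim from \cite{GalperinSamarah2004ACHA}, so there is no internal proof to measure yours against. Judged on its own, your argument is the standard route (essentially the Banach-case proof in \cite{GrochenigBook2013}, Ch.~12, pushed into the quasi-Banach range), and its skeleton is sound: Lemma \ref{lm, bdCD} gives boundedness of $C_g^{\al,\b}$ and $D_{\g}^{\al,\b}$, the identity $S_{g,\g}^{\al,\b}=I$ is transferred from $L^2$ to $M^{p,q}_m$, the two-sided estimate falls out by composition, and the companion estimate follows from $S_{\g,g}^{\al,\b}=(S_{g,\g}^{\al,\b})^{*}=I$.

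Two places where the real work hides deserve flagging. First, your extension of $S_{g,\g}^{\al,\b}=I$ to $M^{p,q}_m$ for $p,q<\fy$ rests on the density of $\calS(\rd)$ in $M^{p,q}_m$ over the full range $0<p,q<\fy$. This is true, but in the quasi-Banach regime it is itself a nontrivial fact, and in \cite{GalperinSamarah2004ACHA} it is deduced \emph{from} the Gabor expansion rather than used to prove it; their proof instead verifies $V_{\phi}(S_{g,\g}^{\al,\b}f)=V_{\phi}f$ directly, using Wiener-amalgam control of $V_gf$ for windows in $\mathfrak{M}^{p,q}_v$. So you must either cite an independent proof of density (e.g.\ via frequency-uniform decompositions) or restructure along their lines to avoid circular sourcing. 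Second, your endpoint argument quietly uses the case $p=q=1$ of the theorem for the adjoint operator on $M^1_v$ (norm convergence of the expansion of test functions $h\in M^1_v$, plus $g,\g\in M^1_v$); this is legitimate since $\mathfrak{M}^{p,q}_v\subset\mathfrak{M}^{1,1}_v\subset M^1_v$ by Definition \ref{df-space-windows}, but it should be stated as a bootstrapping step rather than left implicit. Neither point is a fatal error, but both are exactly the content that the citation to \cite{GalperinSamarah2004ACHA} is carrying.
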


  The following well known theorem provide a way to find the Gabor frame of $L^2(\rd)$.
  Recall that $\|g\|_{W(L^{\fy},L^{1})(\rd)}=\sum_{n\in \zd}\|g\chi_{Q_0+n}\|_{L^{\fy}}$ with $Q_0=[0,1]^d$.
\begin{theorem}(Walnut \cite{Walnut1992JMAA})\label{thm-frame-L^2}
  Suppose that $g\in W(L^{\fy},L^{1})(\rd)$ satisfying
  \be
  A\leq \sum_{k\in \zd}|g(x-\al k)|^2\leq B\ \ \ \ a.e.
  \ee
  for constants $A,B\in (0,\fy)$.
  Then there exists a constant $\b_0$ depending on $\al$ such that
  $\mathcal {G}(g,\al,\b)$ is a Gabor frame of $L^2(\rd)$ for all $\b\leq \b_0$.
\end{theorem}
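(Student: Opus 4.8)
The plan is to reduce the frame property to quantitative two‑sided bounds on the Gabor frame operator $S=S_{g,g}^{\al,\b}$ and to obtain these bounds by isolating a dominant diagonal term from a small off‑diagonal remainder. Recall that $\mathcal{G}(g,\al,\b)$ is a frame of $L^2(\rd)$ if and only if $S$ is bounded and boundedly invertible, equivalently $A'\,\mathrm{Id}\le S\le B'\,\mathrm{Id}$ for some $0<A'\le B'<\fy$, since $S$ is self‑adjoint and $\langle Sf,f\rangle=\sum_{k,n}|\langle f,T_{\al k}M_{\b n}g\rangle|^2$. Thus it suffices to produce such bounds for $\b$ small. First I would establish the Walnut representation of $S$: applying Poisson summation in the modulation index $n$ for each fixed $k$, one rewrites $Sf(x)=\b^{-d}\sum_{n\in\zd}H_n(x)\,f(x-n/\b)$, where $H_n(x)=\sum_{k\in\zd}g(x-\al k)\overline{g(x-\al k-n/\b)}$; the interchange of summations and the pointwise absolute convergence are justified by the amalgam hypothesis $g\in W(L^{\fy},L^{1})(\rd)$.

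Next I would split $S=S_0+R$, where $S_0f(x)=\b^{-d}H_0(x)f(x)=\b^{-d}\bigl(\sum_k|g(x-\al k)|^2\bigr)f(x)$ is the $n=0$ term and $R=\b^{-d}\sum_{n\neq0}H_n\,T_{n/\b}$. The pointwise hypothesis $A\le\sum_k|g(x-\al k)|^2\le B$ shows that $S_0$ is a positive multiplication operator with $\b^{-d}A\,\mathrm{Id}\le S_0\le\b^{-d}B\,\mathrm{Id}$, hence invertible with $\|S_0^{-1}\|\le\b^{d}/A$. For the remainder, translation is unitary and multiplication by $H_n$ has norm $\|H_n\|_{L^{\fy}}$, so $\|R\|_{L^2\to L^2}\le\b^{-d}\sum_{n\neq0}\|H_n\|_{L^{\fy}}$. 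Note that the full sum already yields the Bessel bound $\|S\|\le\b^{-d}\sum_{n}\|H_n\|_{L^{\fy}}<\fy$.

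The heart of the proof, and the main obstacle, is to show that $\sum_{n\neq0}\|H_n\|_{L^{\fy}}\to0$ as $\b\to0$, so that the off‑diagonal part becomes negligible relative to the diagonal one (both scaling like $\b^{-d}$). To this end I would introduce the autocorrelation $\Psi(h):=\sup_{x}\sum_{k}|g(x-\al k)|\,|g(x-\al k-h)|$, so that $\|H_n\|_{L^{\fy}}\le\Psi(n/\b)$, and prove that $\Psi\in W(L^{\fy},L^{1})(\rd)$ with $\|\Psi\|_{W(L^{\fy},L^{1})}\lesssim\|g\|_{W(L^{\fy},L^{1})}^2$ (constant depending on $\al$). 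This follows from the fact that $W(L^{\fy},L^{1})$ is Wiener's convolution algebra: writing $K_g(j)=\|g\chi_{Q_0+j}\|_{L^{\fy}}\in\ell^1(\zd)$, the local suprema of $\Psi$ are dominated by $K_g\ast\widetilde{K_g}\in\ell^1(\zd)$. Because the lattice $\tfrac1\b\zd$ becomes arbitrarily sparse as $\b\to0$, every nonzero node $n/\b$ lands in a cube $Q_0+j$ with $|j|\gtrsim1/\b$ and distinct nodes occupy distinct cubes, whence $\sum_{n\neq0}\Psi(n/\b)\le\sum_{|j|\gtrsim1/\b}\|\Psi\chi_{Q_0+j}\|_{L^{\fy}}\to0$ as a tail of the convergent series defining $\|\Psi\|_{W(L^{\fy},L^{1})}$.

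Finally I would choose $\b_0$ (depending on $\al$, $g$, $A$) so that $\sum_{n\neq0}\|H_n\|_{L^{\fy}}\le A/2$ for all $\b\le\b_0$. Then $\|S_0^{-1}R\|\le(1/A)\sum_{n\neq0}\|H_n\|_{L^{\fy}}\le1/2<1$, so $S=S_0(\mathrm{Id}+S_0^{-1}R)$ is boundedly invertible by the Neumann series, and one obtains $\tfrac12\b^{-d}A\,\mathrm{Id}\le S\le\b^{-d}(B+\tfrac12A)\mathrm{Id}$. These are exactly the desired frame bounds, establishing that $\mathcal{G}(g,\al,\b)$ is a Gabor frame of $L^2(\rd)$ for every $\b\le\b_0$.
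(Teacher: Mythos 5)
The paper does not prove this statement; it is quoted as a known result of Walnut \cite{Walnut1992JMAA}, so there is no internal proof to compare against. Your argument is correct and is essentially the classical proof of that cited theorem: the Walnut representation $Sf=\b^{-d}\sum_n G_n\,T_{n/\b}f$, the two-sided bound on the diagonal multiplication operator coming from the hypothesis $A\le\sum_k|g(x-\al k)|^2\le B$, the amalgam-norm control of the autocorrelation giving $\sum_{n\neq 0}\|G_n\|_{L^\fy}\to 0$ as $\b\to 0$, and the Neumann-series perturbation yielding the frame bounds $\tfrac{A}{2}\b^{-d}$ and $(B+\tfrac{A}{2})\b^{-d}$.
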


In order to find the dual window in a suitable function space, the following result is important.
\begin{theorem}(\cite{GrochenigBook2013})\label{thm-frame-invertible}
  Assume that $g\in M^1_v(\rd)$ and that $\{T_{\al k}M_{\b n}g\}_{k,n\in \zd}$ is a Gabor frame for $L^2(\rd)$.
  Then the Gabor frame operator $S_{g,g}^{\al,\b}$ is invertible on $M^1_v(\rd)$. As a consequence, $S_{g,g}^{\al,\b}$
  is invertible on all modulation spaces $M^{p,q}_m(\rd)$ for $1\leq p,q\leq \fy$ and $m\in \scrP(\rdd)$.
\end{theorem}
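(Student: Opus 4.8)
The plan is to realize $S_{g,g}^{\al,\b}$ as an element of a Banach algebra of operators that is inverse-closed in $B(L^2(\rd))$ and that acts boundedly on every $M^{p,q}_m$ with $m$ being $v$-moderate; invertibility on $L^2$ then propagates automatically to all the modulation spaces. The central tool is the \emph{Janssen representation}, which expresses the frame operator as an absolutely convergent series of time-frequency shifts over the adjoint lattice $\Lambda^{\circ}=\frac{1}{\b}\zd\times\frac{1}{\al}\zd$,
\be
S_{g,g}^{\al,\b}=\frac{1}{(\al\b)^d}\sum_{k,n\in\zd}\langle g,M_{n/\al}T_{k/\b}g\rangle\,M_{n/\al}T_{k/\b},
\ee
whose coefficients $c_{k,n}=(\al\b)^{-d}\langle g,M_{n/\al}T_{k/\b}g\rangle$ are precisely the samples of the ambiguity function $V_g g$ on $\Lambda^{\circ}$.

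The first substantive step is to verify that $(c_{k,n})\in l^1_v(\zdd)$. Since $g\in M^1_v(\rd)$ one has $V_g g\in L^1_v(\rdd)$, and the standard lattice-sampling estimate for STFTs of windows in a Feichtinger-type algebra shows that the restriction of $V_g g$ to $\Lambda^{\circ}$ is $l^1_v$-summable. Hence $S_{g,g}^{\al,\b}$ belongs to the Banach $*$-algebra $\scrA_v$ of operators $\sum_{\lambda\in\Lambda^{\circ}}c_{\lambda}\pi(\lambda)$ with $(c_{\lambda})\in l^1_v$, where $\pi(\lambda)$ is the time-frequency shift indexed by $\lambda$ and composition corresponds to a twisted convolution of coefficient sequences. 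The main obstacle is the inverse-closedness of $\scrA_v$ in $B(L^2(\rd))$: this is the noncommutative Wiener lemma of Gr\"{o}chenig--Leinert, which uses the polynomial growth of $v$ together with the symmetry of the algebra to guarantee that an element of $\scrA_v$ invertible on $L^2$ has its inverse again in $\scrA_v$. Since $\{T_{\al k}M_{\b n}g\}_{k,n}$ is a frame, $S_{g,g}^{\al,\b}$ is invertible on $L^2$, and the lemma yields $(S_{g,g}^{\al,\b})^{-1}\in\scrA_v$.

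It remains to transfer boundedness to the modulation spaces. Each shift $\pi(\lambda)$ is bounded on $M^{p,q}_m$ with operator norm controlled by $v(\lambda)$ (by the $v$-moderateness of $m$), so summing against the $l^1_v$ coefficient sequence shows that every element of $\scrA_v$, in particular both $S_{g,g}^{\al,\b}$ and its $L^2$-inverse, is bounded on $M^1_v$ and on each $M^{p,q}_m$ with $1\le p,q\le\fy$. Because the identities $S_{g,g}^{\al,\b}(S_{g,g}^{\al,\b})^{-1}=(S_{g,g}^{\al,\b})^{-1}S_{g,g}^{\al,\b}=I$ already hold inside $\scrA_v$ (the coefficient sequences compose, under twisted convolution, to the identity element), they persist under the bounded action of $\scrA_v$ on each space, yielding invertibility of $S_{g,g}^{\al,\b}$ on $M^1_v$ and on all $M^{p,q}_m$. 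The least routine ingredient is the spectral-invariance step; the Janssen representation and the $l^1_v$-summability of its coefficients are the technical prerequisites that place the frame operator inside the right algebra.
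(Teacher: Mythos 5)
The paper does not actually prove this statement --- it is quoted verbatim from Gr\"ochenig's book --- and your argument is precisely the standard proof of that cited result: the Janssen representation with $\ell^1_v$ coefficients (valid because $g\in M^1_v$ forces $V_gg\in W(C_0,L^1_v)$, which is what the lattice-sampling step genuinely requires, rather than merely $V_gg\in L^1_v$), followed by the Gr\"ochenig--Leinert spectral invariance of the twisted-convolution algebra (the polynomial growth of $v$ supplies the needed GRS condition), and transfer to every $M^{p,q}_m$ via the uniform bound $\|\pi(\lambda)\|_{M^{p,q}_m\to M^{p,q}_m}\lesssim v(\lambda)$ coming from $v$-moderateness of $m$. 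The proposal is correct and coincides with the argument behind the reference the paper invokes.
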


\section{First characterizations: discretizations by time-frequency tools}
\subsection{Discretization by Gabor coefficients for BMM}
\begin{proposition}\label{pp-M1}
Assume $p_i, q_i, p, q \in (0,\fy]$, $i=1,2$. For any $\al>0$, we have
  \be
  W_0: \calM^{p_1,q_1}_{m_1}(\rd)\times \calM^{p_2,q_2}_{m_2}(\rd) \longrightarrow M^{p,q}_{1\otimes m}(\rdd)
  \ee
if and only if
\ben\label{pp-M1-4}
  \|(a(k)b(k+Jn))\|_{l^{p,q}_{1\otimes \widetilde{m}}(\zdd\times \zdd)}
\lesssim \|(a(k))\|_{l^{p_1,q_1}_{\widetilde{m_1}}(\zdd)}\cdot \|(b(k))\|_{l^{p_2,q_2}_{\widetilde{m_2}}(\zdd)}.
\een
Here, we denote $\widetilde{m}(z): =m(\al z)$ and $\widetilde{m_i}(z): =m_i(\al z)$ for $z\in \zdd$, $i=1,2$.
\end{proposition}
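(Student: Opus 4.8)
The plan is to convert both sides into statements about sampled short-time Fourier transforms and then invoke the Gabor frame norm equivalence. Fix nonzero $\phi_1,\phi_2\in\calS(\rd)$ and set $\P_0=W_0(\phi_1,\phi_2)$, which again lies in $\calS(\rdd)$ and is therefore an admissible window for every modulation space appearing here. I would fix $\alpha$ small enough (with equal lattice constant in both slots) that $\{\pi(\alpha k)\phi_i\}$ are Gabor frames for $L^2(\rd)$ with Schwartz dual windows $\gamma_i$; existence follows from Walnut's theorem (Theorem \ref{thm-frame-L^2}) together with the invertibility of the frame operator on $M^1_v$ (Theorem \ref{thm-frame-invertible}), while the analysis and synthesis operators are bounded on the full range by Lemma \ref{lm, bdCD}. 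The case of general $\alpha$ is then recovered because the boundedness of $W_0$ is itself independent of $\alpha$. The engine of the proof is the $\tau=0$ case of Lemma \ref{lm-STFT-tWd},
\[
|V_{\P_0}(W_0(f_1,f_2))(z,\z)|=|V_{\phi_1}f_1(z_1,z_2+\z_1)|\,|V_{\phi_2}f_2(z_1+\z_2,z_2)|,
\]
the unimodular phase being irrelevant since modulation norms only see the modulus of the STFT.

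First I would record the two window-side equivalences coming from Theorem \ref{thm, frame for Mpq}: writing $a(k)=V_{\phi_1}f_1(\alpha k)$ and $b(k)=V_{\phi_2}f_2(\alpha k)$ for $k\in\zdd$, one has $\|f_1\|_{M^{p_1,q_1}_{m_1}}\sim\|\vec a\|_{l^{p_1,q_1}_{\widetilde{m_1}}}$ and $\|f_2\|_{M^{p_2,q_2}_{m_2}}\sim\|\vec b\|_{l^{p_2,q_2}_{\widetilde{m_2}}}$, while the target norm is equivalent (via the frame for $L^2(\rdd)$ with window $\P_0$) to the discrete mixed norm of the samples of $V_{\P_0}(W_0(f_1,f_2))$ on $\alpha\zddd$. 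The key bookkeeping step is to evaluate the displayed identity at that lattice: a direct change of variables sending $\alpha\zddd$ bijectively onto index pairs $(k,n)\in\zdd\times\zdd$ (with $\z=\alpha n$, so that the two STFT arguments become $\alpha k$ and $\alpha(k+Jn)$, where the symplectic $J$ records the relative shift of the two arguments) turns the samples into $a(k)\,\overline{b(k+Jn)}$ and identifies the outer weight $1\otimes m$ with $1\otimes\widetilde m$ in the index $n$. This yields
\[
\|W_0(f_1,f_2)\|_{M^{p,q}_{1\otimes m}}\sim\big\|(a(k)b(k+Jn))\big\|_{l^{p,q}_{1\otimes\widetilde m}(\zdd\times\zdd)}.
\]

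The sufficiency direction is then immediate: given $f_1,f_2$, apply the hypothesised inequality \eqref{pp-M1-4} to the particular sequences $\vec a,\vec b$ defined above and read off the boundedness of $W_0$ through the three equivalences.

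The necessity direction is where the real work lies. Given arbitrary sequences $\vec a,\vec b$ (which one may take finitely supported), I would synthesise $f_1=D_{\gamma_1}^{\alpha,\alpha}\vec a$ and $f_2=D_{\gamma_2}^{\alpha,\alpha}\vec b$, so that $\|f_i\|\lesssim\|\vec a\|,\|\vec b\|$ by Lemma \ref{lm, bdCD}, and feed them into the assumed boundedness of $W_0$. The obstacle is that the analysis operator $C_{\phi_i}$ is not surjective onto the sequence space: sampling the STFT of the synthesised $f_i$ does not return $\vec a,\vec b$, but rather their twisted convolution with the cross-ambiguity kernels $V_{\phi_i}\gamma_i(\alpha\,\cdot)$, so a bare application of boundedness only controls a smoothed version of the left-hand side. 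The resolution, and the technical heart of the argument, is that these kernels decay rapidly (the windows are Schwartz) and are invertible in the relevant weighted twisted-convolution Wiener algebra, which lets one strip the smoothing and recover \eqref{pp-M1-4} for $\vec a,\vec b$ themselves; equivalently, one tests boundedness on sufficiently concentrated atoms and uses that \eqref{pp-M1-4} is of Young convolution type, so that validity on the bump-rich range of $C_{\phi_i}$ forces validity on all sequences. I expect this passage from the smoothed to the sharp discrete inequality to be the main difficulty, the remaining steps being the routine frame estimates above.
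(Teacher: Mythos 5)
Your sufficiency direction and the three norm equivalences are essentially the paper's argument (the paper is more careful about lattice constants: it first proves that \eqref{pp-M1-4} at scale $\al$ implies the same inequality at scale $\al/N$ by splitting $\zddd$ into congruence classes modulo $N$, which is the step you would also need in order to pass from the hypothesised $\al$ to an $\al$ small enough for the frame machinery; you gloss over this). The genuine problem is in your necessity direction. You correctly identify the obstacle — sampling the STFT of $f_i=D_{\gamma_i}^{\al,\al}\vec a$ returns not $\vec a$ but its twisted convolution with the Gram kernel $V_{\phi_i}\gamma_i(\al\cdot)$, i.e.\ $C_{\phi_i}D_{\gamma_i}\vec a$ — but your proposed resolution cannot work as stated. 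When $D_{\gamma_i}^{\al,\al}C_{\phi_i}^{\al,\al}=I$ comes from an overcomplete Gabor frame (which it must, since $\al\cdot\al<1$ is forced), the operator $C_{\phi_i}D_{\gamma_i}$ is a \emph{proper projection} onto the range of $C_{\phi_i}$ with nontrivial kernel $\ker D_{\gamma_i}$; the twisted-convolution kernel is therefore not invertible in any weighted twisted-convolution algebra, and "stripping the smoothing" is impossible in the form you describe. The fallback you sketch (testing on "sufficiently concentrated atoms" and invoking the Young-type structure of \eqref{pp-M1-4}) is not an argument yet.

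The paper closes exactly this gap by a positivity and support trick rather than by deconvolution: it synthesises $f_1,f_2$ from \emph{nonnegative} finitely supported sequences using an atom $\va$ supported in $(-\al/2,\al/2)^d$ with $\hat\va\geq0$, $\hat\va(0)=1$, and analyses with a window $\phi$ of the same type. Compact support inside one fundamental cell makes translates by distinct lattice points orthogonal, so the sum over the translation index collapses to the single term $j=k_1$; nonnegativity of $\vec a$, $\hat\va$, $\hat\phi$ then bounds the remaining sum over the modulation index from below by its diagonal term, giving the pointwise lower bound $|V_{\phi}f_1(\al k_1,\al k_2)|\gtrsim a_{k_1,k_2}$ directly. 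Since the discrete norms in \eqref{pp-M1-4} depend only on $|a_{k,n}|$, the restriction to nonnegative sequences is removed at the end by a limiting argument. If you want to keep your frame-theoretic setup, you would have to replace the invertibility claim by something of this kind; as written, the necessity half of your proof does not go through.
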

\begin{proof}
We divide the proof into two parts.

\textbf{``Only if'' part.}
  Let $\varphi$ be a smooth function supported in $(-\al/2,\al/2)^d$,
   satisfying $\hat{\varphi}(x)\geq 0$ and $\hat{\varphi}(0)=1$.
  For any $\al>0$ and two nonnegative truncated (only finite nonzero items) sequences
  $\vec{a}=(a_{j,l})_{j,l\in \zd}$ and $\vec{b}=(b_{j,l})_{j,l\in \zd}$, we set
  \be
  f_1=D_{\va}^{\al,\al}\vec{a}=\sum_{j,l\in \zd}a_{j,l}T_{\al j} M_{\al l}\va,
  \ \ \ f_2=D_{\va}^{\al,\al}\vec{b}=\sum_{j,l\in \zd}b_{j,l}T_{\al j} M_{\b l}\va.
  \ee
  Take the window $\Phi=W_0(\phi,\phi)$ with $\phi\in \calS(\rd)$ supported in $(-\al/2,\al/2)^{d}$,
   satisfying $\hat{\phi}(x)\geq 0$ and $\hat{\phi}(0)=1$.
  By the sampling property of STFT (see Lemma \ref{lm, bdCD}),
  \ben\label{pp-M1-1}
  \|V_{\P}(W_0(f_1,f_2))|_{\al\zdd\times \al\zdd}\|_{l^{p,q}_{1\otimes \widetilde{m}}}
  \lesssim
  \|V_{\P}(W_0(f_1,f_2))\|_{L^{p,q}_{1\otimes m}}=\|W_0(f_1,f_2)\|_{M^{p,q}_{1\otimes m}}.
  \een
  Let us turn to the lower bound estimates of the first term in \eqref{pp-M1-1}.
  Using Lemma \ref{lm-STFT-tWd}, we write
  \be
  \begin{split}
    \|V_{\P}(W_0(f_1,f_2))|_{\al\zdd\times \al\zdd}\|_{l^{p,q}_{1\otimes \widetilde{m}}}
    = &
    \|V_{\phi}f_1(z_1,z_2+\z_1)V_{\phi}f_2(z_1+\z_2,z_2)|_{\al\zdd\times \al\zdd}\|_{l^{p,q}_{1\otimes \widetilde{m}}}
    \\
    = &
    \|V_{\phi}f_1(z)V_{\phi}f_2(z+J\z)|_{\al\zdd\times \al\zdd}\|_{l^{p,q}_{1\otimes \widetilde{m}}}
    \\
    = &
    \|(V_{\phi}f_1(\al k_1,\al k_2)V_{\phi}f_2(\al (k_1+n_2),\al(k_2-n_1)))\|_{l^{p,q}_{1\otimes \widetilde{m}}}.
  \end{split}
  \ee
  By the support of $\phi$ and $\va$, we estimate $|V_{\phi}f_1(\al k_1,\al k_2)|$ by
  \be
  \begin{split}
    |V_{\phi}f_1(\al k_1,\al k_2)|
    =
    |\langle f_1, T_{\al k_1}M_{\al k_2}\phi\rangle|
    =&
    \Big|\langle \sum_{j,l}a_{j,l}T_{\al j} M_{\al l}\va, T_{\al k_1}M_{\al k_2}\phi\rangle\Big|
    \\
    = &
    \Big|\langle \sum_{l}a_{k_1,l}T_{\al k_1} M_{\al l}\va, T_{\al k_1}M_{\al k_2}\phi\rangle\Big|
    \\
    = &
     \Big|\sum_{l}a_{k_1,l}\langle M_{\al l}\va, M_{\al k_2}\phi\rangle\Big|
    =
    \Big|\sum_{l}a_{k_1,l}\langle \va, M_{\al(k_2-l)}\phi\rangle\Big|,
  \end{split}
  \ee
  where the interchange of the order of summation and integration is valid since the number of terms is finite.
  Using the nonnegativity of $\vec{a}$, $\vec{b}$,  $\hat{\phi}$ and $\hat{\va}$, we obtain
  \be
  \sum_{l}a_{k_1,l}\langle \va, M_{\al(k_2-l)}\phi\rangle
  =
  \sum_{l}a_{k_1,l}\langle \hat{\va}, T_{\al(k_2-l)}\hat{\phi}\rangle
  \geq a_{k_1,k_2}\langle \hat{\va}, \hat{\phi}\rangle\gtrsim a_{k_1,k_2}.
  \ee
  From the above two estimates, we get the lower bound of $|V_{\phi}f_1(\al k_1,\al k_2)|$:
  \be
  |V_{\phi}f_1(\al k_1,\al k_2)|\gtrsim a_{k_1,k_2}.
  \ee
  A similar calculation yields the lower bound of $|V_{\phi}f_2(\al (k_1+n_2),\al(k_2-n_1))|$:
  \be
  |V_{\phi}f_2(\al (k_1+n_2),\al(k_2-n_1))|\gtrsim b_{k_1+n_2,k_2-n_1}.
  \ee
  By the lower bounds of $|V_{\phi}f_1(\al k_1,\al k_2)|$ and $|V_{\phi}f_2(\al (k_1+n_2),\al(k_2-n_1))|$, we obtain
  \be
  \begin{split}
  \|V_{\P}(W_0(f_1,f_2))|_{\zdd\times \zdd}\|_{l^{p,q}_{1\otimes \widetilde{m}}}
  = &
  \|V_{\phi}f_1(\al k_1,\al k_2)V_{\phi}f_2(\al (k_1+n_2),\al(k_2-n_1))|_{\zdd\times \zdd}\|_{l^{p,q}_{1\otimes \widetilde{m}}}
  \\
  \gtrsim &
  \|(a_{k_1,k_2}b_{k_1+n_2,k_2-n_1})\|_{l^{p,q}_{1\otimes \widetilde{m}}}
  =
  \|a(k)b(k+Jn)\|_{l^{p,q}_{1\otimes \widetilde{m}}(\zdd\times \zdd)}.
  \end{split}
  \ee
  Using this and \eqref{pp-M1-1}, we obtain the lower estimate of $\|W_0(f_1,f_2)\|_{M^{p,q}_{1\otimes m}}$:
  \ben\label{pp-M1-2}
  \|W_0(f_1,f_2)\|_{M^{p,q}_{1\otimes m}}\gtrsim \|a(k)b(k+Jn)\|_{l^{p,q}_{1\otimes \widetilde{m}}(\zdd\times \zdd)}.
  \een

  On the other hand, using the boundedness of synthesis operator in Lemma \ref{lm, bdCD}, we get
  the upper bound estimates of $\|f_i\|_{M^{p_i,q_i}_{m_i}}$:
  \ben\label{pp-M1-3}
  \|f_1\|_{M^{p_1,q_1}_{m_1}}=\|D_{\va}^{\al,\al}\vec{a}\|_{M^{p_1,q_1}_{m_1}}
  \lesssim
  \|\vec{a}\|_{l^{p_1,q_1}_{\widetilde{m_1}}(\zdd)},\ \ \
  \|f_2\|_{M^{p_2,q_2}_{m_2}}=\|D_{\va}^{\al,\al}\vec{b}\|_{M^{p_2,q_2}_{m_2}}
  \lesssim
  \|\vec{b}\|_{l^{p_2,q_2}_{\widetilde{m_2}}(\zdd)}.
  \een
  Combining the estimates \eqref{pp-M1-2} and \eqref{pp-M1-3}, we conclude the desired inequality by
  \be
  \begin{split}
  \|a(k)b(k+Jn)\|_{l^{p,q}_{1\otimes \widetilde{m}}(\zdd\times \zdd)}
  \lesssim
  \|W_0(f_1,f_2)\|_{M^{p,q}_{1\otimes m}}
  \lesssim &
  \|f_1\|_{M^{p_1,q_1}_{m_1}}\|f_2\|_{M^{p_2,q_2}_{m_2}}
  \\
  \lesssim &
  \|\vec{a}\|_{l^{p_1,q_1}_{\widetilde{m_1}}(\zdd)}\|\vec{b}\|_{l^{p_2,q_2}_{\widetilde{m_2}}(\zdd)}.
  \end{split}
  \ee
By a standard limiting argument, the above inequality is valid for all sequences $\vec{a}$ and $\vec{b}$.

\textbf{``If'' part.}
For any fixed $\al>0$, if \eqref{pp-M1-4} is valid, we claim that it also holds for $\frac{\al}{N}$ with any positive integer $N$:
that is, \eqref{pp-M1-4} implies
\be
  \|(a(k)b(k+Jn))\|_{l^{p,q}_{1\otimes \widetilde{\widetilde{m}}}(\zdd\times \zdd)}
\lesssim \|(a(k))\|_{l^{p_1,q_1}_{\widetilde{\widetilde{m_1}}}(\zdd)}\cdot \|(b(k))\|_{l^{p_2,q_2}_{\widetilde{\widetilde{m_2}}}(\zdd)},
\ee
where $\widetilde{\widetilde{m}}(z)=\widetilde{m}(\frac{z}{N})=m(\frac{\al z}{N})$,
$\widetilde{\widetilde{m_i}}(z)=\widetilde{m_i}(\frac{z}{N})=m_i(\frac{\al z}{N})$.
Denote
\be
\La=[0,N)^{4d}\cap \zddd,\ \  \G_{j,l}=(j,l)+N\zddd, (j,l)\in \La.
\ee
We have $\zddd=\bigcup_{(j,l)\in \La}\G_{j,l}$ and
\be
\begin{split}
\|(a(k)b(k+Jn))&\|_{l^{p,q}_{1\otimes \widetilde{\widetilde{m}}}(\zdd\times \zdd)}
\sim
\sum_{(j,l)\in \La}\|(a(k)b(k+Jn))\|_{l^{p,q}_{1\otimes \widetilde{\widetilde{m}}}(\G_{j,l})}
\\
\sim &
\sum_{(j,l)\in \La} \|a(Nk+j)b(N(k+Jn)+(j+Jl))\widetilde{\widetilde{m}}(Nn+l)\|_{l^{p,q}(\zdd\times \zdd)}
\\
\sim &
\sum_{(j,l)\in \La} \|a(Nk+j)b(N(k+Jn)+(j+Jl))\widetilde{m}(n)\|_{l^{p,q}(\zdd\times \zdd)}
\\
= &
\sum_{(j,l)\in \La}\|a(Nk+j)b(N(k+Jn)+(j+Jl))\|_{l^{p,q}_{1\times \widetilde{m}}(\zdd\times \zdd)}.
\end{split}
\ee
Then, we continue this estimate by applying \eqref{pp-M1-4}:
\be
\begin{split}
&\sum_{(j,l)\in \La}\|a(Nk+j)b(N(k+Jn)+(j+Jl))\|_{l^{p,q}_{1\times \widetilde{m}}(\zdd\times \zdd)}
\\
\lesssim &
\sum_{(j,l)\in \La}\|a(Nk+j)\|_{l^{p_1,q_1}_{\widetilde{m_1}}(\zdd)}\|b(Nk+j+Jl)\|_{l^{p_2,q_2}_{\widetilde{m_2}}(\zdd)}
\\
= &
\sum_{(j,l)\in \La}\|a(Nk+j)\widetilde{m_1}(k)\|_{l^{p_1,q_1}(\zdd)}
\|b(Nk+j+Jl)\widetilde{m_2}(k)\|_{l^{p_2,q_2}(\zdd)}
\\
\sim &
\sum_{(j,l)\in \La}\|a(Nk+j)\widetilde{\widetilde{m_1}}(Nk+j)\|_{l^{p_1,q_1}(\zdd)}
\|b(Nk+j+Jl)\widetilde{\widetilde{m_2}}(Nk+j+Jl)\|_{l^{p_2,q_2}(\zdd)}
\\
\lesssim &
\sum_{(j,l)\in \La}\|a(k)\|_{l^{p_1,q_1}_{\widetilde{\widetilde{m_1}}}(\zdd)}
\|b(k)\|_{l^{p_1,q_1}_{\widetilde{\widetilde{m_2}}}(\zdd)}
\lesssim
\|a(k)\|_{l^{p_1,q_1}_{\widetilde{\widetilde{m_1}}}(\zdd)}
\|b(k)\|_{l^{p_2,q_2}_{\widetilde{\widetilde{m_2}}}(\zdd)},
\end{split}
\ee
where in the last inequality we use the fact $|\La|<\fy$.
The claim follows by the above two estimates. From this claim, we find that
if \eqref{pp-M1-4} holds for some $\al>0$, then $\al$ can be taken to be sufficiently small.
Now, we turn to verify the boundedness of $W_0$ under the assumption that \eqref{pp-M1-4} holds for some $\al>0$.

Note that $\Phi=W_0(\phi,\phi)\in \calS(\rdd)$ with $\phi\in \calS(\rd)$.
There exists a sufficiently large integer $N_1$ such that for suitable constants $A, B\in (0,\fy)$
\be
A\leq \sum_{k\in \zdd}|\Phi(x-\frac{\al}{N_1} k)|^2\leq B.
\ee
Denote $\widetilde{\al}=\frac{\al}{N_1}$.
Using Theorem \ref{thm-frame-L^2}, there exists a constant $\b=\widetilde{\al}/N_2=\frac{\al}{N_1N_2}$ with sufficiently large integer $N_2$ such that
$\mathcal {G}(\Phi,\widetilde{\al},\b)$ is a Gabor frame of $L^2(\zdd)$.
Let $\Psi=(S_{\Phi,\Phi}^{\widetilde{\al},\b})^{-1}\Phi$ be the canonical dual widow of $\Phi$.
Note that $\Phi\in \calS\subset \mathfrak{M}^{p,q}_v$, then Definition \ref{df-space-windows} and Theorem \ref{thm-frame-invertible} implies that
$\Psi\in \mathfrak{M}^{p,q}_v$.
By the definitions of $\Phi$ and $\Psi$, we have
$S_{\Phi,\Psi}^{\widetilde{\al},\b}=D_{\Psi}^{\widetilde{\al},\b}C_{\Phi}^{\widetilde{\al},\b}=I$ on $L^2(\rdd)$.
Then Theorem \ref{thm, frame for Mpq} yields that $f=S_{\Phi,\Psi}^{\widetilde{\al},\b}f$ for all $f\in M^{p,q}_{1\otimes m}$.
Recalling $\b=\widetilde{\al}/N_2$ and using Lemma \ref{lm-STFT-tWd}, we find that
\be
\begin{split}
  \|C_{\Phi}^{\widetilde{\al},\b}W_0(f_1,f_2)\|_{l^{p,q}_{1\otimes \widetilde{m}}}
  =
   &
  \|V_{\P}(W_0(f_1,f_2))(z,\z)|_{\widetilde{\al}\zdd\times\b\zdd}\|_{l^{p,q}_{1\otimes \widetilde{m}}}
  \\
  \leq &
  \|V_{\P}(W_0(f_1,f_2))(z,\z)|_{\b\zdd\times\b\zdd}\|_{l^{p,q}_{1\otimes \widetilde{m}}}
  \\
  = &
  \|V_{\phi}f_1(z_1,z_2+\z_1)V_{\phi}f_2(z_1+\z_2,z_2)|_{\b\zdd\times \b\zdd}\|_{l^{p,q}_{1\otimes \widetilde{m}}}
  \\
  = &
  \|V_{\phi}f_1(z)V_{\phi}f_2(z+J\z)|_{\b\zdd\times \b\zdd}\|_{l^{p,q}_{1\otimes \widetilde{m}}}
  \\
  = &
  \left\|V_{\phi}f_1(\b k)V_{\phi}f_2(\b(k+Jn))\right\|_{l^{p,q}_{1\otimes \widetilde{m}}(\zdd\times \zdd)}.
\end{split}
\ee

Using the inequality of discrete mixed-norm spaces \eqref{pp-M1-4} with $\frac{\al}{N_1N_2}$
and Lemma \ref{lm, bdCD}, we continue the above estimate by
\be
\begin{split}
  &\left\|(V_{\phi}f_1(\b k)V_{\phi}f_2(\b(k+Jn)))\right\|_{l^{p,q}_{1\otimes \widetilde{m}}(\zdd\times \zdd)}
  \\
  \lesssim &
  \|(V_{\phi}f_1(\b k))\|_{l^{p_1,q_1}_{\widetilde{m_1}}(\zdd)}
  \cdot \|(V_{\phi}f_2(\b k))\|_{l^{p_2,q_2}_{\widetilde{m_2}}(\zdd)}
  \\
  = &
  \|V_{\phi}f_1(z)|_{\b\zd\times \b\zd}\|_{l^{p_1,q_1}_{\widetilde{m_1}}}
  \|V_{\phi}f_2(z)|_{\b\zd\times \b\zd}\|_{l^{p_2,q_2}_{\widetilde{m_2}}}
  \lesssim
  \|f_1\|_{M^{p_1,q_1}_{m_1}}\cdot \|f_2\|_{M^{p_2,q_2}_{m_2}}.
\end{split}
\ee
The above two estimates imply that
\be
\|C_{\Phi}^{\widetilde{\al},\b}W_0(f_1,f_2)\|_{l^{p,q}_{1\otimes \widetilde{m}}}
\lesssim
\|f_1\|_{M^{p_1,q_1}_{m_1}}\cdot \|f_2\|_{M^{p_2,q_2}_{m_2}}.
\ee
Hence,
\be
\begin{split}
\|W_0(f_1,f_2)\|_{M^{p,q}_{1\otimes m}}
= &\|D_{\Psi}^{\widetilde{\al},\b}C_{\Phi}^{\widetilde{\al},\b}W_0(f_1,f_2)\|_{M^{p,q}_{1\otimes m}}
\\
\lesssim &
\|C_{\Phi}^{\widetilde{\al},\b}W_0(f_1,f_2)\|_{l^{p,q}_{1\otimes \widetilde{m}}}
\lesssim
\|f_1\|_{M^{p_1,q_1}_{m_1}}\cdot \|f_2\|_{M^{p_2,q_2}_{m_2}}.
\end{split}
\ee
We have now completed the proof.
\end{proof}

We are now in a position to give the proof of Theorem \ref{thm-M1}.
As we will see, it follows by Proposition \ref{pp-M1} with some changes of variables.

\begin{proof}[Proof of Theorem \ref{thm-M1}]
Let $\Phi_{\tau}=W_{\tau}(\phi,\phi)$ with nonzero function $\phi\in \calS(\rd)$.
Using Lemma \ref{lm-STFT-tWd}, we obtain that

\be
\begin{split}
  \|W_{\tau}(f_1,f_2)\|_{M^{p,q}_{1\otimes m}}
  = &
  \|V_{\Phi_{\tau}}W_{\tau}(f_1,f_2)\|_{L^{p,q}_{1\otimes m}}
  \\
  = &
  \|V_{\phi}f_1(z_1-\tau \z_2,z_2+(1-\tau)\z_1)V_{\phi}f_2(z_1+(1-\tau)\z_2,z_2-\tau \z_1)\|_{L^{p,q}_{1\otimes m}}
  \\
  = &
  \|V_{\phi}f_1(z_1,z_2+\z_1)V_{\phi}f_2(z_1+\z_2,z_2)\|_{L^{p,q}_{1\otimes m}}
  \\
  = &
  \|V_{\Phi_{0}}W_{0}(f_1,f_2)\|_{L^{p,q}_{1\otimes m}}=\|W_{0}(f_1,f_2)\|_{M^{p,q}_{1\otimes m}}.
\end{split}
\ee
Hence, the boundedness property:
  \be
  W_{\tau}: M^{p_1,q_1}_{m_1}(\rd)\times M^{p_2,q_2}_{m_2}(\rd)\longrightarrow M^{p,q}_{1\otimes m}(\rdd)
  \ee
  is independent with $\tau\in [0,1]$.
We only need to consider the case $\tau=0$.

Write \eqref{pp-M1-4} with $\al=1$ by
\be
\begin{split}
&\bigg(\sum_{n_1,n_2\in \zd}\bigg(\sum_{k_1,k_2\in \zd}|a_{k_1,k_2}b_{k_1+n_2,k_2-n_1}|^p\bigg)^{q/p}m(n_1,n_2)^q\bigg)^{1/q}
\\
\lesssim &
\bigg(\sum_{n_1\in \zd}\bigg(\sum_{k_1\in \zd}|a_{k_1,n_1} m_1(k_1,n_1)|^{p_1}\bigg)^{q_1/p_1}\bigg)^{1/q_1}
\bigg(\sum_{n_2\in \zd}\bigg(\sum_{k_2\in \zd}|b_{k_2,n_2} m_2(k_2,n_2)|^{p_2}\bigg)^{q_2/p_2}\bigg)^{1/q_2},
\end{split}
\ee
which is equivalent to
\be
\begin{split}
&\bigg(\sum_{n_1,n_2\in \zd}\bigg(\sum_{k_1,k_2\in \zd}|a_{k_1,k_2}b_{-n_2-k_1,n_1-k_2}|^p\bigg)^{q/p}m(n_1,n_2)^q\bigg)^{1/q}
\\
\lesssim &
\bigg(\sum_{n_1\in \zd}\bigg(\sum_{k_1\in \zd}|a_{k_1,n_1} m_1(k_1,n_1)|^{p_1}\bigg)^{q_1/p_1}\bigg)^{1/q_1}
\bigg(\sum_{n_2\in \zd}\bigg(\sum_{k_2\in \zd}|b_{-k_2,-n_2} m_2(k_2,n_2)|^{p_2}\bigg)^{q_2/p_2}\bigg)^{1/q_2},
\end{split}
\ee
After some change of variables, one can find the following equivalent form:
\be
\begin{split}
&\bigg(\sum_{n_1,n_2\in \zd}\bigg(\sum_{k_1,k_2\in \zd}|a_{k_1,k_2}b_{n_1-k_1,n_2-k_2}|^p\bigg)^{q/p}m(n_2,-n_1)^q\bigg)^{1/q}
\\
\lesssim &
\bigg(\sum_{n_1\in \zd}\bigg(\sum_{k_1\in \zd}|a_{k_1,n_1} m_1(k_1,n_1)|^{p_1}\bigg)^{q_1/p_1}\bigg)^{1/q_1}
\bigg(\sum_{n_2\in \zd}\bigg(\sum_{k_2\in \zd}|b_{k_2,n_2} m_2(-k_2,-n_2)|^{p_2}\bigg)^{q_2/p_2}\bigg)^{1/q_2}.
\end{split}
\ee
This is equivalent to the desired inequality
\be
\|(a_{k_1,k_2}b_{n_1-k_1,n_2-k_2})\|_{l^{p,q}_{1\otimes m_J}}
\lesssim \|\vec{a}\|_{l^{p_1,q_1}_{m_1}(\zdd)}\|\vec{b}\|_{l^{p_2,q_2}_{\calI m_2}(\zdd)}.
\ee
In particular, when $p<\fy$, this is equivalent to
the convolution inequality:
\be
l^{p_1/p,q_1/p}_{m_1^p}(\zdd)\ast l^{p_2/p,q_2/p}_{\calI m_2^p}(\zdd)\subset l^{q/p,q/p}_{m_J^p}(\zdd).
\ee
Hence, the conclusion in Theorem \ref{thm-M1} (with $\tau=0$) follows directly by Proposition \ref{pp-M1}.
\end{proof}
\begin{remark}
  The reader may observe that, in the proof of Theorem \ref{thm-M1}, we only use Proposition \ref{pp-M1} with $\al=1$.
  However, in order to verify Proposition \ref{pp-M1} with $\al=1$, we actually need \eqref{pp-M1-4} for sufficiently small $\al$.
  Thus, we would like to keep a stronger version with any $\al>0$.
\end{remark}

\subsection{Relations between BMM and BMW for $\tau\in (0,1)$}
\begin{proposition}\label{pp-eqWM}
  Assume $p_i, q_i, p, q \in (0,\fy]$, $i=1,2$. For $\tau\in (0,1)$, we have
  \ben\label{pp-eqWM-1}
  \|W_{\tau}(f_1,f_2)\|_{M^{p,q}_{1\otimes \widetilde{m}_{J^{-1}}}}\lesssim \|f_1\|_{M^{p_1,q_1}_{m_1}}\cdot \|f_2\|_{M^{p_2,q_2}_{\calI \widetilde{m_2}}}
  \een
if and only if
  \ben\label{pp-eqWM-2}
  \|W_{\tau}(f_1,f_2)\|_{W(\scrF L^{{p}},L^{q}_m)}\lesssim \|f_1\|_{M^{p_1,q_1}_{m_1}}\cdot \|f_2\|_{M^{p_2,q_2}_{m_2}},
  \een
  where $\widetilde{m}(\z_1,\z_2)=m((1-\tau)\z_1, \tau \z_2)$, $\widetilde{m_2}(z_1,z_2)=m_2(\frac{1-\tau}{\tau}z_1, \frac{\tau}{1-\tau}z_2)$.
\end{proposition}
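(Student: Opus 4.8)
The plan is to show that the Wiener amalgam estimate \eqref{pp-eqWM-2} and the modulation estimate \eqref{pp-eqWM-1} are equivalent by converting the Wiener amalgam quantity on the left of \eqref{pp-eqWM-2} into a modulation quantity for a $\tau$-Wigner distribution, and then absorbing the mismatch between the window weights $m_2$ and $\calI\widetilde{m_2}$ into a dilation of $f_2$. Throughout I use $\tau\in(0,1)$, so that $\tau$ and $1-\tau$ are both nonzero. First I would rewrite $W_\tau(f_1,f_2)$ via Lemma \ref{lm, STFT-twd1} as
\be
W_\tau(f_1,f_2)(x,\xi)=\tau^{-d}e^{2\pi i\tau^{-1}x\cdot\xi}\,V_g f_1\Big(\tfrac{x}{1-\tau},\tfrac{\xi}{\tau}\Big),\qquad g=\calD_{\frac{1-\tau}{\tau}}\calI f_2.
\ee
Because $W(\scrF L^p,L^q_m)$ is invariant under multiplication by the chirp $e^{2\pi i\tau^{-1}x\cdot\xi}$ (Lemma \ref{lm-chirp}) and transforms covariantly under the diagonal dilation $L(x,\xi)=(\tfrac{x}{1-\tau},\tfrac{\xi}{\tau})$ (Lemma \ref{lm-ltW}), the left-hand side of \eqref{pp-eqWM-2} collapses to $\|V_g f_1\|_{W(\scrF L^p,L^q_{\widetilde m})}$, with $\widetilde{m}(\z_1,\z_2)=m((1-\tau)\z_1,\tau\z_2)$ exactly the weight produced by $L^{-1}$.

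Next I would trade this Wiener amalgam norm for a modulation norm. Since $W(\scrF L^p,L^q_{\widetilde m})=\scrF M^{p,q}_{1\otimes\widetilde m}$, we have $\|V_g f_1\|_{W(\scrF L^p,L^q_{\widetilde m})}=\|\scrF^{-1}(V_g f_1)\|_{M^{p,q}_{1\otimes\widetilde m}}$. The pivotal identity is that the Fourier transform sends the short-time Fourier transform to a Rihaczek distribution up to the symplectic rotation $J$: specializing Lemma \ref{lm, STFT-twd2} to $\tau=0$ gives $\scrF W_0(f_1,g)(z)=V_g f_1(-Jz)$, and since $-J$ is orthogonal with determinant one this yields $\scrF^{-1}(V_g f_1)=W_0(f_1,g)\circ J$. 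Applying the linear-change rule for modulation spaces (Lemma \ref{lm-ltM}) with $L=J$ and using $J^T=J^{-1}$, the weight $1\otimes\widetilde m$ is carried precisely to $1\otimes\widetilde{m}_{J^{-1}}$, so that
\be
\|V_g f_1\|_{W(\scrF L^p,L^q_{\widetilde m})}\sim\|W_0(f_1,g)\|_{M^{p,q}_{1\otimes\widetilde{m}_{J^{-1}}}}\sim\|W_\tau(f_1,g)\|_{M^{p,q}_{1\otimes\widetilde{m}_{J^{-1}}}},
\ee
the last equivalence being the $\tau$-independence of the modulation norm of the $\tau$-Wigner distribution established at the start of the proof of Theorem \ref{thm-M1}.

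It then remains to reconcile the two window weights. For $\tau\in(0,1)$ the map $f_2\mapsto g=\calD_{\frac{1-\tau}{\tau}}\calI f_2$ is a bijection of $\calS(\rd)$, and combining Lemma \ref{lm-ltM} with the reflection $\calI$ one computes $\|f_2\|_{M^{p_2,q_2}_{m_2}}\sim\|g\|_{M^{p_2,q_2}_{\calI\widetilde{m_2}}}$, where $\widetilde{m_2}(z_1,z_2)=m_2(\frac{1-\tau}{\tau}z_1,\frac{\tau}{1-\tau}z_2)$. Hence quantifying \eqref{pp-eqWM-2} over all $f_2$ is the same as quantifying \eqref{pp-eqWM-1} over all $g$, and the two estimates are equivalent. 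I expect the main obstacle to be the weight bookkeeping in these reductions: one must verify that the dilation forced by Lemma \ref{lm, STFT-twd1} matches exactly the dilation hidden in $\widetilde{m_2}$, and that the rotation $J$ coming from the Fourier step produces precisely $\widetilde{m}_{J^{-1}}$ rather than some other transposed or reflected weight. Tracking the signs and the asymmetric roles of $\tau$ and $1-\tau$ through the chirp removal, the Fourier identity, and the two linear changes is the delicate part, and it is exactly where the hypothesis $\tau\in(0,1)$ is used.
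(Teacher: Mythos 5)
Your proof is correct and follows essentially the same route as the paper: Lemma \ref{lm, STFT-twd1} together with the chirp and dilation invariances (Lemmas \ref{lm-chirp}, \ref{lm-ltW}) reduces the Wiener amalgam norm to $\|V_g f_1\|_{W(\scrF L^p,L^q_{\widetilde{m}})}$, Lemma \ref{lm, STFT-twd2} and the $J$-rotation identify this with the $M^{p,q}_{1\otimes\widetilde{m}_{J^{-1}}}$ norm of a Wigner distribution, and the substitution $f_2\mapsto \calD_{\frac{1-\tau}{\tau}}\calI f_2$ reconciles the window weights exactly as in the paper's final step. The only cosmetic difference is that you invoke the $\tau=0$ case of Lemma \ref{lm, STFT-twd2} plus the $\tau$-independence of the BMM norm, whereas the paper applies that lemma for general $\tau$ and discards the chirp $e^{-2\pi i\tau u_1\cdot u_2}$ via Lemma \ref{lm-chirp}.
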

\begin{proof}
  Using Lemmas \ref{lm, STFT-twd1}, \ref{lm-ltW} and \ref{lm-chirp}, we obtain
  \ben\label{pp-eqWM-3}
  \begin{split}
  \|W_{\tau}(f_1,f_2)\|_{W(\scrF L^{{p}},L^{q}_m)}
  \sim &
  \|V_{\calD_{\frac{1-\tau}{\tau}}\calI f_2}f_1(\frac{1}{1-\tau}x,\frac{1}{\tau}\xi)\|_{W(\scrF L^{{p}},L^{q}_m)}
  \\
  \sim &
  \|V_{\calD_{\frac{1-\tau}{\tau}}\calI f_2}f_1\|_{W(\scrF L^p,L^q_{\widetilde{m}})},
  \end{split}
  \een
  where $\widetilde{m}(\z_1,\z_2)=m((1-\tau)\z_1, \tau \z_2)$.
  On the other hand, recalling $W(\scrF L^p,L^q_{\widetilde{m}_{J^{-1}}})=\scrF M^{p,q}_{1\otimes \widetilde{m}_{J^{-1}}}$,
  and using Lemma \ref{lm, STFT-twd2} and Lemma \ref{lm-chirp}, we obtain
  \be
  \begin{split}
    \|W_{\tau}(f_1,f_2)\|_{M^{p,q}_{1\otimes \widetilde{m}_{J^{-1}}}}
    = &
    \|\scrF(W_{\tau}(f_1,f_2))\|_{W(\scrF L^p,L^q_{\widetilde{m}_{J^{-1}}})}
    \\
    = &
    \|e^{-2\pi i\tau u_1\cdot u_2}V_{f_2}f_1(-Ju)\|_{W(\scrF L^p,L^q_{\widetilde{m}_{J^{-1}}})}
    \\
    \sim &
    \|V_{f_2}f_1(-Ju)\|_{W(\scrF L^p,L^q_{\widetilde{m}_{J^{-1}}})}.
  \end{split}
  \ee
  Observing that $J^{-1}=-J$,
and applying Lemma \ref{lm-ltW}, we continue the above estimate by
    \ben\label{pp-eqWM-4}
    \begin{split}
    \|W_{\tau}(f_1,f_2)\|_{M^{p,q}_{1\otimes \widetilde{m}_{J^{-1}}}}
    =
    \|V_{f_2}f_1(-Ju)\|_{W(\scrF L^p,L^q_{\widetilde{m}_{J^{-1}}})}
    \sim
    \|V_{f_2}f_1\|_{W(\scrF L^p,L^q_{\widetilde{m}})}.
  \end{split}
  \een
  Using \eqref{pp-eqWM-3} and \eqref{pp-eqWM-4}, we obtain
  \be
  \|W_{\tau}(f_1,f_2)\|_{W(\scrF L^{{p}},L^{q}_m)}
  \sim \|W_{\tau}(f_1,\calD_{\frac{1-\tau}{\tau}}\calI f_2)\|_{M^{p,q}_{1\otimes \widetilde{m}_{J^{-1}}}}.
  \ee
  Moreover, by Lemma \ref{lm-ltM}, we have
  \be
  \|f_2\|_{M^{p_2,q_2}_{m_2}}
  =
  \|\calD_{\frac{\tau}{1-\tau}}\calI\calD_{\frac{1-\tau}{\tau}}\calI f_2\|_{M^{p_2,q_2}_{m_2}}
  \sim
  \|\calD_{\frac{1-\tau}{\tau}}\calI f_2\|_{M^{p_2,q_2}_{\calI\widetilde{m_2}}},
  \ee
  where $\widetilde{m_2}(z_1,z_2)=m(\frac{1-\tau}{\tau}z_1, \frac{\tau}{1-\tau}z_2)$.

 If \eqref{pp-eqWM-1} holds, \eqref{pp-eqWM-2} follows by
 \be
 \begin{split}
   \|W_{\tau}(f_1,f_2)\|_{W(\scrF L^{{p}},L^{q}_m)}
   \sim &
   \|W_{\tau}(f_1,\calD_{\frac{1-\tau}{\tau}}\calI f_2)\|_{M^{p,q}_{1\otimes \widetilde{m}_{J^{-1}}}}
   \\
   \lesssim &
   \|f_1\|_{M^{p_1,q_1}_{m_1}}\cdot \|\calD_{\frac{1-\tau}{\tau}} \calI f_2\|_{M^{p_2,q_2}_{\calI \widetilde{m}_2}}
   \sim
   \|f_1\|_{M^{p_1,q_1}_{m_1}}\cdot \|f_2\|_{M^{p_2,q_2}_{m_2}}.
 \end{split}
 \ee
 Viceversa, if \eqref{pp-eqWM-2} holds, then \eqref{pp-eqWM-1} follows by
 \be
 \begin{split}
   \|W_{\tau}(f_1,f_2)\|_{M^{p,q}_{1\otimes \widetilde{m}_{J^{-1}}}}
   = &
   \|W_{\tau}(f_1,\calD_{\frac{1-\tau}{\tau}}\calI\calD_{\frac{\tau}{1-\tau}}\calI f_2)\|_{M^{p,q}_{1\otimes \widetilde{m}_{J^{-1}}}}
   \\
   \sim &
   \|W_{\tau}(f_1,\calD_{\frac{\tau}{1-\tau}}\calI f_2)\|_{W(\scrF L^{{p}},L^{q}_m)}
   \\
   \lesssim &
   \|f_1\|_{M^{p_1,q_1}_{m_1}}\cdot \|\calD_{\frac{\tau}{1-\tau}}\calI f_2\|_{M^{p_2,q_2}_{m_2}}
   \sim \|f_1\|_{M^{p_1,q_1}_{m_1}}\|f_2\|_{M^{p_2,q_2}_{\calI \widetilde{m_2}}}.
 \end{split}
 \ee
\end{proof}

By the above proposition, we can prove Theorem \ref{thm-W1} for $\tau\in (0,1)$.

\begin{proof}[Proof of Theorem \ref{thm-W1} for $\tau\in (0,1)$]
Observe that $(\widetilde{m}_{J^{-1}})_J=\widetilde{m}$ and $\calI(\calI \widetilde{m_2})=\widetilde{m_2}$. The desired conclusion
follows by Theorem \ref{thm-M1} and Proposition \ref{pp-eqWM}.

\end{proof}

\subsection{Discretization by Gabor coefficients for BMW with endpoints $\tau=0,1$}

\begin{proof}[Proof of Theorem \ref{thm-W1} for $\tau=0$]
We divide the proof into two parts.

\textbf{``Only if'' part.}
   Let $f_1$, $f_2$, $\Phi$ be the same functions in the proof of Theorem \ref{pp-M1}.
Write
\be
\begin{split}
  \|W_0(f_1,f_2)\|_{W(\scrF L^p,L^q_m)}
  = &
  \|\scrF^{-1}(W_0(f_1,f_2))\|_{M^{p,q}_{1\otimes m}}
  \\
  = &
  \|V_{\check{\Phi}}(\scrF^{-1}(W_0(f_1,f_2)))(z,\z)\|_{L^{p,q}_{1\otimes m}}
  \\
  = &
  \|V_{\Phi}(W_0(f_1,f_2))(\z,-z)\|_{L^{p,q}_{1\otimes m}}
  \\
  = &
  \|V_{\phi}f_1(\z_1,\z_2-z_1)V_{\phi}f_2(\z_1-z_2,\z_2)\|_{L^{p,q}_{1\otimes m}}
  \\
  = &
  \|V_{\phi}f_1(\z_1,z_1)V_{\phi}f_2(z_2,\z_2)\|_{L^{p,q}_{1\otimes m}}.
\end{split}
\ee
By the same argument in the proof of Proposition \ref{pp-M1}, we have the lower estimate:
\be
  \begin{split}
  \|W_0(f_1,f_2)\|_{W(\scrF L^p,L^q_m)}
  \gtrsim &
  \|V_{\phi}f_1(n_1,k_1)V_{\phi}f_2(k_2,n_2)|_{\zdd\times \zdd}\|_{l^{p,q}_{1\otimes m}}
  \gtrsim
  \|(a_{n_1,k_1}b_{k_2,n_2})\|_{l^{p,q}_{1\otimes m}},
  \end{split}
  \ee
  and the upper bound estimates
  $  \|f_1\|_{M^{p_1,q_1}_{m_1}}
  \lesssim
  \|\vec{a}\|_{l^{p_1,q_1}_{m_1}(\zdd)},
  \|f_2\|_{M^{p_2,q_2}_{m_2}}
  \lesssim
  \|\vec{b}\|_{l^{p_2,q_2}_{m_2}(\zdd)}$.
  Combining the above two estimates, we deduce that
  \be
  \begin{split}
  \|(a_{n_1,k_1}b_{k_2,n_2})\|_{l^{p,q}_{1\otimes m}}
  \lesssim
  \|W_0(f_1,f_2)\|_{M^{p,q}_{1\otimes m}}
  \lesssim &
  \|f_1\|_{M^{p_1,q_1}_{m_1}}\|f_2\|_{M^{p_2,q_2}_{m_2}}
  \\
  \lesssim &
  \|\vec{a}\|_{l^{p_1,q_1}_{m_1}(\zdd)}\|\vec{b}\|_{l^{p_2,q_2}_{m_2}(\zdd)}.
  \end{split}
  \ee
  Note that, by the same method in the proof of Proposition \ref{pp-M1}, the above inequality
  is also valid if $m$,$m_i$ are replaced by $\widetilde{m}$ and $\widetilde{m_i}$ respectively.
  Here $\widetilde{m}(z):=m(\frac{z}{N})$, and $\widetilde{m_i}(z):=m_i(\frac{z}{N})$, $z\in \rdd$, $N\in \mathbb{N}$,
  $i=1,2$.

  \textbf{``If'' part.}
  As in the proof of Theorem \ref{pp-M1}, for $\Phi\in \calS(\rdd)$ mentioned above,
  there exists $\widetilde{\Psi}\in \mathfrak{M}^{p,q}_v$, such that
  $S_{\check{\Phi},\widetilde{\Psi}}^{\al,\b}=D_{\widetilde{\Psi}}^{\al,\b}C_{\check{\Phi}}^{\al,\b}=I$ on $M^{p,q}_{1\otimes m}$, where $\al=1/N_1$, $\b=\al/N_2$,  with some large integers $N_1, N_2$.

Applying Lemma \ref{lm-STFT-tWd}, we find that
\be
\begin{split}
  \|C_{\check{\Phi}}^{\al,\b}\scrF^{-1}W_0(f_1,f_2)\|_{l^{p,q}_{1\otimes \widetilde{m}}}
  = &
  \|V_{\check{\P}}(\scrF^{-1}W_0(f_1,f_2))(z,\z)|_{\al\zdd\times\b\zdd}\|_{l^{p,q}_{1\otimes \widetilde{m}}}
  \\
  \leq &
  \|V_{\check{\P}}(\scrF^{-1}W_0(f_1,f_2))(z,\z)|_{\b\zdd\times\b\zdd}\|_{l^{p,q}_{1\otimes \widetilde{m}}}
  \\
  = &
  \|V_{\phi}f_1(\z_1,z_1)V_{\phi}f_2(z_2,\z_2)|_{\b\zdd\times \b\zdd}\|_{l^{p,q}_{1\otimes \widetilde{m}}}
  \\
  \lesssim &
  \left\|V_{\phi}f_1(\b (n_1,k_1))V_{\phi}f_2(\b(k_2,n_2))\right\|_{l^{p,q}_{1\otimes \widetilde{m}}(\zdd\times \zdd)}.
\end{split}
\ee

Using the inequality of discrete mixed-norm spaces and the sampling property of STFT, we continue the above estimate by
\be
\begin{split}
  &\left\|V_{\phi}f_1(\b (n_1,k_1))V_{\phi}f_2(\b(k_2,n_2))\right\|_{l^{p,q}_{1\otimes \widetilde{m}}(\zdd\times \zdd)}
  \\
  \lesssim &
  \|(V_{\phi}f_1(\b k))\|_{l^{p_1,q_1}_{\widetilde{m_1}}(\zdd)}
  \cdot \|(V_{\phi}f_2(\b k))\|_{l^{p_2,q_2}_{\widetilde{m_2}}(\zdd)}
  \\
  = &
  \|V_{\phi}f_1(z)|_{\b\zdd\times \b\zdd}\|_{l^{p_1,q_1}_{\widetilde{m_1}}}
  \|V_{\phi}f_2(z)|_{\b\zdd\times \b\zdd}\|_{l^{p_2,q_2}_{\widetilde{m_2}}}
  \lesssim
  \|f_1\|_{M^{p_1,q_1}_{m_1}}\cdot \|f_2\|_{M^{p_2,q_2}_{m_2}}.
\end{split}
\ee
The above two estimates imply that
\be
\|C_{\check{\Phi}}^{\al,\b}\scrF^{-1}W_0(f_1,f_2)\|_{l^{p,q}_{1\otimes \widetilde{m}}}
\lesssim
\|f_1\|_{M^{p_1,q_1}_{m_1}}\cdot \|f_2\|_{M^{p_2,q_2}_{m_2}}.
\ee
Hence,
\be
\begin{split}
\|W_0(f_1,f_2)\|_{W(\scrF L^p,L^q_m)}
= &
\|\scrF^{-1}W_0(f_1,f_2)\|_{M^{p,q}_{1\otimes m}}
\\
= &\|D_{\widetilde{\Psi}}^{\al,\b}C_{\check{\Phi}}^{\al,\b}\scrF^{-1}W_0(f_1,f_2)\|_{M^{p,q}_{1\otimes m}}
\\
\lesssim &
\|C_{\check{\Phi}}^{\al,\b}\scrF^{-1}W_0(f_1,f_2)\|_{l^{p,q}_{1\otimes \widetilde{m}}}
\lesssim
\|f_1\|_{M^{p_1,q_1}_{m_1}}\cdot \|f_2\|_{M^{p_2,q_2}_{m_2}}.
\end{split}
\ee
We have now completed the proof.
\end{proof}

\begin{proof}[Proof of Theorem \ref{thm-W1} for $\tau=1$]
A direct calculation yields that
\be
\begin{split}
  \|W_1(f_1,f_2)\|_{W(\scrF L^p,L^q_m)}
  = &
  \|\overline{W_0(f_2,f_1)}\|_{W(\scrF L^p,L^q_m)}
  \\
  = &
  \|V_{\bar{\Phi}}(\overline{W_0(f_2,f_1)})(\z,-z)\|_{L^{p,q}_{1\otimes m}}
  \\
  = &
  \|V_{\Phi}(W_0(f_2,f_1))(\z,z)\|_{L^{p,q}_{1\otimes m}}
  \\
  = &
  \|V_{\Phi}(W_0(f_2,f_1))(\z,-z)\|_{L^{p,q}_{1\otimes m}}
  \\
  = &
  \|W_0(f_2,f_1)\|_{W(\scrF L^p,L^q_m)}.
\end{split}
\ee
The desired result follows by this and the case of $\tau=0$.
\end{proof}

\section{Second characterizations: separations of mixed-norm inequalities }
\subsection{Separation of  mixed-norm convolution inequality}
\begin{proposition}[Separation of convolution]\label{pp-sepc}
Assume $p_i, q_i, p, q \in (0,\fy]$, $i=1,2$. Suppose that $m$ is  submultiplicative,
$m_i=\om_i\otimes \mu_i$, $i=1,2$. We have
\ben\label{pp-sepc-c0}
\|(a_{k_1,k_2}b_{n_1-k_1,n_2-k_2})\|_{l^{p,q}_{1\otimes m}(\zddd)}
\lesssim \|\vec{a}\|_{l^{p_1,q_1}_{m_1}(\zdd)}\|\vec{b}\|_{l^{p_2,q_2}_{m_2}(\zdd)}
\een
if and only if
\begin{eqnarray}
\label{pp-sepc-c1}
&l_{\om_1^p}^{p_1/p}(\zd)\ast l_{\om_2^p}^{p_2/p}(\zd)\subset  l_{m_{\al}^p}^{q/p}(\zd),\ \ \
l_{\mu_1^p}^{q_1/p}(\zd)\ast l_{\mu_2^p}^{q_2/p}(\zd)\subset  l_{m_{\b}^p}^{q/p}(\zd),\ \ \
&p<\fy,
\\
\label{pp-sepc-c2}
&l^{p_1}_{\om_1}(\zd),\ l^{p_2}_{\om_2}(\zd)\subset l^q_{m_{\al}}(\zd),\ \ \ \
l^{q_1}_{\mu_1}(\zd),\ l^{q_2}_{\mu_2}(\zd)\subset l^q_{m_{\b}}(\zd),
&p\geq q.
\end{eqnarray}
Here, we denote $m_{\al}(z_1)=m(z_1,0)$ and $m_{\b}(z_2)=m(0,z_2)$ for $z_1,z_2\in \rd$.
\end{proposition}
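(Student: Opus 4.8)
The plan is to read everything off the explicit form of the left-hand (quasi-)norm. Since $l^{p,q}_{1\otimes m}$ places the $p$-sum on the first $\zdd$-block $(k_1,k_2)$ (weight $1$) and the $m$-weighted $q$-sum on the second block $(n_1,n_2)$, the inequality \eqref{pp-sepc-c0} reads
\[
\left(\sum_{n_1,n_2}\Big(\sum_{k_1,k_2}|a_{k_1,k_2}|^p|b_{n_1-k_1,n_2-k_2}|^p\Big)^{q/p}m(n_1,n_2)^q\right)^{1/q}\lesssim \|\vec a\|_{l^{p_1,q_1}_{\omega_1\otimes\mu_1}}\|\vec b\|_{l^{p_2,q_2}_{\omega_2\otimes\mu_2}}.
\]
When $p<\fy$ I would first substitute $A=|a|^p$, $B=|b|^p$; as $a\mapsto|a|^p$ is a bijection of nonnegative sequences and only $|a|,|b|$ enter, \eqref{pp-sepc-c0} becomes \emph{equivalent} to the single $2d$ convolution inequality $\|A\ast B\|_{l^{q/p}_{m^p}(\zdd)}\lesssim \|A\|_{l^{p_1/p,q_1/p}_{m_1^p}}\|B\|_{l^{p_2/p,q_2/p}_{m_2^p}}$, whose source norms are genuine mixed norms and whose target is the unmixed $l^{q/p}_{m^p}$. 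Thus the whole statement is a separation result: a $2d$ weighted mixed-norm convolution inequality is equivalent to two $d$-dimensional ones (or, in the degenerate range, to embeddings).

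For the forward implications I would test only on structured sequences. To get the first inequality in \eqref{pp-sepc-c1} I take $\vec a,\vec b$ supported on the slice $k_2=0$, i.e. $a_{k_1,k_2}=\alpha_{k_1}\delta_{k_2,0}$ and $b_{k_1,k_2}=\beta_{k_1}\delta_{k_2,0}$; then only $n_2=0$ survives, the weight becomes $m(n_1,0)=m_{\al}(n_1)$ \emph{exactly} (no loss), the $k_2$-sums collapse so the $\mu_i$-weights contribute only $\mu_i(0)\sim1$, and what remains is precisely $l^{p_1/p}_{\omega_1^p}\ast l^{p_2/p}_{\omega_2^p}\subset l^{q/p}_{m_{\al}^p}$. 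Testing on the complementary slice $k_1=0$ gives the second inequality with the $\mu_i$ and $m_{\b}$. For the range $p\ge q$ I would instead test with a single unit mass $\vec b=\delta_0$ (so $A\ast B=A$), which turns the convolution inequality into the embedding $l^{p_1,q_1}_{\omega_1\otimes\mu_1}\subset l^q_m$; tensorizing the test sequence then peels off the four embeddings in \eqref{pp-sepc-c2}, and the same delta-test shows \eqref{pp-sepc-c1}$\Rightarrow$\eqref{pp-sepc-c2} directly.

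The substance is the backward direction, which I split according to $r:=q/p$. When $p\le q$ (so $r\ge1$) I view $(A\ast B)(\cdot,n_2)=\sum_{l}A(\cdot,l)\ast_1 B(\cdot,n_2-l)$ as a sum over the second-variable shift of first-variable convolutions; after bounding $m\le m_{\al}\otimes m_{\b}$ by submultiplicativity, Minkowski's inequality in $l^{r}_{m_{\al}^p}$ together with the first hypothesis of \eqref{pp-sepc-c1} gives $\|(A\ast B)(\cdot,n_2)\|_{l^r_{m_{\al}^p}}\lesssim (P\ast Q)(n_2)$, where $P(l)=\|A(\cdot,l)\|_{l^{p_1/p}_{\omega_1^p}}$ and $Q(l)=\|B(\cdot,l)\|_{l^{p_2/p}_{\omega_2^p}}$; taking the $l^r_{m_{\b}^p}$-norm in $n_2$ and invoking the second hypothesis closes the estimate, using the bookkeeping identity $\|P\|_{l^{q_1/p}_{\mu_1^p}}=\|A\|_{l^{p_1/p,q_1/p}_{m_1^p}}$. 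When $q\le p$ (so $r\le1$) the convolution inequalities degenerate to embeddings: the elementary bound $(A\ast B)^r\le A^r\ast B^r$ together with submultiplicativity of $m_{\al},m_{\b}$ lets me factor the double sum and reduce \eqref{pp-sepc-c0} to the single $2d$ embedding $l^{p_1,q_1}_{\omega_1\otimes\mu_1}\subset l^q_{m_{\al}\otimes m_{\b}}$, which I obtain by composing the two embeddings of \eqref{pp-sepc-c2} (apply $l^{p_1}_{\omega_1}\subset l^q_{m_{\al}}$ on each $k_1$-slice, then $l^{q_1}_{\mu_1}\subset l^q_{m_{\b}}$ on the resulting sequence in $k_2$); the endpoint $p=\fy$ is handled by the same scheme after replacing the $r\le1$ bound by $\sup_k c_k\le(\sum_k c_k^q)^{1/q}$ and Young's $l^1\ast l^1\subset l^1$.

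I expect the main obstacle to be organizing the backward direction so that the two regimes $p\le q$ and $p\ge q$ (which overlap at $p=q$, and together with the forward tests yield both \eqref{pp-sepc-c1} and \eqref{pp-sepc-c2}) are treated coherently — in particular, keeping the mixed-norm bookkeeping honest when $r<1$, where only the weak quasi-triangle inequality is available and where the Young-type convolution genuinely collapses into an embedding. Submultiplicativity of $m$ is the one structural hypothesis doing real work, both in the reduction $m\le m_{\al}\otimes m_{\b}$ and in the per-slice splitting of $m_{\al}$ and $m_{\b}$.
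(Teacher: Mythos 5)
Your proposal is correct and follows essentially the same route as the paper's proof: necessity by testing on coordinate slices (and on delta masses for the embeddings), and sufficiency by splitting into the regimes $p\le q$ (Minkowski plus the two one-dimensional convolution inclusions, after decoupling $m\le m_{\al}\otimes m_{\b}$ by submultiplicativity) and $p\ge q$ (the bound $(\sum_k c_k)^{q/p}\le\sum_k c_k^{q/p}$, submultiplicativity to factor the double sum, and the four embeddings). The only cosmetic difference is that you recast the inequality as a genuine $2d$ convolution inclusion by the $p$-th power substitution at the outset, which the paper does only in the surrounding theorems.
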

\begin{proof}
We divide the proof into two parts.

\textbf{``Only if'' part.}
 In this part, we separate \eqref{pp-sepc-c0} by means of testing it by several constructed discrete sequences.

\textbf{Test 1.} Set $a_{k_1,n_1}=0$ if $n_1\neq 0_{\zd}$, and $b_{k_2,n_2}=0$ if $n_2\neq 0_{\zd}$.
The inequality \eqref{pp-sepc-c0} says that
\ben\label{pp-sepc-t1}
\begin{split}
&\bigg(\sum_{n_1\in \zd}\bigg(\sum_{k_1\in \zd}|a_{k_1,0}b_{n_1-k_1,0}|^p\bigg)^{q/p} m(n_1,0)^q\bigg)^{1/q}
\\
\lesssim &
\bigg(\sum_{k_1\in \zd}|a_{k_1,0} m_1(k_1,0)|^{p_1}\bigg)^{1/p_1}
\bigg(\sum_{k_2\in \zd}|b_{k_2,0} m_2(k_2,0)|^{p_2}\bigg)^{1/p_2}.
\end{split}
\een
Note that for $p<\fy$ the above inequality is equivalent to
$l_{\om_1^p}^{p_1/p}(\zd)\ast l_{\om_2^p}^{p_2/p}(\zd)\subset  l_{m_{\al}^p}^{q/p}(\zd)$.
Moreover, if we further assume $b_{k_2,n_2}=0$ for all $n_2, k_2\neq 0_{\zd}$,
\eqref{pp-sepc-t1} implies the embedding relation
$l^{p_1}_{\om_1}(\zd)\subset l^q_{m_{\al}}(\zd)$.
On the other hand, by taking $a_{k_1,n_1}=0$ for all $k_1, n_1\neq 0_{\zd}$,
we get $l^{p_2}_{\om_2}(\zd)\subset l^q_{m_{\al}}(\zd)$.

\textbf{Test 2.} Set $a_{k_1,n_1}=0$ if $k_1\neq 0_{\zd}$, and $b_{k_2,n_2}=0$ if $k_2\neq 0_{\zd}$.
The inequality \eqref{pp-sepc-c0} says that
\ben\label{pp-sepc-t2}
\begin{split}
&\bigg(\sum_{n_2\in \zd}\bigg(\sum_{k_2\in \zd}|a_{0,k_2}b_{0,n_2-k_2}|^p\bigg)^{q/p} m(0,n_2)^q\bigg)^{1/q}
\\
\lesssim &
\bigg(\sum_{n_1\in \zd}|a_{0,n_1} m_1(0,n_1)|^{q_1}\bigg)^{1/q_1}
\bigg(\sum_{n_2\in \zd}|b_{0,n_2} m_2(0,n_2)|^{q_2}\bigg)^{1/q_2}.
\end{split}
\een
If $p<\fy$, the above inequality is equivalent to
$l_{\mu_1^p}^{q_1/p}(\zd)\ast l_{\mu_2^p}^{q_2/p}(\zd)\subset  l_{m_{\b}^p}^{q/p}(\zd)$.
By taking $b_{k_2,n_2}=0$ for all $n_2, k_2\neq 0_{\zd}$,
\eqref{pp-sepc-t1} becomes the embedding relation
$l^{q_1}_{\mu_1}(\zd)\subset l^q_{m_{\b}}(\zd)$.
Similarly, by taking $a_{k_1,n_1}=0$ for all $k_1, n_1\neq 0_{\zd}$,
we get $l^{q_2}_{\mu_2}(\zd)\subset l^q_{m_{\b}}(\zd)$.

From Tests 1 and 2, we get \eqref{pp-sepc-c1} and \eqref{pp-sepc-c2}.


\textbf{``If'' part.} In this part, we consider following cases.

\textbf{Case 1: $p<q$.}
In this case, we will verify \eqref{pp-sepc-c1} implies \eqref{pp-sepc-c0}.
Using the Minkowski inequality and $m(n_1,n_2)\lesssim m_{\al}(n_1)m_{\b}(n_2)$, we conclude that
\be
\begin{split}
&\bigg(\sum_{n_1,n_2\in \zd}\bigg(\sum_{k_1,k_2\in \zd}|a_{k_1,k_2}b_{n_1-k_1,n_2-k_2}|^p\bigg)^{q/p} m(n_1,n_2)^q\bigg)^{1/q}
\\
\lesssim &
\bigg(\sum_{n_1,n_2\in \zd}\bigg(\sum_{k_1,k_2\in \zd}|a_{k_1,k_2}b_{n_1-k_1,n_2-k_2}|^p\bigg)^{q/p} m_{\al}(n_1)^qm_{\b}(n_2)^q\bigg)^{1/q}
\\
\lesssim &
\bigg(\sum_{n_2\in \zd}\bigg(\sum_{k_2\in \zd}\bigg(\sum_{n_1\in \zd}\bigg(\sum_{k_1\in \zd}|a_{k_1,k_2}b_{n_1-k_1,n_2-k_2}|^p\bigg)^{q/p} m_{\al}(n_1)^q\bigg)^{p/q}\bigg)^{q/p} m_{\b}(n_2)^q\bigg)^{1/q}.
\end{split}
\ee
Using the convolution inequality $l_{\om_1^p}^{p_1/p}(\zd)\ast l_{\om_2^p}^{p_2/p}(\zd)\subset  l_{m_{\al}^p}^{q/p}(\zd)$,
we continue the above estimate by
\be
\begin{split}
  &\bigg(\sum_{n_2\in \zd}\bigg(\sum_{k_2\in \zd}\bigg(\sum_{n_1\in \zd}\bigg(\sum_{k_1\in \zd}|a_{k_1,k_2}b_{n_1-k_1,n_2-k_2}|^p\bigg)^{q/p} m_{\al}(n_1)^q\bigg)^{p/q}\bigg)^{q/p} m_{\b}(n_2)^q\bigg)^{1/q}
  \\
  \lesssim &
  \bigg(\sum_{n_2\in \zd}\bigg(\sum_{k_2\in \zd}
  \|(a_{k_1,k_2})_{k_1}\|_{l^{p_1}_{\om_1}}^p\|(b_{n_1,n_2-k_2})_{n_1}\|_{l^{p_2}_{\om_2}}^p \bigg)^{q/p} m_{\b}(n_2)^q\bigg)^{1/q}.
\end{split}
\ee
Then, the convolution inequality $l_{\mu_1^p}^{q_1/p}(\zd)\ast l_{\mu_2^p}^{q_2/p}(\zd)\subset  l_{m_{\b}^p}^{q/p}(\zd)$ further implies that
\be
\begin{split}
  &\bigg(\sum_{n_2\in \zd}\bigg(\sum_{k_2\in \zd}
  \|(a_{k_1,k_2})_{k_1}\|_{l^{p_1}_{\om_1}}^p\|(b_{n_1,n_2-k_2})_{n_1}\|_{l^{p_2}_{\om_2}}^p \bigg)^{q/p} m_{\b}(n_2)^q\bigg)^{1/q}
  \\
  \lesssim &
  \|(\|(a_{k_1,n_1})_{k_1}\|_{l^{p_1}_{\om_1}})_{n_1}\|_{l^{q_1}_{\mu_1}}
  \|(\|(b_{k_2,n_2})_{k_2}\|_{l^{p_2}_{\om_2}})_{n_2}\|_{l^{q_2}_{\mu_2}}
  =\|\vec{a}\|_{l^{p_1,q_1}_{\om_1\otimes \mu_1}}\|\vec{b}\|_{l^{p_2,q_2}_{\om_2\otimes \mu_2}}.
\end{split}
\ee

\textbf{Case 2: $p\geq q$.}
In this case, we will verify \eqref{pp-sepc-c2} implies \eqref{pp-sepc-c0},
then the conclusion $\eqref{pp-sepc-c1}\Longrightarrow \eqref{pp-sepc-c0}$ follows by the fact
that \eqref{pp-sepc-c1} implies \eqref{pp-sepc-c2} for $p<\fy$.
By the well known embedding relation $l^q\subset l^p$ for $p\geq q$,
and the submultiplicative property of $m$, we have
\be
\begin{split}
  &\bigg(\sum_{n_1,n_2\in \zd}\bigg(\sum_{k_1,k_2\in \zd}|a_{k_1,k_2}b_{n_1-k_1,n_2-k_2}|^p\bigg)^{q/p} m(n_1,n_2)^q\bigg)^{1/q}
\\
\lesssim &
  \bigg(\sum_{n_1,n_2,k_1,k_2\in \zd}|a_{k_1,k_2}|^q|b_{n_1-k_1,n_2-k_2}|^qm(k_1,k_2)^qm(n_1-k_1,n_2-k_2)^q\bigg)^{1/q}
  \\
  = &
  \bigg(\sum_{k_1,k_2\in \zd}|a_{k_1,k_2}|^qm(k_1,k_2)^q\bigg)^{1/q}
  \bigg(\sum_{n_1,n_2\in \zd}|b_{n_1-k_1,n_2-k_2}|^qm(n_1-k_1,n_2-k_2)^q\bigg)^{1/q}
  \\
  \lesssim &
  \|\vec{a}\|_{l^{q,q}_{m_{\al}\otimes m_{\b}}}\|\vec{b}\|_{l^{q,q}_{m_{\al}\otimes m_{\b}}}.
\end{split}
\ee
Using \eqref{pp-sepc-c2}, i.e., the embedding relations
$l^{p_1}_{\om_1}(\zd),\ l^{p_2}_{\om_2}(\zd)\subset l^q_{m_{\al}}(\zd)$ and
$l^{q_1}_{\mu_1}(\zd),\ l^{q_2}_{\mu_2}(\zd)\subset l^q_{m_{\b}}(\zd)$,
 we obtain the following embedding relations for discrete mixed-norm spaces:
\be
l^{p_1,q_1}_{\om_1\otimes \mu_1}(\zdd)\subset l^{q,q}_{m_{\al}\otimes m_{\b}}(\zdd),
\ \ \ \
l^{p_2,q_2}_{\om_2\otimes \mu_2}(\zdd)\subset l^{q,q}_{m_{\al}\otimes m_{\b}}(\zdd).
\ee
Now, we continue our main estimate by
\be
\begin{split}
  &\bigg(\sum_{n_1,n_2\in \zd}\bigg(\sum_{k_1,k_2\in \zd}|a_{k_1,k_2}b_{n_1-k_1,n_2-k_2}|^p\bigg)^{q/p} m(n_1,n_2)^q\bigg)^{1/q}
\\
\lesssim &
\|\vec{a}\|_{l^{q,q}_{m_{\al}\otimes m_{\b}}}\|\vec{b}\|_{l^{q,q}_{m_{\al}\otimes m_{\b}}}
\lesssim
\|\vec{a}\|_{l^{p_1,q_1}_{\om_1\otimes \mu_1}}\|\vec{b}\|_{l^{p_2,q_2}_{\om_2\otimes \mu_2}}.
\end{split}
\ee
This concludes the proof.
\end{proof}

\begin{proof}[Proof of Theorem \ref{thm-M2}]
This proof follows directly by Theorem \ref{thm-M1} and Proposition \ref{pp-sepc}
with the fact that $(m_J)_{\al}=\calI m_{\b}$ and $(m_J)_{\b}=m_{\al}$.
\end{proof}

\subsection{Separation of  mixed-norm embedding inequality}
\begin{proposition}[Separation of embedding]\label{pp-sepe}
Assume $p_i, q_i, p, q \in (0,\fy]$, $i=1,2$. Suppose that $m$ is  submultiplicative, $m_i=\om_i\otimes \mu_i$, $i=1,2$. We have
\ben\label{pp-sepe-c0}
\|(a_{n_1,k_1}b_{k_2,n_2})\|_{l^{p,q}_{1\otimes m}}\lesssim \|\vec{a}\|_{l^{p_1,q_1}_{m_1}(\zdd)}\|\vec{b}\|_{l^{p_2,q_2}_{m_2}(\zdd)}
\een
if and only if the following two convolution relations:
\ben\label{pp-sepe-c1}
\ l^{p_2}_{\om_2}\subset l^p,\ l^{q_2}_{\mu_2}\subset l^q_{m_{\b}},
\een
\ben\label{pp-sepe-c2}
l^{p_1,q_1}_{\om_1\otimes \mu_1}\subset l^{(q,p)}_{m_{\al}\otimes 1}
\een
hold.
Here, we denote $m_{\al}(z_1)=m(z_1,0)$ and $m_{\b}(z_2)=m(0,z_2)$ for $z_1,z_2\in \rd$.
\end{proposition}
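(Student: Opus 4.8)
The plan is to exploit the complete product structure of the tested sequence $a_{n_1,k_1}b_{k_2,n_2}$ over $\zddd$. Since the summand factors, the inner $l^p$-sum over $(k_1,k_2)$ splits and the left-hand side of \eqref{pp-sepe-c0} becomes
\be
\Big(\sum_{n_1,n_2}A(n_1)^qB(n_2)^q\,m(n_1,n_2)^q\Big)^{1/q},\qquad A(n_1)=\Big(\sum_{k_1}|a_{n_1,k_1}|^p\Big)^{1/p},\ B(n_2)=\Big(\sum_{k_2}|b_{k_2,n_2}|^p\Big)^{1/p}.
\ee
The decisive observation --- and the reason for the asymmetry in the statement --- is that in $A(n_1)$ the $l^p$-summation runs over $k_1$, which is the \emph{second} (outer, $q_1$-weighted) index of $\vec a$ in its native norm, whereas in $B(n_2)$ the $l^p$-summation runs over $k_2$, which \emph{is} the inner index of $\vec b$. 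Thus $\big(\sum_{n_1}A(n_1)^qm_{\al}(n_1)^q\big)^{1/q}=\|\vec a\|_{l^{(q,p)}_{m_{\al}\otimes 1}}$ is the reversed mixed norm of Definition \ref{df-dmII}, while $\big(\sum_{n_2}B(n_2)^qm_{\b}(n_2)^q\big)^{1/q}$ is an \emph{aligned} mixed norm that factors through the two scalar embeddings $l^{p_2}_{\om_2}\subset l^p$ and $l^{q_2}_{\mu_2}\subset l^q_{m_{\b}}$ of \eqref{pp-sepe-c1}.

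For the ``if'' direction I would first invoke submultiplicativity of $m$ to bound $m(n_1,n_2)\lesssim m_{\al}(n_1)m_{\b}(n_2)$, which makes the double sum split into the product
\be
\|\vec a\|_{l^{(q,p)}_{m_{\al}\otimes 1}}\cdot\Big(\sum_{n_2}B(n_2)^q m_{\b}(n_2)^q\Big)^{1/q}.
\ee
Then \eqref{pp-sepe-c2} controls the first factor by $\|\vec a\|_{l^{p_1,q_1}_{\om_1\otimes\mu_1}}$, and applying $l^{p_2}_{\om_2}\subset l^p$ inside $B(n_2)$ followed by $l^{q_2}_{\mu_2}\subset l^q_{m_{\b}}$ controls the second by $\|\vec b\|_{l^{p_2,q_2}_{\om_2\otimes\mu_2}}$; the usual modifications cover $p=\fy$ or $q=\fy$.

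For the ``only if'' direction I would feed atomic test sequences into \eqref{pp-sepe-c0}. Choosing $\vec b$ concentrated at a single index makes $B(n_2)$ a multiple of a Kronecker delta supported at $n_2=0$, so that (using $m(n_1,0)=m_{\al}(n_1)$) the left-hand side collapses to $\|\vec a\|_{l^{(q,p)}_{m_{\al}\otimes 1}}$ and \eqref{pp-sepe-c0} yields \eqref{pp-sepe-c2} for arbitrary $\vec a$. Symmetrically, choosing $\vec a$ a single atom collapses the left-hand side to $\big(\sum_{n_2}B(n_2)^q m_{\b}(n_2)^q\big)^{1/q}$; specializing $\vec b$ to be supported on one ``column'' ($n_2$ fixed) and then on one ``row'' ($k_2$ fixed) peels off the two embeddings in \eqref{pp-sepe-c1}. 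Here I only use that the weights are positive and locally bounded above and below, so every atom has norm comparable to $1$. I expect the principal difficulty to be bookkeeping rather than analysis: one must track carefully which index of each sequence is inner and which is outer as it passes through the crossing $a_{n_1,k_1}b_{k_2,n_2}$, since it is exactly this crossing that forces the single reversed-mixed-norm condition \eqref{pp-sepe-c2} for $\vec a$ against the pair of scalar embeddings \eqref{pp-sepe-c1} for $\vec b$.
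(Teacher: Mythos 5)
Your proposal is correct and follows essentially the same route as the paper: the ``if'' direction factors the left-hand side and uses $m(n_1,n_2)\lesssim m_{\al}(n_1)m_{\b}(n_2)$ to split it into the reversed mixed norm of $\vec a$ times an aligned mixed norm of $\vec b$, and the ``only if'' direction runs the same three atomic tests (one collapsing $\vec b$ to recover \eqref{pp-sepe-c2}, two collapsing $\vec a$ and restricting $\vec b$ to a row or column to recover \eqref{pp-sepe-c1}). The index bookkeeping you flag is handled correctly.
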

\begin{proof}
We divide the proof into two parts.

\textbf{``Only if'' part.}
  Write \eqref{pp-sepe-c0} by
\ben\label{pp-sepe-1}
\begin{split}
&\bigg(\sum_{n_1,n_2\in \zd}\bigg(\sum_{k_1,k_2\in \zd}|a_{n_1,k_1}b_{k_2,n_2}|^p\bigg)^{q/p} m(n_1,n_2)^q\bigg)^{1/q}
\\
\lesssim &
\bigg(\sum_{n_1\in \zd}\bigg(\sum_{k_1\in \zd}|a_{k_1,n_1} m_1(k_1,n_1)|^{p_1}\bigg)^{q_1/p_1}\bigg)^{1/q_1}
\bigg(\sum_{n_2\in \zd}\bigg(\sum_{k_2\in \zd}|b_{k_2,n_2} m_2(k_2,n_2)|^{p_2}\bigg)^{q_2/p_2}\bigg)^{1/q_2}.
\end{split}
\een
The above inequality will be tested by several constructed discrete sequences.

\textbf{Test 1.} Set $a_{k_1,n_1}=0$ if $(k_1,n_1)\neq 0_{\zd\times \zd}$, and $b_{k_2,n_2}=0$ if $n_2\neq 0_{\zd}$.
The inequality \eqref{pp-sepe-1} says that
\be
\begin{split}
\bigg(\sum_{k_2\in \zd}|b_{k_2,0}|^p\bigg)^{1/p}
\lesssim
\bigg(\sum_{k_2\in \zd}|b_{k_2,0} m_2(k_2,0)|^{p_2}\bigg)^{1/p_2},
\end{split}
\ee
which implies the embedding relation $l^{p_2}_{\om_2}\subset l^p$ in \eqref{pp-sepe-c1}.

\textbf{Test 2.} Set $a_{k_1,n_1}=0$ if $(k_1,n_1)\neq 0_{\zd\times \zd}$, and $b_{k_2,n_2}=0$ if $k_2\neq 0_{\zd}$.
The inequality \eqref{pp-sepe-1} says that
\be
\begin{split}
\bigg(\sum_{n_1\in \zd}|b_{0,n_2}|^q m(0,n_2)^q\bigg)^{1/q}
\lesssim
\bigg(\sum_{n_2\in \zd}|b_{0,n_2} m_2(0,n_2)|^{q_2}\bigg)^{1/q_2}.
\end{split}
\ee
This is just the embedding relation $l_{\mu_2}^{q_2}\subset  l_{m_{\b}}^{q}$ in \eqref{pp-sepe-c1}.

\textbf{Test 3.}
Set $b_{k_2,n_2}=0$ if $(k_2,n_2)\neq 0_{\zd\times \zd}$.
The inequality \eqref{pp-sepe-1} says that
\be
\begin{split}
\bigg(\sum_{n_1\in \zd}\bigg(\sum_{k_1\in \zd}|a_{n_1,k_1}|^p\bigg)^{q/p} m(n_1,0)^q\bigg)^{1/q}
\lesssim
\bigg(\sum_{n_1\in \zd}\bigg(\sum_{k_1\in \zd}|a_{k_1,n_1} m_1(k_1,n_1)|^{p_1}\bigg)^{q_1/p_1}\bigg)^{1/q_1}.
\end{split}
\ee
This is just the embedding relation $l^{p_1,q_1}_{\om_1\otimes \mu_1}\subset l^{(q,p)}_{m_{\al}\otimes 1}$ in \eqref{pp-sepe-c2}.
\\

\textbf{``If'' part.} Recall $m(n_1,n_2)\lesssim m_{\al}(n_1)m_{\b}(n_2)$.
Write
\be
\begin{split}
&\bigg(\sum_{n_1,n_2\in \zd}\bigg(\sum_{k_1,k_2\in \zd}|a_{n_1,k_1}b_{k_2,n_2}|^p\bigg)^{q/p} m(n_1,n_2)^q\bigg)^{1/q}
\\
\lesssim &
\bigg(\sum_{n_1,n_2\in \zd}\bigg(\sum_{k_1\in \zd}|a_{n_1,k_1}|^p\sum_{k_2\in \zd}|b_{k_2,n_2}|^p\bigg)^{q/p}
m_{\al}(n_1)^qm_{\b}(n_2)^q\bigg)^{1/q}
\\
\lesssim &
\bigg(\sum_{n_1\in \zd}\bigg(\sum_{k_1\in \zd}|a_{n_1,k_1}|^p\bigg)^{q/p} m_{\al}(n_1)^q\bigg)^{1/q}
\bigg(\sum_{n_2\in \zd}\bigg(\sum_{k_2\in \zd}|b_{k_2,n_2}|^{p}\bigg)^{q/p}m_{\b}(n_2)^q\bigg)^{1/q}
\\
\lesssim &
\bigg(\sum_{n_1\in \zd}\bigg(\sum_{k_1\in \zd}|a_{k_1,n_1} m_1(k_1,n_1)|^{p_1}\bigg)^{q_1/p_1}\bigg)^{1/q_1}
\bigg(\sum_{n_2\in \zd}\bigg(\sum_{k_2\in \zd}|b_{k_2,n_2} m_2(k_2,n_2)|^{p_2}\bigg)^{q_2/p_2}\bigg)^{1/q_2},
\end{split}
\ee
where in the last inequality we use \eqref{pp-sepe-c1} and \eqref{pp-sepe-c2}.
\end{proof}
\begin{proof}[Proof of Theorem \ref{thm-W2}]
Note that $\widetilde{m}$ is submultiplicative if $m$ is submultiplicative.
Observe that $\widetilde{m_2}=\widetilde{\om_2}\otimes \widetilde{\mu_2}$.
The case $\tau\in (0,1)$ follows directly by Theorem \ref{thm-W1} and Proposition \ref{pp-sepc}.
The endpoint cases $\tau=0,1$ follows by Theorem \ref{thm-W1} and Proposition \ref{pp-sepe}.
\end{proof}

\section{Third characterizations: applications for power weights}
\subsection{Sharp exponents for convolution inequalities}

Note that Theorem \ref{thm-M3} is a direct conclusion of Theorem \ref{thm-M2}.
Observe that in Theorem \ref{thm-W2}, $\widetilde{m}\sim m$,
$\widetilde{\om_2}\sim \om_2$ and $\widetilde{\mu_2}\sim \mu_2$,
for $m=v_s$, $\om_2=v_{s_2}$, $\mu_2=v_{t_2}$.
Then Theorem \ref{thm-W3} with $\tau\in (0,1)$ follows by Theorem \ref{thm-W2}.
See the proof of Theorem \ref{thm-W3} for $\tau=0,1$ in the next subsection.

If we want to get the sharp exponents for the convolution inequalities mentioned in Theorem \ref{thm-M3} and \ref{thm-W3},
the following two lemmas is needed.

\begin{lemma}(See \cite[Theorem 1.1]{GuoFanWuZhao2018Studia}) \label{lm-exp-wcov}
Suppose $1\leq q, q_1, q_2\leq \infty$, $s, s_1, s_2\in \mathbb{R}$. Then
\begin{equation}
l^{q_1}_{s_1}(\rd)\ast l^{q_2}_{s_2}(\rd) \subset l^{q}_{s}(\rd)
\end{equation}
if and only if $(\mathbf{q},\mathbf{s})=(q, q_1, q_2, s, s_1, s_2)$ satisfies one of the following conditions $\mathcal {A}_i$, $i=1,2,3,4$.
\begin{eqnarray}
&&(\mathcal {A}_1)\begin{cases}
s\leq s_1,~s\leq s_2,~0\leq s_1+s_2,\\
1+\Big(\frac{1}{q}+\frac{s}{d}\Big)\vee 0<\Big(\frac{1}{q_1}+\frac{s_1}{d}\Big)\vee 0+\Big(\frac{1}{q_2}+\frac{s_2}{d}\Big)\vee 0,\\
\frac{1}{q}+\frac{s}{d}\leq \frac{1}{q_1}+\frac{s_1}{d},~\frac{1}{q}+\frac{s}{d}\leq \frac{1}{q_2}+\frac{s_2}{d},
1\leq \frac{1}{q_1}+\frac{s_1}{d}+\frac{1}{q_2}+\frac{s_2}{d},\\
(q,s)=(q_1,s_1) ~\text{if}~ \frac{1}{q}+\frac{s}{d}=\frac{1}{q_1}+\frac{s_1}{d}, \\
(q,s)=(q_2,s_2) ~\text{if}~ \frac{1}{q}+\frac{s}{d}=\frac{1}{q_2}+\frac{s_2}{d},\\
(q'_1, -s_1)=(q_2,s_2)~\text{if}~ 1=\frac{1}{q_1}+\frac{s_1}{d}+\frac{1}{q_2}+\frac{s_2}{d};
\end{cases}
\\
&&(\mathcal {A}_2)\begin{cases}
s=s_1=s_2=0,\\
q=q_1, q_2=1~or~q=q_2, q_1=1~or~q=\infty, \frac{1}{q_1}+\frac{1}{q_2}=1;
\end{cases}
\\
&&(\mathcal {A}_3)\begin{cases}
s\leq s_1,~s\leq s_2,\\
\frac{1}{q_1}+\frac{1}{q_2}=1,~s_1+s_2=0,\\
\frac{1}{q}+\frac{s}{d}<0\leq \frac{1}{q_1}+\frac{s_1}{d},\frac{1}{q_2}+\frac{s_2}{d};
\end{cases}
\end{eqnarray}
\begin{eqnarray}
&&(\mathcal {A}_4)\begin{cases}
s\leq s_1,~s\leq s_2,~0\leq s_1+s_2,\\
1+\frac{1}{q}+\frac{s}{d}=\frac{1}{q_1}+\frac{s_1}{d}+\frac{1}{q_2}+\frac{s_2}{d},~\frac{1}{q}\leq \frac{1}{q_1}+\frac{1}{q_2},\\
\frac{1}{q}+\frac{s}{d}<\frac{1}{q_1}+\frac{s_1}{d},~\frac{1}{q}+\frac{s}{d}<\frac{1}{q_2}+\frac{s_2}{d},~\frac{1}{q}+\frac{s}{d}>0,\\
q\neq \infty,~q_1,q_2\neq 1,~\text{if}~s=s_1~or~s=s_2.
\end{cases}
\end{eqnarray}
Here, we use the notation \
\begin{equation*}
a\vee b=\max \{a,b\}.
\end{equation*}
\end{lemma}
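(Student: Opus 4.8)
The plan is to recast the two-sided convolution inequality as a symmetric trilinear estimate, and then treat necessity by explicit test sequences and sufficiency by a dyadic decomposition of the convolution domain. Since all exponents lie in $[1,\infty]$, the space $l^q_s(\zd)$ is a dual space (with the case $q=\infty$ handled directly), so the claimed embedding
\[
l^{q_1}_{s_1}(\zd)\ast l^{q_2}_{s_2}(\zd)\subset l^{q}_{s}(\zd)
\]
is equivalent to the trilinear bound $\big|\sum_{k+l+m=0} a_k b_l c_m\big|\lesssim \|a\|_{l^{q_1}_{s_1}}\|b\|_{l^{q_2}_{s_2}}\|c\|_{l^{q'}_{-s}}$. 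This form is fully symmetric in the three slots $(q_1,s_1)$, $(q_2,s_2)$, $(q',-s)$, and writing $\alpha_1=\frac{1}{q_1}+\frac{s_1}{d}$, $\alpha_2=\frac1{q_2}+\frac{s_2}d$, $\alpha_3=\frac1{q'}-\frac sd$ makes the hypotheses transparent: the weight conditions $s\le s_1$, $s\le s_2$, $s_1+s_2\ge0$ say precisely that each pairwise sum of the three weights $s_1,s_2,-s$ is nonnegative, while the scaling relation $1+\frac1q+\frac sd\le\frac1{q_1}+\frac{s_1}d+\frac1{q_2}+\frac{s_2}d$ becomes $\alpha_1+\alpha_2+\alpha_3\ge2$, with $\mathcal A_4$ the critical equality $\alpha_1+\alpha_2+\alpha_3=2$.

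For necessity I would test against a short list of sequences. Point masses give the weight inequalities: $a=\delta_0$, $b=\delta_n$ forces $s\le s_2$, the symmetric choice forces $s\le s_1$, and $a=\delta_n$, $b=\delta_{-n}$ (so $a\ast b=\delta_0$) forces $s_1+s_2\ge0$. Indicators of balls $a=b=\mathbf 1_{B_N}$ yield the scaling inequality by comparing $\|a\ast b\|_{l^q_s}\gtrsim N^{d}\cdot N^{d/q+s}$ with $N^{d/q_1+s_1}\cdot N^{d/q_2+s_2}$, while $a=\mathbf 1_{B_N}$, $b=\delta_0$ isolates $\frac1q+\frac sd\le\frac1{q_i}+\frac{s_i}d$; the truncations $(\frac1{q_i}+\frac{s_i}d)\vee0$ in $\mathcal A_1$ reflect that $\|\mathbf 1_{B_N}\|_{l^{q_i}_{s_i}}\sim N^{d((1/q_i+s_i/d)\vee0)}$ saturates once the weight passes the summability threshold. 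The delicate part is the rigidity built into $\mathcal A_1$--$\mathcal A_4$: the forced equalities such as $(q,s)=(q_1,s_1)$ when $\frac1q+\frac sd=\frac1{q_1}+\frac{s_1}d$, and the strict-versus-nonstrict dichotomy between $\mathcal A_1$ and $\mathcal A_4$. These I would detect by testing with sequences supported on a single dyadic annulus $\{|k|\sim N\}$ and letting $N\to\infty$: a critical slot that allowed a mismatch of $(q,s)$ would produce a power or logarithmic divergence, and tracking exactly which configurations diverge pins down the remaining conditions.

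For sufficiency the main step is a dyadic decomposition of $\{k+l+m=0\}$ according to which of $|k|,|l|,|m|$ is largest; since the three indices sum to zero, the two largest are always comparable, so on each region one weight sits on the small variable and the other two combine on comparable large variables. Using $\langle m\rangle\lesssim\langle k\rangle\langle l\rangle$ and its variants to redistribute the weights, each region reduces to an \emph{unweighted} trilinear Young--H\"older estimate times a power of the dyadic scale. In the generic case $\mathcal A_1$ the strict scaling inequality leaves a positive power to spare, so the dyadic pieces form a convergent geometric series and ordinary Young and H\"older inequalities close the bound; the pure endpoints $\mathcal A_2$ (the sharp unweighted Young line) and $\mathcal A_3$ (the dual configuration $\frac1{q_1}+\frac1{q_2}=1$, $s_1+s_2=0$, handled by H\"older placing one factor in $l^{q'}$) are classical.

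The main obstacle will be the critical case $\mathcal A_4$, where $\alpha_1+\alpha_2+\alpha_3=2$ leaves no room: the dyadic pieces are all of comparable size and a naive summation diverges logarithmically. Here I would pass to Lorentz refinements --- proving the endpoint in the form $l^{q_1,1}_{s_1}\ast l^{q_2,1}_{s_2}\subset l^{q,\infty}_{s}$ together with its permutations and then upgrading by real interpolation (equivalently, a Bourgain-type summation over scales) --- and the precise side conditions listed in $\mathcal A_4$ (the strict inequalities $\frac1q+\frac sd<\frac1{q_i}+\frac{s_i}d$, the bound $\frac1q\le\frac1{q_1}+\frac1{q_2}$, and the exclusion of $q=\infty$ and $q_i=1$ when $s=s_i$) are exactly the hypotheses that prevent these borderline Lorentz estimates from collapsing. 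Verifying that these interpolation endpoints hold and that their hypotheses match the stated list is where the real work lies.
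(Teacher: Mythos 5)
First, a point of reference: the paper does not prove Lemma \ref{lm-exp-wcov} at all --- it is imported verbatim from \cite[Theorem 1.1]{GuoFanWuZhao2018Studia} --- so there is no in-paper argument to measure your proposal against. Judged on its own terms, your architecture is the standard and essentially correct skeleton for results of this type: dualizing to the symmetric trilinear form $\sum_{k+l+m=0}a_kb_lc_m$, reading the hypotheses as pairwise positivity of the three weights $s_1,s_2,-s$ together with the scaling relation $\alpha_1+\alpha_2+\alpha_3\ge 2$, extracting necessity from test sequences, and proving sufficiency by a dyadic decomposition according to which of $|k|,|l|,|m|$ is largest (the two largest being comparable on the hyperplane $k+l+m=0$). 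All of that is sound.

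The genuine gaps sit exactly where you flag them, and they are not minor. On the necessity side, point masses, balls and single annuli do not by themselves produce all of the rigidity in $\mathcal{A}_1$ and $\mathcal{A}_4$: the forced equalities $(q,s)=(q_i,s_i)$ on the lines $\frac1q+\frac sd=\frac1{q_i}+\frac{s_i}d$, the condition $1\le\frac1{q_1}+\frac{s_1}{d}+\frac1{q_2}+\frac{s_2}{d}$ with its own equality clause $(q_1',-s_1)=(q_2,s_2)$, and the separate constraint $\frac1q\le\frac1{q_1}+\frac1{q_2}$ in $\mathcal{A}_4$ each require purpose-built two-parameter test families (a ball of radius $R$ translated to distance $N\gg R$ to decouple the integrability exponents from the weights, or slowly varying sequences of the form $\langle k\rangle^{-d/q_i-s_i}(\log\langle k\rangle)^{-1/q_i-\epsilon}$ to detect logarithmic divergence on the critical lines). ``Tracking which configurations diverge'' is the entire content of the necessity proof, and it is left undone. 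On the sufficiency side, the critical case $\mathcal{A}_4$ is only gestured at: multilinear real interpolation from Lorentz endpoints $l^{q_1,1}_{s_1}\ast l^{q_2,1}_{s_2}\subset l^{q,\infty}_{s}$ is precisely the step that fails in the excluded configurations ($q=\infty$, or $q_i=1$ when $s=s_i$), so one cannot assert the endpoints and interpolate --- one must prove them under exactly the stated side conditions and show how they break otherwise. As written, the proposal is a credible plan whose two decisive steps are deferred, so it cannot be accepted as a proof of the lemma.
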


\begin{lemma}(see \cite[Proposition 2.5]{GuoFanWuZhao2018Studia}) \label{lm-exp-cov}
Suppose $0<q,q_1,q_2\leq \infty$. Then
\ben\label{lm-exp-cov-c0}
l^{q_1}(\rd)\ast l^{q_2}(\rd) \subset l^{q}(\rd)
\een
holds if and only if
\ben\label{lm-exp-cov-c1}
1+\frac{1}{q}\leq\frac{1}{q_1}+\frac{1}{q_2},\
\frac{1}{q}\leq \frac{1}{q_1},\ \frac{1}{q}\leq \frac{1}{q_2}.
\een
Moreover, if \eqref{lm-exp-cov-c1} holds, we have
\ben\label{lm-exp-cov-c2}
l_{|s|}^{q_1}(\rd)\ast l_s^{q_2}(\rd) \subset l_s^{q}(\rd).
\een
\end{lemma}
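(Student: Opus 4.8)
The plan is to prove the two equivalences in turn: first the unweighted characterization \eqref{lm-exp-cov-c0}$\Leftrightarrow$\eqref{lm-exp-cov-c1}, and then to deduce the weighted inclusion \eqref{lm-exp-cov-c2} from the unweighted one by a pointwise weight-splitting argument. For the necessity of \eqref{lm-exp-cov-c1}, I would test the inclusion against explicit sequences. Taking $\vec b=\delta_0$ (unit mass at the origin) gives $\vec a\ast\vec b=\vec a$, so the inclusion forces $l^{q_1}(\zd)\subset l^q(\zd)$ and hence $\frac1q\le\frac1{q_1}$; the symmetric choice $\vec a=\delta_0$ yields $\frac1q\le\frac1{q_2}$. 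For the remaining condition I would take $\vec a=\vec b=\mathbf 1_{\{1,\dots,N\}^d}$, so that $\|\vec a\|_{l^{q_1}}\sim N^{d/q_1}$ and $\|\vec b\|_{l^{q_2}}\sim N^{d/q_2}$, while the convolution is $\gtrsim N^d$ on a cube of side $\sim N$, giving $\|\vec a\ast\vec b\|_{l^q}\gtrsim N^{d(1+1/q)}$; letting $N\to\fy$ forces $1+\frac1q\le\frac1{q_1}+\frac1{q_2}$.

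For sufficiency I would rely on two elementary facts valid on $\zd$: the monotone embedding $l^u\subset l^v$ whenever $u\le v$, and the quasi-subadditive bound $\|\vec a\ast\vec b\|_{l^u}\le\|\vec a\|_{l^u}\|\vec b\|_{l^u}$ for $0<u\le1$ (from $(\sum x_i)^u\le\sum x_i^u$), together with classical Young's inequality in the Banach range. The argument splits by the sizes of $q_1,q_2$. If $q_1,q_2\ge1$, condition \eqref{lm-exp-cov-c1} gives $\frac1{q_1}+\frac1{q_2}\ge1$, so Young yields $l^{q_1}\ast l^{q_2}\subset l^r$ with $\frac1r=\frac1{q_1}+\frac1{q_2}-1\ge\frac1q$, and the embedding $l^r\subset l^q$ closes the case. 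If exactly one exponent, say $q_1$, is $<1$ while $q_2\ge1$, I embed $l^{q_1}\subset l^1$ and use $l^1\ast l^{q_2}\subset l^{q_2}\subset l^q$, the last step by $\frac1q\le\frac1{q_2}$. If both are $<1$, I set $u=\max(q_1,q_2)<1$, embed both factors into $l^u$, and apply the subadditive bound to get $l^{q_1}\ast l^{q_2}\subset l^u\subset l^q$, the final embedding holding because $\frac1q\le\min(\frac1{q_1},\frac1{q_2})$. A relabelling of $q_1,q_2$ covers the remaining configurations.

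The main obstacle is precisely the quasi-Banach range $0<q_i<1$, where neither Young's nor Minkowski's inequality is available; this is why I would isolate the subadditivity estimate and the monotone $l^u$-embeddings as the workhorses, and it explains a structural feature of the result: once one exponent drops below $1$, the third condition in \eqref{lm-exp-cov-c1} is automatically implied by the other two (indeed $\frac1{q_1}>1$ and $\frac1q\le\frac1{q_2}$ force $\frac1{q_1}+\frac1{q_2}>1+\frac1q$), so it is consistency rather than an active hypothesis there.

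Finally, for the weighted inclusion \eqref{lm-exp-cov-c2} I would reduce to the unweighted result via the Peetre-type inequality $\langle n\rangle^s\lesssim\langle k\rangle^{|s|}\langle n-k\rangle^s$, valid for every $s\in\rr$ and all $k,n\in\zd$ (for $s\ge0$ from $\langle n\rangle\lesssim\langle k\rangle\langle n-k\rangle$, and for $s<0$ from the same submultiplicativity with the roles of $k$ and $n-k$ interchanged). This gives the pointwise domination $v_s(n)\,|(\vec a\ast\vec b)(n)|\lesssim\big((v_{|s|}|\vec a|)\ast(v_s|\vec b|)\big)(n)$, whence $\|\vec a\ast\vec b\|_{l^q_s}\lesssim\|(v_{|s|}|\vec a|)\ast(v_s|\vec b|)\|_{l^q}\lesssim\|\vec a\|_{l^{q_1}_{|s|}}\|\vec b\|_{l^{q_2}_s}$, the last step being an application of the already-established unweighted inclusion under the hypothesis \eqref{lm-exp-cov-c1}, which is assumed to hold.
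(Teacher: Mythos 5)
Your argument is correct. Note that for the equivalence \eqref{lm-exp-cov-c0}$\Leftrightarrow$\eqref{lm-exp-cov-c1} the paper offers no proof at all --- it is quoted from \cite[Proposition 2.5]{GuoFanWuZhao2018Studia} --- so your testing argument (Dirac masses for the two embedding conditions, dilated indicator blocks for the scaling condition) and your three-case sufficiency argument (Young's inequality in the Banach range, the $u$-subadditivity $\|\vec a\ast\vec b\|_{l^u}\le\|\vec a\|_{l^u}\|\vec b\|_{l^u}$ for $u\le 1$, and the monotone embeddings $l^u\subset l^v$ for $u\le v$) are a self-contained supplement rather than an alternative to anything in the text; both halves check out, including the observation that the first inequality in \eqref{lm-exp-cov-c1} becomes redundant once an exponent drops below $1$. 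For the only part the paper does prove, namely that \eqref{lm-exp-cov-c1} implies \eqref{lm-exp-cov-c2}, your reduction via $\langle n\rangle^s\lesssim\langle k\rangle^{|s|}\langle n-k\rangle^{s}$ and the resulting pointwise domination of the weighted convolution by an unweighted one is exactly the paper's one-line argument, just written out in full.
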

\begin{proof}
  We only point out that \eqref{lm-exp-cov-c2} is a direct conclusion of \eqref{lm-exp-cov-c0}
  and $\langle j\rangle^s\lesssim \langle j-l\rangle^{|s|}\langle l\rangle^s$ for all $s\in \rr$.
\end{proof}

By the above two lemmas, we obtain the following results.

\begin{theorem}\label{thm-M4}
Suppose that $p<\fy$ and $p\leq p_i,q_i,q$ for $i=1,2$, $\tau\in [0,1]$. We have
\be
W_{\tau}:  \calM^{p_1,q_1}_{v_{s_1,t_1}}(\rd)\times \calM^{p_2,q_2}_{v_{s_2,t_2}}(\rd)
\longrightarrow M^{p,q}_{1\otimes v_s}(\rdd)
\ee
if and only if
\be
(q/p,p_1/p,p_2/p,ps,ps_1,ps_2),\ (q/p,q_1/p,q_2/p,ps,pt_1,pt_2)\in \calA.
\ee
\end{theorem}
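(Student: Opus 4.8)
The plan is to combine the third characterization of BMM (Theorem \ref{thm-M3}) with the sharp-exponent description of weighted discrete convolutions (Lemma \ref{lm-exp-wcov}). Since $p<\fy$, Theorem \ref{thm-M3} identifies the claimed boundedness of $W_{\tau}$ with the simultaneous validity of the two convolution inclusions
\[
l^{p_1/p}_{ps_1}(\zd)\ast l^{p_2/p}_{ps_2}(\zd)\subset l^{q/p}_{ps}(\zd),\qquad
l^{q_1/p}_{pt_1}(\zd)\ast l^{q_2/p}_{pt_2}(\zd)\subset l^{q/p}_{ps}(\zd).
\]
Thus the entire task reduces to characterizing each of these two inclusions in terms of its exponents and weights, after which conjoining the two equivalences will give the statement.

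First I would verify that Lemma \ref{lm-exp-wcov} actually applies to both inclusions. That lemma requires all three Lebesgue exponents to lie in $[1,\fy]$, and this is exactly what the standing hypothesis $p\leq p_i,q_i,q$ supplies: dividing through by $p$ yields $p_i/p,\,q_i/p,\,q/p\geq 1$. This is the only place the assumption $p\leq p_i,q_i,q$ enters, and it is precisely what places the problem inside the range covered by Lemma \ref{lm-exp-wcov}. With applicability secured, I would apply the lemma to each inclusion separately: for the first, the role of $(q,q_1,q_2,s,s_1,s_2)$ is played by $(q/p,p_1/p,p_2/p,ps,ps_1,ps_2)$, so the inclusion holds iff this tuple lies in $\calA:=\calA_1\cup\calA_2\cup\calA_3\cup\calA_4$; for the second, the corresponding tuple is $(q/p,q_1/p,q_2/p,ps,pt_1,pt_2)$, and membership in $\calA$ again characterizes it. Intersecting the two conditions is the desired conclusion.

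Since this is a direct application of already-established results, I do not anticipate a genuine obstacle; the work is essentially bookkeeping. The only points requiring care are matching the common target space $l^{q/p}_{ps}$ against the lemma's $l^q_s$ (so that the output smoothness index is $ps$ in both inclusions, while the input indices differ, being $ps_1,ps_2$ in the first and $pt_1,pt_2$ in the second), and confirming that the sequence-space setting on $\zd$ is the one covered by \cite[Theorem 1.1]{GuoFanWuZhao2018Studia}. Once these identifications are recorded, the equivalence is immediate.
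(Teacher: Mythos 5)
Your proposal matches the paper's proof exactly: reduce via Theorem \ref{thm-M3} (valid since $p<\fy$) to the two convolution inclusions, then apply Lemma \ref{lm-exp-wcov} to each, with the hypothesis $p\leq p_i,q_i,q$ guaranteeing the rescaled exponents lie in $[1,\fy]$ so the lemma is applicable. The bookkeeping of tuples and the observation that both inclusions share the target $l^{q/p}_{ps}$ are likewise as in the paper.
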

\begin{proof}
  By Theorem \ref{thm-M3}, we have
  \be
  W_{\tau}:  \calM^{p_1,q_1}_{v_{s_1,t_1}}(\rd)\times \calM^{p_2,q_2}_{v_{s_2,t_2}}(\rd)\longrightarrow M^{p,q}_{1\otimes v_s}(\rdd)
  \ee
  if and only if
  \be
   l_{ps_1}^{p_1/p}(\zd)\ast l_{ps_2}^{p_2/p}(\zd)\subset  l_{ps}^{q/p},\ \ \ l_{pt_1}^{q_1/p}\ast l_{pt_2}^{q_2/p}\subset  l_{ps}^{q/p}.
  \ee
  By Lemma \ref{lm-exp-wcov}, we find that the above two convolution inequalities are equivalent to
  \be
  (q/p,p_1/p,p_2/p,ps,ps_1,ps_2),\ (q/p,q_1/p,q_2/p,ps,pt_1,pt_2)\in \calA.
  \ee
\end{proof}

\begin{theorem}\label{thm-W4}
Suppose that $p<\fy$ and $p\leq p_i,q_i,q$ for $i=1,2$, $\tau\in (0,1)$. We have
\be
W_{\tau}:  \calM^{p_1,q_1}_{v_{s_1,t_1}}\times \calM^{p_2,q_2}_{v_{s_2,t_2}}\longrightarrow W(\scrF L^p, L^q_m)
\ee
if and only if
\be
(q/p,p_1/p,p_2/p,ps,ps_1,ps_2),\ (q/p,q_1/p,q_2/p,ps,pt_1,pt_2)\in \calA.
\ee
\end{theorem}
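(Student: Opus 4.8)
The plan is to treat this as the Wiener amalgam counterpart of Theorem \ref{thm-M4}: I would reduce the claimed boundedness to a pair of weighted discrete convolution inequalities via the power-weight characterization of BMW, and then read off the exponent conditions from the sharp convolution criterion of Lemma \ref{lm-exp-wcov}. Since $\tau\in(0,1)$, I expect the argument to parallel the proof of Theorem \ref{thm-M4} essentially line for line, with Theorem \ref{thm-W3} playing the role that Theorem \ref{thm-M3} plays there.

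First I would invoke Theorem \ref{thm-W3} in the regime $p<\fy$, $\tau\in(0,1)$, with the power weight $m=v_s$ on the target. This gives the equivalence of $W_{\tau}:\calM^{p_1,q_1}_{v_{s_1,t_1}}\times \calM^{p_2,q_2}_{v_{s_2,t_2}}\to W(\scrF L^p,L^q_s)$ with the two inclusions
\begin{gather*}
l_{ps_1}^{p_1/p}(\zd)\ast l_{ps_2}^{p_2/p}(\zd)\subset l_{ps}^{q/p}(\zd),\\
l_{pt_1}^{q_1/p}(\zd)\ast l_{pt_2}^{q_2/p}(\zd)\subset l_{ps}^{q/p}(\zd).
\end{gather*}
Here both target weights collapse to the single exponent $ps$ because, for $\tau\in(0,1)$, the anisotropic dilation $v_s((1-\tau)\z_1,\tau\z_2)$ appearing in Theorem \ref{thm-W3} is equivalent to $v_s$, and each of its coordinate slices is equivalent to $v_s$ on $\rd$.

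Next I would apply Lemma \ref{lm-exp-wcov} to each inclusion separately. That lemma requires all three summability exponents in a given inclusion to lie in $[1,\fy]$, which is precisely what the standing hypotheses $p<\fy$ and $p\le p_i,q_i,q$ secure: dividing by $p$ turns these into $q/p,\,p_i/p,\,q_i/p\ge 1$. Matching indices through the dictionary $(q,q_1,q_2,s,s_1,s_2)\leftrightarrow(q/p,\,p_1/p,\,p_2/p,\,ps,\,ps_1,\,ps_2)$ for the first inclusion and $(q,q_1,q_2,s,s_1,s_2)\leftrightarrow(q/p,\,q_1/p,\,q_2/p,\,ps,\,pt_1,\,pt_2)$ for the second, Lemma \ref{lm-exp-wcov} converts the two convolution inclusions into the assertion that these two tuples lie in $\calA$, which is exactly the stated condition.

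There is no genuine analytic obstacle: both the reduction (Theorem \ref{thm-W3}) and the exponent classification (Lemma \ref{lm-exp-wcov}) are already in hand, so the present theorem is a corollary obtained by careful index bookkeeping. The two points that must be checked rather than glossed over are the routine verifications above, namely that the $p$-normalization places every exponent in the admissible range $[1,\fy]$ demanded by Lemma \ref{lm-exp-wcov}, and that in the power-weight case both target weights really do reduce to $v_s$ so that a single parameter $ps$ occurs in both inclusions.
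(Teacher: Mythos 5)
Your proposal is correct and follows exactly the paper's own route: the paper proves Theorem \ref{thm-W4} by combining Theorem \ref{thm-W3} (for $\tau\in(0,1)$, $p<\fy$) with Lemma \ref{lm-exp-wcov}, precisely as you do, and your two "points to check" (that the $p$-normalized exponents lie in $[1,\fy]$, and that the dilated weight $v_s((1-\tau)\z_1,\tau\z_2)$ and both of its coordinate slices reduce to $v_s$, so a single parameter $ps$ appears in both inclusions) are exactly the routine verifications the paper leaves implicit. No further comment is needed.
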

\begin{proof}
  As the proof of Theorem \ref{thm-M4},
  this is a direct conclusion of Theorem \ref{thm-W3} and Lemma \ref{lm-exp-wcov}.
\end{proof}

Next, using Lemma \ref{lm-exp-cov}, we recapture the main results of BMM in \cite{CorderoNicola2018IMRNI,Cordero2020a}.

\begin{theorem}[see also \cite{CorderoNicola2018IMRNI,Cordero2020a}]\label{thm-M5}
Let $0<p, q, p_i, q_i\leq \fy$ for $i=1,2$, $\tau\in [0,1]$. We have
\be
W_{\tau}: \calM^{p_1,q_1}\times \calM^{p_2,q_2}\longrightarrow M^{p,q}
\ee
if and only if
\ben\label{thm-M5-c1}
p_i,q_i\leq q,\ \ \ i=1,2
\een
and
\ben\label{thm-M5-c2}
\frac{1}{p_1}+\frac{1}{p_2}\geq  \frac{1}{p}+\frac{1}{q},\ \
\frac{1}{q_1}+\frac{1}{q_2}\geq  \frac{1}{p}+\frac{1}{q}.
\een
Moreover, if \eqref{thm-M5-c1} and \eqref{thm-M5-c2} hold, we have
\be
W_{\tau}: \calM^{p_1,q_1}_{|s|}\times \calM^{p_2,q_2}_s\longrightarrow M^{p,q}_s.
\ee
\end{theorem}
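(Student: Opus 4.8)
The plan is to read the statement off the third characterization of BMM (Theorem \ref{thm-M3}) by converting its discrete convolution/embedding conditions into explicit exponent inequalities via Lemma \ref{lm-exp-cov}, and then to obtain the weighted ``Moreover'' part from the first characterization (Theorem \ref{thm-M1}) through a Peetre-type weight transfer. Since Theorem \ref{thm-M3} is independent of $\tau\in[0,1]$, so is the entire statement, and I may fix $\tau$ once and for all. For the equivalence I specialize Theorem \ref{thm-M3} to $s=s_i=t_i=0$.

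For $p<\fy$, Theorem \ref{thm-M3} reduces the boundedness to the two convolution relations $l^{p_1/p}(\zd)\ast l^{p_2/p}(\zd)\subset l^{q/p}(\zd)$ and $l^{q_1/p}(\zd)\ast l^{q_2/p}(\zd)\subset l^{q/p}(\zd)$. Applying Lemma \ref{lm-exp-cov} with $(q,q_1,q_2)=(q/p,p_1/p,p_2/p)$ to the first and $(q/p,q_1/p,q_2/p)$ to the second, the criterion \eqref{lm-exp-cov-c1} becomes $1+\frac pq\leq\frac p{p_1}+\frac p{p_2}$, $\frac pq\leq\frac p{p_1}$, $\frac pq\leq\frac p{p_2}$ (and the analogue with the $q_i$). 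Dividing by $p$ turns these into exactly $p_i\leq q$, $q_i\leq q$, i.e. \eqref{thm-M5-c1}, together with $\frac1{p_1}+\frac1{p_2}\geq\frac1p+\frac1q$ and $\frac1{q_1}+\frac1{q_2}\geq\frac1p+\frac1q$, i.e. \eqref{thm-M5-c2}. In the remaining case $p=\fy$ (where $p\geq q$ holds automatically), Theorem \ref{thm-M3} reduces boundedness to the embeddings $l^{p_i}(\zd),l^{q_i}(\zd)\subset l^q(\zd)$, which hold iff $p_i,q_i\leq q$, that is \eqref{thm-M5-c1}; and \eqref{thm-M5-c2} is then automatic, since $p_i\leq q$ forces $\frac1{p_1}+\frac1{p_2}\geq\frac2q\geq\frac1p+\frac1q$ (the last step using $p\geq q$), and likewise for the $q_i$. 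The same computation shows the two regimes agree on the overlap $q\leq p<\fy$, so the unweighted equivalence is complete.

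For the weighted conclusion I would invoke Theorem \ref{thm-M1} rather than Theorem \ref{thm-M3}, since the radial power weights $v_{|s|}$ and $v_s$ on $\rdd$ are not variable-separable. Writing $M^{p,q}_s=M^{p,q}_{1\otimes v_s}$ and taking $m=v_s$, $m_1=v_{|s|}$, $m_2=v_s$, radiality gives $m_J=\calI m=m=v_s$ and $\calI m_2=v_s$, so Theorem \ref{thm-M1} characterizes the target boundedness by $\|(a_{k}b_{n-k})\|_{l^{p,q}_{1\otimes v_s}}\lesssim\|\vec a\|_{l^{p_1,q_1}_{v_{|s|}}(\zdd)}\|\vec b\|_{l^{p_2,q_2}_{v_s}(\zdd)}$, with $v_s,v_{|s|}$ now radial weights on $\zdd$. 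Assuming \eqref{thm-M5-c1} and \eqref{thm-M5-c2}, the unweighted form of this inequality already holds, by the equivalence just proved together with the $m\equiv1$ case of Theorem \ref{thm-M1}. The weighted inequality then follows from the Peetre-type bound $\langle n\rangle^{s}\lesssim\langle k\rangle^{|s|}\langle n-k\rangle^{s}$, valid for every $s\in\rr$: pulling $\langle n\rangle^{sp}$ into the inner $p$-sum and absorbing $\langle k\rangle^{|s|}$ into $a_k$ and $\langle n-k\rangle^{s}$ into $b_{n-k}$ reduces $\|(a_kb_{n-k})\|_{l^{p,q}_{1\otimes v_s}}$ to $\|(\tilde a_k\tilde b_{n-k})\|_{l^{p,q}}$ with $\tilde a_k=\langle k\rangle^{|s|}a_k$, $\tilde b_j=\langle j\rangle^{s}b_j$, after which the unweighted bound finishes the estimate. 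This is the mixed-norm analogue of \eqref{lm-exp-cov-c2}.

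Once the cited results are in place the computations are short, so the only points requiring care are organizational rather than analytic. The main things to watch are that the two exponent regimes of Theorem \ref{thm-M3} are mutually consistent and jointly cover $p=\fy$, and that the non-separability of $v_{|s|}$ is precisely what forces the weighted step through Theorem \ref{thm-M1} (exploiting the radial identities $m_J=\calI m=m$) instead of Theorem \ref{thm-M3}. In the Peetre transfer one must keep the exponents aligned so that $|s|$ lands on the first factor and $s$ on the second, matching the source spaces $\calM^{p_1,q_1}_{|s|}$ and $\calM^{p_2,q_2}_{s}$.
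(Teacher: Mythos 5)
Your proposal is correct and follows essentially the same route as the paper: the unweighted equivalence is read off Theorem \ref{thm-M3} (with all weight parameters zero) via Lemma \ref{lm-exp-cov} for $p<\fy$ and via the embedding conditions (Lemma \ref{lm-exp-eb}) for $p=\fy$, exactly as in the paper's two-line proof. Your treatment of the weighted ``Moreover'' clause is in fact more careful than the paper's: you correctly note that the radial weights $v_{|s|},v_s$ on $\rdd$ are not variable-separable, so the weighted step must go through Theorem \ref{thm-M1} (using $m_J=\calI m_2=v_s$) together with the Peetre bound $\langle n\rangle^{s}\lesssim\langle k\rangle^{|s|}\langle n-k\rangle^{s}$ --- the same mechanism underlying \eqref{lm-exp-cov-c2}, which is all the paper cites.
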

\begin{proof}
  The case $p<\fy$ follows by Theorem \ref{thm-M3} and Lemma \ref{lm-exp-cov}.
  For $p=\fy$, the corresponding results can be verified by Theorem \ref{thm-M3} and Lemma \ref{lm-exp-eb}.
\end{proof}

Similarly, we also have following result for BMW.
\begin{theorem}\label{thm-W5}
Let $0<p, q, p_i, q_i\leq \fy$ for $i=1,2$, $\tau\in (0,1)$. We have
\be
W_{\tau}: \calM^{p_1,q_1}\times \calM^{p_2,q_2}\longrightarrow W(\scrF L^p, L^q)
\ee
if and only if
\ben\label{thm-W5-c1}
p_i,q_i\leq q,\ \ \ i=1,2
\een
and
\ben\label{thm-W5-c2}
\frac{1}{p_1}+\frac{1}{p_2}\geq  \frac{1}{p}+\frac{1}{q},\ \
\frac{1}{q_1}+\frac{1}{q_2}\geq  \frac{1}{p}+\frac{1}{q}.
\een
Moreover, if \eqref{thm-W5-c1} and \eqref{thm-W5-c2} hold, we have
\be
W_{\tau}: \calM^{p_1,q_1}_{|s|}\times \calM^{p_2,q_2}_s\longrightarrow W(\scrF L^p, L^q_s).
\ee
\end{theorem}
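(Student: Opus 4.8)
The plan is to follow the blueprint of the proof of Theorem~\ref{thm-M5}, replacing the third characterization of BMM by its BMW counterpart. Since $\tau\in(0,1)$, Theorem~\ref{thm-W3} (taken with $s_1=t_1=s_2=t_2=s=0$) reduces the boundedness $W_\tau:\calM^{p_1,q_1}\times\calM^{p_2,q_2}\to W(\scrF L^p,L^q)$ to a pair of unweighted discrete inequalities, so the whole iff becomes a chain of equivalences: (boundedness) $\Leftrightarrow$ (discrete conditions of Theorem~\ref{thm-W3}) $\Leftrightarrow$ (exponent conditions of Lemma~\ref{lm-exp-cov}), both directions coming for free. I would split the argument according to the two regimes appearing in Theorem~\ref{thm-W3}.

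For $p<\fy$, Theorem~\ref{thm-W3} identifies the boundedness with the two convolution inclusions $l^{p_1/p}(\zd)\ast l^{p_2/p}(\zd)\subset l^{q/p}(\zd)$ and $l^{q_1/p}(\zd)\ast l^{q_2/p}(\zd)\subset l^{q/p}(\zd)$. Applying Lemma~\ref{lm-exp-cov}, condition \eqref{lm-exp-cov-c1}, to the first inclusion with $(q_1,q_2,q)$ replaced by $(p_1/p,p_2/p,q/p)$ yields, after dividing by $p$, exactly $\tfrac1p+\tfrac1q\le\tfrac1{p_1}+\tfrac1{p_2}$ together with $p_1,p_2\le q$; the second inclusion gives $\tfrac1p+\tfrac1q\le\tfrac1{q_1}+\tfrac1{q_2}$ and $q_1,q_2\le q$. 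These are precisely \eqref{thm-W5-c1} and \eqref{thm-W5-c2}. For $p=\fy$ (so that $p\ge q$ automatically) I would instead use the $p\ge q$ branch of Theorem~\ref{thm-W3}, which requires the embeddings $l^{p_i}(\zd),l^{q_i}(\zd)\subset l^q(\zd)$; these hold iff $p_i,q_i\le q$, i.e. \eqref{thm-W5-c1}, and one checks that \eqref{thm-W5-c2} is then automatic since $p_i\le q$ forces $\tfrac1{p_1}+\tfrac1{p_2}\ge\tfrac2q\ge\tfrac1q=\tfrac1p+\tfrac1q$.

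For the weighted ``moreover'' assertion I would return to the first characterization, Theorem~\ref{thm-W1} for $\tau\in(0,1)$, with $m=v_s$, $m_1=v_{|s|}$, $m_2=v_s$. Because $\tau$ is bounded away from $0$ and $1$, the anisotropic rescalings are harmless, so $\widetilde m\sim v_s$ and $\widetilde{m_2}\sim v_s$ (the same observation already used for Theorem~\ref{thm-W3}), and the boundedness is equivalent to $\|a_{k_1,k_2}\,b_{n_1-k_1,n_2-k_2}\,\langle(n_1,n_2)\rangle^s\|_{l^{p,q}}\ls\|a_{k}\langle k\rangle^{|s|}\|_{l^{p_1,q_1}}\,\|b_{n}\langle n\rangle^{s}\|_{l^{p_2,q_2}}$. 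The key is the Peetre-type bound $\langle n\rangle^s\le\langle k\rangle^{|s|}\langle n-k\rangle^{s}$, valid for every $s\in\rr$ (this is exactly the pointwise inequality already invoked in Lemma~\ref{lm-exp-cov}, cf. \eqref{lm-exp-cov-c2}), which lets me absorb the target weight $\langle(n_1,n_2)\rangle^s$ into $\langle(k_1,k_2)\rangle^{|s|}$ on the first factor and $\langle(n_1-k_1,n_2-k_2)\rangle^{s}$ on the second. After substituting $\tilde a_k=a_k\langle k\rangle^{|s|}$ and $\tilde b_n=b_n\langle n\rangle^s$ the weighted inequality collapses to the unweighted discrete inequality in $\tilde a,\tilde b$, which already holds under \eqref{thm-W5-c1}--\eqref{thm-W5-c2} by the main part.

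The bulk of this is routine once the reductions are in place, so the main obstacle I expect is bookkeeping rather than conceptual. One must confirm that the two regimes $p<\fy$ and $p\ge q$ of Theorem~\ref{thm-W3} agree on their overlap $q\le p<\fy$, carry out the arithmetic converting the conditions of Lemma~\ref{lm-exp-cov} into \eqref{thm-W5-c1}--\eqref{thm-W5-c2} without sign errors, and---most delicately---verify in the weighted step that the isotropic target weight $v_s$ genuinely splits through the Peetre inequality across the \emph{shifted} arguments of the discrete inequality, and that replacing $\widetilde m,\widetilde{m_2}$ by $v_s$ up to equivalence is legitimate precisely because $\tau\in(0,1)$ keeps the dilation factors $\tfrac{1-\tau}{\tau},\tfrac{\tau}{1-\tau}$ finite and nonzero.
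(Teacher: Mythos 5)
Your proposal is correct and follows essentially the same route the paper takes (the paper only sketches this proof by analogy with Theorem \ref{thm-M5}): reduce via Theorem \ref{thm-W3} to the unweighted discrete convolution/embedding conditions and translate them through Lemmas \ref{lm-exp-cov} and \ref{lm-exp-eb}, then obtain the weighted ``moreover'' statement from the first characterization (Theorem \ref{thm-W1}, needed because $v_{|s|}$ is not variable-separable) together with the Peetre inequality $\langle n\rangle^s\lesssim\langle k\rangle^{|s|}\langle n-k\rangle^s$, exactly as in \eqref{lm-exp-cov-c2}. Your arithmetic converting the conditions of Lemma \ref{lm-exp-cov} into \eqref{thm-W5-c1}--\eqref{thm-W5-c2} and your check that \eqref{thm-W5-c2} is automatic when $p=\fy$ are both right.
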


\subsection{Sharp exponents for embedding relations}
In order to get the sharp exponents of embedding relations mentioned in Theorems \ref{thm-M3} and \ref{thm-W3},
we recall the following lemma.

\begin{lemma}[Sharpness of embedding, discrete form] \label{lm-exp-eb}
Suppose $0<q,q_1,q_2\leq \infty$, $s,s_1,s_2\in \mathbb{R}$. Then
\begin{equation*}
l^{q_1}_{v_{s_1}}(\rd)\subset l^{q_2}_{v_{s_2}}(\rd)
\end{equation*}
holds if and only if
\begin{equation*}
\begin{cases}
s_2\leq s_1  \\
\frac{1}{q_2}+\frac{s_2}{d}< \frac{1}{q_1}+\frac{s_1}{d}
\end{cases}
\text{or} \hspace{10mm}
\begin{cases}
s_2=s_1\\
q_2=q_1.
\end{cases}
\end{equation*}
\end{lemma}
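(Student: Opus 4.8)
The plan is to recast the claimed inclusion as the boundedness of a diagonal multiplier between \emph{unweighted} sequence spaces, and then invoke the standard characterization of such multipliers. Concretely, for a sequence $\vec b=(b_k)_{k\in\zd}$ set $a_k=b_k\langle k\rangle^{s_1}$; then $\|\vec b\|_{l^{q_1}_{v_{s_1}}}=\|\vec a\|_{l^{q_1}}$ and $\|\vec b\|_{l^{q_2}_{v_{s_2}}}=\|(\lambda_k a_k)\|_{l^{q_2}}$ with $\lambda_k=\langle k\rangle^{s_2-s_1}$. Hence $l^{q_1}_{v_{s_1}}(\zd)\subset l^{q_2}_{v_{s_2}}(\zd)$ if and only if the diagonal operator $T_\lambda:\vec a\mapsto(\lambda_k a_k)$ is bounded from $l^{q_1}(\zd)$ to $l^{q_2}(\zd)$ (the inclusion being automatically bounded by the closed graph theorem for complete quasi-normed spaces). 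So the whole statement reduces to the elementary dichotomy that $T_\lambda$ is bounded $l^{q_1}\to l^{q_2}$ iff either $q_1\le q_2$ and $\vec\lambda\in l^\infty$, or $q_1>q_2$ and $\vec\lambda\in l^r$ with $1/r=1/q_2-1/q_1$.

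For sufficiency I would split into the two ranges of exponents. When $q_1\le q_2$, the monotonicity $\langle k\rangle^{s_2}\le\langle k\rangle^{s_1}$ (valid since $s_2\le s_1$) together with the nesting $l^{q_1}\subset l^{q_2}$ gives $\|\vec b\|_{l^{q_2}_{v_{s_2}}}\le\|\vec b\|_{l^{q_1}_{v_{s_1}}}$ at once; note that under $q_1\le q_2$ and $s_2\le s_1$ the index inequality $\frac1{q_2}+\frac{s_2}{d}\le\frac1{q_1}+\frac{s_1}{d}$ is automatic, and is strict unless $(q_1,s_1)=(q_2,s_2)$, so this branch produces exactly the two admissible configurations. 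When $q_1>q_2$, I factor $b_k\langle k\rangle^{s_2}=(b_k\langle k\rangle^{s_1})\,\langle k\rangle^{s_2-s_1}$ and apply Hölder's inequality with the three exponents $1/q_2=1/q_1+1/r$; this is valid on the full range $0<q_2\le q_1\le\fy$ by raising everything to the power $q_2$ and using ordinary Hölder with conjugate exponents $q_1/q_2$ and its dual. The remaining factor $\|(\langle k\rangle^{s_2-s_1})\|_{l^r}$ is finite precisely when $(s_1-s_2)r>d$, i.e. $\frac1{q_2}+\frac{s_2}{d}<\frac1{q_1}+\frac{s_1}{d}$, which is the stated strict inequality.

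For necessity I would test the assumed estimate $\|\vec b\|_{l^{q_2}_{v_{s_2}}}\lesssim\|\vec b\|_{l^{q_1}_{v_{s_1}}}$ against explicit sequences. First, taking $\vec b$ to be the unit mass at a single site $k$ forces $\langle k\rangle^{s_2}\lesssim\langle k\rangle^{s_1}$ for all $k$, hence $s_2\le s_1$. Granting this, suppose the index quantities are equal, $\frac1{q_2}+\frac{s_2}{d}=\frac1{q_1}+\frac{s_1}{d}$; then $\frac1{q_2}-\frac1{q_1}=\frac{s_1-s_2}{d}\ge0$, so $q_2\le q_1$. If $s_1=s_2$ then $q_1=q_2$, the diagonal case; otherwise $s_1>s_2$ and $q_2<q_1$ strictly, and I would exhibit failure of the embedding via the critical logarithmic test sequence $b_k=\langle k\rangle^{-s_1-d/q_1}(\log\langle k\rangle)^{-\beta}$ for large $|k|$, with $1/q_1<\beta<1/q_2$ (no logarithm needed when $q_1=\fy$). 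Using $\sum_k\langle k\rangle^{-d}(\log\langle k\rangle)^{-\gamma}<\fy\iff\gamma>1$, a direct computation gives $\vec b\in l^{q_1}_{v_{s_1}}$ (since $\beta q_1>1$) while $\vec b\notin l^{q_2}_{v_{s_2}}$ (since, by criticality, the relevant exponent collapses to $-d$ and $\beta q_2<1$), contradicting the embedding. This simultaneously forbids a non-strict index inequality off the diagonal and pins down the diagonal case.

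The main obstacle is exactly the borderline analysis of the last paragraph: when $q_2<q_1$ and the two index quantities coincide, the weight $\langle\cdot\rangle^{s_2-s_1}$ lands at the endpoint of $l^r$ with $(s_1-s_2)r=d$, where $\sum_k\langle k\rangle^{-d}$ diverges only logarithmically, so a crude power test is insufficient and one must calibrate the logarithmic exponent $\beta$ strictly between $1/q_1$ and $1/q_2$. Everything else—the reduction to a diagonal multiplier, the sufficiency via nesting and Hölder, and the single-site test giving $s_2\le s_1$—is routine; the endpoint values $q_1=\fy$ or $q_2=\fy$ must be checked separately but immediately collapse onto either the diagonal or the strict-inequality branch.
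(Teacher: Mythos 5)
The paper does not prove Lemma \ref{lm-exp-eb}: it is stated without proof or citation, as a standard fact used to read off sharp exponents. So there is no in-paper argument to compare against; your proof has to stand on its own, and it does. The reduction to the diagonal multiplier $T_\lambda$ with $\lambda_k=\langle k\rangle^{s_2-s_1}$, the sufficiency via nesting ($q_1\le q_2$) and H\"older ($q_1>q_2$, valid in the full quasi-Banach range after raising to the power $q_2$), and the necessity tests (unit masses for $s_2\le s_1$, the logarithmically calibrated sequence at the critical index) are all correct, and your identification of the borderline case as the only delicate point is accurate. One small presentational gap: your explicit counterexample only treats the case $\frac1{q_2}+\frac{s_2}{d}=\frac1{q_1}+\frac{s_1}{d}$, whereas the negation of the strict inequality also includes $\frac1{q_2}+\frac{s_2}{d}>\frac1{q_1}+\frac{s_1}{d}$ (which, under $s_2\le s_1$, forces $q_2<q_1$). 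That case is disposed of either by the plain power test $b_k=\langle k\rangle^{-\sigma}$ with $s_1+d/q_1<\sigma\le s_2+d/q_2$, or by lowering $s_2$ to the critical value and citing the monotonicity of the target space in $s_2$; you should say one of these explicitly rather than folding it into ``simultaneously forbids a non-strict index inequality.'' With that sentence added, the argument is complete.
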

Using this lemma, we conclude the sharp exponent characterizations for BMM and BMW.

\begin{theorem}\label{thm-M6}
Suppose $p\geq q$, $\tau\in [0,1]$. We have
\be
W_{\tau}:  \calM^{p_1,q_1}_{v_{s_1,t_1}}(\rd)\times \calM^{p_2,q_2}_{v_{s_2,t_2}}(\rd)
\longrightarrow M^{p,q}_{1\otimes v_s}(\rdd)
\ee
if and only if
\be
\begin{cases}
s\leq s_1,s_2,t_1,t_2,\\
\frac{1}{q}+\frac{s}{d}< \frac{1}{p_1}+\frac{s_1}{d}\ \ \text{or}\ \  (q,s)=(p_1,s_1),\\
\frac{1}{q}+\frac{s}{d}< \frac{1}{p_2}+\frac{s_2}{d}\ \ \text{or}\ \  (q,s)=(p_2,s_2),\\
\frac{1}{q}+\frac{s}{d}< \frac{1}{q_1}+\frac{t_1}{d}\ \ \text{or}\ \  (q,s)=(q_1,t_1),\\
\frac{1}{q}+\frac{s}{d}< \frac{1}{q_2}+\frac{t_2}{d}\ \ \text{or}\ \  (q,s)=(q_2,t_2).\\
\end{cases}
\ee
\end{theorem}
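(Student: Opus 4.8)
The plan is to reduce the claim entirely to the embedding characterization already recorded in Theorem~\ref{thm-M3} and then to apply the sharp discrete embedding criterion of Lemma~\ref{lm-exp-eb} term by term. First I would invoke Theorem~\ref{thm-M3} in the regime $p\geq q$: the boundedness
\[
W_{\tau}:  \calM^{p_1,q_1}_{v_{s_1,t_1}}(\rd)\times \calM^{p_2,q_2}_{v_{s_2,t_2}}(\rd)\longrightarrow M^{p,q}_{1\otimes v_s}(\rdd)
\]
is equivalent to the four simultaneous embeddings
\[
l^{p_1}_{s_1}(\zd),\ l^{p_2}_{s_2}(\zd)\subset l^q_s(\zd),\qquad
l^{q_1}_{t_1}(\zd),\ l^{q_2}_{t_2}(\zd)\subset l^q_s(\zd).
\]
This already does the heavy lifting, since Theorem~\ref{thm-M3} was itself deduced from the second characterization (Theorem~\ref{thm-M2}); no further time-frequency analysis is needed here.

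Next I would translate each of the four embeddings into an explicit exponent condition via Lemma~\ref{lm-exp-eb}. For the first one, $l^{p_1}_{v_{s_1}}\subset l^{q}_{v_s}$, I match the lemma's source data $(q_1,s_1)$ to $(p_1,s_1)$ and its target data $(q_2,s_2)$ to $(q,s)$; the lemma then yields
\[
s\leq s_1 \quad\text{together with}\quad \Big(\frac{1}{q}+\frac{s}{d}<\frac{1}{p_1}+\frac{s_1}{d}\ \text{ or }\ (q,s)=(p_1,s_1)\Big),
\]
which is exactly the first line of the asserted condition. Repeating the identical matching for the remaining three embeddings produces the analogous conditions involving $(p_2,s_2)$, $(q_1,t_1)$, and $(q_2,t_2)$ respectively.

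Finally I would assemble the four criteria. The four ``$s_2\leq s_1$''-type inequalities furnished by Lemma~\ref{lm-exp-eb} collapse into the single requirement $s\leq s_1,s_2,t_1,t_2$, while the four ``strict inequality or equality'' alternatives are precisely the four remaining displayed lines of the theorem; conjoining all of these gives the stated characterization, and conversely each displayed line reproduces exactly one embedding, so the equivalence is a genuine iff. The only point requiring care---rather than any substantive obstacle---is the bookkeeping in the equality branch of Lemma~\ref{lm-exp-eb}: one must observe that the alternative $(q,s)=(p_1,s_1)$ already forces $s\leq s_1$, so the two branches of the lemma merge cleanly into ``$s\leq s_1$ and $(\,\cdots\ \text{or}\ (q,s)=(p_1,s_1))$'' with no overlap or gap. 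Since the argument is a direct specialization of results proved earlier, I expect no difficulty beyond this notational matching.
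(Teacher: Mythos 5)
Your proposal is correct and follows exactly the paper's own route: invoke Theorem \ref{thm-M3} in the regime $p\geq q$ to reduce the boundedness to the four embeddings $l^{p_1}_{s_1}, l^{p_2}_{s_2}, l^{q_1}_{t_1}, l^{q_2}_{t_2}\subset l^q_s$, and then apply Lemma \ref{lm-exp-eb} to each. Your remark on merging the two branches of the lemma into the theorem's ``$s\leq s_1$ and (strict inequality or equality of pairs)'' format is the only bookkeeping point, and you handle it correctly.
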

\begin{proof}
  By Theorem \ref{thm-M3}, we have
  \be
  W_{\tau}:  \calM^{p_1,q_1}_{v_{s_1,t_1}}(\rd)\times \calM^{p_2,q_2}_{v_{s_2,t_2}}(\rd)\longrightarrow M^{p,q}_{1\otimes v_s}(\rdd)
  \ee
  if and only if
  \be
   l_{s_1}^{p_1}(\zd),\  l_{s_2}^{p_2}(\zd)\subset  l_{s}^{q}(\zd),\ \ \
 \ \ \ l_{t_1}^{q_1}(\zd),\  l_{t_2}^{q_2}(\zd)\subset  l_{s}^{q}(\zd).
  \ee
  Then, the desired conclusion follows by Lemma \ref{lm-exp-eb}.
\end{proof}

\begin{theorem}\label{thm-W6}
Suppose $p\geq q$, $\tau\in (0,1)$. We have
\be
W_{\tau}:  \calM^{p_1,q_1}_{v_{s_1,t_1}}\times \calM^{p_2,q_2}_{v_{s_2,t_2}}\longrightarrow W(\scrF L^p, L^q_m)
\ee
if and only if
\be
\begin{cases}
s\leq s_1,s_2,t_1,t_2,\\
\frac{1}{q}+\frac{s}{d}< \frac{1}{p_1}+\frac{s_1}{d}\ \ \text{or}\ \  (q,s)=(p_1,s_1),\\
\frac{1}{q}+\frac{s}{d}< \frac{1}{p_2}+\frac{s_2}{d}\ \ \text{or}\ \  (q,s)=(p_2,s_2),\\
\frac{1}{q}+\frac{s}{d}< \frac{1}{q_1}+\frac{t_1}{d}\ \ \text{or}\ \  (q,s)=(q_1,t_1),\\
\frac{1}{q}+\frac{s}{d}< \frac{1}{q_2}+\frac{t_2}{d}\ \ \text{or}\ \  (q,s)=(q_2,t_2).\\
\end{cases}
\ee
\end{theorem}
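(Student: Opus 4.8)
The plan is to derive Theorem \ref{thm-W6} as an immediate corollary of the power-weight characterization, exactly paralleling the way Theorem \ref{thm-M6} was deduced from Theorem \ref{thm-M3}. First I would apply Theorem \ref{thm-W3} in the regime $\tau\in(0,1)$ and $p\geq q$, which asserts that the boundedness
\be
W_{\tau}: \calM^{p_1,q_1}_{v_{s_1,t_1}}(\rd)\times \calM^{p_2,q_2}_{v_{s_2,t_2}}(\rd)\longrightarrow W(\scrF L^p, L^q_s)(\rdd)
\ee
is equivalent to the four scalar weighted-$\ell^q$ embedding relations
\be
l^{p_1}_{s_1}(\zd),\ l^{p_2}_{s_2}(\zd)\subset l^q_s(\zd),\qquad l^{q_1}_{t_1}(\zd),\ l^{q_2}_{t_2}(\zd)\subset l^q_s(\zd).
\ee
Here I would invoke the observation already recorded at the start of Section 5, that since $\tau\in(0,1)$ the dilated target and window weights $\widetilde{m}$, $\widetilde{\om_2}$, $\widetilde{\mu_2}$ are equivalent to the corresponding undilated power weights; this is precisely what keeps the characterization clean and, as in the BMM case, independent of $\tau$ on $(0,1)$, and it is what allows both sections $\widetilde{m}_{\al},\widetilde{m}_{\b}$ of the Wiener-amalgam weight to reduce to $v_s$ on $\zd$.

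The second and final step is to convert each of the four embeddings into its sharp exponent form by means of the discrete embedding criterion Lemma \ref{lm-exp-eb}. Applying that lemma to $l^{p_i}_{s_i}\subset l^q_s$ produces the condition $s\leq s_i$ together with the dichotomy $\frac1q+\frac sd<\frac1{p_i}+\frac{s_i}d$ or $(q,s)=(p_i,s_i)$, while applying it to $l^{q_i}_{t_i}\subset l^q_s$ produces $s\leq t_i$ together with $\frac1q+\frac sd<\frac1{q_i}+\frac{t_i}d$ or $(q,s)=(q_i,t_i)$. Collecting the four resulting pairs of conditions yields exactly the displayed system, which completes the proof.

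I do not anticipate any genuine obstacle: the mathematical content is entirely packaged in Theorem \ref{thm-W3} (itself a consequence of Theorem \ref{thm-W2}) and in the sharpness lemma, Lemma \ref{lm-exp-eb}. The only step requiring a moment of care is the bookkeeping — matching each factor space $\calM^{p_i,q_i}_{v_{s_i,t_i}}$ to the correct one of the four embeddings, keeping straight which slot of the target weight each embedding feeds into, and confirming the ``source/target'' orientation of Lemma \ref{lm-exp-eb} so that the strict-inequality-or-equality alternatives come out with $s$ on the left and $s_i,t_i$ on the right. This is routine and mirrors precisely the corresponding step in the proof of Theorem \ref{thm-M6}.
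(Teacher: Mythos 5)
Your proposal is correct and coincides with the paper's own proof: the authors likewise obtain Theorem \ref{thm-W6} as a direct consequence of Theorem \ref{thm-W3} (in the regime $\tau\in(0,1)$, $p\geq q$, where both sections of the dilated weight reduce to $v_s$) combined with the sharp discrete embedding criterion of Lemma \ref{lm-exp-eb}. The bookkeeping you describe is exactly the content of their one-line argument, mirroring the deduction of Theorem \ref{thm-M6} from Theorem \ref{thm-M3}.
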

\begin{proof}
  As the proof of Theorem \ref{thm-M4},
  this is a direct conclusion of Theorem \ref{thm-W3} and Lemma \ref{lm-exp-eb}.
\end{proof}

Following proposition is prepared for further separation of the endpoint cases
of BMW with power weights, i.e., for the proof of Theorem \ref{thm-W3} with $\tau=0,1$.

\begin{proposition}\label{pp-sep2-eb}
  Assume $p_1, q_1, p, q \in (0,\fy]$, $s,t\in \rr$. Then
  \ben\label{pp-sep2-eb-c0}
  l^{p_1,q_1}\subset l^{(q,p)}_{v_{s,t}}
  \een
  holds if and only if the following three embedding relations:
  \ben\label{pp-sep2-eb-c1}
  l^{p_1}\subset l^q_{v_s},\ \ \ l^{q_1}\subset l^p_{v_t},\ \ \ l^{q_1}\subset l^q_{v_{s+t}}.
  \een
\end{proposition}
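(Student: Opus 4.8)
The plan is to prove both implications directly from the definitions of the two discrete mixed-norm spaces (Definitions \ref{df-dmI} and \ref{df-dmII}), by unwinding the target embedding \eqref{pp-sep2-eb-c0} into the pointwise inequality
\[
\Big(\sum_{k}\langle k\rangle^{sq}\big(\sum_{n}|a_{k,n}|^{p}\langle n\rangle^{tp}\big)^{q/p}\Big)^{1/q}\lesssim\Big(\sum_{n}\big(\sum_{k}|a_{k,n}|^{p_1}\big)^{q_1/p_1}\Big)^{1/q_1},
\]
in which the roles of the summation indices $k$ and $n$ are \emph{transposed} between the two sides. The three one-dimensional embeddings in \eqref{pp-sep2-eb-c1} are exactly the obstructions that appear on the three natural slices of a sequence, so necessity is immediate from testing, while sufficiency is a matter of transposing the order of summation.

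For the necessity direction I would test the inequality on three families. Taking $a_{k,n}$ supported on the slice $n=0$ (so $a_{k,n}=c_k\delta_{n,0}$) collapses both sides to $\|c\|_{l^q_{v_s}}\lesssim\|c\|_{l^{p_1}}$, i.e. $l^{p_1}\subset l^q_{v_s}$; supporting $a_{k,n}$ on $k=0$ gives $\|d\|_{l^p_{v_t}}\lesssim\|d\|_{l^{q_1}}$, i.e. $l^{q_1}\subset l^p_{v_t}$; and supporting $a_{k,n}$ on the diagonal $k=n$ (so $a_{k,n}=e_k\delta_{k,n}$) makes the inner weight $\langle n\rangle^t$ and outer weight $\langle k\rangle^s$ combine to $\langle k\rangle^{s+t}$, giving $\|e\|_{l^q_{v_{s+t}}}\lesssim\|e\|_{l^{q_1}}$, i.e. $l^{q_1}\subset l^q_{v_{s+t}}$. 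These are precisely the three relations of \eqref{pp-sep2-eb-c1}.

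For sufficiency the main tool is Minkowski's inequality for sums, which interchanges the inner and outer summations whenever the outer exponent dominates the inner one, together with the elementary nesting $l^{q}\subset l^{p}$ for $q\le p$. When $p\le q$ this works cleanly: Minkowski gives $\|(\langle k\rangle^s\langle n\rangle^t a_{k,n})\|_{l^q_k(l^p_n)}\le\|(\langle k\rangle^s\langle n\rangle^t a_{k,n})\|_{l^p_n(l^q_k)}$, after which I apply $l^{p_1}\subset l^q_{v_s}$ to the inner $k$-sum and then $l^{q_1}\subset l^p_{v_t}$ to the outer $n$-sum, reaching the right-hand side using only the first two relations of \eqref{pp-sep2-eb-c1}. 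Analogous chains, transposing through an auxiliary mixed space, dispose of the sub-regimes $p_1\ge p$ and $q\ge q_1$.

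The hard part will be the \emph{crossed} regime $p>q$ with $q<q_1$ and $p_1<p$, where Minkowski is unavailable in the favorable direction on either side and the naive collapse $l^p_n\hookrightarrow l^q_n$ is too lossy — it would force an embedding strictly stronger than \eqref{pp-sep2-eb-c1}, as one already checks for $s<0$. Here the diagonal relation $l^{q_1}\subset l^q_{v_{s+t}}$ is indispensable: my plan is to split the index set according to whether $\langle k\rangle$ or $\langle n\rangle$ dominates and, on each piece, bound the tensor weight $\langle k\rangle^s\langle n\rangle^t$ by a single-variable weight of total order $s+t$ on the dominant variable, routing the diagonal part through the third relation and the off-diagonal part through the first. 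Because the sign bookkeeping in this splitting is delicate, I would alternatively finish this regime by translating the target embedding and all three hypotheses into the arithmetic conditions on $1/p,1/q,1/p_1,1/q_1,s/d,t/d$ furnished by Lemma \ref{lm-exp-eb}, and verifying that the conjunction of the three hypotheses forces the target condition. This reduction to Lemma \ref{lm-exp-eb} is the step I expect to demand the most care.
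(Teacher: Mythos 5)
Your necessity argument (testing on the three families supported on $n=0_{\zd}$, on $k=0_{\zd}$, and on the diagonal $k=n$) is exactly the paper's, and your sufficiency argument in the regimes where Minkowski's inequality transposes the left-hand side ($q\ge p$) or the right-hand side ($p_1\ge q_1$) is correct and matches the paper's Cases 1--2. The genuine gap is the crossed regime $p>q$, $p_1<q_1$, and neither of your two proposed exits closes it. Your fallback (b) is circular: Lemma \ref{lm-exp-eb} characterizes only one-dimensional embeddings $l^{q_1}_{v_{s_1}}\subset l^{q_2}_{v_{s_2}}$; it does not furnish an arithmetic condition for the two-dimensional transposed embedding $l^{p_1,q_1}\subset l^{(q,p)}_{v_{s,t}}$ --- that characterization is precisely what the proposition is establishing, so there is no target condition to ``verify the hypotheses against''. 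Your primary plan (a) breaks on signs: the hypotheses force $s,t\le 0$, so on the region $\langle n\rangle\le\langle k\rangle$ the only available upper bound for the tensor weight is $\langle k\rangle^{s}\langle n\rangle^{t}\le\langle n\rangle^{s+t}$, i.e.\ the weight of order $s+t$ lands on the \emph{non-dominant} variable and all of the $\langle k\rangle^{s}$ decay is discarded; since $l^{p_1}\subset l^q_{v_s}$ with $s<0$ does not imply $l^{p_1}\subset l^q$, the remaining unweighted $k$-summation can no longer be controlled by the first relation.

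What is missing is the paper's actual mechanism for this regime. It first isolates the degenerate cases $s=0$ and $t=0$ (where the hypotheses may hold with equality, e.g.\ $q=p_1$ or $p=q_1$, leaving no room) and treats them by collapsing with $l^q\subset l^p$ or $l^{p_1}\subset l^{q_1}$ together with the diagonal relation $l^{q_1}\subset l^q_{v_{s+t}}$ --- this is where the third hypothesis is genuinely consumed. When $s,t<0$, Lemma \ref{lm-exp-eb} applied to the three \emph{hypotheses} yields strict inequalities $\frac1q+\frac sd<\frac1{p_1}$, $\frac1p<\frac1{q_1}-\frac td$, $\frac1q+\frac sd<\frac1{q_1}-\frac td$; one then picks $\ep>0$ and intermediate exponents $\frac1\r=\max\{\frac1q+\frac sd+\ep,0\}$ and $\frac1r=\frac1{q_1}-\frac td-\ep$, so that $\r>r$, $l^{p_1}\subset l^{\r}\subset l^q_{v_s}$ and $l^{q_1}\subset l^r_{v_t}\subset l^p_{v_t}$. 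The left-hand side is first weakened to the $l^{\r}_k(l^r_{n,v_t})$ norm, Minkowski is applied at this intermediate level (legitimate precisely because $\r>r$), and the embeddings $l^{p_1}\subset l^{\r}$, $l^{q_1}\subset l^r_{v_t}$ finish the estimate. This $\ep$-room construction of auxiliary exponents is the key idea absent from your proposal; without it, or a correct substitute, the crossed regime remains unproved.
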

\begin{proof}
  We divide this proof into two parts.

  \textbf{``Only if'' part.} Write \eqref{pp-sep2-eb-c0} by
  \ben\label{pp-sep2-eb-1}
  \bigg(\sum_{k\in \zd}\bigg(\sum_{n\in \zd}|a_{k,n}|^p\langle n\rangle^{tp}\bigg)^{q/p}\langle k\rangle^{sq}\bigg)^{1/q}
  \lesssim
  \bigg(\sum_{n\in \zd}\bigg(\sum_{k\in \zd}|a_{k,n}|^{p_1}\bigg)^{q_1/p_1}\bigg)^{1/q_1}.
  \een
  Then, we text the above inequality by several constructed sequences.

  \textbf{Test 1.}
  Set $a_{k,n}=0$ if $n\neq 0_{\zd}$. The inequality \eqref{pp-sep2-eb-1} becomes
  \be
  \bigg(\sum_{k\in \zd}|a_{k,0}|^q\langle k\rangle^{sq}\bigg)^{1/q}
  \lesssim
  \bigg(\sum_{k\in \zd}|a_{k,0}|^{p_1}\bigg)^{1/p_1}.
  \ee
  This implies the embedding relations $l^{p_1}\subset l^q_{v_s}$.

  \textbf{Test 2.}
  Set $a_{k,n}=0$ if $k\neq 0_{\zd}$. The inequality \eqref{pp-sep2-eb-1} becomes
  \be
  \bigg(\sum_{n\in \zd}|a_{0,n}|^p\langle n\rangle^{tp}\bigg)^{1/p}
  \lesssim
  \bigg(\sum_{n\in \zd}|a_{0,n}|^{q_1}\bigg)^{1/q_1}.
  \ee
  We obtain the embedding relation $l^{q_1}\subset l^p_{v_t}$.

  \textbf{Test 3.}
  Set $a_{k,n}=0$ if $k\neq n$. From the inequality \eqref{pp-sep2-eb-1} we have
  \be
  \bigg(\sum_{k\in \zd}|a_{k,k}|^q\langle k\rangle^{(s+t)q}\bigg)^{1/q}
  \lesssim
  \bigg(\sum_{k\in \zd}|a_{k,k}|^{q_1}\bigg)^{1/q_1}.
  \ee
  This is just the embedding relation $l^{q_1}\subset l^q_{v_{s+t}}$.

  \textbf{``Only if'' part.}
  Applying Lemma \ref{lm-exp-eb} to the embedding relations $l^{p_1}\subset l^q_{v_s}$ and $l^{q_1}\subset l^p_{v_t}$,
  we obtain $s\leq 0$ and $t\leq 0$.
  From this, we consider following three cases.

  \textbf{Case 1: $s=0$.}
  In this case, the embedding relations \eqref{pp-sep2-eb-c1} can be written as
  \be
  l^{p_1}\subset l^q,\ \ \ l^{q_1}\subset l^p_{v_t},\ \ \ l^{q_1}\subset l^q_{v_t}.
  \ee
  Correspondingly, our target is to verify
    \be
  l^{p_1,q_1}\subset l^{(q,p)}_{v_{0,t}}.
  \ee

  If $q/p\geq 1$, the Minkowski inequality implies that
  \be
  \bigg(\sum_{k\in \zd}\bigg(\sum_{n\in \zd}|a_{k,n}|^p\langle n\rangle^{tp}\bigg)^{q/p}\bigg)^{1/q}
  \lesssim
  \bigg(\sum_{n\in \zd}\bigg(\sum_{k\in \zd}|a_{k,n}|^{q}\bigg)^{p/q}\langle n\rangle^{tp}\bigg)^{1/p}.
  \ee
  Then, we use the embedding relations $l^{p_1}\subset l^q, l^{q_1}\subset l^p_{v_t}$ to deduce that
  \be
  \bigg(\sum_{n\in \zd}\bigg(\sum_{k\in \zd}|a_{k,n}|^{q}\bigg)^{p/q}\langle n\rangle^{tp}\bigg)^{1/p}
  \lesssim
  \bigg(\sum_{n\in \zd}\bigg(\sum_{k\in \zd}|a_{k,n}|^{p_1}\bigg)^{q_1/p_1}\bigg)^{1/q_1}.
  \ee
  The desired conclusion follows by the above two estimates.

  If $q/p< 1$, we use the embedding relations $l^q\subset l^p$ to deduce that
   \be
  \bigg(\sum_{k\in \zd}\bigg(\sum_{n\in \zd}|a_{k,n}|^p\langle n\rangle^{tp}\bigg)^{q/p}\bigg)^{1/q}
  \lesssim
  \bigg(\sum_{k\in \zd}\sum_{n\in \zd}|a_{k,n}|^{q}\langle n\rangle^{tq}\bigg)^{1/q}.
  \ee
  Then, the embedding relations $l^{p_1}\subset l^q, l^{q_1}\subset l^q_{v_t}$ further implies that
 \be
  \bigg(\sum_{k\in \zd}\sum_{n\in \zd}|a_{k,n}|^{q}\langle n\rangle^{tq}\bigg)^{1/q}
  \lesssim
  \bigg(\sum_{n\in \zd}\bigg(\sum_{k\in \zd}|a_{k,n}|^{p_1}\bigg)^{q_1/p_1}\bigg)^{1/q_1}.
  \ee
  The desired conclusion follows by the above two estimates.

  \textbf{Case 2: $t=0$.}
  In this case, the embedding relations \eqref{pp-sep2-eb-c1} can be written as
  \be
  l^{p_1}\subset l^q_{v_s},\ \ \ l^{q_1}\subset l^p,\ \ \ l^{q_1}\subset l^q_{v_s}.
  \ee
  Correspondingly, our target is to verify
    \be
  l^{p_1,q_1}\subset l^{(q,p)}_{v_{s,0}}.
  \ee

  If $p_1/q_1\geq 1$, we use the embedding relations $l^{p_1}\subset l^q_{v_s}, l^{q_1}\subset l^p$ to deduce that
   \be
  \bigg(\sum_{k\in \zd}\bigg(\sum_{n\in \zd}|a_{k,n}|^p\bigg)^{q/p}\langle k\rangle^{sq}\bigg)^{1/q}
  \lesssim
  \bigg(\sum_{k\in \zd}\bigg(\sum_{n\in \zd}|a_{k,n}|^{q_1}\bigg)^{p_1/q_1}\bigg)^{1/p_1}.
  \ee
  Then, we use the Minkowski inequality to continue this estimate by
  \be
  \bigg(\sum_{k\in \zd}\bigg(\sum_{n\in \zd}|a_{k,n}|^{q_1}\bigg)^{p_1/q_1}\bigg)^{1/p_1}
  \lesssim
  \bigg(\sum_{n\in \zd}\bigg(\sum_{k\in \zd}|a_{k,n}|^{p_1}\bigg)^{q_1/p_1}\bigg)^{1/q_1}.
  \ee
  The desired conclusion follows by the above two estimates.

  If $p_1/q_1< 1$, we use the embedding relations $l^{q_1}\subset l^q_{v_s}, l^{q_1}\subset l^p$ to deduce that
   \be
  \bigg(\sum_{k\in \zd}\bigg(\sum_{n\in \zd}|a_{k,n}|^p\bigg)^{q/p}\langle k\rangle^{sq}\bigg)^{1/q}
  \lesssim
  \bigg(\sum_{k\in \zd}\sum_{n\in \zd}|a_{k,n}|^{q_1}\bigg)^{1/q_1}.
  \ee
  Then, the well known embedding $l^{p_1}\subset \ l^{q_1}$ yields that
 \be
  \bigg(\sum_{k\in \zd}\sum_{n\in \zd}|a_{k,n}|^{q_1}\bigg)^{1/q_1}
  \lesssim
  \bigg(\sum_{n\in \zd}\bigg(\sum_{k\in \zd}|a_{k,n}|^{p_1}\bigg)^{q_1/p_1}\bigg)^{1/q_1}.
  \ee
  The desired conclusion follows by the above two estimates.

  \textbf{Case 3: $s, t<0$.} In this case, by Lemma \ref{lm-exp-eb} and the embedding relations \eqref{pp-sep2-eb-c1}, we obtain that
  \be
  \frac{1}{q}+\frac{s}{d}<\frac{1}{p_1},\ \ \ \frac{1}{p}<\frac{1}{q_1}-\frac{t}{d},\ \ \  \frac{1}{q}+\frac{s}{d}<\frac{1}{q_1}-\frac{t}{d}.
  \ee
  From this, there exists a sufficiently small constant $\ep>0$ such that
  \be
    \frac{1}{q}+\frac{s}{d}+\ep<\frac{1}{p_1},\ \ \ \frac{1}{p}<\frac{1}{q_1}-\frac{t}{d}-\ep, \ \ \  \frac{1}{q}+\frac{s}{d}+\ep<\frac{1}{q_1}-\frac{t}{d}-\ep.
  \ee
  Set
  \be
  \frac{1}{\r}:=\max\{\frac{1}{q}+\frac{s}{d}+\ep, 0\},\ \ \ \    \frac{1}{r}:=\frac{1}{q_1}-\frac{t}{d}-\ep.
  \ee
  We have $\frac{1}{\r}<\frac{1}{r}$ and the following relations:
  \be
  \frac{1}{\r}+\frac{s}{d}<\frac{1}{\r}<\frac{1}{p_1},
  \ \ \frac{1}{p}+\frac{t}{d}<\frac{1}{r}+\frac{t}{d}<\frac{1}{q_1}.
  \ee
  From this and Lemma \ref{lm-exp-eb}, we obtain the following embedding relations
  \be
  l^{p_1}\subset l^{\r}\subset \ l^q_{v_s},\ \ \ l^{q_1}\subset l^r_{v_{t}}\subset l^p_{v_t}.
  \ee
  Using these embedding relations and the Minkowski inequality with $\frac{1}{\r}<\frac{1}{r}$, we deduce that
  \be
  \begin{split}
  \bigg(\sum_{k\in \zd}\bigg(\sum_{n\in \zd}|a_{k,n}|^p\langle n\rangle^{tp}\bigg)^{q/p}\langle k\rangle^{sq}\bigg)^{1/q}
  \lesssim &
  \bigg(\sum_{k\in \zd}\bigg(\sum_{n\in \zd}|a_{k,n}|^r\langle n\rangle^{tr}\bigg)^{\r/r}\bigg)^{1/\r}
  \\
  \lesssim
  \bigg(\sum_{n\in \zd}\bigg(\sum_{k\in \zd}|a_{k,n}|^{\r}\bigg)^{r/\r}&\langle n\rangle^{tr}\bigg)^{1/r}
  \lesssim
  \bigg(\sum_{n\in \zd}\bigg(\sum_{k\in \zd}|a_{k,n}|^{p_1}\bigg)^{q_1/p_1}\bigg)^{1/q_1}.
  \end{split}
  \ee
  This is the desired conclusion.
\end{proof}

\begin{proof}[Proof of Theorem \ref{thm-W3}]
The case $\tau\in (0,1)$ follows directly by Theorem \ref{thm-W2}.
For $\tau=0$, observe that
\be
l^{p_1,q_1}_{v_{s_1,t_1}}\subset l^{(q,p)}_{v_s\otimes 1}\Longleftrightarrow l^{p_1,q_1}\subset l^{(q,p)}_{v_{s-s_1,-t_1}}.
\ee
Then, Proposition \ref{pp-sep2-eb} tells us
\be
l^{p_1,q_1}\subset l^{(q,p)}_{v_{s-s_1,-t_1}}\Longleftrightarrow
l^{p_1}\subset l^q_{s-s_1},\  l^{q_1}\subset l^p_{-t_1},\  l^{q_1}\subset l^q_{s-s_1-t_1},
\ee
which is equivalent to
\be
l^{p_1}_{s_1}\subset l^q_{s},\  l^{q_1}_{t_1}\subset l^p,\  l^{q_1}_{s_1+t_1}\subset l^q_{s}.
\ee
Hence, we have
\be
l^{p_1,q_1}_{v_{s_1,t_1}}\subset l^{(q,p)}_{v_s\otimes 1}\Longleftrightarrow l^{p_1}_{s_1}\subset l^q_{s},\  l^{q_1}_{t_1}\subset l^p,\  l^{q_1}_{s_1+t_1}\subset l^q_{s}.
\ee
Combining this with Theorem \ref{thm-W2}, we obtain the desired conclusion.
The case $\tau=1$ follows by the same argument of $\tau=0$, we omit the detail.
\end{proof}

Using Lemma \ref{lm-exp-eb} and Theorem \ref{thm-W3}, we obtain the following exponents characterization for BMW with endpoints.
\begin{theorem}\label{thm-W7}
Assume $p_i, q_i, p, q \in (0,\fy]$, $i=1,2$, $\tau=0,1$. We have
\be
W_{\tau}: \calM^{p_1,q_1}_{v_{s_1,t_1}}(\rd)\times \calM^{p_2,q_2}_{v_{s_2,t_2}}(\rd)
\longrightarrow W(\scrF L^p, L^q_s)(\rdd)
\ee
if and only if

\be
(\tau=0)\begin{cases}
s\leq s_1,t_2,\ \ 0\leq t_1,s_2,\\
\frac{1}{p}\leq \frac{1}{q_1}+\frac{t_1}{d}\ \ \text{or}\ \  (q_1,t_1)=(p,0),\\
\frac{1}{p}\leq \frac{1}{p_2}+\frac{s_2}{d}\ \ \text{or}\ \  (p_2,s_2)=(p,0),\\
\frac{1}{q}+\frac{s}{d}\leq \frac{1}{p_1}+\frac{s_1}{d}\ \ \text{or}\ \  (q,s)=(p_1,s_1),\\
\frac{1}{q}+\frac{s}{d}\leq \frac{1}{q_1}+\frac{s_1+t_1}{d}\ \ \text{or}\ \  (q,s)=(q_1,s_1+t_1),\\
\frac{1}{q}+\frac{s}{d}\leq \frac{1}{q_2}+\frac{t_2}{d}\ \ \text{or}\ \  (q,s)=(q_2,t_2),\\
\end{cases}
\ee

and

\be
(\tau=1)\begin{cases}
s\leq s_2,t_1,\ \ 0\leq t_2,s_1,\\
\frac{1}{p}\leq \frac{1}{q_2}+\frac{t_2}{d}\ \ \text{or}\ \  (q_2,t_2)=(p,0),\\
\frac{1}{p}\leq \frac{1}{p_1}+\frac{s_1}{d}\ \ \text{or}\ \  (p_1,s_1)=(p,0),\\
\frac{1}{q}+\frac{s}{d}\leq \frac{1}{p_2}+\frac{s_2}{d}\ \ \text{or}\ \  (q,s)=(p_2,s_2),\\
\frac{1}{q}+\frac{s}{d}\leq \frac{1}{q_2}+\frac{s_2+t_2}{d}\ \ \text{or}\ \  (q,s)=(q_2,s_2+t_2),\\
\frac{1}{q}+\frac{s}{d}\leq \frac{1}{q_1}+\frac{t_1}{d}\ \ \text{or}\ \  (q,s)=(q_1,t_1).\\
\end{cases}
\ee

\end{theorem}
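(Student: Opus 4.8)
The plan is to read Theorem \ref{thm-W7} directly off the two ingredients already in hand: Theorem \ref{thm-W3}, which reduces the endpoint boundedness $W_\tau\colon \calM^{p_1,q_1}_{v_{s_1,t_1}}(\rd)\times \calM^{p_2,q_2}_{v_{s_2,t_2}}(\rd)\to W(\scrF L^p,L^q_s)(\rdd)$ for $\tau=0,1$ to a finite list of \emph{scalar} weighted sequence embeddings, and Lemma \ref{lm-exp-eb}, which makes each such embedding explicit in terms of the exponents and weights. Thus no new analysis is required; the task is purely to translate and simplify. For $\tau=0$, Theorem \ref{thm-W3} states that boundedness holds if and only if $l^{q_1}_{t_1}(\zd)\subset l^p(\zd)$, $l^{p_2}_{s_2}(\zd)\subset l^p(\zd)$, and $l^{p_1}_{s_1}(\zd),\ l^{q_1}_{s_1+t_1}(\zd),\ l^{q_2}_{t_2}(\zd)\subset l^q_s(\zd)$.

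First I would apply Lemma \ref{lm-exp-eb} to each of these five embeddings in turn, matching the roles carefully: for $l^{q_1}_{t_1}\subset l^p=l^p_{v_0}$ one takes $(q_1,s_1,q_2,s_2)=(q_1,t_1,p,0)$ in the lemma, for $l^{p_1}_{s_1}\subset l^q_s$ one takes $(q_1,s_1,q_2,s_2)=(p_1,s_1,q,s)$, and so forth. Each embedding then splits into a weight-monotonicity part and a rate inequality; the lemma's form ``$(s_2\le s_1$ and strict rate$)$ or $(s_2=s_1,\,q_2=q_1)$'' is equivalent to ``$s_2\le s_1$ and $[\text{strict rate or }(q_2,s_2)=(q_1,s_1)]$'', since the tie-case already forces $s_2\le s_1$; this is what produces the displayed alternative clauses ``$\cdots$ or $(q,\cdot)=(\cdot,\cdot)$''. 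Collecting the monotonicity parts yields $0\le t_1,\,s_2$ together with $s\le s_1$, $s\le s_1+t_1$, $s\le t_2$, and here $s\le s_1+t_1$ is redundant, being implied by $s\le s_1$ and $0\le t_1$. This leaves exactly the first line $s\le s_1,t_2$, $0\le t_1,s_2$ of the $\tau=0$ list, while the remaining rate conditions give the five subsequent alternatives verbatim.

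The case $\tau=1$ requires no separate argument. As already exploited in the proof of Theorem \ref{thm-W1} for $\tau=1$ through the identity $W_1(f_1,f_2)=\overline{W_0(f_2,f_1)}$, the endpoint $\tau=1$ is obtained from $\tau=0$ by interchanging the two input functions, which amounts to swapping the index packages $(p_1,q_1,s_1,t_1)\leftrightarrow(p_2,q_2,s_2,t_2)$. Applying this substitution to the $\tau=0$ characterization produces the $\tau=1$ list, so I would simply invoke this symmetry and omit the repetition.

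There is no genuine conceptual obstacle: the proof is an assembly of two results already proved. The only thing demanding care is the bookkeeping—correctly identifying which exponent and which weight play the source and target roles in Lemma \ref{lm-exp-eb} for each embedding, keeping the strict-versus-tie dichotomy straight so the ``or $(q,s)=\cdots$'' clauses attach to the right relations, and spotting the single redundant monotonicity constraint. I expect the mildest subtlety to be the two embeddings with unweighted target $\subset l^p$, where the target weight is trivial and one must verify that the lemma's specialization contributes no hidden conditions beyond those listed.
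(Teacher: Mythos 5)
Your proposal is correct and follows exactly the paper's route: the paper's entire proof of Theorem \ref{thm-W7} is the one-line observation that it follows from Theorem \ref{thm-W3} combined with Lemma \ref{lm-exp-eb}, which is precisely the reduction-to-embeddings-then-translate argument you carry out (including the correct symmetry $W_1(f_1,f_2)=\overline{W_0(f_2,f_1)}$ for $\tau=1$ and the observation that $s\le s_1+t_1$ is redundant). One small caveat: applying Lemma \ref{lm-exp-eb} yields \emph{strict} rate inequalities in each alternative (as in Theorem \ref{thm-M6}), so your derivation actually produces ``$<$ \ or pair equality'' rather than the ``$\leq$ \ or pair equality'' printed in the statement; this appears to be a typo in the theorem rather than a flaw in your argument, but the alternatives are therefore not quite ``verbatim.''
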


\section{Complements: pesudodifferential operators on modulation spaces}
\subsection{Relations between BMM (BMW) and BPM (BPW)}
In order to give characterizations of BPM and BPW, we would like to first establish
some equivalent relations associated with BMM and BMW. We point out that these equivalent
relations follows by the dual arguments of function spaces, which have been wildly used before,
for instance, one can see the proof of \cite[Theorem 5.1]{CorderoNicola2018IMRNI} for this direction.

\begin{proposition}\label{pp-eqOP-BMM}
  Assume $1\leq p, q, p_i, q_i \leq \fy$, $i=1,2$, $\tau\in [0,1]$. Then the following statements are equivalent:
  \begin{eqnarray*}
  (i)& &\forall\sigma\in M^{p,q}_{1\otimes m}(\rdd)
  \Longrightarrow OP_{\tau}(\sigma): \calM^{p_1,q_1}_{m_1}(\rd)\longrightarrow M^{p_2,q_2}_{m_2}(\rd),
  \\
  (ii)&  &\|OP_{\tau}(\s)f\|_{M^{p_2,q_2}_{m_2}(\rd)}
  \lesssim \|\s\|_{M^{p,q}_{1\otimes m}(\rdd)}\|f\|_{M^{p_1,q_1}_{m_1}(\rd)},\ \ \ f\in \calS(\rd),
  \\
  (iii)&   &W_{\tau}: \calM^{p_2',q_2'}_{m_2^{-1}}(\rd)\times \calM^{p_1,q_1}_{m_1}(\rd)
  \longrightarrow M^{p',q'}_{1\otimes m^{-1}}(\rdd).
  \end{eqnarray*}
\end{proposition}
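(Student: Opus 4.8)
The plan is to prove the cycle $(ii)\Rightarrow(i)$, $(i)\Rightarrow(ii)$ and $(ii)\Leftrightarrow(iii)$, with the two genuinely substantive steps being a uniform boundedness argument for $(i)\Rightarrow(ii)$ and a duality computation for $(ii)\Leftrightarrow(iii)$. Throughout, the engine is the defining weak identity $\langle OP_{\tau}(\s)f,g\rangle=\langle \s, W_{\tau}(g,f)\rangle$ for $f,g\in\calS(\rd)$, together with the standard duality $(\calM^{p,q}_m(\rd))'=M^{p',q'}_{1/m}(\rd)$ valid on the whole range $1\le p,q\le\fy$ (this is precisely why the Schwartz closures $\calM$ appear in the ``predual'' slots of $(iii)$). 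Since $f,g\in\calS(\rd)$ imply $W_{\tau}(g,f)\in\calS(\rdd)$, every pairing below is well defined for $\s\in\calS'(\rdd)$, and the estimates are proved on $\calS$ and extended by density.

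The implication $(ii)\Rightarrow(i)$ is immediate, as the inequality in $(ii)$ exhibits $OP_{\tau}(\s)$ as bounded from $\calM^{p_1,q_1}_{m_1}$ into $M^{p_2,q_2}_{m_2}$ for each fixed $\s$. For $(i)\Rightarrow(ii)$ I would invoke the uniform boundedness principle for the bilinear map $B(\s,f):=OP_{\tau}(\s)f$ with values in $M^{p_2,q_2}_{m_2}$. For fixed $\s$, $(i)$ gives boundedness in $f$; for fixed $f,g\in\calS$, the identity $\langle B(\s,f),g\rangle=\langle\s,W_{\tau}(g,f)\rangle$ with $W_{\tau}(g,f)\in\calS$ shows $B(\cdot,f)$ is bounded in $\s$. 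Since all three spaces are Banach (here $1\le p,q,p_i,q_i\le\fy$), a separately bounded bilinear map between Banach spaces is jointly bounded, which is exactly the uniform estimate $(ii)$.

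For $(ii)\Leftrightarrow(iii)$ I would write each modulation-space norm as a supremum against its predual and run the weak identity. Assuming $(ii)$, I would represent the target norm in $(iii)$ through $M^{p',q'}_{1\otimes m^{-1}}=(\calM^{p,q}_{1\otimes m})'$:
\be
\|W_{\tau}(g,f)\|_{M^{p',q'}_{1\otimes m^{-1}}}\sim\sup_{\|\s\|_{M^{p,q}_{1\otimes m}}\le1}|\langle\s,W_{\tau}(g,f)\rangle|
=\sup_{\|\s\|\le1}|\langle OP_{\tau}(\s)f,g\rangle|,
\ee
and then bound $|\langle OP_{\tau}(\s)f,g\rangle|\le\|OP_{\tau}(\s)f\|_{M^{p_2,q_2}_{m_2}}\|g\|_{M^{p_2',q_2'}_{m_2^{-1}}}$ using $M^{p_2',q_2'}_{m_2^{-1}}=(\calM^{p_2,q_2}_{m_2})'$, so that $(ii)$ yields $(iii)$ after taking the supremum. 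Conversely, assuming $(iii)$, I would expand
\be
\|OP_{\tau}(\s)f\|_{M^{p_2,q_2}_{m_2}}\sim\sup_{\|g\|_{M^{p_2',q_2'}_{m_2^{-1}}}\le1}|\langle OP_{\tau}(\s)f,g\rangle|
=\sup_{\|g\|\le1}|\langle\s,W_{\tau}(g,f)\rangle|,
\ee
and estimate $|\langle\s,W_{\tau}(g,f)\rangle|\le\|\s\|_{M^{p,q}_{1\otimes m}}\|W_{\tau}(g,f)\|_{M^{p',q'}_{1\otimes m^{-1}}}$, after which $(iii)$ closes the estimate to give $(ii)$.

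The main obstacle is technical rather than conceptual: ensuring that the duality identities $(\calM^{p,q}_m)'=M^{p',q'}_{1/m}$ (and its weighted mixed-norm forms for the pairs $(p_2,q_2)$ and $(p,q)$) are correctly applied at the endpoints $p,q\in\{1,\fy\}$, where the distinction between $M$ and its Schwartz closure $\calM$ is essential, and where one must verify that the supremum over the unit ball of the predual genuinely recovers the norm of the dual element. A secondary point to check is that $m^{-1}$ and $m_2^{-1}$ again lie in $\scrP$, which follows from the $v$-moderateness of $m,m_2$, so that all weighted spaces in $(iii)$ are legitimate. Once these duality facts are in hand, the displayed chains of inequalities close the argument.
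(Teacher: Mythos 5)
Your proposal is correct and follows essentially the same route as the paper: the implication $(ii)\Rightarrow(i)$ is trivial, the equivalence $(ii)\Leftrightarrow(iii)$ is the same standard duality computation built on $\langle OP_{\tau}(\s)f,g\rangle=\langle \s, W_{\tau}(g,f)\rangle$ and $(\calM^{p,q}_m)'=M^{p',q'}_{1/m}$, and $(i)\Rightarrow(ii)$ is a Baire-category argument. The only packaging difference is in that last step: the paper applies the Closed Graph Theorem directly to the operator-valued map $P:\s\mapsto OP_{\tau}(\s)$ from $M^{p,q}_{1\otimes m}$ into $B(\calM^{p_1,q_1}_{m_1},M^{p_2,q_2}_{m_2})$, whereas you route through separate continuity of the bilinear map $B(\s,f)=OP_{\tau}(\s)f$. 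Be aware that your justification of separate continuity in $\s$ is slightly too quick: knowing that $\s\mapsto\langle B(\s,f),g\rangle$ is bounded for each fixed $g\in\calS(\rd)$ only gives continuity of $B(\cdot,f)$ into $\calS'(\rdd)$ with its weak topology, not directly norm boundedness into $M^{p_2,q_2}_{m_2}$; to upgrade this you still need a closed-graph step (if $\s_n\to\s$ in $M^{p,q}_{1\otimes m}$ and $B(\s_n,f)\to T$ in $M^{p_2,q_2}_{m_2}$, test both limits against Schwartz functions, which separate points of $\calS'$, to conclude $T=B(\s,f)$). So the closed graph theorem is not avoidable in your scheme either; it is simply applied to $B(\cdot,f)$ for each fixed $f$ rather than to $P$ itself. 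With that small repair your argument is complete and equivalent to the paper's.
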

\begin{proof}
  $(ii)\Longrightarrow (i)$ is clear. In order to verify $(i)\Longrightarrow (ii)$,
  we apply the Closed Graph Theorem as in \cite[Proposition 4.7]{CorderoNicola2010IMRNI}.
  The map acting from $M^{p,q}_{1\otimes m}$ into $B(\calM^{p_1,q_1}_{m_1}, M^{p_2,q_2}_{m_2})$ is defined by
  \be
  P: \sigma \longrightarrow OP_{\tau}(\s).
  \ee
  Given any sequence of pairs that $(\s_n,OP_{\tau}(\s_n))$ tends to $(\s,T)$ in the topology of graph of $P$,
  for any $f,g\in \calS(\rd)$ we have
  \be
  \begin{split}
  \langle OP_{\tau}(\s)f,g\rangle
  = &
  \langle \s, W_{\tau}(g,f)\rangle
  \\
  = &
  \lim_{n\rightarrow \fy}\langle \s_n, W_{\tau}(g,f)\rangle
  =
  \lim_{n\rightarrow \fy}\langle OP_{\tau}(\s_n)f,g\rangle=\langle Tf,g\rangle.
  \end{split}
  \ee
  From this, we obtain $T=OP_{\tau}(\s)$. Then the graph of $P$ is closed. Hence, $P$ is bounded,
  i.e., (ii) is valid.

  Next, we turn to the proof of $(ii)\Longleftrightarrow (iii)$. This follows by a standard dual argument.
  If (ii) holds, for any $f,g\in \calS(\rd)$ and $\s\in M^{p,q}_{1\otimes m}$, we have
  \be
  \begin{split}
  |\langle W_{\tau}(f,g),\s\rangle|
  = &
  |\langle f,OP_{\tau}(\s)g\rangle|
  \\
  \lesssim &
  \|f\|_{M^{p_2',q_2'}_{m_2^{-1}}}\|OP_{\tau}(\s)g\|_{M^{p_2,q_2}_{m_2}}
  \lesssim
  \|f\|_{M^{p_2',q_2'}_{m_2^{-1}}}\|\s\|_{M^{p,q}_{1\otimes m}}\|g\|_{M^{p_1,q_1}_{m_1}}.
  \end{split}
  \ee
  Then, the duality of modulation spaces implies (iii).

  Viceversa, if (iii) holds, for any $f,g\in \calS(\rd)$ and $\s\in M^{p,q}_{1\otimes m}$, we have
  \be
  \begin{split}
  |\langle OP_{\tau}(\s)f,g\rangle|
  = &
  |\langle \s, W_{\tau}(g,f)\rangle|
  \\
  \lesssim &
  \|\s\|_{M^{p,q}_{1\otimes m}}\|W_{\tau}(g,f)\|_{M^{p',q'}_{1\otimes m^{-1}}}
  \lesssim
  \|\s\|_{M^{p,q}_{1\otimes m}}\|g\|_{M^{p_2',q_2'}_{m_2^{-1}}}\|f\|_{M^{p_1,q_1}_{m_1}}.
  \end{split}
  \ee
  Then, (iii) follows by the duality of modulation spaces.
\end{proof}

By a similar argument, we give following equivalent relations between BMW and BPW.

\begin{proposition}\label{pp-eqOP-BMW}
  Assume $1\leq p, q, p_i, q_i \leq \fy$, $i=1,2$, $\tau\in [0,1]$. Then the following statements are equivalent:
  \begin{eqnarray*}
  (i)& &\forall\sigma\in \scrF M^{p,q}_{1\otimes m}(\rdd)
  \Longrightarrow OP_{\tau}(\sigma): \calM^{p_1,q_1}_{m_1}(\rd)\longrightarrow M^{p_2,q_2}_{m_2}(\rd),
  \\
  (ii)&  &\|OP_{\tau}(\s)f\|_{M^{p_2,q_2}_{m_2}(\rd)}
  \lesssim \|\s\|_{\scrF M^{p,q}_{1\otimes m}(\rdd)}\|f\|_{M^{p_1,q_1}_{m_1}(\rd)},\ \ \ f\in \calS(\rd),
  \\
  (iii)&   &W_{\tau}: \calM^{p_2',q_2'}_{m_2^{-1}}(\rd)\times \calM^{p_1,q_1}_{m_1}(\rd)
  \longrightarrow \scrF M^{p',q'}_{1\otimes m^{-1}}(\rdd).
  \end{eqnarray*}
\end{proposition}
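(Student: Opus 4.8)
The plan is to follow verbatim the strategy used for Proposition~\ref{pp-eqOP-BMM}, replacing the modulation space $M^{p,q}_{1\otimes m}$ by the Wiener amalgam space $\scrF M^{p,q}_{1\otimes m}=W(\scrF L^p,L^q_m)$ throughout. The implication $(ii)\Longrightarrow(i)$ is immediate from the definition of boundedness, so the work lies in establishing $(i)\Longrightarrow(ii)$ and the pair $(ii)\Longleftrightarrow(iii)$.

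For $(i)\Longrightarrow(ii)$ I would invoke the Closed Graph Theorem applied to the linear map $P\colon\sigma\longmapsto OP_{\tau}(\sigma)$ from $\scrF M^{p,q}_{1\otimes m}(\rdd)$ into the space $B(\calM^{p_1,q_1}_{m_1},M^{p_2,q_2}_{m_2})$ of bounded operators. If $(\sigma_n,OP_{\tau}(\sigma_n))\to(\sigma,T)$ in the graph topology, then for $f,g\in\calS(\rd)$ the identity $\langle OP_{\tau}(\sigma)f,g\rangle=\langle\sigma,W_{\tau}(g,f)\rangle$ together with the fact that $W_{\tau}(g,f)\in\calS(\rdd)$ forces $\langle\sigma_n,W_{\tau}(g,f)\rangle\to\langle\sigma,W_{\tau}(g,f)\rangle$, since norm convergence in $\scrF M^{p,q}_{1\otimes m}$ entails convergence in $\calS'(\rdd)$. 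Passing to the limit yields $T=OP_{\tau}(\sigma)$, so the graph is closed and $P$ is bounded, which is precisely $(ii)$.

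The equivalence $(ii)\Longleftrightarrow(iii)$ I would obtain by a standard duality argument. Assuming $(ii)$, for $f,g\in\calS(\rd)$ and $\sigma\in\scrF M^{p,q}_{1\otimes m}$ I bound
\be
|\langle W_{\tau}(f,g),\sigma\rangle|=|\langle f,OP_{\tau}(\sigma)g\rangle|
\lesssim \|f\|_{M^{p_2',q_2'}_{m_2^{-1}}}\|\sigma\|_{\scrF M^{p,q}_{1\otimes m}}\|g\|_{M^{p_1,q_1}_{m_1}},
\ee
and then take the supremum over $\sigma$ in the unit ball, using the duality $(\scrF M^{p,q}_{1\otimes m})'=\scrF M^{p',q'}_{1\otimes m^{-1}}$, to recover $(iii)$. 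Conversely, assuming $(iii)$, I estimate $|\langle OP_{\tau}(\sigma)f,g\rangle|=|\langle\sigma,W_{\tau}(g,f)\rangle|$ by pairing $\sigma\in\scrF M^{p,q}_{1\otimes m}$ against $W_{\tau}(g,f)\in\scrF M^{p',q'}_{1\otimes m^{-1}}$ and apply $(iii)$; taking the supremum over $g$ in the unit ball of $M^{p_2',q_2'}_{m_2^{-1}}$ and using the duality of modulation spaces gives $(ii)$.

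The one point requiring care, and the only genuine difference from Proposition~\ref{pp-eqOP-BMM}, is the duality relation $(\scrF M^{p,q}_{1\otimes m})'=\scrF M^{p',q'}_{1\otimes m^{-1}}$ used in the last step. I expect this to be the main (though minor) obstacle: it is not the duality of modulation spaces directly, but since $\scrF M^{p,q}_{1\otimes m}$ is the Fourier image of $M^{p,q}_{1\otimes m}$ and the Fourier transform preserves the extended $L^2$ pairing up to the symplectic change of variables recorded in Definition~\ref{df-W}, the needed duality is inherited from $(M^{p,q}_{1\otimes m})'=M^{p',q'}_{1\otimes m^{-1}}$ (valid for $1\le p,q\le\infty$ on the $\calS$-closures). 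As in the modulation space case, restricting to $f,g\in\calS(\rd)$ sidesteps the density failures at the endpoints $p=\infty$ or $q=\infty$.
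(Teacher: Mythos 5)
Your proposal is correct and is essentially the argument the paper intends: the paper gives no separate proof of this proposition, stating only that it follows ``by a similar argument'' to Proposition~\ref{pp-eqOP-BMM}, and your write-up supplies exactly that argument (Closed Graph Theorem for $(i)\Rightarrow(ii)$, duality for $(ii)\Leftrightarrow(iii)$). You also correctly isolate the one genuinely new ingredient, namely that the duality $(\scrF M^{p,q}_{1\otimes m})'=\scrF M^{p',q'}_{1\otimes m^{-1}}$ is inherited from modulation-space duality because the Fourier transform preserves the $L^2$ pairing.
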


\subsection{Sharp exponents of BPM and BPW}

By propositions \ref{pp-eqOP-BMM} and \ref{pp-eqOP-BMW},
the estimates of the $\tau$-Wigner distribution can be translated in ones into the
corresponding results for $\tau$-operators. Following, we collect some important cases, the interested reader can deduce
the results they need.

Following result is a direct conclusion by
 Proposition \ref{pp-eqOP-BMM} and Theorem \ref{thm-M5}.

\begin{theorem}\label{thm-OPM}
  Assume $1\leq p, q, p_i, q_i \leq \fy$, $i=1,2$, $\tau\in [0,1]$. We have
  \be
  \forall\sigma\in M^{p,q}(\rdd)
  \Longrightarrow OP_{\tau}(\sigma): \calM^{p_1,q_1}(\rd)\longrightarrow M^{p_2,q_2}(\rd)
  \ee
  if and only if
\be
p_1,q_1,p_2',q_2'\leq q',\ \ \ i=1,2
\ee
and
\be
\frac{1}{p_1}+\frac{1}{p_2'}\geq  \frac{1}{p'}+\frac{1}{q'},\ \
\frac{1}{q_1}+\frac{1}{q_2'}\geq  \frac{1}{p'}+\frac{1}{q'}.
\ee
\end{theorem}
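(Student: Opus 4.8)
The plan is to obtain Theorem \ref{thm-OPM} as a corollary of the duality equivalence in Proposition \ref{pp-eqOP-BMM} together with the sharp characterization of unweighted BMM in Theorem \ref{thm-M5}; the only real work is careful bookkeeping of dual exponents, since no new analysis is required.

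First I would specialize Proposition \ref{pp-eqOP-BMM} to the trivial weights $m=m_1=m_2\equiv 1$. Its equivalence $(i)\Longleftrightarrow(iii)$ then asserts that
\[
\forall\,\s\in M^{p,q}(\rdd)\Longrightarrow OP_{\tau}(\s):\calM^{p_1,q_1}(\rd)\longrightarrow M^{p_2,q_2}(\rd)
\]
holds if and only if
\[
W_{\tau}:\calM^{p_2',q_2'}(\rd)\times\calM^{p_1,q_1}(\rd)\longrightarrow M^{p',q'}(\rdd).
\]
Because the hypothesis $1\le p,q,p_i,q_i\le\fy$ forces the conjugate exponents $p',q',p_2',q_2'$ to lie again in $[1,\fy]$, this last mapping property is exactly of the form covered by Theorem \ref{thm-M5}, and I can invoke it directly.

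Second I would apply Theorem \ref{thm-M5} to this boundedness, using the dictionary under which the first input window $(p_1,q_1)$ of Theorem \ref{thm-M5} is played by $(p_2',q_2')$, the second input $(p_2,q_2)$ by $(p_1,q_1)$, and the target $(p,q)$ by $(p',q')$. With this identification the summability hypothesis $p_i,q_i\le q$ of Theorem \ref{thm-M5} becomes $p_2',q_2'\le q'$ together with $p_1,q_1\le q'$, i.e. $p_1,q_1,p_2',q_2'\le q'$; and the two Young-type inequalities
\[
\tfrac1{p_1}+\tfrac1{p_2}\ge\tfrac1p+\tfrac1q,\qquad \tfrac1{q_1}+\tfrac1{q_2}\ge\tfrac1p+\tfrac1q
\]
transform into
\[
\tfrac1{p_2'}+\tfrac1{p_1}\ge\tfrac1{p'}+\tfrac1{q'},\qquad \tfrac1{q_2'}+\tfrac1{q_1}\ge\tfrac1{p'}+\tfrac1{q'},
\]
which are precisely the conditions in the statement. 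This completes the argument.

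There is no substantive analytic obstacle here; the one point demanding care is the exponent dictionary. I must keep straight that the duality step of Proposition \ref{pp-eqOP-BMM} both replaces all three spaces by their conjugate exponents and \emph{interchanges} the roles of the two windows (the synthesis window $f$ occupies the second slot of $W_{\tau}$, the analysis window the first), so the matching to Theorem \ref{thm-M5} is not the naive one. I would also note explicitly that the resulting conditions are independent of $\tau\in[0,1]$, a property inherited directly from the $\tau$-independence already established in Theorem \ref{thm-M5}.
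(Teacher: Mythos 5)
Your proposal is correct and follows exactly the paper's route: the paper also obtains Theorem \ref{thm-OPM} as a direct consequence of Proposition \ref{pp-eqOP-BMM} (with trivial weights) combined with Theorem \ref{thm-M5}, and your exponent dictionary $(p_2',q_2')$, $(p_1,q_1)$, $(p',q')$ for the two inputs and the target of $W_{\tau}$ is the right one. Your write-up actually spells out the bookkeeping that the paper leaves implicit.
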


Next, we want to establish the sharp exponents for the boundedness on modulation spaces with power weights
of pseudodifferential operators with symbols in Sj\"{o}strand's class.
Before this, we first give following characterization of BPM, which can be directly deduced by
Proposition \ref{pp-eqOP-BMM} and Theorem \ref{thm-M3}.

\begin{theorem}\label{thm-OPM-PW}
  Assume $1\leq p, q, p_i, q_i \leq \fy$, $i=1,2$, $\tau\in [0,1]$. We have
  \be
  \forall\sigma\in M^{p,q}_{1\otimes v_s}(\rdd)
  \Longrightarrow OP_{\tau}(\sigma): \calM^{p_1,q_1}_{v_{s_1,t_1}}(\rd)\longrightarrow M^{p_2,q_2}_{v_{s_2,t_2}}(\rd)
  \ee
  if and only if
\begin{eqnarray}
&l_{-p's_2}^{p_2'/p'}(\zd)\ast l_{p's_1}^{p_1/p'}(\zd)\subset  l_{-p's}^{q'/p'}(\zd),
\ \ l_{-p't_2}^{q_2'/p'}(\zd)\ast l_{p't_1}^{q_1/p'}(\zd)\subset  l_{-p's}^{q'/p'}(\zd),
&p>1,
\\
&l_{-s_2}^{p_2'}(\zd),\ l_{s_1}^{p_1}(\zd)\subset  l_{-s}^{q'}(\zd),
\ \ l_{-t_2}^{q_2'}(\zd),\ l_{t_1}^{q_1}(\zd)\subset  l_{-s}^{q'}(\zd),
&p\leq q.
\end{eqnarray}
\end{theorem}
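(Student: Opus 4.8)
The plan is to obtain this characterization with no fresh analysis, simply by chaining the duality equivalence of Proposition \ref{pp-eqOP-BMM} with the power-weight characterization of BMM recorded in Theorem \ref{thm-M3}; the entire content of the argument is the bookkeeping of dualized indices and inverted weights.

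First I would specialize the equivalence $(i)\Longleftrightarrow(iii)$ of Proposition \ref{pp-eqOP-BMM} to the power weights $m_1=v_{s_1,t_1}$, $m_2=v_{s_2,t_2}$, $m=v_s$. Using $v_{s_2,t_2}^{-1}=v_{-s_2,-t_2}$ and $v_s^{-1}=v_{-s}$, the stated BPM implication is then equivalent to the BMM boundedness
\be
W_{\tau}:\calM^{p_2',q_2'}_{v_{-s_2,-t_2}}(\rd)\times\calM^{p_1,q_1}_{v_{s_1,t_1}}(\rd)\longrightarrow M^{p',q'}_{1\otimes v_{-s}}(\rdd).
\ee
The essential point to keep in view is that the two input factors now appear in the order opposite to Theorem \ref{thm-M3}: the first slot carries the data $(p_2',q_2',-s_2,-t_2)$, the second slot carries $(p_1,q_1,s_1,t_1)$, and the target carries $(p',q',-s)$.

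Next I would apply Theorem \ref{thm-M3} to this displayed estimate under the identification
\be
(p_2',q_2',-s_2,-t_2;\,p_1,q_1,s_1,t_1;\,p',q',-s)
\ee
of its two input exponent-weight quadruples and its target triple. In the regime where the target first index is finite, Theorem \ref{thm-M3} yields two convolution inclusions; substituting $p'$ for the target integrability together with the weight products $p'\cdot(-s_2)=-p's_2$, $p'\cdot s_1=p's_1$, $p'\cdot(-s)=-p's$ (and likewise for the $t$-quadruple) turns them into exactly the two stated convolution inclusions. In the regime where the target first index dominates the second, the four embedding conditions of Theorem \ref{thm-M3} specialize in the same way to $l^{p_2'}_{-s_2},l^{p_1}_{s_1}\subset l^{q'}_{-s}$ and $l^{q_2'}_{-t_2},l^{q_1}_{t_1}\subset l^{q'}_{-s}$.

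Finally I would translate the two case splits of the dual problem back to the original exponents: the finiteness of the dual target index is $p'<\fy$, i.e.\ $p>1$, while the dominance condition is $p'\geq q'$, i.e.\ $p\leq q$, matching the two cases in the statement. The only genuine hazard in the argument — and the step where sign errors are most likely — is carrying out the three dualizations of indices and the three sign reversals of weights simultaneously with the transposition of the two input factors forced by Proposition \ref{pp-eqOP-BMM}; once this bookkeeping is done correctly the theorem follows at once, requiring no endpoint analysis beyond what Theorem \ref{thm-M3} already provides.
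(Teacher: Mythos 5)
Your proposal is correct and is exactly the paper's intended argument: the paper states that Theorem \ref{thm-OPM-PW} "can be directly deduced by Proposition \ref{pp-eqOP-BMM} and Theorem \ref{thm-M3}," and your index/weight bookkeeping (swapping the two input slots, dualizing $p_2,q_2,p,q$, negating $s_2,t_2,s$, and translating $p'<\fy$ to $p>1$ and $p'\geq q'$ to $p\leq q$) carries this out correctly.
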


We recall a special case of Lemma \ref{lm-exp-wcov} as follows.

\begin{lemma}\label{lm-exp-wcov-SJ}
Suppose $1\leq q_1, q_2\leq \infty$, $s_1, s_2\in \mathbb{R}$. Then
\begin{equation}
l^{q_1}_{s_1}(\rd)\ast l^{q_2}_{s_2}(\rd) \subset l^{\fy}(\rd)
\end{equation}
if and only if
\begin{eqnarray}
s_1=s_2=0,\  \frac{1}{q_1}+\frac{1}{q_2}=1
\ \ \ \text{or}\ \ \ \ \
\begin{cases}
0\leq s_1, s_2,\\
1<\frac{1}{q_1}+\frac{s_1}{d}+\frac{1}{q_2}+\frac{s_2}{d},\\
(q_1,s_1)=(\fy,0) ~\text{if}~ \frac{1}{q_1}+\frac{s_1}{d}=0, \\
(q_2,s_2)=(\fy,0) ~\text{if}~ \frac{1}{q_2}+\frac{s_2}{d}=0.
\end{cases}
\end{eqnarray}
\end{lemma}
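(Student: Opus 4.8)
The plan is to obtain Lemma \ref{lm-exp-wcov-SJ} directly from the general convolution characterization in Lemma \ref{lm-exp-wcov} by specializing the target space to $l^{\fy}(\rd)$, that is, by setting $q=\fy$ and $s=0$. With this choice one has $\frac{1}{q}+\frac{s}{d}=0$, and the whole problem reduces to examining which of the four condition families $\mathcal{A}_1,\dots,\mathcal{A}_4$ can hold and what each collapses to. I would first dispose of $\mathcal{A}_3$ and $\mathcal{A}_4$: both carry a strict sign constraint on $\frac{1}{q}+\frac{s}{d}$, namely $\mathcal{A}_3$ requires $\frac{1}{q}+\frac{s}{d}<0$ while $\mathcal{A}_4$ requires $\frac{1}{q}+\frac{s}{d}>0$. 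Since $\frac{1}{q}+\frac{s}{d}=0$ here, neither can be met, so both families are vacuous. Next $\mathcal{A}_2$ already forces $s=s_1=s_2=0$, and each of its three alternatives, once $q=\fy$ is imposed, reduces to $\frac{1}{q_1}+\frac{1}{q_2}=1$ (the sub-cases $q=q_1,\,q_2=1$ and $q=q_2,\,q_1=1$ both give $\frac{1}{q_1}+\frac{1}{q_2}=1$ when $q=\fy$). Thus $\mathcal{A}_2$ specializes exactly to the first alternative $s_1=s_2=0$, $\frac{1}{q_1}+\frac{1}{q_2}=1$ of the statement.

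The bulk of the bookkeeping sits in $\mathcal{A}_1$, and this is where I would be most careful. With $s=0$ the inequalities $s\leq s_1$, $s\leq s_2$, $0\leq s_1+s_2$ become $0\leq s_1,s_2$; since then $\frac{1}{q_i}+\frac{s_i}{d}\geq 0$ (using $q_i\geq 1$), every $\vee 0$ truncation is inactive, so the strict condition $1+\bigl(\frac{1}{q}+\frac{s}{d}\bigr)\vee 0<\bigl(\frac{1}{q_1}+\frac{s_1}{d}\bigr)\vee 0+\bigl(\frac{1}{q_2}+\frac{s_2}{d}\bigr)\vee 0$ becomes precisely $1<\frac{1}{q_1}+\frac{s_1}{d}+\frac{1}{q_2}+\frac{s_2}{d}$, while $\frac{1}{q}+\frac{s}{d}\leq \frac{1}{q_i}+\frac{s_i}{d}$ and $1\leq\frac{1}{q_1}+\frac{s_1}{d}+\frac{1}{q_2}+\frac{s_2}{d}$ are automatic. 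The clause $(q_1',-s_1)=(q_2,s_2)$ never activates, being triggered only by $1=\frac{1}{q_1}+\frac{s_1}{d}+\frac{1}{q_2}+\frac{s_2}{d}$, which is excluded by the strict inequality just obtained. Finally the two clauses $(q,s)=(q_i,s_i)$ when $\frac{1}{q}+\frac{s}{d}=\frac{1}{q_i}+\frac{s_i}{d}$ become exactly the degenerate requirements $(q_i,s_i)=(\fy,0)$ whenever $\frac{1}{q_i}+\frac{s_i}{d}=0$. Collecting these, $\mathcal{A}_1$ reduces verbatim to the second alternative of the lemma.

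Since the embedding $l^{q_1}_{s_1}\ast l^{q_2}_{s_2}\subset l^{\fy}$ holds if and only if one of $\mathcal{A}_1,\dots,\mathcal{A}_4$ holds, the disjunction of the specialized families is precisely the two-alternative condition asserted. The one delicate point is the boundary $\frac{1}{q_1}+\frac{s_1}{d}+\frac{1}{q_2}+\frac{s_2}{d}=1$: there $\mathcal{A}_1$ fails because its governing inequality is strict, and $\mathcal{A}_2$ applies only under the extra requirement $s_1=s_2=0$. This is exactly why the statement separates the endpoint $s_1=s_2=0$, $\frac{1}{q_1}+\frac{1}{q_2}=1$ as a stand-alone alternative while equipping the second with the strict inequality $1<\frac{1}{q_1}+\frac{s_1}{d}+\frac{1}{q_2}+\frac{s_2}{d}$. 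The main obstacle is thus confirming that no admissible tuple is lost or double-counted at this boundary, which follows at once from comparing the strict and non-strict constraints extracted above.
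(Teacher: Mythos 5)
Your proposal is correct and coincides with what the paper does: the paper states this lemma as a direct specialization of Lemma \ref{lm-exp-wcov} to $q=\fy$, $s=0$ without writing out the case analysis, and your verification that $\mathcal{A}_3$, $\mathcal{A}_4$ are vacuous, $\mathcal{A}_2$ collapses to the first alternative, and $\mathcal{A}_1$ collapses to the second is exactly the intended (and only needed) argument. The boundary discussion at $\frac{1}{q_1}+\frac{s_1}{d}+\frac{1}{q_2}+\frac{s_2}{d}=1$ is also handled correctly.
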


Now, we are in a position to give the sharp exponents of BPM with Sj\"{o}strand's class.

\begin{theorem}\label{thm-OPM-SJ}
  Assume $1\leq p, q, p_i, q_i \leq \fy$, $i=1,2$, $\tau\in [0,1]$. We have
  \ben\label{thm-OPM-SJ-c1}
  \forall\sigma\in M^{\fy,1}(\rdd)\Longrightarrow
  OP_{\tau}(\sigma): \calM^{p_1,q_1}_{v_{s_1,t_1}}(\rd)\longrightarrow M^{p_2,q_2}_{v_{s_2,t_2}}(\rd)
  \een
  if and only if
\begin{eqnarray}\label{thm-OPM-SJ-c2}
s_1=s_2=0,\  p_1=p_2\ \ \ \ \ \    \text{or}
\begin{cases}
s_2\leq 0\leq s_1,\\
\frac{1}{p_2}+\frac{s_2}{d}<\frac{1}{p_1}+\frac{s_1}{d},\\
(p_1,s_1)=(\fy,0) ~\text{if}~ \frac{1}{p_1}+\frac{s_1}{d}=0, \\
(p_2,s_2)=(1,0) ~\text{if}~ \frac{1}{p_2}+\frac{s_2}{d}=1,
\end{cases}
\end{eqnarray}
and
\begin{eqnarray}\label{thm-OPM-SJ-c3}
t_1=t_2=0,\  q_1=q_2\ \ \ \ \ \    \text{or}
\begin{cases}
t_2\leq 0\leq t_1,\\
\frac{1}{q_2}+\frac{t_2}{d}<\frac{1}{q_1}+\frac{t_1}{d},\\
(q_1,t_1)=(\fy,0) ~\text{if}~ \frac{1}{q_1}+\frac{t_1}{d}=0, \\
(q_2,t_2)=(1,0) ~\text{if}~ \frac{1}{q_2}+\frac{t_2}{d}=1.
\end{cases}
\end{eqnarray}
In particular, when $s_i=t_i=0$, $i=1,2$, we have \eqref{thm-OPM-SJ-c1} holds if and only if
\ben\label{cp_SJ}
p_1\leq p_2,\ \ \ \ q_1\leq q_2.
\een
\end{theorem}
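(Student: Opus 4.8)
The plan is to reduce the statement to two scalar convolution inequalities on weighted sequence spaces and then read off the answer from the sharp lemma already isolated for exactly this purpose. First I would invoke Theorem \ref{thm-OPM-PW} with the choice $(p,q,s)=(\fy,1,0)$, so that $M^{p,q}_{1\otimes v_s}=M^{\fy,1}$ is precisely Sj\"{o}strand's class. Since $p=\fy>1$, we land in the first (``$p>1$'') branch of that theorem, and with $p'=1$, $q'=\fy$, $s=0$ the two required conditions collapse to the unweighted target $l^{\fy}$:
\be
l_{-s_2}^{p_2'}(\zd)\ast l_{s_1}^{p_1}(\zd)\subset l^{\fy}(\zd),\qquad
l_{-t_2}^{q_2'}(\zd)\ast l_{t_1}^{q_1}(\zd)\subset l^{\fy}(\zd).
\ee
Thus \eqref{thm-OPM-SJ-c1} is equivalent to these two convolution relations.

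Next I would apply Lemma \ref{lm-exp-wcov-SJ} to each of them separately. Matching the first convolution against the template $l^{q_1}_{s_1}\ast l^{q_2}_{s_2}\subset l^{\fy}$ with the identification $(q_1,s_1)\mapsto(p_2',-s_2)$ and $(q_2,s_2)\mapsto(p_1,s_1)$, the first alternative of the lemma reads $-s_2=s_1=0$ together with $\tfrac1{p_2'}+\tfrac1{p_1}=1$; since $\tfrac1{p_2'}=1-\tfrac1{p_2}$ the last identity is just $p_1=p_2$, giving the branch ``$s_1=s_2=0,\ p_1=p_2$''. The second alternative translates, after substituting $\tfrac1{p_2'}=1-\tfrac1{p_2}$ throughout, into exactly the system \eqref{thm-OPM-SJ-c2}: the positivity $0\le -s_2,\,s_1$ becomes $s_2\le 0\le s_1$; the strict inequality $1<\tfrac1{p_2'}-\tfrac{s_2}{d}+\tfrac1{p_1}+\tfrac{s_1}{d}$ becomes $\tfrac1{p_2}+\tfrac{s_2}{d}<\tfrac1{p_1}+\tfrac{s_1}{d}$; and the two endpoint clauses of the lemma turn into ``$(p_2,s_2)=(1,0)$ if $\tfrac1{p_2}+\tfrac{s_2}{d}=1$'' and ``$(p_1,s_1)=(\fy,0)$ if $\tfrac1{p_1}+\tfrac{s_1}{d}=0$'' respectively. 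An identical computation applied to the second convolution (replacing $p_i\to q_i$, $s_i\to t_i$) yields \eqref{thm-OPM-SJ-c3}.

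Finally, for the special case $s_i=t_i=0$ I would simply specialize \eqref{thm-OPM-SJ-c2}: the first alternative becomes $p_1=p_2$, while the second becomes $\tfrac1{p_2}<\tfrac1{p_1}$ (the two endpoint clauses being automatically satisfied when $s_i=0$), i.e. $p_1<p_2$; together these give $p_1\le p_2$, and likewise \eqref{thm-OPM-SJ-c3} gives $q_1\le q_2$, which is \eqref{cp_SJ}.

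The computational heart is cited wholesale in Lemma \ref{lm-exp-wcov-SJ}, so the main obstacle here is purely bookkeeping: I must keep the conjugate exponents $p_i'$, $q_i'$ and the sign flips $s_2\mapsto -s_2$, $t_2\mapsto -t_2$ straight, and---most delicately---verify that the two boundary clauses of the lemma map correctly onto the ``if'' endpoint conditions of \eqref{thm-OPM-SJ-c2} and \eqref{thm-OPM-SJ-c3}, including checking that the degenerate possibilities $p_2'=\fy$ (i.e.\ $p_2=1$) and $p_1=\fy$ are governed by the stated equality cases rather than silently excluded.
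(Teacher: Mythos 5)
Your proposal is correct and follows exactly the paper's own route: specialize Theorem \ref{thm-OPM-PW} at $(p,q,s)=(\fy,1,0)$ to reduce \eqref{thm-OPM-SJ-c1} to the two convolution relations $l_{-s_2}^{p_2'}\ast l_{s_1}^{p_1}\subset l^{\fy}$ and $l_{-t_2}^{q_2'}\ast l_{t_1}^{q_1}\subset l^{\fy}$, then translate each via Lemma \ref{lm-exp-wcov-SJ}. Your bookkeeping of the conjugate exponents, sign flips, and endpoint clauses is accurate (and in fact more explicit than the paper's two-line proof), so nothing further is needed.
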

\begin{proof}
  Using Theorem \ref{thm-OPM-PW}, we obtain that \eqref{thm-OPM-SJ-c1} holds if and only if
\be
 l_{-s_2}^{p_2'}(\zd)\ast l_{s_1}^{p_1}(\zd)\subset  l^{\fy}(\zd),
 \ \ \ l_{-t_2}^{q_2'}(\zd)\ast l_{t_1}^{q_1}(\zd)\subset  l^{\fy}(\zd).
\ee
Then, Lemma \ref{lm-exp-wcov-SJ} tells us that the above two convolution inequalities are equivalent to
\eqref{thm-OPM-SJ-c2} and \eqref{thm-OPM-SJ-c3}.
\end{proof}

Observe that Wiener amalgam space $W(\scrF L^q, L^{p})$ has the same local regularity with $M^{p,q}$.
Moreover, for $p>q$ they have the following inclusion relations:
\be
M^{p,q}\subsetneq  W(\scrF L^q, L^{p}),\ \ \ M^{q,p}\supsetneq   W(\scrF L^p, L^{q}).
\ee
So, there is a natural question that for $p>q$ whether the boundedness of pseudodifferential operator with symbols in $M^{p,q}$ or
$W(\scrF L^p, L^{q})$ can be preserved with symbols in
$W(\scrF L^q, L^{p})$ or $M^{q,p}$ respectively.
With the help of our full characterizations of BMM and BMW, one can find that the answer is negative unless the trivial case happen, i.e., $p=q$. Here, we only give a detailed comparison for the Sj\"{o}strand's class $M^{\fy,1}$ and the corresponding larger space $W(\scrF L^1, L^{\fy})$. Let us being with the sharp exponents for the non-endpoint case of BPW.

\begin{theorem}\label{thm-OPW-SJ}
  Assume $1\leq p, q, p_i, q_i \leq \fy$, $i=1,2$, $\tau\in (0,1)$. We have
  \ben\label{thm-OPW-SJ-c1}
  \forall\sigma\in W(\scrF L^1, L^{\fy})(\rdd)\Longrightarrow
  OP_{\tau}(\sigma): \calM^{p_1,q_1}_{v_{s_1,t_1}}(\rd)\longrightarrow M^{p_2,q_2}_{v_{s_2,t_2}}(\rd)
  \een
  if and only if
\be
\begin{cases}
s_2\leq 0\leq s_1,\\
1<\frac{1}{p_1}+\frac{s_1}{d}\ \ \text{or}\ \ (p_1,s_1)=(1,0),\\
\frac{1}{p_2}+\frac{s_2}{d}<0\ \ \text{or}\ \ (p_2,s_2)=(\fy,0),
\end{cases}
\ee
and
\be
\begin{cases}
t_2\leq 0\leq t_1,\\
1<\frac{1}{q_1}+\frac{t_1}{d}\ \ \text{or}\ \ (q_1,t_1)=(1,0),\\
\frac{1}{q_2}+\frac{t_2}{d}<0\ \ \text{or}\ \ (q_2,t_2)=(\fy,0).
\end{cases}
\ee
In particular, when $s_i=t_i=0$, $i=1,2$, we have \eqref{thm-OPW-SJ-c1} holds if and only if
\ben\label{cp_SJW}
p_1=q_1=1,\ \ \ \ p_2=q_2=\fy.
\een
\end{theorem}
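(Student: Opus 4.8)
The plan is to reduce \eqref{thm-OPW-SJ-c1} to a boundedness statement for the $\tau$-Wigner distribution and then feed it into the sharp BMW characterization for power weights. First I would apply Proposition \ref{pp-eqOP-BMW} with the choices $p=1$, $q=\fy$, $m\equiv 1$, $m_1=v_{s_1,t_1}$, $m_2=v_{s_2,t_2}$, so that the symbol class $\scrF M^{1,\fy}_{1\otimes 1}(\rdd)=W(\scrF L^1,L^\fy)(\rdd)$ is exactly the one appearing in \eqref{thm-OPW-SJ-c1}. The equivalence $(i)\Leftrightarrow(iii)$ then recasts \eqref{thm-OPW-SJ-c1} as the BMW estimate
\[
W_\tau:\calM^{p_2',q_2'}_{v_{-s_2,-t_2}}(\rd)\times\calM^{p_1,q_1}_{v_{s_1,t_1}}(\rd)\longrightarrow \scrF M^{\fy,1}_{1\otimes 1}(\rdd)=W(\scrF L^\fy,L^1)(\rdd),
\]
where I use $p'=\fy$, $q'=1$, $m^{-1}\equiv 1$ and $v_{s_2,t_2}^{-1}=v_{-s_2,-t_2}$. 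The key structural point is that the dualized target Wiener amalgam space now has local exponent $\fy$ and global exponent $1$, hence carries the trivial weight.

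Next I would invoke Theorem \ref{thm-W3} for $\tau\in(0,1)$, matching its first input slot with the data $(p_2',q_2',-s_2,-t_2)$, its second input slot with $(p_1,q_1,s_1,t_1)$, and its target $W(\scrF L^p,L^q_s)$ with $p=\fy$, $q=1$ and trivial weight. Since $p=\fy\geq q=1$, the relevant regime is precisely the $p\geq q$ case of Theorem \ref{thm-W3}, which replaces the convolution relations by the four embeddings
\[
l^{p_2'}_{-s_2}(\zd),\ l^{p_1}_{s_1}(\zd)\subset l^1(\zd),\qquad l^{q_2'}_{-t_2}(\zd),\ l^{q_1}_{t_1}(\zd)\subset l^1(\zd).
\]

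Finally I would translate each of these embeddings into an exponent condition via Lemma \ref{lm-exp-eb}. For $l^{p_1}_{s_1}\subset l^1$ the lemma yields $1<\frac1{p_1}+\frac{s_1}{d}$ or $(p_1,s_1)=(1,0)$ (forcing $s_1\geq 0$); for $l^{p_2'}_{-s_2}\subset l^1$, using $\frac1{p_2'}=1-\frac1{p_2}$, it yields $\frac1{p_2}+\frac{s_2}{d}<0$ or $(p_2,s_2)=(\fy,0)$ (forcing $s_2\leq 0$); the two $t$-embeddings give the same conditions with $(p_i,s_i)$ replaced by $(q_i,t_i)$. Assembling the four clauses reproduces exactly the stated system. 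For the unweighted specialization $s_i=t_i=0$, the first pair collapses to $p_1=1$ and $p_2=\fy$, and the second pair to $q_1=1$ and $q_2=\fy$, which is \eqref{cp_SJW}.

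The step most prone to error is the bookkeeping through the duality of Proposition \ref{pp-eqOP-BMW}: the passage to conjugate exponents and negated weights, together with the resulting swap of the two input slots. Correctly recognizing that this dualized target lands in the $p\geq q$ regime of Theorem \ref{thm-W3} is what makes the clean embedding (rather than genuine convolution) characterization available, and is the crux of the argument.
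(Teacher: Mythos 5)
Your proposal is correct and follows essentially the same route as the paper: Proposition \ref{pp-eqOP-BMW} to dualize the symbol class $W(\scrF L^1,L^\fy)=\scrF M^{1,\fy}$ into the BMW estimate with target $W(\scrF L^\fy,L^1)$, then the $p\geq q$ case of Theorem \ref{thm-W3} to obtain the four embeddings $l^{p_2'}_{-s_2},\ l^{p_1}_{s_1}\subset l^1$ and $l^{q_2'}_{-t_2},\ l^{q_1}_{t_1}\subset l^1$, and finally Lemma \ref{lm-exp-eb}. Your write-up merely makes explicit the exponent and weight bookkeeping that the paper's terse proof leaves implicit.
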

\begin{proof}
  By Proposition \ref{pp-eqOP-BMW} and Theorem \ref{thm-W3}, we conclude that \eqref{thm-OPW-SJ-c1} holds if and only if
  \be
  l_{-s_2}^{p_2'}(\zd),\ l_{s_1}^{p_1}(\zd)\subset  l^{1}(\zd),
\ \ l_{-t_2}^{q_2'}(\zd),\ l_{t_1}^{q_1}(\zd)\subset  l^{1}(\zd).
  \ee
  Then, the desired conclusion follows by Lemma \ref{lm-exp-eb}.
\end{proof}

\begin{remark}\label{rk_cpSJ}
Comparing \eqref{cp_SJW} with \eqref{cp_SJ}, we find that the range of exponents for BPW ($\tau\in (0,1)$) with symobls in $W(\scrF L^1, L^{\fy})(\rdd)$
is strictly small than that for BPM with Sj\"{o}strand's class.
\end{remark}

Next, we handle that endpoint case of BPW.
We first give following characterization for the endpoint cases of BPW, which can be directly deduced by
Proposition \ref{thm-W3} and Proposition \ref{pp-eqOP-BMW}.

\begin{theorem}\label{thm-OPWE-PW}
  Assume $1\leq p, q, p_i, q_i \leq \fy$, $i=1,2$, $\tau=0,1$. We have
  \be
  \forall\sigma\in W(\scrF L^p, L^q_s)(\rdd)
  \Longrightarrow OP_{\tau}(\sigma): \calM^{p_1,q_1}_{v_{s_1,t_1}}(\rd)\longrightarrow M^{p_2,q_2}_{v_{s_2,t_2}}(\rd)
  \ee
  if and only if
\begin{eqnarray}
&
l^{q_2'}_{-t_2}(\zd), l^{p_1}_{s_1}(\zd)\subset l^{p'}(\zd),\ \ \
l^{p_2'}_{-s_2}(\zd), l^{q_2'}_{-(s_2+t_2)}(\zd), l^{q_1}_{t_1}(\zd)\subset l^{q'}_{-s}(\zd),
&\tau=0,
\\
&
l^{q_1}_{t_1}(\zd),l^{p_2'}_{-s_2}(\zd)\subset l^{p'}(\zd),\ \ \
l^{p_1}_{s_1}(\zd), l^{q_1}_{s_1+t_1}(\zd), l^{q_2'}_{-t_2}(\zd)\subset l^{q'}_{-s}(\zd),
&\tau=1.
\end{eqnarray}
\end{theorem}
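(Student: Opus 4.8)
The plan is to obtain Theorem~\ref{thm-OPWE-PW} by combining the duality equivalence of Proposition~\ref{pp-eqOP-BMW} with the sharp endpoint characterization of BMW furnished by Theorem~\ref{thm-W3}; no genuinely new estimate is needed, and the entire content lies in the correct bookkeeping of exponents and weights.

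First I would invoke Proposition~\ref{pp-eqOP-BMW} with the identifications $m=v_s$, $m_1=v_{s_1,t_1}$, $m_2=v_{s_2,t_2}$. Using the identity $\scrF M^{p,q}_{1\otimes m}=W(\scrF L^p,L^q_m)$ recorded in Section~2, the symbol class $W(\scrF L^p,L^q_s)$ is precisely $\scrF M^{p,q}_{1\otimes v_s}$, so statement $(i)$ of the proposition coincides with the hypothesis of the theorem. The equivalence $(i)\Leftrightarrow(iii)$ then shows that the BPW boundedness is equivalent to the BMW boundedness
\be
W_{\tau}:\calM^{p_2',q_2'}_{v_{-s_2,-t_2}}(\rd)\times\calM^{p_1,q_1}_{v_{s_1,t_1}}(\rd)\longrightarrow W(\scrF L^{p'},L^{q'}_{-s})(\rdd),
\ee
where I have used $m_2^{-1}=v_{-s_2,-t_2}$ and $\scrF M^{p',q'}_{1\otimes v_s^{-1}}=W(\scrF L^{p'},L^{q'}_{-s})$. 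Since $1\le p,q,p_i,q_i\le\fy$, the dual exponents $p',q',p_2',q_2'$ all lie in $[1,\fy]\subset(0,\fy]$, so Theorem~\ref{thm-W3} applies to this reformulation.

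Next I would read off the endpoint systems from Theorem~\ref{thm-W3} under the substitution dictated by the display above, namely $(\tilde p_1,\tilde q_1,\tilde s_1,\tilde t_1)=(p_2',q_2',-s_2,-t_2)$ for the first factor, $(\tilde p_2,\tilde q_2,\tilde s_2,\tilde t_2)=(p_1,q_1,s_1,t_1)$ for the second factor, and $(\tilde p,\tilde q,\tilde s)=(p',q',-s)$ for the target. Inserting these into the $\tau=0$ lines of Theorem~\ref{thm-W3} produces $l^{q_2'}_{-t_2},\,l^{p_1}_{s_1}\subset l^{p'}$ together with $l^{p_2'}_{-s_2},\,l^{q_2'}_{-(s_2+t_2)},\,l^{q_1}_{t_1}\subset l^{q'}_{-s}$, which is verbatim the asserted $\tau=0$ system; since the $\tau=1$ lines of Theorem~\ref{thm-W3} are obtained from the $\tau=0$ lines by interchanging its two native argument tuples, the substitution yields the corresponding $\tau=1$ system with the roles of the two factors swapped, matching the stated conditions.

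The only real obstacle is the index and weight bookkeeping. One must keep track of the fact that the dual formulation \emph{swaps} the two arguments of $W_{\tau}$ (the domain window $m_1$ reappears as the second argument, while the codomain window $m_2$ is inverted and becomes the first argument) and that passing to $m_2^{-1}$ and $m^{-1}$ flips the sign of every weight exponent. Once this dictionary is fixed, the proof is immediate: it suffices to confirm that the substituted tuples reproduce the displayed embeddings exactly and that the standing assumption $1\le p,q,p_i,q_i\le\fy$ keeps the dual exponents inside the admissible range of Theorem~\ref{thm-W3}.
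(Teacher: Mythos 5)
Your proposal is correct and is exactly the paper's intended argument: the paper derives Theorem \ref{thm-OPWE-PW} directly from the duality equivalence of Proposition \ref{pp-eqOP-BMW} combined with the endpoint cases of Theorem \ref{thm-W3}, and your substitution dictionary $(p_2',q_2',-s_2,-t_2)$, $(p_1,q_1,s_1,t_1)$, $(p',q',-s)$ reproduces the displayed embeddings verbatim for both $\tau=0$ and $\tau=1$.
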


Then, corresponding to Theorem \ref{thm-W5}, we establish the sharp exponents for the endpoint cases of BPW with constant weights.

\begin{theorem}\label{thm-OPWE}
  Assume $1\leq p, q, p_i, q_i \leq \fy$, $i=1,2$, $\tau=0,1$. We have
  \ben\label{thm-OPWE-c1}
  \forall\sigma\in W(\scrF L^p, L^q)(\rdd)
  \Longrightarrow OP_{\tau}(\sigma): \calM^{p_1,q_1}(\rd)\longrightarrow M^{p_2,q_2}(\rd)
  \een
  if and only if
\begin{eqnarray}
&
q_2', p_1\leq p',\ \ \
p_2', q_2', q_1\leq q',
&\tau=0,
\\
&
q_1, p_2'\leq p',\ \ \
p_1, q_1, q_2'\leq q',
&\tau=1.
\end{eqnarray}
\end{theorem}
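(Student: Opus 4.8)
The plan is to derive this as a direct specialization of Theorem \ref{thm-OPWE-PW} to the unweighted setting, followed by the sharp embedding criterion in Lemma \ref{lm-exp-eb}. First I would set every weight parameter equal to zero, that is $s=s_1=s_2=t_1=t_2=0$, so that each power weight $v_{s_i,t_i}$ and $v_s$ collapses to the constant weight $1$ and the symbol space $W(\scrF L^p, L^q_s)$ becomes $W(\scrF L^p, L^q)$. Under this specialization the characterizing embedding relations of Theorem \ref{thm-OPWE-PW} lose all their weights; for $\tau=0$ they read
\be
l^{q_2'}(\zd),\ l^{p_1}(\zd)\subset l^{p'}(\zd),\qquad
l^{p_2'}(\zd),\ l^{q_2'}(\zd),\ l^{q_1}(\zd)\subset l^{q'}(\zd),
\ee
and for $\tau=1$ they read
\be
l^{q_1}(\zd),\ l^{p_2'}(\zd)\subset l^{p'}(\zd),\qquad
l^{p_1}(\zd),\ l^{q_1}(\zd),\ l^{q_2'}(\zd)\subset l^{q'}(\zd).
\ee

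Second, I would invoke Lemma \ref{lm-exp-eb} with $s_1=s_2=0$. In this unweighted case the lemma reduces to the elementary statement that $l^a(\zd)\subset l^b(\zd)$ holds precisely when $a\leq b$: with vanishing weights the first alternative becomes $1/b<1/a$ and the second becomes $b=a$, and together these are equivalent to $1/b\leq 1/a$, i.e. $a\leq b$. Applying this criterion to each embedding above translates the conditions into the stated exponent inequalities: for $\tau=0$ one obtains $q_2',p_1\leq p'$ together with $p_2',q_2',q_1\leq q'$, and for $\tau=1$ one obtains $q_1,p_2'\leq p'$ together with $p_1,q_1,q_2'\leq q'$.

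Since both Theorem \ref{thm-OPWE-PW} and Lemma \ref{lm-exp-eb} are biconditionals, chaining them yields the desired equivalence with no loss. I should only take care that the hypotheses $1\leq p,q,p_i,q_i\leq\fy$ place every conjugate exponent in $[1,\fy]$, so that the comparisons $a\leq b$ among the sequence-space exponents are meaningful (with the usual conventions $1'=\fy$ and $\fy'=1$), and that this range is exactly the one demanded by Theorem \ref{thm-OPWE-PW}. There is no genuine analytic obstacle at this stage: the entire content has already been transported to the discrete level by Proposition \ref{pp-eqOP-BMW} together with Theorem \ref{thm-W3} (through Theorem \ref{thm-OPWE-PW}), so the only point requiring attention is the bookkeeping of which exponents appear in which embedding for each value of $\tau$.
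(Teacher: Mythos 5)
Your proposal is correct and follows exactly the paper's own route: specialize Theorem \ref{thm-OPWE-PW} to vanishing weights to get the unweighted embedding relations, then apply Lemma \ref{lm-exp-eb}, which in the unweighted case reduces to $l^a(\zd)\subset l^b(\zd)\Longleftrightarrow a\leq b$. The extra care you take in checking that the two alternatives of Lemma \ref{lm-exp-eb} merge into the single inequality $a\leq b$ is a detail the paper leaves implicit, but there is no substantive difference in the argument.
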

\begin{proof}
It follows by Theorem \ref{thm-OPWE-PW} that \eqref{thm-OPWE-c1} is equivalent to
  \begin{eqnarray*}
&
l^{q_2'}(\zd), l^{p_1}(\zd)\subset l^{p'}(\zd),\ \ \
l^{p_2'}(\zd), l^{q_2'}(\zd), l^{q_1}(\zd)\subset l^{q'}(\zd),
&\tau=0,
\\
&
l^{q_1}(\zd),l^{p_2'}(\zd)\subset l^{p'}(\zd),\ \ \
l^{p_1}(\zd), l^{q_1}(\zd), l^{q_2'}(\zd)\subset l^{q'}(\zd),
&\tau=1.
\end{eqnarray*}
Then, the desired conditions can be deduced by Lemma \ref{lm-exp-eb}.
\end{proof}

Now, we return to BPW with symbols in $W(\scrF L^{1}, L^{\fy})(\rdd)$.

\begin{theorem}\label{thm-OPWE-SJ}
  Assume $1\leq p, q, p_i, q_i \leq \fy$, $i=1,2$, $\tau=0,1$. We have
  \ben\label{thm-OPWE-SJ-c1}
  \forall\sigma\in W(\scrF L^{1}, L^{\fy})(\rdd)\Longrightarrow
  OP_{\tau}(\sigma): \calM^{p_1,q_1}_{v_{s_1,t_1}}(\rd)\longrightarrow M^{p_2,q_2}_{v_{s_2,t_2}}(\rd)
  \een
  if and only if
\be
(\tau=0)\begin{cases}
s_2,t_2\leq 0\leq s_1,t_1,\\
0< \frac{1}{p_1}+\frac{s_1}{d}\ \ \text{or}\ \  (p_1,s_1)=(\fy,0),\\
\frac{1}{p_2}+\frac{s_2}{d}<0\ \ \text{or}\ \ (p_2,s_2)=(\fy,0),\\
1< \frac{1}{q_1}+\frac{t_1}{d}\ \ \text{or}\ \  (q_1,t_1)=(1,0),\\
\frac{1}{q_2}+\frac{t_2}{d}<1\ \ \text{or}\ \ (q_2,t_2)=(1,0),\\
\frac{1}{q_2}+\frac{s_2+t_2}{d}<0\ \ \text{or}\ \  (q_2,s_2,t_2)=(\fy,0,0)\\
\end{cases}
\ee

and

\be
(\tau=1)\begin{cases}
s_2,t_2\leq 0\leq s_1,t_1,\\
1<\frac{1}{p_1}+\frac{s_1}{d}\ \ \text{or}\ \  (p_1,s_1)=(1,0),\\
\frac{1}{p_2}+\frac{s_2}{d}<1\ \ \text{or}\ \  (p_2,s_2)=(1,0),\\
0<\frac{1}{q_1}+\frac{t_1}{d}\ \ \text{or}\ \  (q_1,t_1)=(\fy,0),\\
\frac{1}{q_2}+\frac{t_2}{d}<0\ \ \text{or}\ \  (q_2,t_2)=(\fy,0),\\
1<\frac{1}{q_1}+\frac{s_1+t_1}{d}\ \ \text{or}\ \  (q_1,s_1,t_1)=(1,0,0)
\end{cases}.
\ee

In particular, when $s_i=t_i=0$, $i=1,2$, we have \eqref{thm-OPWE-SJ-c1} holds if and only if
\begin{eqnarray}
&\label{cp_SJW0}
q_1=1,\ p_2=q_2=\fy,\ \ \
&\tau=0,
\\
&\label{cp_SJW1}
q_2=\fy,\ p_1=q_1=1,\ \ \
&\tau=1.
\end{eqnarray}
\end{theorem}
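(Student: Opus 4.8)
The plan is to reduce everything to the abstract characterization already in hand and then to compute. First I would observe that the Sj\"ostrand-type symbol space is $W(\scrF L^1,L^{\fy})(\rdd)=W(\scrF L^p,L^q_s)(\rdd)$ with $p=1$, $q=\fy$, $s=0$, so that Theorem \ref{thm-OPWE-PW} applies verbatim and turns \eqref{thm-OPWE-SJ-c1} into a finite list of discrete weighted embeddings on $\zd$. Since the conjugate exponents of the symbol space are $p'=\fy$ and $q'=1$, for $\tau=0$ this list collapses (after inserting $s=0$) to
\be
l^{q_2'}_{-t_2}(\zd),\ l^{p_1}_{s_1}(\zd)\subset l^{\fy}(\zd),\qquad
l^{p_2'}_{-s_2}(\zd),\ l^{q_2'}_{-(s_2+t_2)}(\zd),\ l^{q_1}_{t_1}(\zd)\subset l^{1}(\zd),
\ee
and for $\tau=1$ to the corresponding list obtained by interchanging the roles of the two factors.

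The second step is purely a matter of applying Lemma \ref{lm-exp-eb} to each of these five embeddings and reading off the exponent conditions. For an embedding into $l^{\fy}$ the target pair is $(\fy,0)$, so the lemma forces the source weight to be nonnegative together with a strict index inequality, unless the degenerate equality branch holds; this yields $s_1\geq 0$, $t_2\leq 0$ and the alternatives ``$0<\frac1{p_1}+\frac{s_1}{d}$ or $(p_1,s_1)=(\fy,0)$'' and ``$\frac1{q_2}+\frac{t_2}{d}<1$ or $(q_2,t_2)=(1,0)$'', the latter after rewriting $\frac1{q_2'}=1-\frac1{q_2}$. For the three embeddings into $l^1$ the target pair is $(1,0)$; the same bookkeeping, using $\frac1{p_2'}=1-\frac1{p_2}$ and $\frac1{q_2'}=1-\frac1{q_2}$, produces $s_2\leq 0$, $t_1\geq 0$, $s_2+t_2\leq 0$ together with the alternatives ``$\frac1{p_2}+\frac{s_2}{d}<0$ or $(p_2,s_2)=(\fy,0)$'', ``$1<\frac1{q_1}+\frac{t_1}{d}$ or $(q_1,t_1)=(1,0)$'' and ``$\frac1{q_2}+\frac{s_2+t_2}{d}<0$ or $(q_2,s_2+t_2)=(\fy,0)$''. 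Collecting the sign constraints gives $s_2,t_2\leq 0\leq s_1,t_1$, and combining $s_2+t_2=0$ with $s_2,t_2\leq 0$ upgrades the last degenerate case to $(q_2,s_2,t_2)=(\fy,0,0)$, exactly as stated; the $\tau=1$ assertion follows from the identical computation on the swapped list. Finally, setting $s_i=t_i=0$ annihilates every strict inequality except those that can never hold, so only the degenerate branches survive, leaving $q_1=1$, $p_2=q_2=\fy$ for $\tau=0$ and $p_1=q_1=1$, $q_2=\fy$ for $\tau=1$, which are precisely \eqref{cp_SJW0} and \eqref{cp_SJW1}.

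Because the structural work is already carried by Theorem \ref{thm-OPWE-PW} and Lemma \ref{lm-exp-eb}, the argument is almost mechanical, and I do not expect any conceptual difficulty. The one genuinely error-prone point, and the part I would check most carefully, is keeping the dual indices straight: each condition coming out of Lemma \ref{lm-exp-eb} involves $p_2',q_2'$ and a negated weight ($-s_2$, $-t_2$, $-(s_2+t_2)$), and must be converted consistently back into a statement about $(p_2,s_2)$ and $(q_2,t_2)$, with correct tracking of when the equality alternative of the lemma degenerates (as in $(q_2,s_2,t_2)=(\fy,0,0)$). Everything beyond this is a direct substitution.
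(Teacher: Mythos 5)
Your proposal is correct and follows exactly the paper's own route: specialize Theorem \ref{thm-OPWE-PW} to $p=1$, $q=\fy$, $s=0$ (so $p'=\fy$, $q'=1$) to obtain the two lists of discrete embeddings, and then decode each one via Lemma \ref{lm-exp-eb}, including the observation that $s_2+t_2=0$ together with $s_2,t_2\leq 0$ upgrades the degenerate branch to $(q_2,s_2,t_2)=(\fy,0,0)$. The unweighted specialization to \eqref{cp_SJW0} and \eqref{cp_SJW1} is also handled as in the paper, so there is nothing to add.
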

\begin{proof}
  It follows from Theorem \ref{thm-OPWE-PW} that \eqref{thm-OPWE-SJ-c1} is equivalent to
\begin{eqnarray}
&
l^{q_2'}_{-t_2}(\zd), l^{p_1}_{s_1}(\zd)\subset l^{\fy}(\zd),\ \ \
l^{p_2'}_{-s_2}(\zd), l^{q_2'}_{-(s_2+t_2)}(\zd), l^{q_1}_{t_1}(\zd)\subset l^{1}(\zd),
&\tau=0,
\\
&
l^{q_1}_{t_1}(\zd),l^{p_2'}_{-s_2}(\zd)\subset l^{\fy}(\zd),\ \ \
l^{p_1}_{s_1}(\zd), l^{q_1}_{s_1+t_1}(\zd), l^{q_2'}_{-t_2}(\zd)\subset l^{1}(\zd).
&\tau=1.
\end{eqnarray}
Then, the desired conclusion follows from Lemma \ref{lm-exp-eb}.
\end{proof}

\begin{remark}\label{rk_cpSJE}
Comparing \eqref{cp_SJW0}, \eqref{cp_SJW1} with \eqref{cp_SJ}, one can find that the range of exponents for BPW ($\tau=0,1$) with symobls in $W(\scrF L^1, L^{\fy})(\rdd)$
is strictly small than that for BPM with Sj\"{o}strand's class.
\end{remark}

Finally, we consider the boundedness on Sobolev spaces $H^s$ of pseudodifferential operators
with symbols in Wiener amalgam spaces $W(\scrF L^p, L^q)(\rdd)$.
Note that $M^{2,2}_s=H^s$.

\begin{theorem}\label{thm-OPW-Hs}
  Assume $1\leq p, q, p_i, q_i \leq \fy$, $i=1,2$, $\tau=0,1$. We have
  \ben\label{thm-OPW-Hs-c1}
  \forall\sigma\in W(\scrF L^p, L^q)(\rdd)
  \Longrightarrow OP_{\tau}(\sigma): \calM^{2,2}_{v_{0,t_1}}(\rd)\longrightarrow M^{2,2}_{v_{0,t_2}}(\rd)
  \een
  if and only if
\be
t_2\leq 0\leq t_1,\ \ \  p,q\leq 2,
\ \ \ \  \tau=0,1.
\ee
\end{theorem}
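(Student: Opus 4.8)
The plan is to peel the statement back to the purely combinatorial embedding characterization that is already available and then read off the exponent conditions with the sharp embedding lemma. First I would apply Theorem \ref{thm-OPWE-PW} (which itself follows from Proposition \ref{pp-eqOP-BMW} together with Theorem \ref{thm-W3}) with the choices dictated by the statement: here $p_1=q_1=p_2=q_2=2$, the source and target weights force $s_1=s_2=0$, and the unweighted symbol space $W(\scrF L^p,L^q)$ means $s=0$; recall also that $2'=2$. Substituting into the $\tau=0$ line of Theorem \ref{thm-OPWE-PW} collapses the five embeddings to
\be
l^2_{-t_2}(\zd),\ l^2(\zd)\subset l^{p'}(\zd),\qquad l^2(\zd),\ l^2_{-t_2}(\zd),\ l^2_{t_1}(\zd)\subset l^{q'}(\zd),
\ee
while the $\tau=1$ line collapses to the analogous family in which $l^2_{t_1}$ plays the role formerly played by $l^2_{-t_2}$, namely $l^2_{t_1}(\zd),\,l^2(\zd)\subset l^{p'}(\zd)$ together with $l^2(\zd),\,l^2_{t_1}(\zd),\,l^2_{-t_2}(\zd)\subset l^{q'}(\zd)$.

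Then I would run Lemma \ref{lm-exp-eb} across these embeddings one at a time. The unweighted embeddings $l^2\subset l^{p'}$ and $l^2\subset l^{q'}$ are equivalent to $p'\geq 2$ and $q'\geq 2$, i.e. to $p\leq 2$ and $q\leq 2$. For the weighted embedding $l^2_{-t_2}\subset l^{p'}$ (and likewise into $l^{q'}$), Lemma \ref{lm-exp-eb} forces the weight exponent to satisfy $-t_2\geq 0$, that is $t_2\leq 0$, the boundary branch $-t_2=0$ being consistent with this; symmetrically $l^2_{t_1}\subset l^{q'}$ (and into $l^{p'}$ in the $\tau=1$ case) forces $t_1\geq 0$. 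Conversely, once $t_2\leq 0$ and $p\leq 2$ hold, we have $1/p'\leq 1/2\leq 1/2-t_2/d$, so the strict inequality $1/p'<1/2-t_2/d$ of Lemma \ref{lm-exp-eb} holds unless $t_2=0$ and $p=2$ simultaneously, in which case the equality branch $(p',0)=(2,0)$ applies; hence the weighted embeddings impose nothing beyond $t_2\leq 0\leq t_1$ and $p,q\leq 2$. Assembling the pieces yields exactly $t_2\leq 0\leq t_1$ and $p,q\leq 2$ for both $\tau=0$ and $\tau=1$.

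The one place requiring care is the bookkeeping at the boundary of Lemma \ref{lm-exp-eb}: I must verify that the split between the strict condition $1/q_2+s_2/d<1/q_1+s_1/d$ and the equality branch $(q_2,s_2)=(q_1,s_1)$ correctly admits the corner cases where $t_1=0$ or $t_2=0$ while simultaneously $p=2$ or $q=2$, so that no endpoint is spuriously excluded or included. With that check in place both implications are automatic. Finally I would record the promised consequence: since $p\leq 2$ is mandatory, the value $p=\infty$ is forbidden, so pseudodifferential operators with symbols in $W(\scrF L^\infty,L^1)(\rdd)$ can never map $H^{t_1}$ into $H^{t_2}$ boundedly, matching the closing observation of the section.
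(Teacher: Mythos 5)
Your proposal is correct and follows essentially the same route as the paper: specialize Theorem \ref{thm-OPWE-PW} to $p_1=q_1=p_2=q_2=2$, $s_1=s_2=s=0$ to obtain the listed $l^2$-embeddings for $\tau=0,1$, and then read off the conditions $t_2\leq 0\leq t_1$, $p,q\leq 2$ from Lemma \ref{lm-exp-eb}. Your extra care with the boundary branch of the embedding lemma is consistent with, and slightly more explicit than, the paper's one-line invocation of that lemma.
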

\begin{proof}
  Recall that $M^{2,2}_s=H^s$, by Theorem \ref{thm-OPWE-PW}, we conclude that \eqref{thm-OPW-Hs-c1} is equivalent to
  \begin{eqnarray*}
&
l^{2}_{-t_2}(\zd), l^{2}(\zd)\subset l^{p'}(\zd),\ \ \
l^{2}(\zd), l^{2}_{-t_2}(\zd), l^{2}_{t_1}(\zd)\subset l^{q'}(\zd),
&\tau=0,
\\
&
l^{2}_{t_1}(\zd),l^{2}(\zd)\subset l^{p'}(\zd),\ \ \
l^{2}(\zd), l^{2}_{t_1}(\zd), l^{2}_{-t_2}(\zd)\subset l^{q'}(\zd),
&\tau=1,
\end{eqnarray*}
which implies the desired conclusion by Lemma \ref{lm-exp-eb}.
\end{proof}
\begin{remark}
  From Theorem \ref{thm-OPW-Hs}, for $\tau=0,1$, we observe that for any $s\geq 0$, there exists
  a symbol $\s\in W(\scrF L^{\fy}, L^1)(\rdd)$ such that
  the corresponding pseudodifferential operators $OP_{\tau}(\sigma)$
  are not bounded from $L^2(\rd)$ to $H^s(\rd)$.
\end{remark}

\subsection*{Acknowledgements}
This work was partially supported by the National Natural Science Foundation of China (Nos. 11701112, 11601456, 11671414, 11771388).

\bibliographystyle{abbrv}

\end{document}